\newtheorem{thm}{Theorem}
\newtheorem{lem}{Lemma}
\newtheorem{cor}{Corollary}
\newtheorem{prop}{Proposition}
\newtheorem{defn}{Definition}
\newtheorem{cla}{Claim}
\begin{document}
\title{Self-adjoint boundary-value problems of automorphic forms}
\author{Adil Ali}
\email{alix0114@umn.edu}
\address{127 Vincent Hall, 206 Church Street Southeast Minneapolis,MN,55414}
\begin{abstract} 
We apply some ideas of Bombieri and Garrett to construct natural self-adjoint operators on spaces of automorphic forms whose only possible discrete spectrum is $\lambda_{s}$ for $s$ in a subset of on-line zeros of an $L$-function, appearing as a compact period of cuspidal-data Eisenstein series on $GL_{4}$.  These ideas have their origins in results of Hejhal and Colin de Verdi\'ere.  In parallel with the $GL(2)$ case, the corresponding pair-correlation and triple-correlation results limit the fraction of on-the-line zeros that can appear in this fashion.
\end{abstract}
\keywords{ Automorphic Forms, Representation Theory}
\maketitle

\tableofcontents

\section{Introduction}
We apply the spectral theory of automorphic forms to the study of zeros of $L$-functions. A refined version of the spectral theory of automorphic forms plausibly has bearing on zeros of automorphic $L$-functions and other periods. This is powerfully illustrated by the following example, which is a much simpler analogue of our present result.  In 1977, H. Haas [Haas 1977] attempted to numerically compute eigenvalues $\lambda$ of the invariant Laplacian
$$\Delta\;=\;y^{2}(\frac{\partial^{2}}{\partial x^{2}}+\frac{\partial^{2}}{\partial y^{2}})$$
on $SL_{2}(\mathbb{Z})\backslash\mathfrak{H}$, parametrized as $\lambda_{w}=w(w-1)$.  Haas listed the $w$-values, intending to solve the differential equation $$(\Delta-\lambda_{w})u=0$$  H. Stark and D. Hejhal [Hejhal 1981] observed zeros of $\zeta$ and of an $L$-function on the list.  This suggested an approach to the Riemann Hypothesis, hoping that  zeros $w$ of $\zeta$ would be in bijection with eigenvalues $\lambda_{w}=w(w-1)$ of $\Delta$.  Since a suitable version of $\Delta$ is a self-adjoint, non-positive operator, these eigenvalues would necessarily be non-positive also, forcing either $\text{Re}(w)=\frac{1}{2}$ or $w\in [0,1]$.  Hejhal attempted to reproduce Haas' list with more careful computations, but the zeros failed to appear on Hejhal's list.
Hejhal realized that Haas had solved the inhomogeneous equation $$(\Delta-\lambda_{w})u=\delta_{\omega}^{\text{afc}}$$ allowing a multiple of an automorphic Dirac $\delta_{\omega}^{\text{afc}}$ on the right hand side. Here $\omega$ is a cube root of unity, and $\delta_{\omega}^{\text{afc}}(f)=f(\omega)$ for an $SL_{2}(\mathbb{Z})$-automorphic waveform $f$. However, since solutions $u_{w}$ of $(\Delta-\lambda)u=\delta_{\omega}^{\text{afc}}$ are not genuine eigenfunctions of the Laplacian, this no longer implied non-positivity of the eigenvalues.

The natural question was whether the Laplacian could be modified so as to exhibit a fundamental solution as a legitimate eigenfunction for the perturbed operator.  That is, one would want a variant $\Delta'$ for which
$$(\Delta'-\lambda_{w})u_{w}\;=\;0\iff (\Delta-\lambda_{w})u_{w}\;=\;C\cdot\delta_{\omega}^{\text{afc}}$$
Because of Y. Colin de Verdi\`{e}re's  argument for meromorphic continuation of Eisenstein series [CdV 1981], it was anticipated that $\Delta'=\Delta^{\text{Fr}}$ would be a fruitful choice for the Friedrichs extension of a suitably chosen restriction. $\Delta^{\text{Fr}}$ is self-adjoint, and therefore symmetric.  This gave glimpses of progress toward the Riemann hypothesis.

Friedrichs extensions have the desired properties and they played an essential role in another story, namely Colin de Verdi\`{e}re's meromorphic continuation of Eisenstein series, though there, the distribution that appeared was the evaluation of constant term at height $y=a$. There, the spaces of interest were the orthogonal complements $L^{2}(\Gamma\backslash\mathfrak{H})_{a}$ to the spaces of pseudo-Eisenstein series with test function data supported on $[a,\infty)$.  $\Delta_{a}$ was $\Delta$ with domain $C_{c}^{\infty}(\Gamma\backslash\mathfrak{H})$ and constant term vanishing above height $y=a$. $\Delta^{\text{Fr}}$ was the Friedrichs extension of $\Delta_{a}$ to a self-adjoint operator on $L^{2}(\Gamma\backslash\mathfrak{H})_{a}$.
In this way, a Friedrichs extension attached to the distribution on $\Gamma\backslash\mathfrak{H}$ given by
$$T_{a}(f)\;=\;(c_{P}f)(ia)$$
has all eigenfunctions inside a $+1$-index  global automorphic Sobolev space, defined as the completion of $C_{c}^{\infty}(\Gamma\backslash\mathfrak{H})$ with respect to the $+1$-Sobolev norm $$|f|_{H^{1}}=\langle (1-\Delta)f,f\rangle^{\frac{1}{2}}$$ The Dirac $\delta$ on a two-dimensional manifold lies in a global Sobolev space $H^{-1-\epsilon}$ with index $-1-\epsilon$ for all $\epsilon>0$, but not in $H^{-1}$, so by elliptic regularity, a fundamental solution lies in the $+1-\epsilon$-Sobolev space.  This implies that a fundamental solution could not be an eigenfunction for any Friedrichs extension of a restriction of $\Delta$ described by boundary conditions.

The automorphic Dirac $\delta_{\omega}^{\text{afc}}$ is an example of a \textit{period functional}.  Periods of automorphic forms have been studied extensively: after all, Mellin transforms of cuspforms are noncompact periods.  Hecke and Maass were aware of Eisenstein series periods: in effect, Hecke treated  finite sums over Heegner points attached to \textit{negative} fundamental discriminants, and Maass treated compact geodesic periods attached to \textit{positive} fundamental discriminants.  A simple example is given by 
$$E_{s}(i)\;=\;\frac{\zeta_{\mathbb{Q}(i)}(s)}{\zeta_{\mathbb{Q}}(2s)}$$
More generally, let $\ell$ a quadratic field extension of a global field $k$ of characteristic not $2$. Let $G=GL_{2}(k)$, and let $H$ be a copy of $\ell^{\times}$ inside $G$.  The period of an Eisenstein series $E_{s}\;=\;\sum_{\gamma\in P_{k}\backslash G_{k}}\varphi(\gamma g)$ along $H$ is defined by the compactly-supported integral
$$\text{period of}\;E_{s}\;\text{along}\;H\;=\;\int_{Z_{\mathbb{A}}H_{k}\backslash H_{\mathbb{A}}}E_{s}$$
Via Iwasawa-Tate integrals, 
$$\int_{Z_{\mathbb{A}}H_{k}\backslash H_{\mathbb{A}}}E_{s}\;=\;\frac{\xi_{\ell}(s)}{\xi_{k}(2s)}$$
Noncompact periods have been studied extensively.  Let $G$ be a reductive group over a number field $F$, and let $H\subset G$ be a subgroup obtained as the fixed point set of an involution $\theta$. [Jacquet-Lapid-Rogowski 1997] studied the period integral
$$\Pi^{H}(\varphi)\;=\;\int_{H(F)\backslash H(\mathbb{A})}\varphi(h)\;dh$$
The authors use a regularization procedure and a relative trace formula to obtain an Euler product for $\Pi(E)$, where $E$ is an Eisenstein series.

This paper examines the discrete spectrum of a Friedrichs extension $\widetilde{\Delta}_{\theta}$ associated to a compactly-supported $GL_{4}(\mathbb{Z})$-invariant distribution $\tilde{\theta}$ on $G=GL(4)$, whose projection $\theta$ to the subspace of $L^{2}(GL_{4}(\mathbb{Z})\backslash GL_{4}(\mathbb{R})/O_{4}(\mathbb{R}))$ spanned by $2,2$ pseudo-Eisenstein series with fixed cuspidal data $f$ and $\overline{f}$ and the residue of this Eisenstein series, a Speh form. This distribution lies in the $-1$ index Sobolev space.  We prove that the parameters $w$ of the discrete spectrum $\lambda_{w}=w(w-1)$, if any, of $\widetilde{\Delta}_{\theta}$ interlace with the zeros of the constant term of the $2,2$ Eisenstein series $E_{f,\overline{f},s}^{P}$ where $f$ is a $GL(2)$ cuspform.  Such spacing is too regular to be compatible with the corresponding pair-correlation and triple-correlation conjectures, and this powerfully constrains the number of zeros $w$ of $\theta E_{1-w}$ appearing in the discrete spectrum of $\widetilde{\Delta}_{\theta}$.  In particular, the discrete spectrum is presumably sparse.

\section{Spectral Theory}
We follow [Langlands 1976], [MW 1990], [MW 1989], and [Garrett 2012]. Fix, once and for all, $K_{\infty}=O_{4}(\mathbb{R})$, and $K_{v}=GL_{4}(\mathbb{Z}_{v})$ for non-archimedean places $v$.  Let $\mathfrak{z}$ be the center of the enveloping algebra of $G_{\infty}=GL_{4}(\mathbb{R})$. 
\begin{defn}Given a parabolic $P$ in $G=GL_{4}$ and a function $f$ on $Z_{\mathbb{A}}G_{k}\backslash G_{\mathbb{A}}$, the constant term of $f$ along $P$ is
$$c_{P}f(g)=\int_{N_{k}\backslash N_{\mathbb{A}}}f(ng)\;dn$$
where $N$ is the unipotent radical of $P$.\end{defn}  
We will let $k=\mathbb{Q}$ throughout. An automorphic form is a \textit{cuspform} if, for all parabolics $P$, the constant term along $P$ is zero. This is the Gelfand condition (in the weak sense). Since the right $G_{\mathbb{A}}$-action commutes with taking constant terms, the space of functions $L^{2}_{\text{cusp}}(Z_{\mathbb{A}}G_{k}\backslash G_{\mathbb{A}})$ satisfying the Gelfand condition is $G_{\mathbb{A}}$-stable, and so is a sub-representation of $L^{2}(Z_{\mathbb{A}}G_{k}\backslash G_{\mathbb{A}})$. We note that there are non-$K_{v}$-finite vectors in $L^{2}(Z_{\mathbb{A}}G_{k}\backslash G_{\mathbb{A}})$.  R. Godement, A. Selberg, I. Gelfand and I. I. Piatetski-Shapiro showed that integral operators attached to test functions on $L^{2}_{\text{cusp}}(Z_{\mathbb{A}}G_{k}\backslash G_{\mathbb{A}})$ are compact.  Specifically, for $\varphi\in C_{c}^{\infty}(G_{\mathbb{A}})$ which is right $K$-invariant, the operator $$f\rightarrow\varphi\cdot f$$ gives a compact operator from $L^{2}_{\text{cusp}}(Z_{\mathbb{A}}G_{k}\backslash G_{\mathbb{A}})$ to itself. Here $$(\varphi\cdot f)(y)\;=\;\int\limits_{Z_{\mathbb{A}}G_{k}\backslash G_{\mathbb{A}}}\varphi(x)\cdot f(yx)\;dx$$  By the spectral theorem for compact operators, this sub-representation decomposes into a direct sum of irreducibles, each with finite multiplicity.  The remainder of $L^{2}$ is decomposed as follows.

We classify non-cuspidal automorphic forms according to their cuspidal support, i.e. the smallest parabolics on which they have non-zero constant term.  In $GL(4)$ there are four associate classes of proper parabolic subgroups.  There is $P^{4}=GL_{4}$, $P^{2,1,1}$, $P^{1,2,1}$, $P^{1,1,2}$, the maximal proper parabolic subgroups $P^{3,1}$, $P^{1,3}$ and $P^{2,2}$, and the standard minimal parabolic subgroup $P^{1,1,1,1}$. 
\begin{defn} A pseudo-Eisenstein series is a function of the form
$$\Psi_{\varphi}(g)\;=\;\sum_{\gamma\in P_{k}\backslash G_{k}}\varphi(\gamma\cdot g)$$
where $\varphi$ is a continuous function on $Z_{\mathbb{A}}N_{\mathbb{A}}M_{k}\backslash G_{\mathbb{A}}$ with cuspidal data on the Levi component.\end{defn}  For example, given the $2,2$ parabolic, the function out of which the pseudo-Eisenstein series is constructed is
$$\varphi_{\phi,f_{1}\otimes f_{2}}(\left( \begin{array}{cc}
A & * \\
0& D\end{array} \right))\;=\;\phi(\left|\frac{\text{det}A}{\text{det}D}\right|^{2})\cdot f_{1}(A)\cdot f_{2}(D)$$
where $\phi$ is a compactly-supported, smooth function on $\mathbb{R}$ and $f_{1}$ and $f_{2}$ are cuspforms on $GL_{2}$ with trivial central character.  For the $3,1$ parabolic, consider the function 
$$\varphi_{\phi,f_{1}\otimes f_{2}}(\left( \begin{array}{cc}
A & * \\
0& d\end{array} \right))\;=\;\phi(\left|\frac{\text{det}A}{d^{3}}\right|)\cdot f_{1}(A)$$
where $A\in GL_{3}$ and $f_{1}$ is a cuspform on $GL_{3}$. For the $2,1,1$ parabolic, let
$$\varphi_{f,\phi_{1},\phi_{2}}(\left( \begin{array}{ccc}
A & 0&0 \\
0& b&0\\
0&0&c\end{array} \right))\;=\;f(A)\cdot\phi_{1}(\frac{\text{det}A}{b^{2}})\cdot \phi_{2}(\frac{\text{det}A}{c^{2}})$$
The $1,1,1,1$-pseudo-Eisenstein series is discussed later.

\begin{prop} In the following, abbreviate $\varphi_{\phi,f_{1}\otimes f_{2}}$ by $\varphi$. For any square-integrable automorphic form $f$ and any pseudo-Eisenstein series $\Psi_{\varphi}^{P}$, with $P$ a parabolic subgroup
$$\langle f,\Psi_{\varphi}^{P}\rangle_{Z_{\mathbb{A}}G_{k}\backslash G_{\mathbb{A}}}\;=\;\langle c_{P}f,\varphi\rangle_{Z_{\mathbb{A}}N_{\mathbb{A}}^{P}M_{k}^{P}\backslash G_{\mathbb{A}}}$$
\end{prop}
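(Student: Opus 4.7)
The plan is the standard ``unwinding'' argument: expand the pseudo-Eisenstein series as a sum over $P_k\backslash G_k$, interchange sum and integral, fold the sum into the domain of integration using left $G_k$-invariance of $f$, and then absorb the integration over $N_k\backslash N_{\mathbb{A}}$ into the constant term $c_Pf$, exploiting the left $N_{\mathbb{A}}$-invariance of $\varphi$.

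First I would write the inner product as
$$\langle f,\Psi_{\varphi}^{P}\rangle \;=\; \int_{Z_{\mathbb{A}}G_{k}\backslash G_{\mathbb{A}}} f(g)\,\overline{\sum_{\gamma\in P_{k}\backslash G_{k}}\varphi(\gamma g)}\;dg.$$
The sum over $\gamma$ can be moved outside the integral provided the iterated integral converges absolutely; this is where the compact support of the $\phi$-factor in $\varphi$, together with square-integrability of $f$, is essential. Granting this, I would change variables $g\mapsto\gamma^{-1}g$ in each term and use left $G_k$-invariance of $f$ to collapse the sum into an unfolded integral over $Z_{\mathbb{A}}P_{k}\backslash G_{\mathbb{A}}$:
$$\langle f,\Psi_{\varphi}^{P}\rangle \;=\; \int_{Z_{\mathbb{A}}P_{k}\backslash G_{\mathbb{A}}} f(g)\,\overline{\varphi(g)}\;dg.$$

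Next I would factor through $P=NM$. Writing $Z_{\mathbb{A}}P_{k}\backslash G_{\mathbb{A}}=Z_{\mathbb{A}}N_{k}M_{k}\backslash G_{\mathbb{A}}$ and inserting the compact fiber integral over $N_{k}\backslash N_{\mathbb{A}}$, one gets
$$\int_{Z_{\mathbb{A}}N_{\mathbb{A}}M_{k}\backslash G_{\mathbb{A}}} \int_{N_{k}\backslash N_{\mathbb{A}}} f(ng)\,\overline{\varphi(ng)}\;dn\;dg.$$
Because $\varphi$ is by definition a function on $Z_{\mathbb{A}}N_{\mathbb{A}}M_{k}\backslash G_{\mathbb{A}}$, it is left $N_{\mathbb{A}}$-invariant, so $\overline{\varphi(ng)}=\overline{\varphi(g)}$ pulls out of the inner integral. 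What remains inside is exactly $c_P f(g)$, yielding
$$\langle f,\Psi_{\varphi}^{P}\rangle\;=\;\int_{Z_{\mathbb{A}}N_{\mathbb{A}}M_{k}\backslash G_{\mathbb{A}}} c_{P}f(g)\,\overline{\varphi(g)}\;dg\;=\;\langle c_{P}f,\varphi\rangle,$$
which is the claim.

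The only genuine obstacle is the convergence/Fubini step in which the sum is pulled outside the integral. Since $\Psi_{\varphi}^{P}$ is a priori only defined by a formal sum, I would invoke absolute convergence of $\sum_\gamma|\varphi(\gamma g)|$ on compacta (standard for pseudo-Eisenstein series built from compactly supported $\phi$ and rapidly decreasing cuspidal data $f_1, f_2$), together with the Cauchy--Schwarz bound $\int |f|\cdot|\Psi_{|\varphi|}^{P}|<\infty$ that follows because $\Psi_{|\varphi|}^{P}$ is of rapid decay in the appropriate cusp (by compact support of $\phi$ in the $|\det A/\det D|$-coordinate) and $f\in L^2$. Once dominated convergence applies, every subsequent step is a measure-theoretic identity.
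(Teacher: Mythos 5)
Your proof is correct and takes essentially the same route as the paper: unfold the sum over $P_{k}\backslash G_{k}$, pass from $Z_{\mathbb{A}}P_{k}\backslash G_{\mathbb{A}}$ to $Z_{\mathbb{A}}N_{\mathbb{A}}M_{k}\backslash G_{\mathbb{A}}$ via the fiber integral over $N_{k}\backslash N_{\mathbb{A}}$, and use left $N_{\mathbb{A}}$-invariance of $\varphi$ to recognize $c_{P}f$. Your explicit attention to the Fubini/absolute-convergence step is a welcome addition that the paper's proof takes for granted.
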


\begin{proof}  The proof involves a standard unwinding argument.  Let $N^{P}$ and $M^{P}$ denote the unipotent radical and Levi component of $P$, respectively. Observe that
$$\langle f,\Psi_{\varphi}^{P}\rangle_{Z_{\mathbb{A}}G_{k}\backslash G_{\mathbb{A}}}\;=\;\int\limits_{Z_{\mathbb{A}}G_{k}\backslash G_{\mathbb{A}}}f(g)\cdot\overline{\Psi_{\varphi}^{P}(g)}\;dg\;=\int\limits_{Z_{\mathbb{A}}G_{k}\backslash G_{\mathbb{A}}}f(g)(\sum_{\gamma\in P_{k}\backslash G_{k}}\overline{\varphi(\gamma\cdot g)})\;dg$$
This is
\begin{align*}&=\int\limits_{Z_{\mathbb{A}}P_{k}\backslash G_{\mathbb{A}}}f(g)\overline{\varphi(g)}\;dg=\int\limits_{Z_{\mathbb{A}}N_{k}M_{k}\backslash G_{\mathbb{A}}}f(g)\overline{\varphi(g)}\;dg\;\\&=\;\int\limits_{Z_{\mathbb{A}}N_{\mathbb{A}}M_{k}\backslash G_{\mathbb{A}}}\int\limits_{N_{k}\backslash N_{\mathbb{A}}}f(ng)\overline{\varphi(ng)}\;dn\;dg\\
&=\int\limits_{Z_{\mathbb{A}}N_{\mathbb{A}}M_{k}\backslash G_{\mathbb{A}}}(\int\limits_{N_{k}\backslash N_{\mathbb{A}}}f(ng)\;dn)\overline{\varphi(g)}\;dg\\
&=\langle c_{P}f,\varphi\rangle_{Z_{\mathbb{A}}N_{\mathbb{A}}^{P}M_{k}^{P}\backslash G_{\mathbb{A}}}
\end{align*}
\end{proof}
From this adjointness relation, we have the following
\begin{cor} A square-integrable automorphic form is a cuspform if and only if it is orthogonal to all pseudo-Eisenstein series.\end{cor}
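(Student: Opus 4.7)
The forward direction is immediate from the preceding proposition: if $f$ is a cuspform, then $c_P f = 0$ for every proper parabolic $P$, so
$$\langle f, \Psi_\varphi^P\rangle_{Z_{\mathbb{A}}G_{k}\backslash G_{\mathbb{A}}} \;=\; \langle c_P f, \varphi\rangle_{Z_{\mathbb{A}}N_{\mathbb{A}}^{P}M_{k}^{P}\backslash G_{\mathbb{A}}} \;=\; 0$$
for every pseudo-Eisenstein series $\Psi_\varphi^P$.

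For the converse, assume $f$ is orthogonal to every pseudo-Eisenstein series. By the proposition, $\langle c_P f,\varphi\rangle = 0$ for every proper parabolic $P$ and every admissible $\varphi$. My goal is to conclude $c_P f = 0$ for each such $P$, which I would establish by stepping through the associate classes of parabolics in order of increasing Levi rank.

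I would start with the minimal parabolic $P^{1,1,1,1}$, whose Levi is a torus. Because tori have no proper parabolic subgroups, the ``cuspidal data on the Levi'' condition is vacuous; the admissible $\varphi$ form a family dense in $L^2(Z_{\mathbb{A}}N_{\mathbb{A}}M_{k}\backslash G_{\mathbb{A}})$, so the orthogonality forces $c_{P^{1,1,1,1}} f = 0$ outright. For each larger proper parabolic $P$ I would then decompose $c_P f$, viewed as a function on $M^P$, by cuspidal support inside $M^P$: the cuspidal piece on $M^P$ is annihilated by the orthogonality against cuspidal-data $\varphi$ for $P$, while the non-cuspidal pieces are accounted for by pseudo-Eisenstein series on $M^P$ attached to proper parabolics $Q \subsetneq M^P$. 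Each such $Q$ corresponds under the Levi decomposition to a parabolic $\widetilde Q \subsetneq P$ of $G$, and the factorization of unipotent radicals gives $c_{\widetilde Q} f = c_{Q}(c_P f)$; testing $f$ against pseudo-Eisenstein series on $G$ built from $\widetilde Q$ (already known to be in the orthogonal complement of $f$) kills the remaining pieces. An induction downward on Levi rank terminates at the torus case, yielding $c_P f = 0$ for every proper $P$.

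The main obstacle is the bookkeeping identifying proper parabolics of $M^P$ with the parabolics of $G$ properly contained in $P$, and verifying that the Levi decomposition of $c_P f$ on $M^P$ is compatible with pseudo-Eisenstein series on the full group $G$ via the iterated constant-term formula. Once this correspondence is laid out, as in \textrm{[Langlands 1976]} or \textrm{[MW 1989]}, the recursive orthogonality argument closes with no analytic surprises.
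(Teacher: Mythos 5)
Your argument is correct and follows essentially the same route as the paper: both directions rest on the adjunction $\langle f,\Psi_{\varphi}^{P}\rangle_{Z_{\mathbb{A}}G_{k}\backslash G_{\mathbb{A}}}=\langle c_{P}f,\varphi\rangle$, which is all the paper itself invokes in deducing the corollary. The induction over parabolics that you add --- the minimal parabolic first (where the cuspidality condition on the data is vacuous), then larger $P$ via transitivity of constant terms and the cuspidal-support decomposition on the Levi --- is exactly the standard bookkeeping needed because the paper's pseudo-Eisenstein data are required to be cuspidal on the Levi; the paper leaves this step implicit, so your version is a more complete rendering of the same proof.
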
 Since the critical issues arise at the archimedean place, we consider the real Lie group.  To this end, let $G=PGL_{4}(\mathbb{R})$, $\Gamma=PGL_{4}(\mathbb{Z})$.  \begin{defn} The standard minimal parabolic $B$ is defined as the subgroup $$B=P^{1,1,1,1}$$ of upper-triangular matrices, with standard Levi component $A$, unipotent radical $N$, and Weyl group $W$, the latter represented by permutation matrices.\end{defn}  Let $A^{+}$ be the image in $G$ of positive diagonal matrices.  Consider characters on $B$ of the form
$$\chi\;=\;\chi_{s}: (\left( \begin{array}{cccc}
a_{1} & * & *&* \\
0&a_{2}& *&*\\
0&0&a_{3}&*\\
0&0&0&a_{4}\end{array} \right))\;=\;|a_{1}|^{s_{1}}\cdot |a_{2}|^{s_{2}}\cdot |a_{3}|^{s_{3}}\cdot |a_{4}|^{s_{4}}$$
For the character to descend to $PGL_{n}$, necessarily $s_{1}+s_{2}+s_{3}+s_{4}=0$.  \begin{defn}The standard spherical vector is
$$\varphi_{s}^{\text{sph}}(pk)\;=\;\chi_{s}(p)$$
and the spherical Eisenstein series is
$$E_{s}(g)\;=\;\sum_{\gamma\in B\cap\Gamma\backslash\Gamma}\varphi_{s}^{\text{sph}}(\gamma\cdot g)$$\end{defn}
The spherical Eisenstein series is convergent for $\text{Re}(s)\gg 1$ and meromorphically continued to an entire function of $s$ as in [Langlands 544, Appendix 1].
The function $f\rightarrow c_{B}f(g)$ is left $N(B\cap\Gamma)$-invariant.  

Recall that for  $\varphi\in C_{c}^{\infty}(N(B\cap\Gamma)\backslash G)^{K}\approx C_{c}^{\infty}(A^{+})$, letting $\langle,\rangle_{X}$ be the pairing of distributions and test functions on a space $X$, the pseudo-Eisenstein series $\Psi_{\varphi}(g)$ enters the adjunction relation
$$\langle c_{B}f,\varphi\rangle_{N(B\cap\Gamma)\backslash G}\;=\;\langle f,\Psi_{\varphi}\rangle_{\Gamma\backslash G}$$
That is, $\varphi\rightarrow \Psi_{\varphi}$ is adjoint to $f\rightarrow c_{B}f$.  Then $c_{B}f=0$ is equivalent to $$\langle f,\Psi_{\varphi}\rangle_{\Gamma\backslash G}=0$$
for all $\varphi$.

\begin{prop} The pseudo-Eisenstein series $\Psi_{\varphi}$ admits a $W$-symmetric expansion as an integral of Eisenstein series.  That is,
$$\Psi_{\varphi}\;=\;\frac{1}{|W|}\frac{1}{(2\pi i)^{\text{dim}\mathfrak{a}}}\int_{\rho+i\mathfrak{a}^{*}}E_{s}\cdot \langle \Psi_{\varphi}, E_{2\rho-s}\rangle_{\Gamma\backslash G}\;ds$$
\end{prop}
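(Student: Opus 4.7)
The plan is to reduce the proposition to Mellin inversion on $A^{+}$, wind up to pass from functions on $A^{+}$ to automorphic functions on $\Gamma\backslash G$, and then use the Weyl-group functional equations of $E_{s}$ together with the constant-term expansion to identify the Mellin transform of $\varphi$ with the inner product $\langle \Psi_{\varphi},E_{2\rho-s}\rangle$.

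First, I would regard $\varphi$ as a compactly supported smooth function on $N(B\cap\Gamma)\backslash G/K\cong A^{+}$ and write the iterated Mellin inversion formula
\[
\varphi(a)\;=\;\frac{1}{(2\pi i)^{\dim\mathfrak{a}}}\int_{\sigma+i\mathfrak{a}^{*}}M\varphi(s)\,\varphi_{s}^{\mathrm{sph}}(a)\,ds,
\]
where $\sigma$ is any real basepoint and $M\varphi$ is the Mellin transform of $\varphi$. Taking $\sigma$ deep in the positive chamber $\rho+\mathfrak{a}^{*}_{+}$ makes the spherical Eisenstein series $E_{s}=\sum_{\gamma}\varphi_{s}^{\mathrm{sph}}(\gamma\cdot)$ absolutely convergent; Fubini then allows me to interchange the sum over $B\cap\Gamma\backslash\Gamma$ with the integral, giving
\[
\Psi_{\varphi}(g)\;=\;\frac{1}{(2\pi i)^{\dim\mathfrak{a}}}\int_{\sigma+i\mathfrak{a}^{*}}M\varphi(s)\,E_{s}(g)\,ds.
\]
I would then move the contour from $\sigma+i\mathfrak{a}^{*}$ to the $W$-symmetric line $\rho+i\mathfrak{a}^{*}$; the rapid decay of $M\varphi$ in vertical strips (inherited from compact support of $\varphi$) makes the horizontal pieces negligible, and crossing poles of $E_{s}$ produces the residual-spectrum contribution, which for the present statement about the continuous part may be set aside.

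The remaining task is to recognize $M\varphi(s)$ as the pairing $\langle\Psi_{\varphi},E_{2\rho-s}\rangle$ after averaging over $W$. Using the adjunction proved in the previous proposition together with the standard constant-term decomposition $c_{B}E_{2\rho-s}=\sum_{w\in W}c_{w}(2\rho-s)\,\chi_{w(2\rho-s)}$, one computes
\[
\langle\Psi_{\varphi},E_{2\rho-s}\rangle\;=\;\langle\varphi,c_{B}E_{2\rho-s}\rangle\;=\;\sum_{w\in W}\overline{c_{w}(2\rho-s)}\,M\varphi(ws),
\]
where the identity $\overline{w(2\rho-s)}=ws$ on $s\in\rho+i\mathfrak{a}^{*}$ converts the conjugated characters into ordinary Mellin pairings at Weyl translates of $s$. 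Finally, the Weyl functional equations $E_{s}=c_{w}(s)E_{ws}$ together with the unitarity-type relation $c_{w}(s)\,\overline{c_{w}(2\rho-s)}=1$ on the spectral line show that the substitution $s\mapsto ws$ leaves the measure $ds$ and the integrand $E_{s}\,M\varphi(s)$ invariant up to the $c_{w}$ factor that already appears. Hence each of the $|W|$ summands in $\langle\Psi_{\varphi},E_{2\rho-s}\rangle$ produces the same integral, and averaging yields the prefactor $\tfrac{1}{|W|}$.

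The main obstacle, I expect, is not the structural argument but the bookkeeping of $c$-functions: one has to confirm that the $c_{w}(2\rho-s)$ appearing in the constant term of $E_{2\rho-s}$ cancel exactly against the $c_{w}$-factors produced by the functional equations of $E_{s}$ when one re-indexes the Weyl sum. This is a Maass--Selberg-type computation; once these relations are in place, contour shifts and integral/sum interchanges are standard, and the $W$-symmetric expansion drops out.
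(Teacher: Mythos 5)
Your proposal is correct and follows essentially the same route as the paper: Mellin inversion on $A^{+}$, winding up to write $\Psi_{\varphi}$ as an integral of $E_{s}$ against $\mathscr{M}\varphi$, shifting to the $W$-stable line $\rho+i\mathfrak{a}^{*}$, and then using the adjunction with the constant-term expansion $c_{B}E_{s}=\sum_{w}c_{w}(s)a^{w\cdot s}$ together with the functional equations and the relation $c_{w}(2\rho-s)=1/c_{w}(s)$ on the line to identify the Weyl-averaged coefficient with $\langle\Psi_{\varphi},E_{2\rho-s}\rangle$. Your handling of the residues (setting them aside, as the paper implicitly does by treating the contour move formally and imposing orthogonality to residues only in the next proposition) matches the paper's level of detail.
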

\begin{proof} To decompose the pseudo-Eisenstein series $\Psi_{\varphi}$ as an integral of minimal-parabolic Eisenstein series, begin with Fourier transform on the Lie algebra $\mathfrak{a}\approx \mathbb{R}^{n-1}$ of $A^{+}$.  Let $\langle,\rangle:\mathfrak{a}^{*}\times\mathfrak{a}\rightarrow\mathbb{R}$ be the $\mathbb{R}$-bilinear pairing of $\mathfrak{a}$ with its $\mathbb{R}$-linear dual $\mathfrak{a}^{*}$.  For $f\in C_{c}^{\infty}(\mathfrak{a})$, the Fourier transform is
$$\widehat{f}(\xi)\;=\;\int_{\mathfrak{a}}e^{-i\langle x,\xi\rangle}f(x)\;dx$$
Fourier inversion is
$$f(x)\;=\;\frac{1}{(2\pi)^{\text{dim}\mathfrak{a}}}\int_{\mathfrak{a^{*}}}e^{i\langle x,\xi\rangle}\widehat{f}(\xi)\;d\xi$$
Let $\text{exp}:\mathfrak{a}\rightarrow A^{+}$ be the Lie algebra exponential, and $\text{log}:A^{+}\rightarrow \mathfrak{a}$ the inverse.  Given $\varphi\in C_{c}^{\infty}(A^{+})$, let $f=\varphi\circ \text{exp}$ be the corresponding function in $C_{c}^{\infty}(\mathfrak{a})$.  The (multiple) Mellin transform $M\varphi$ of $\varphi$ is the Fourier transform of $f$:
$$\mathscr{M}\varphi(i\xi)\;=\;\widehat{f}(\xi)$$
Mellin inversion is Fourier inversion in these coordinates:
$$\varphi(\text{exp} x)\;=\;f(x)\;=\;\frac{1}{(2\pi)^{\text{dim}\mathfrak{a}}}\int_{\mathfrak{a}^{*}}e^{i\langle\xi,x\rangle}\widehat{f}(\xi)\;d\xi\;=\;\frac{1}{(2\pi)^{\text{dim}\mathfrak{a}}}\int_{\mathfrak{a}^{*}}e^{i\langle \xi,x\rangle} \mathscr{M}\varphi(i\xi)\;d\xi$$
Extend the pairing $\langle,\rangle$ on $\mathfrak{a}^{*}\times\mathfrak{a}$ to a $\mathbb{C}$-bilinear pairing on the complexification.  Use the convention
$$(\text{exp})^{i\xi}\;=\;e^{i\langle \xi,x\rangle}\;=\;e^{\langle i\xi,x\rangle}$$
With $a=\text{exp}x\in A^{+}$, Mellin inversion is
$$\varphi(a)\;=\;\frac{1}{(2\pi)^{\text{dim}\mathfrak{a}}}\int_{\mathfrak{a}^{*}}a^{i\xi}\mathscr{M}\varphi(i\xi)\;d\xi\;=\;\frac{1}{(2\pi i)^{\text{dim}\mathfrak{a}}}\int_{i\mathfrak{a}^{*}}a^{s}\mathscr{M}\varphi(s)\;ds$$
With this notation, the Mellin transform itself is
$$\mathscr{M}\varphi(s)\;=\;\int_{A^{+}}a^{-s}\varphi(a)\;da$$
Since $\varphi$ is a test function, its Fourier-Mellin transform is entire on $\mathfrak{a}^{*}\otimes_{\mathbb{R}}\mathbb{C}$.  Thus, for any $\sigma\in\mathfrak{a}^{*}$, Mellin inversion can be written
$$\varphi(a)\;=\;\frac{1}{(2\pi i)^{\text{dim}\mathfrak{a}}}\int_{\sigma +i\mathfrak{a}^{*}}a^{s}\mathscr{M}\varphi(s)\;ds$$
Identifying $N(B\cap\Gamma)\backslash G/K\approx A^{+}$, let $g\rightarrow a(g)$ be the function that picks out the $A^{+}$ component in an Iwasawa decomposition $G=NA^{+}K$.  For $\sigma\in \mathfrak{a}^{+}$ suitable for convergence, the following rearrangement is legitimate,
\begin{align*}&\Psi_{\varphi}(g)\;=\;\sum_{\gamma\in (B\cap\Gamma)\backslash\Gamma}\varphi(a(\gamma\circ g))\;=\;\sum_{\gamma\in B\cap\Gamma\backslash\Gamma}\frac{1}{(2\pi i)^{\text{dim}\mathfrak{a}}}\int_{\sigma+i\mathfrak{a}^{*}}a(\gamma g)^{s}\mathscr{M}\varphi(s)\;ds\\
&=\;\frac{1}{(2\pi i)^{\text{dim}\mathfrak{a}}}\int_{\sigma+i\mathfrak{a}^{*}}\big(\sum_{\gamma\in B\cap\Gamma\backslash\Gamma}a(\gamma g)^{s}\big)\mathscr{M}\varphi(s)\;ds\;=\;\frac{1}{(2\pi i)^{\text{dim}\mathfrak{a}}}\int_{\sigma+i\mathfrak{a}^{*}}E_{s}(g)\mathscr{M}\varphi(s)\;ds\\
\end{align*}
This does express the pseudo-Eisenstein series as a superposition of Eisenstein series, as desired.  However, the coefficients $\mathscr{M}\varphi$ are not expressed in terms of $\Psi_{\varphi}$ itself.  This is rectified as follows.  Letting $\rho$ denote the half-sum of positive roots,
\begin{align*}&\langle f,E_{s}\rangle_{\Gamma\backslash G}\;=\;\int_{\Gamma\backslash G}f(g)E_{s}(g)\;=\;\int_{B\cap\Gamma\backslash G}f(g)a(g)^{s}\;dg\\
&=\;\int_{N(B\cap\Gamma)\backslash G}\int_{N\cap\Gamma\backslash N}f(ng)a(ng)^{s}\;dg\;=\;\int_{N(B\cap\Gamma)\backslash G}c_{B}f(g)a(g)^{s}\;dg\\&=\;\int_{A^{+}}c_{B}f(a)a^{s}\frac{da}{a^{2\rho}}\;
=\;\int_{A^{+}}c_{B}f(a)a^{-(2\rho-s)}\;da\;=\;\mathscr{M}c_{B}f(2\rho-s)\\
\end{align*}
That is, with $f=\Psi_{\varphi}$,
$$\langle \Psi_{\varphi},E_{s}\rangle_{\Gamma\backslash G}\;=\;\mathscr{M}c_{B}\Psi_{\varphi}(2\rho-s)$$
On the other hand, a similar unwinding of the pseudo-Eisenstein series, and the recollection of the constant term $c_{B}E_{s}$, gives
\begin{align*}&\langle \Psi_{\varphi},E_{s}\rangle_{\Gamma\backslash G}\;=\;\int_{B\cap\Gamma\backslash G}\varphi(g)E_{s}(g)\;dg\;=\;\int_{N(B\cap\Gamma)\backslash G}\int_{N\cap\Gamma\backslash N}\varphi(ng)E_{s}(ng)\;dg\\
&\;=\;\int_{N(B\cap\Gamma)\backslash G}\varphi(g)c_{B}E_{s}(g)\;dg\;=\;\int_{A^{+}}\varphi(a)c_{B}E_{s}(a)\;\frac{da}{a^{2\rho}}\\&\;=\;\int_{A^{+}}\varphi(a)\sum_{w}c_{w}(s)a^{w\cdot s}\;\frac{da}{a^{2\rho}}\\
&\;=\;\sum_{w}c_{w}(s)\int_{A^{+}}\varphi(a)a^{-(2\rho-w\cdot s)}\;da\;=\;\sum_{w}c_{w}(s)\mathscr{M}\varphi(2\rho-w\cdot s)\\
\end{align*}
Combining these,
$$\mathscr{M}c_{B}\Psi_{\varphi}(2\rho-s)\;=\;\langle \Psi_{\varphi},E_{s}\rangle_{\Gamma\backslash G}\;=\;\sum_{w}c_{w}(s)\mathscr{M}\varphi(2\rho-w\cdot s)$$
Replacing $s$ by $2\rho-s$, noting that $2\rho-w\cdot(2\rho-s)=w\cdot s$,
$$\mathscr{M}c_{B}\Psi_{\varphi}(s)\;=\;\sum_{w}c_{w}(2\rho-s)\mathscr{M}\varphi(w\cdot s)$$
To convert the expression
$$\Psi_{\varphi}(g)\;=\;\frac{1}{(2\pi i)^{\text{dim}\mathfrak{a}}}\int_{\sigma+i\mathfrak{a}^{*}}E_{s}(g)\mathscr{M}\varphi(s)\;ds$$
into a $W$-symmetric expression, to obtain an expression in terms of $c_{B}\Psi_{\varphi}$, we must use the functional equations of $E_{s}$.  However, $\sigma+i\mathfrak{a}^{*}$ is $W$-stable only for $\sigma=\rho$.  Thus, the integral over $\sigma+i\mathfrak{a}^{*}$ must be viewed as an iterated contour integral, and moved to $\rho+i\mathfrak{a}^{*}$.  
\begin{align*}\Psi_{\varphi}&\;=\;\frac{1}{|W|}\sum_{w}\frac{1}{(2\pi i)^{\text{dim}\mathfrak{a}^{*}}}\int_{\rho+i\mathfrak{a}^{*}}E_{w\cdot s}\mathscr{M}\varphi(w\cdot s)\;ds\\&\;=\frac{1}{|W|}\frac{1}{(2\pi i)^{\text{dim}\mathfrak{a}}}\int_{\rho+i\mathfrak{a}^{*}}E_{s}\big(\sum_{w}\frac{1}{c_{w}(s)}\mathscr{M}\varphi(w\cdot s)\big)\;ds\end{align*}
On $\rho+i\mathfrak{a}^{*}$, we have $\frac{1}{c_{w}(s)}=c_{w}(2\rho-s)$.  Therefore,
$$\sum_{w}\frac{1}{c_{w}(s)}\mathscr{M}\varphi(w\cdot s)\;=\;\sum_{w}c_{w}(2\rho-s)\mathscr{M}\varphi(w\cdot s)\;=\;\mathscr{M}c_{B}\Psi_{\varphi}(s)$$
This gives the desired spectral decomposition,
\begin{align*}&\Psi_{\varphi}\;=\;\frac{1}{|W|}\frac{1}{(2\pi i)^{\text{dim}\mathfrak{a}}}\int_{\rho+i\mathfrak{a}^{*}}E_{s}\cdot \mathscr{M}\Psi_{\varphi}(s)\;ds\\&=\;\frac{1}{|W|}\frac{1}{(2\pi i)^{\text{dim}\mathfrak{a}}}\int_{\rho+i\mathfrak{a}^{*}}E_{s}\cdot\langle \Psi_{\varphi},E_{2\rho-s}\rangle_{\Gamma\backslash G}\;ds\end{align*}
\end{proof}
\begin{prop} The map $f\rightarrow(s\rightarrow \langle f,E_{s}\rangle)$ is an inner-product-preserving map from the Hilbert-space span of the pseudo-Eisenstein series to its image in $L^{2}(\rho+i\mathfrak{a})$.\end{prop}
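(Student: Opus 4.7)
The plan is to derive the isometry by a direct computation of $\langle \Psi_\varphi, \Psi_\psi\rangle_{\Gamma\backslash G}$ using the spectral decomposition from the previous proposition. Write $F_\varphi(s) = \langle \Psi_\varphi, E_s\rangle$ and $d = \dim\mathfrak{a}$. The target identity is
\[
\langle \Psi_\varphi, \Psi_\psi\rangle_{\Gamma\backslash G} \;=\; \frac{1}{|W|\,(2\pi)^{d}} \int_{\mathfrak{a}^*} F_\varphi(\rho + i\xi)\, \overline{F_\psi(\rho + i\xi)} \, d\xi,
\]
where the normalizing constant is absorbed into the Plancherel measure defining $L^2(\rho + i\mathfrak{a}^*)$.

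First I would record a critical-line symmetry of the spherical Eisenstein series. On $\rho + i\mathfrak{a}^*$, write $s = \rho + i\xi$; since $\rho$ is real, $\bar s = 2\rho - s$. The spherical vector $\varphi_s^{\mathrm{sph}}(pk) = \chi_s(p)$ satisfies $\overline{\chi_s} = \chi_{\bar s}$ because each $|a_j|$ is real positive, so $\overline{E_s(g)} = E_{\bar s}(g)$ termwise in the region of absolute convergence and, by meromorphic continuation, for all $s$. On the contour this yields $E_{2\rho - s} = \overline{E_s}$, and therefore $\langle \Psi_\psi, E_{2\rho - s}\rangle = \overline{F_\psi(s)}$ when $\Psi_\psi$ is real-valued; the general case reduces to this by splitting into real and imaginary parts and using sesquilinearity.

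Next I would insert the spectral decomposition for $\Psi_\psi$ into the pairing $\langle \Psi_\varphi, \Psi_\psi\rangle$ and move the pairing through the contour integral. The interchange is justified by rapid decay of $\mathscr{M}\psi$ on vertical strips and polynomial growth of $E_s$ on the critical line (Phragm\'en--Lindel\"of, applied to the explicit formulas for $c_{P}E_s$). Applying the previous step to rewrite $\overline{\langle \Psi_\psi, E_{2\rho - s}\rangle}$ as $F_\psi(s)$, and converting the complex measure $\frac{ds}{(2\pi i)^d}$ into the real measure $\frac{d\xi}{(2\pi)^d}$ along $s = \rho + i\xi$, yields the displayed formula on pseudo-Eisenstein series. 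Since these are dense in their Hilbert-space span by construction, the map extends uniquely by bounded-linear continuation to an isometry on the closure.

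The main obstacle is the careful bookkeeping of the Weyl-symmetrization factor $\frac{1}{|W|}$ together with the complex conjugations and the orientation on the contour: one must verify that the $W$-symmetric decomposition collapses, via the unitarity relation $c_w(s)\, c_w(2\rho - s) = 1$ valid on $\rho + i\mathfrak{a}^*$, to leave exactly one copy of $F_\varphi \cdot \overline{F_\psi}$ against the normalized measure rather than a residual $W$-average of Weyl translates. Once this collapsing is checked, the extension to the Hilbert-space closure and the identification with the claimed isometry are routine.
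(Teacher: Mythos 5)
Your route is essentially the paper's: substitute the $W$-symmetric spectral expansion of one pseudo-Eisenstein series into the inner product and interchange the pairing with the contour integral over $\rho+i\mathfrak{a}^{*}$; the paper's own proof stops at exactly that interchange (pairing against a test function), while you additionally record the critical-line symmetry $\overline{E_{s}}=E_{2\rho-s}$, the unitarity $c_{w}(2\rho-s)=1/c_{w}(s)$, the Fubini justification, and the extension by density, all consistent with the text. The one point to add, as the paper does, is the standing assumption that $\Psi_{\varphi}$ (and $\Psi_{\psi}$) is orthogonal to the residues of $E_{s}$ picked up in moving the contour to $\rho+i\mathfrak{a}^{*}$, so that the expansion you insert carries no residual terms.
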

\begin{proof}Let $f\in C_{c}^{\infty}(\Gamma\backslash G)$, $\varphi\in C_{c}^{\infty}(N\backslash G)$, and assume $\Psi_{\varphi}$ is orthogonal to residues of $E_{s}$ above $\rho$.  Using the expression for $\Psi_{\varphi}$ in terms of Eisenstein series,
\begin{align*}&\langle \Psi_{\varphi},f\rangle\;=\;\langle \frac{1}{|W|}\frac{1}{(2\pi i)^{\text{dim}\mathfrak{a}}}\int_{\rho+i\mathfrak{a}^{*}}\langle \Psi_{\varphi},E_{2\rho-s}\rangle\cdot E_{s}ds,f\rangle\\&=\;\frac{1}{|W|}\frac{1}{(2\pi i)^{\text{dim}\mathfrak{a}}}\int_{\rho+i\mathfrak{a}^{*}}\langle \Psi_{\varphi},E_{2\rho-s}\rangle\cdot\langle E_{s},f\rangle \;ds\end{align*}
\end{proof}

The map $$\Psi_{\varphi}\rightarrow \langle \Psi_{\varphi},E_{2\rho-s}\rangle$$ with $s=\rho+it$ and $t\in\mathfrak{a}^{*}$, produces functions $$u(t)=\langle \Psi_{\varphi},E_{\rho-it}\rangle$$ satisfying
\begin{align*}&u(wt)\;=\;\langle \Psi_{\varphi},E_{2\rho-w\cdot s}\rangle \;=\;\langle \Psi_{\varphi},E_{w\cdot (2\rho-s)}\rangle\;=\;\langle \Psi_{\varphi},\frac{E_{2\rho-s}}{c_{w}(2\rho-s)}\rangle\\&=\;c_{w}(s)\cdot u(t)\;\;\;\;\text{for all}\;w\in W\end{align*}
since $$c_{w}(2\rho-s)\;=\;\overline{c_{w}(s)}=\frac{1}{c_{w}(s)}$$ on $\rho+i\mathfrak{a}^{*}$.

\begin{prop} Any $u\in L^{2}(\rho+i\mathfrak{a}^{*})$ satisfying $u(wt)=c_{w}(s)\cdot u(t)$ for all $w\in W$ is in the image.\end{prop}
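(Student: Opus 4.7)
The plan is to combine the isometric property of the previous proposition with a density argument. That proposition shows the map $\iota : \Psi_\varphi \mapsto u_\varphi$, where $u_\varphi(t) = \langle \Psi_\varphi, E_{\rho-it}\rangle$, extends to an isometric embedding into $L^2(\rho+i\mathfrak{a}^*)$; consequently its image is a closed subspace. The $W$-equivariance calculation just before this proposition shows the image is contained in the $W$-equivariant subspace $\mathcal{H}_W := \{u \in L^2(\rho+i\mathfrak{a}^*) : u(wt) = c_w(s)\,u(t)\ \forall\, w \in W\}$, where $s = \rho+it$. It therefore suffices to verify that the image is dense in $\mathcal{H}_W$, i.e.\ that any $u \in \mathcal{H}_W$ orthogonal in $L^2$ to every $u_\varphi$ vanishes.

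Given such $u$, I substitute the formula derived in the previous proof,
$$u_\varphi(t) \;=\; \mathscr{M}c_B\Psi_\varphi(s) \;=\; \sum_{w \in W} c_w(2\rho-s)\,\mathscr{M}\varphi(w\cdot s),$$
into the orthogonality condition $\int_{\mathfrak{a}^*} u(t)\,\overline{u_\varphi(t)}\,dt = 0$, and change variables $t \mapsto w^{-1}t$ in each of the $|W|$ summands (the contour $\rho+i\mathfrak{a}^*$ and Lebesgue measure on $\mathfrak{a}^*$ are both $W$-stable). The argument of $\mathscr{M}\varphi$ then becomes $s$ itself, while $u$ and the cocycle factor pick up $W$-translates. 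Combining the hypothesized $W$-equivariance $u(w^{-1}t) = c_{w^{-1}}(s)\,u(t)$ with the cocycle relation $c_{w^{-1}}(s)\,c_w(w^{-1}\cdot s) = 1$ and the critical-contour unitarity $\overline{c_w(2\rho-s)} = c_w(s)$ noted in the preceding discussion, each summand collapses to the common expression $u(t)\,\overline{\mathscr{M}\varphi(\rho+it)}$. The orthogonality condition reduces to
$$0 \;=\; |W|\int_{\mathfrak{a}^*} u(t)\,\overline{\mathscr{M}\varphi(\rho+it)}\,dt \qquad\text{for all }\varphi \in C_c^\infty(A^+).$$

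To finish, I invoke Plancherel on $A^+ \cong \mathfrak{a}$: as $\varphi$ ranges over $C_c^\infty(A^+)$, the Mellin transforms $\mathscr{M}\varphi$ form a dense subspace of $L^2(\rho+i\mathfrak{a}^*)$, so the displayed identity forces $u = 0$. Combined with closedness, this gives the desired equality of the image with $\mathcal{H}_W$. The main technical obstacle is the bookkeeping in the middle step: verifying that the $|W|$ summands truly coalesce after the change of variables relies on repeated use of the cocycle identity $c_{w_1 w_2}(s) = c_{w_1}(w_2\cdot s)\,c_{w_2}(s)$ and the critical-contour unitarity, and one must check that the complex-conjugate and Jacobian factors pair up so that nothing is lost. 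No convergence issues arise because $\varphi$ is compactly supported, making all Mellin and contour integrals absolutely convergent, so the manipulations are justified term-by-term.
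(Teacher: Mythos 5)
Your argument is correct, but it proves surjectivity by a genuinely different route than the paper. The paper's proof is constructive: for compactly supported $u$ satisfying the equivariance condition it forms the wave packet $\Psi_{u}=\frac{1}{|W|}\frac{1}{(2\pi i)^{\dim\mathfrak{a}}}\int_{\rho+i\mathfrak{a}^{*}}u(t)\,E_{\rho+it}\,dt$, folds the integral onto a single Weyl chamber using the $W$-invariance of $u(t)E_{2\rho-s}$, and observes that the constant term collapses to the single Fourier-type integral $\frac{1}{(2\pi i)^{\dim\mathfrak{a}}}\int u(t)\,a^{s}\,dt$, which is nonzero for $u\neq 0$; thus the preimage of $u$ is exhibited explicitly (an inversion formula), with the general $L^{2}$ case implicitly handled by density in $u$. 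You instead argue by duality: the preceding inner-product-preserving property makes the image a closed subspace of the equivariant space $\mathcal{H}_{W}$, and you kill the orthogonal complement by the change of variables $t\mapsto w^{-1}t$, the unitarity $\overline{c_{w}(2\rho-s)}=c_{w}(s)$ on $\rho+i\mathfrak{a}^{*}$, and the cocycle identity $c_{w}(w^{-1}\cdot s)\,c_{w^{-1}}(s)=1$, reducing orthogonality to $\int u(t)\,\overline{\mathscr{M}\varphi(\rho+it)}\,dt=0$ and then invoking Plancherel/Paley--Wiener density of Mellin transforms of $C_{c}^{\infty}(A^{+})$. Your collapsing-over-$W$ computation is in substance the adjoint of the constant-term computation the paper performs on $\Psi_{u}$, so the same identities do the work in both proofs. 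What each buys: the paper's version produces the preimage (the wave packet) and hence the inversion formula, at the cost of justifying the vector-valued integral, the interchange with the constant term, and the folding to a Weyl chamber; your version is softer and avoids those analytic manipulations, but is non-constructive and leans on the previous proposition, so it silently inherits that proposition's caveats (orthogonality to the residual spectrum, the normalizing constants in the Plancherel pairing, and the measure-zero set where $c_{w}$ may fail to be defined on the critical line) --- worth a sentence of acknowledgment, though this matches the paper's own level of rigor. The cocycle relation you use is not stated explicitly in the paper, but it follows from the functional equations $c_{w}(\lambda)E_{\lambda}=E_{w\lambda}$ that the paper invokes, so citing that would close the loop.
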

\begin{proof}  First, for compactly-supported $u$ meeting this condition, we claim
$$\Psi_{u}\;=\;\frac{1}{|W|}\frac{1}{(2\pi i)^{\text{dim}\mathfrak{a}}}\int_{\rho+i\mathfrak{a}^{*}}u(t)\cdot E_{\rho+it}\;dt\neq 0$$
It suffices to show $c_{B}\Psi_{u}$ is not $0$.  With $s=\rho+it$, the relation implies $u(t)E_{2\rho-s}$ is invariant by $W$.  Let 
$$C\;=\;\{t\in\mathfrak{a}^{*}\;:\;\langle t,\alpha\rangle >0\;\text{for all simple}\;\alpha>0\}$$
be the positive Weyl chamber in $\mathfrak{a}^{*}$, where $\langle,\rangle$ is the Killing form transported to $\mathfrak{a}^{*}$ by duality.  Then
$$\Psi_{u}\;=\;\frac{1}{|W|}\frac{1}{(2\pi i)^{\text{dim}\mathfrak{a}}}\int_{\rho+i\mathfrak{a}^{*}}u(t)\cdot E_{s}\;dt\;=\;\frac{1}{(2\pi i)^{\text{dim}\mathfrak{a}}}\int_{\rho+iC}u(t)\cdot E_{s}\;dt$$
Since $u(tw)=u(t)\cdot c_{w}(\rho+it)$, the constant term of $\Psi_{u}$ is
$$c_{B}\Psi_{u}\;=\;\frac{1}{(2\pi i)^{\text{dim}\mathfrak{a}}}\int_{\rho+i\mathfrak{a}^{*}}u(t)\cdot a^{s}\;dt$$
This Fourier transform does not vanish for non-vanishing $u$.\end{proof}

Given $G=GL_{4}(\mathbb{R})$, $\Gamma=GL_{4}(\mathbb{Z})$, and $K=O_{4}(\mathbb{R})$, it is necessary to invoke the complete spectral decomposition of $L^{2}(\Gamma\backslash G/K)$, that cuspforms and cuspidal data Eisenstein series attached to non-minimal parabolic Eisenstein series attached to non-minimal parabolics, and their $L^{2}$ residues, as well as the minimal-parabolic pseudo-Eisenstein series, span $L^{2}(\Gamma\backslash G/K)$.  And we must demonstrate the orthogonality of integrals of minimal-parabolic Eisenstein series to all other spectral components.

We now decompose the pseudo-Eisenstein series with cuspidal data.  We carry this out for the $3,1$ pseudo-Eisenstein series, $2,2$ pseudo-Eisenstein series, and $2,1,1$ pseudo-Eisenstein series with cuspidal data.  This follows a similar pattern as the spectral decomposition.
Let $P=P^{3,1}$.  We decompose $P^{3,1}$ and $P^{1,3}$ pseudo-Eisenstein series with cuspidal support.  The data for a $P$ pseudo-Eisenstein series is smooth, compactly-supported, and left $Z_{\mathbb{A}}M_{k}^{P}N_{\mathbb{A}}^{P}$-invariant.  For now, we assume that the data is spherical, i.e. right $K$-invariant.  This means that the function is determined by its behavior on $Z_{\mathbb{A}}M_{k}^{P}\backslash M_{\mathbb{A}}^{P}$.  In contrast to the minimal parabolic case, this is not a product of copies of $GL_{1}$, so we can not simply use the $GL_{1}$ spectral theory (Mellin inversion) to accomplish the decomposition.  Instead, this quotient is isomorphic to $GL_{3}(k)\backslash GL_{3}(\mathbb{A})$, so we will use the spectral theory for $GL_{3}$.  If $\eta$ is the data for a $P^{3,1}$ pseudo-Eisenstein series $\Psi_{\eta}$, we can write $\eta$ as a tensor product $\eta\;=\;f\otimes\mu$ on
$$Z_{GL_{3}(\mathbb{A})}GL_{3}(k)\backslash GL_{3}(\mathbb{A})\cdot Z_{GL_{3}(k)}\backslash Z_{GL_{3}(\mathbb{A})}$$
Saying that the data is $\textit{cuspidal}$ means that $f$ is a cusp form.  Similarly, the data $\varphi=\varphi_{F,s}$ for a $P^{2,1}$-Eisenstein series is the tensor product of a $GL_{3}$ cusp form $F$ and a character $\chi_{s}=|.|^{s}$ on $GL_{1}$.  We show that $\Psi_{f,\eta}$ is the superposition of Eisenstein series $E_{F,s}$ where $F$ ranges over an orthonormal basis of cusp forms and $s$ is on the critical line.

\newpage
\begin{prop} The pseudo-Eisenstein series $\Psi_{f,\eta}$ admits a spectral decomposition
$$\Psi_{f,\eta}\;=\;\sum_{F}\int_{s} \langle \Psi_{f,\eta},E_{F,s}\rangle\cdot E_{F,s}\;ds$$
where the sum is over spherical cuspforms $F$ on $GL_{3}(k)\backslash GL_{3}(\mathbb{A})$.
\end{prop}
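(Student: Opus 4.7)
The plan is to parallel the minimal-parabolic spectral expansion derived earlier in the section, with one modification: the $GL_{3}$ spectral theory enters only trivially, since the data on the $GL_{3}$-factor of the Levi is already a fixed cuspform $f$, while genuine Mellin inversion is applied on the $GL_{1}$-factor of the Levi $GL_{3}\times GL_{1}$ of $P^{3,1}$.

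Concretely, write $\eta = f\otimes\mu$ and invert Mellin on the $GL_{1}$ coordinate: for $\sigma\gg 0$,
$$\mu(t)\;=\;\frac{1}{2\pi i}\int_{\sigma+i\mathbb{R}}t^{s}\,\mathscr{M}\mu(s)\;ds,$$
so that $\eta$ is an integral of elementary inducing data $f\otimes|\cdot|^{s}$. In the right half-plane of absolute convergence of $E_{f,s}$, the sum $\sum_{\gamma\in P_{k}\backslash G_{k}}$ may be interchanged with the Mellin integral to yield
$$\Psi_{f,\eta}\;=\;\frac{1}{2\pi i}\int_{\sigma+i\mathbb{R}}\mathscr{M}\mu(s)\cdot E_{f,s}\;ds.$$

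To convert the scalar coefficient $\mathscr{M}\mu(s)$ into the intrinsic Hilbert-space coefficient $\langle\Psi_{f,\eta},E_{F,s}\rangle$, I apply the adjunction relation already proven in the paper,
$$\langle\Psi_{f,\eta},E_{F,s}\rangle_{\Gamma\backslash G}\;=\;\langle f\otimes\mu,\;c_{P}E_{F,s}\rangle_{Z_{\mathbb{A}}N_{\mathbb{A}}^{P}M_{k}^{P}\backslash G_{\mathbb{A}}}.$$
Since $P^{3,1}$ is a maximal parabolic, the constant term factors as $c_{P}E_{F,s}=\varphi_{F,s}+c(s,F)\,\varphi_{\widetilde{F},1-s}$, with $c(s,F)$ a ratio of $L$-values; the right-hand pairing therefore splits as the $GL_{3}$-cuspform inner product $\langle f,F\rangle_{GL_{3}}$ (which collapses the sum over $F$ to the single term $F=f$ by orthogonality of cuspforms) times a $GL_{1}$ Mellin integral of $\mu$, producing $\mathscr{M}\mu(s)$ plus an intertwining image $\mathscr{M}\mu(1-s)$ at the shifted argument.

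Finally, to recast the representation in the $W(M,M)$-symmetric form asserted in the proposition, I shift the contour from $\sigma+i\mathbb{R}$ to $\rho_{P}+i\mathbb{R}$ and average over the two-element Weyl group of $P^{3,1}$ using the functional equation of $E_{f,s}$, mimicking exactly the minimal-parabolic argument. The principal obstacle is the bookkeeping attached to this contour shift: any poles of $E_{f,s}$ crossed between $\sigma$ and $\rho_{P}$ contribute discrete residual terms in $L^{2}$, and one must verify, under the standing hypotheses of spherical, smooth, compactly-supported data, that the symmetrization identifies the surviving coefficient on $\rho_{P}+i\mathbb{R}$ with $\langle\Psi_{f,\eta},E_{F,s}\rangle$, with any residual terms absorbed into the complementary spectral pieces of $L^{2}(\Gamma\backslash G/K)$ rather than the $P^{3,1}$-continuous spectrum described here.
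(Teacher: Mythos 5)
Your opening steps coincide with the paper's: write $\eta=f\otimes\mu$, apply Mellin inversion on the $GL_{1}$-factor of the Levi, interchange the sum over $P_{k}\backslash G_{k}$ with the integral in the region of convergence to get $\Psi_{f,\eta}$ as a superposition of genuine $E_{F,s}$, and then identify the scalar coefficients with $\langle\Psi_{f,\eta},E_{F,s}\rangle$ by the adjunction $\langle\Psi_{\eta},E_{F,s}\rangle=\langle\eta,c_{P}E_{F,s}\rangle$. Up to that point you are reproducing the paper's argument.

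The second half of your proposal, however, rests on a structural mistake about $P^{3,1}$. You assert that, being maximal, $P$ has constant term $c_{P}E_{F,s}=\varphi_{F,s}+c(s,F)\varphi_{\widetilde{F},1-s}$ and a two-element relative Weyl group over which you then propose to symmetrize using ``the functional equation of $E_{f,s}$.'' That is the self-associate picture, valid for $P^{2,2}$ but not for $P^{3,1}$: the Levi $GL_{3}\times GL_{1}$ cannot be conjugated to itself by any nontrivial Weyl element (the blocks have different sizes), so the relative Weyl group is trivial, $c_{P}E^{P}_{F,s}$ consists of the single term $\varphi_{F,s}$, and the nontrivial intertwining appears only in the constant term along the associate parabolic $Q=P^{1,3}$. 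Correspondingly, the functional equation does not relate $E^{P}_{f,s}$ to $E^{P}_{f,1-s}$; it relates the $Q$-Eisenstein series to the $P$-Eisenstein series ($E^{Q}_{F,s}=b_{F,s}E^{P}_{F,1-s}$), which is exactly the point the paper makes immediately after this proposition and then uses for the separate $Q$-decomposition. So the folding step you flag as the ``principal obstacle'' cannot be carried out as described -- it would take you out of the $P^{3,1}$ family -- and, more to the point, it is unnecessary: with the correct one-term constant term, the adjunction already gives $\langle\Psi_{f,\eta},E_{F,s}\rangle=\langle\eta,\varphi_{F,s}\rangle$ (the degenerate terms of $c_{P}\Psi_{\eta}$ pair to zero against cuspidal data), and the unfolded integral is precisely the asserted decomposition, with no Weyl averaging and no residues to track. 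Deleting your last paragraph and correcting the constant-term claim turns your argument into the paper's proof.
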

\begin{proof}Using the spectral expansions of $f$ and $\eta$,
$$\eta\;=\;f\otimes\eta\;=\;\big(\sum_{\text{cfms}\;F}\langle f,F\rangle\big)\cdot\big(\int_{s}\langle \mu,\chi_{s}\rangle\cdot\chi_{s}\;ds\big)\;=\;\sum_{\text{cfms}\;F}\int_{s}\langle \eta_{f,\mu},\varphi_{F,s}\rangle\cdot\varphi_{F,s}\;ds$$
So the pseudo-Eisenstein series can be re-expressed as a superposition of Eisenstein series
\begin{align*}\Psi_{f,\eta}(g)\;&=\;\sum_{\gamma\in P_{k}\backslash G_{k}}\eta_{f,\mu}(\gamma g)\\
&=\;\sum_{\gamma\in P_{k}\backslash G_{k}}\sum_{\text{cfms}\;F}\int_{s}\langle \eta_{f,\mu},\varphi_{F,s}\rangle\cdot\varphi_{F,s}(\gamma g)\;ds\\
&\;=\;\sum_{\text{cfms}\;F}\int_{s}\langle \eta_{f,\mu},\varphi_{F,s}\rangle\;\sum_{\gamma\in P_{k}\backslash G_{k}}\varphi_{F,s}(\gamma g)\;ds\\
&=\;\sum_{\text{cfms}\;F}\int_{s}\langle \eta_{f,\mu},\varphi_{F,s}\rangle\cdot E_{F,s}\;ds
\end{align*}
The coefficient $\langle \eta,\varphi\rangle_{GL_{3}}$ is the same as the pairing $\langle \Psi_{\eta},E_{\varphi}\rangle_{GL_{4}}$, since
$$\langle \Psi_{\eta},E_{\varphi}\rangle\;=\;\langle c_{P}(\Psi_{\eta}),\varphi\rangle\;=\;\langle \eta,\varphi\rangle$$
So the spectral decomposition is
$$\Psi_{f,\eta}\;=\;\sum_{\text{cfms}\;F}\int_{s}\langle \Psi_{f,\eta},E_{F,s}\rangle\cdot E_{F,s}\;ds$$
\end{proof}

It now remains to show that pseudo-Eisenstein series for the associate parabolic, $Q=P^{1,3}$ can also be decomposed into superpositions of $P$-Eisenstein series.  Notice that in the decomposition above, when we decomposed $P$-pseudo-Eisenstein series into genuine $P$-Eisenstein series, we did not use the functional equation to fold up the integral, as in the case of minimal parabolic pseudo-Eisenstein series.  For maximal parabolic Eisenstein series, the functional equation does not relate the Eisenstein series to itself, but rather the Eisenstein series of the associate parabolic.  We will use this functional equation to obtain the decomposition of associate parabolic pseudo-Eisenstein series.  The functional equation is
$$E_{F,s}^{Q}\;=\;b_{F,s}\cdot E_{F,1-s}^{P}$$
where $b_{F,s}$ is a meromorphic function that appears in the computation of the constant term along $P$ of the $Q$-Eisenstein series.
\begin{prop} The pseudo-Eisenstein series $\Psi_{f,\mu}^{Q}$ admits a spectral decomposition
$$\Psi_{f,\mu}^{Q}\;=\;\sum_{F}\int_{s} \langle \Psi_{f,\mu}^{Q},E_{F,1-s}^{P}\rangle\cdot |b_{F,1-s}|^{2}\cdot E_{F,1-s}^{P}$$
where $F$ ranges over an orthonormal basis of cuspforms.
\end{prop}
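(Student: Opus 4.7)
The plan is to imitate the previous proposition, decomposing $\Psi_{f,\mu}^{Q}$ first as a superposition of $Q$-Eisenstein series, and then using the functional equation $E_{F,s}^{Q}=b_{F,s}\cdot E_{F,1-s}^{P}$ to convert to a superposition of $P$-Eisenstein series in the form the statement demands.

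First, I would apply the cuspidal-data spectral decomposition directly to the parabolic $Q=P^{1,3}$. Since $P=P^{3,1}$ and $Q=P^{1,3}$ have the same Levi $GL_{3}\times GL_{1}$ and play symmetric roles as associate maximal parabolics, the proof of the previous proposition goes through verbatim with $P$ replaced by $Q$: unwinding the sum over $Q_{k}\backslash G_{k}$ against $\Psi_{f,\mu}^{Q}$ and using spectral expansion of the cuspidal data on $GL_{3}$ yields
\begin{align*}
\Psi_{f,\mu}^{Q}\;=\;\sum_{F}\int_{s}\langle \Psi_{f,\mu}^{Q},E_{F,s}^{Q}\rangle\cdot E_{F,s}^{Q}\;ds
\end{align*}
with $s$ on the critical line and $F$ ranging over an orthonormal basis of spherical cuspforms on $GL_{3}(k)\backslash GL_{3}(\mathbb{A})$.

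Next I would substitute the functional equation into both factors of the integrand. For the Eisenstein series this is direct: $E_{F,s}^{Q}=b_{F,s}\cdot E_{F,1-s}^{P}$. For the coefficient, sesquilinearity of the Hermitian pairing gives $\langle \Psi_{f,\mu}^{Q},E_{F,s}^{Q}\rangle=\overline{b_{F,s}}\cdot\langle \Psi_{f,\mu}^{Q},E_{F,1-s}^{P}\rangle$. Multiplying, the integrand becomes $|b_{F,s}|^{2}\cdot\langle \Psi_{f,\mu}^{Q},E_{F,1-s}^{P}\rangle\cdot E_{F,1-s}^{P}$. On the critical line $\text{Re}(s)=1/2$ one has $\overline{s}=1-s$, and since $b_{F,s}$ is a ratio of completed $L$-factors with real coefficients in its Dirichlet expansion, $b_{F,1-s}=\overline{b_{F,s}}$, hence $|b_{F,s}|^{2}=|b_{F,1-s}|^{2}$. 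Substituting produces the stated identity.

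The main obstacle is not the decomposition itself, which is formal, but careful bookkeeping of the functional-equation coefficient $b_{F,s}$: one needs the symmetry of its modulus under $s\leftrightarrow 1-s$ on the critical line, together with the confirmation that no discrete (residual) spectrum has been silently absorbed into the contour placement. For the $P^{1,3}/P^{3,1}$ pair in $GL_{4}$ with $GL_{3}$-cuspidal data, the relevant residues are Speh-type forms; a fully rigorous version of the argument would either separate these explicitly or verify that $\mu$ is orthogonal to the residual part.
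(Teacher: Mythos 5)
Your proposal follows essentially the same route as the paper: decompose $\Psi_{f,\mu}^{Q}$ into $Q$-Eisenstein series by the same unwinding as for $P$, then substitute the functional equation $E_{F,s}^{Q}=b_{F,s}\cdot E_{F,1-s}^{P}$ and use conjugate-linearity of the pairing to produce the factor $|b|^{2}$. Your extra observation that $|b_{F,s}|^{2}=|b_{F,1-s}|^{2}$ on the critical line (reconciling the proof with the exact form of the stated coefficient) and your caution about contour shifts are welcome refinements of points the paper passes over quickly, the paper simply asserting that no poles, hence no residues, arise for this associate pair.
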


\begin{proof}We consider a $Q$-pseudo-Eisenstein series $\Psi_{f,\mu}^{Q}$ with cuspidal data.  By the same arguments used above to obtain the decomposition of $P$-pseudo-Eisenstein series, we can decompose $\Psi_{f,\mu}^{Q}$ into a superposition of $Q$-Eisenstein series,
$$\Psi_{f,\mu}^{Q}(g)\;=\;\sum_{\text{cfms}\;F}\int_{s}\langle \eta_{f,\mu},\varphi_{F,s}\rangle\cdot E_{F,s}^{Q}(g)$$
Now using the functional equation,
\begin{align*}&\Psi_{f,\mu}^{Q}(g)\;=\;\sum_{\text{cfms}\;F}\int_{s}\langle \Psi_{f,\mu}^{Q},b_{F,s}\cdot E_{F,1-s}^{P}\rangle\cdot b_{F,s}\cdot E_{F,1-s}^{P}\\&\;=\;\sum_{\text{cfms}\;F}\int_{s}\langle \Psi_{f,\mu}^{Q},E_{F,1-s}^{P}\rangle\cdot |b_{F,s}|^{2}\cdot E_{F,1-s}^{P}\\
\end{align*}
giving the proposition.\end{proof}
So we have a decomposition of $Q$-pseudo-Eisenstein series (with cuspidal data) into a $P$-Eisenstein series (with cuspidal data).  In order to use the functional equation we did have to move some contours, but in this case there are no poles, so we did not pick up any residues.
Likewise, if $\eta$ is the data for a $P^{2,1,1}$ pseudo-Eisenstein series $\Psi_{\eta}$, we can write $\eta$ as a tensor product 
$\eta\;=\;f\otimes \mu_{1}\otimes\mu_{2}$
on $$Z_{GL_{4}(\mathbb{A})}\backslash Z_{GL_{2}(\mathbb{A})}\times Z_{GL_{1}(\mathbb{A})}\times Z_{GL_{1}(\mathbb{A})}$$  Similarly, the data $\varphi=\varphi_{F,s_{1},s_{2}}$ for a $P^{2,1,1}$-Eisenstein series is the tensor product of a $GL_{2}$ cuspform and characters $\chi_{s_{1}}$ and $\chi_{s_{2}}$ on $GL_{1}$.  We show that $\Psi_{f,\mu}$ is the superposition of Eisenstein series $E_{F,s_{1},s_{2}}$ where $F$ ranges over an orthonormal basis of cusp forms and $s_{1}$ and $s_{2}$ are on the vertical line.
\begin{prop} The $2,1,1$ pseudo-Eisenstein series $\Psi_{f,\mu_{1},\mu_{2}}$ admits a spectral expansion
$$\Psi_{f,\mu_{1},\mu_{2}}\;=\;\sum_{F}\int_{s_{1}}\int_{s_{2}}\langle \eta_{f,\mu_{1},\mu_{2}},\varphi_{F,s_{1},s_{2}}\rangle\cdot E_{F,s_{1},s_{2}}$$
where $F$ ranges over an orthonormal basis of cuspforms.
\end{prop}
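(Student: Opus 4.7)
The plan is to mimic the $P^{3,1}$ decomposition established just above, adapted to the three-factor Levi $M^{P^{2,1,1}}\simeq GL_{2}\times GL_{1}\times GL_{1}$. Since the datum $\eta_{f,\mu_{1},\mu_{2}}$ is a pure tensor $f\otimes\mu_{1}\otimes\mu_{2}$, its spectral expansion factors componentwise: expand $f$ in an orthonormal basis of spherical $GL_{2}$-cuspforms $\{F\}$, and expand each $\mu_{i}$ on $Z_{GL_{1}(k)}\backslash Z_{GL_{1}(\mathbb{A})}$ by Mellin inversion against characters $\chi_{s_{i}}$. Collecting the three expansions gives
$$\eta_{f,\mu_{1},\mu_{2}}\;=\;\sum_{\text{cfms}\;F}\int_{s_{1}}\int_{s_{2}}\langle \eta_{f,\mu_{1},\mu_{2}},\varphi_{F,s_{1},s_{2}}\rangle\cdot \varphi_{F,s_{1},s_{2}}\;ds_{1}\,ds_{2}$$
with the $s_{1},s_{2}$-integrals placed on vertical lines deep enough to lie in the region of absolute convergence of the associated Eisenstein series, which legitimizes Fubini.

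Next, form the $P^{2,1,1}$ pseudo-Eisenstein series by averaging the datum over $P_{k}\backslash G_{k}$ and interchange this $\gamma$-sum with the sum over $F$ and the $s_{1},s_{2}$-integrals. Since
$$E_{F,s_{1},s_{2}}(g)\;=\;\sum_{\gamma\in P_{k}\backslash G_{k}}\varphi_{F,s_{1},s_{2}}(\gamma g),$$
the interchange produces
$$\Psi_{f,\mu_{1},\mu_{2}}(g)\;=\;\sum_{\text{cfms}\;F}\int_{s_{1}}\int_{s_{2}}\langle \eta_{f,\mu_{1},\mu_{2}},\varphi_{F,s_{1},s_{2}}\rangle\cdot E_{F,s_{1},s_{2}}(g)\;ds_{1}\,ds_{2}.$$
To put the coefficient in the Plancherel form advertised by the statement, invoke the adjunction $\langle \Psi_{\eta},E_{\varphi}\rangle=\langle c_{P}\Psi_{\eta},\varphi\rangle=\langle \eta,\varphi\rangle$ already used in the two preceding propositions, yielding $\langle \eta_{f,\mu_{1},\mu_{2}},\varphi_{F,s_{1},s_{2}}\rangle=\langle \Psi_{f,\mu_{1},\mu_{2}},E_{F,s_{1},s_{2}}\rangle$, which is the asserted decomposition.

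The main obstacle will be a clean justification of the contour placement and of the interchange of $\gamma$-summation with the double integral and the $F$-sum. A helpful structural observation is that $P^{2,1,1}$ is not self-associate (its associate class is $\{P^{2,1,1},P^{1,2,1},P^{1,1,2}\}$), so, in contrast to the minimal-parabolic case, no functional equation needs to be invoked to fold the $(s_{1},s_{2})$-integrals into a Weyl-symmetric form; equivalently, one does not shift contours onto critical lines and no residues arise. What remains is routine: the Mellin transforms $\mathscr{M}\mu_{i}$ are entire and of Schwartz decay in the imaginary direction, the pairings $\langle f,F\rangle$ decay rapidly in the Laplace parameter of $F$ because the chosen $f$ is smooth and compactly supported on the Levi, and $E_{F,s_{1},s_{2}}$ is of moderate growth on the chosen vertical lines, so absolute convergence and Fubini apply together, closing the argument.
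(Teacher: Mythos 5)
Your proposal is correct and follows essentially the same route as the paper: expand the pure-tensor datum $f\otimes\mu_{1}\otimes\mu_{2}$ factorwise (cuspidal expansion of $f$, Mellin inversion for $\mu_{1},\mu_{2}$), interchange the $\gamma$-sum over $P_{k}\backslash G_{k}$ with the $F$-sum and the $s_{1},s_{2}$-integrals, and recognize $E_{F,s_{1},s_{2}}$. Your added remarks on adjunction, contour placement, and the absence of residues only supplement what the paper does implicitly and do not change the argument.
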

\begin{proof}Using the spectral expansions of $f$ and $\mu$,
$$\eta\;=\;f\otimes\mu_{1}\otimes\mu_{2}\;=\;\big(\sum_{\text{cfms}\;F}\langle f,F\rangle\cdot F\big)\cdot\big(\int_{s_{1}}\langle \mu_{1},\chi_{s_{1}}\rangle\cdot\chi_{s_{1}}\;ds_{1}\big)\cdot\big(\int_{s_{2}}\langle \mu_{2},\chi_{s_{2}}\rangle\cdot\chi_{s_{2}}\;ds_{2}\big)$$
$$=\;\sum_{\text{cfms}\;F}\int_{s_{1}}\int_{s_{2}}\langle \eta_{f,\mu_{1},\mu_{2}},\varphi_{F,s_{1},s_{2}}\rangle\cdot\varphi_{F,s_{1},s_{2}}\;ds_{1}\;ds_{2}$$
Therefore, the pseudo-Eisenstein series can be re-expressed as a (double) superposition of Eisenstein series.
\begin{align*} \Psi_{f,\mu_{1},\mu_{2}}\;&=\;\sum_{\gamma\in P_{k}\backslash G_{k}}\eta_{f,\mu_{1},\mu_{2}}(\gamma g)\\
&=\;\sum_{\gamma\in P_{k}\backslash G_{k}}\sum_{\text{cfms}\; F}\int_{s_{1}}\int_{s_{2}}\langle \eta_{f,\mu_{1},\mu_{2}},\varphi_{F,s_{1},s_{2}}\rangle\cdot\varphi_{F,s_{1},s_{2}}(\gamma g)\;ds_{1}\;ds_{2}\\
&=\;\sum_{\text{cfms}\;F}\int_{s_{1}}\int_{s_{2}}\langle \eta_{f,\mu_{1},\mu_{2}},\varphi_{F,s_{1},s_{2}}\rangle\sum_{\gamma\in P_{k}\backslash G_{k}}\varphi_{F,s_{1},s_{2}}(\gamma g)\;ds_{1}\;ds_{2}\\
&=\;\sum_{\text{cfms}\;F}\int_{s_{1}}\int_{s_{2}}\langle \eta_{f,\mu_{1},\mu_{2}},\varphi_{F,s_{1},s_{2}}\rangle\cdot E_{F,s_{1},s_{2}}(g)\end{align*}
\end{proof}
Finally, if $\eta$ is the data for a $P^{2,2}$ pseudo-Eisenstein series $\Psi_{\eta}$, we can write $$\eta_{f,g,\mu}=f\otimes g\otimes\mu$$
on $$Z_{GL_{4}}(\mathbb{A})/ Z_{GL_{2}}(\mathbb{A})\times Z_{GL_{2}}(\mathbb{A})$$ where $f$ and $g$ are cuspforms, and $\mu$ is a compactly-supported smooth function on $GL(1)$.    Similarly, the data $\varphi=\varphi_{f_{1},f_{2},s}$ for a $P^{2,2}$-Eisenstein series is the tensor product of $GL(2)$ cuspforms $f_{1}$ and $f_{2}$ and a character $\chi_{s}$.
\begin{prop} The $2,2$ pseudo-Eisenstein series $\Psi_{\eta}$ has a spectral expansion in terms of $2,2$ Eisenstein series
$$\Psi_{\eta}\;=\;\sum_{F_{1},F_{2}}\int_{s}\langle \eta_{f,g,\mu},\varphi_{F_{1},F_{2},s}\rangle E_{F_{1},F_{2},s}\;ds$$
where $F_{1}$ and $F_{2}$ are cuspforms on $GL(2)$.
\end{prop}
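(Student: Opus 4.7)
The plan is to mirror the arguments already given for the $P^{3,1}$ and $P^{2,1,1}$ pseudo-Eisenstein series, adapting them to the fact that the Levi of $P^{2,2}$ is two copies of $GL_2$ glued along a common determinant. First, I would expand the data $\eta_{f,g,\mu} = f \otimes g \otimes \mu$, using that $f$ and $g$ are already cuspforms on their respective $GL_2$ factors, to write
$$f = \sum_{F_1} \langle f, F_1\rangle\, F_1, \qquad g = \sum_{F_2} \langle g, F_2\rangle\, F_2,$$
with $\{F_1\}, \{F_2\}$ orthonormal bases of spherical cuspforms on $GL_2(k)\backslash GL_2(\mathbb{A})$, and Mellin-invert the scalar factor $\mu$ on $GL_1$ as
$$\mu = \int_s \langle \mu, \chi_s\rangle\, \chi_s\, ds.$$
Tensoring these expansions on the Levi yields
$$\eta_{f,g,\mu} = \sum_{F_1,F_2}\int_s \langle \eta_{f,g,\mu}, \varphi_{F_1,F_2,s}\rangle\, \varphi_{F_1,F_2,s}\, ds.$$

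Second, I would form the pseudo-Eisenstein series by summing $\eta_{f,g,\mu}$ over $\gamma\in P_k\backslash G_k$, and exchange that sum with the double sum over cuspforms and the Mellin integral, valid in the region of absolute convergence for $E_{F_1,F_2,s}$:
\begin{align*}
\Psi_\eta(g) &= \sum_{\gamma\in P_k\backslash G_k}\eta_{f,g,\mu}(\gamma g) \\
&= \sum_{F_1,F_2}\int_s \langle \eta_{f,g,\mu}, \varphi_{F_1,F_2,s}\rangle \sum_{\gamma\in P_k\backslash G_k}\varphi_{F_1,F_2,s}(\gamma g)\, ds \\
&= \sum_{F_1,F_2}\int_s \langle \eta_{f,g,\mu}, \varphi_{F_1,F_2,s}\rangle\, E_{F_1,F_2,s}(g)\, ds.
\end{align*}
Finally, I would invoke the adjointness relation proved earlier, namely $\langle \Psi_\eta, E_\varphi\rangle_{GL_4} = \langle c_P\Psi_\eta, \varphi\rangle = \langle \eta, \varphi\rangle$, to rewrite the pairing on the Levi as a pairing on $GL_4$, obtaining the stated form.

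The main obstacle, as in the $P^{3,1}$ case, is justifying the exchange of the $\gamma$-summation with the Mellin contour and the infinite sum over the two orthonormal bases of $GL_2$ cuspforms. One works initially on a vertical line $\mathrm{Re}(s)$ large enough that $E_{F_1,F_2,s}$ converges absolutely and the $GL_2\times GL_2$ cuspidal Plancherel expansion on the Levi converges in $L^2$; meromorphic continuation then extends the identity to the critical line. A secondary delicacy, not present in the $P^{3,1}$ setting, is that completeness of the Levi-side expansion now uses the \emph{tensor product} of the cuspidal spectra of two independent $GL_2$ factors, so one must appeal to the fact that the cuspidal subspaces of each factor already span the cuspidal projections of $f$ and $g$ individually, and that the $\mu$-factor decouples cleanly along the determinant direction parametrised by $\chi_s$.
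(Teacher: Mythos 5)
Your proposal is correct and follows essentially the same route as the paper: spectrally expand the Levi data via the cuspidal expansions of $f$ and $g$ and Mellin inversion of $\mu$, then wind up over $P_{k}\backslash G_{k}$ and interchange the sum with the sum-and-integral. The extra remarks on convergence and the adjointness relation are consistent with, and slightly more careful than, the paper's brief argument.
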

\begin{proof}Writing
$$\eta\;=\;f\otimes g\otimes\mu\;=\;\big(\sum_{\text{cfms}\;F}\langle f,F\rangle\cdot F\big)\big(\sum_{\text{cfms}\;F}\langle g,F\rangle\cdot F\big)\cdot\big(\int_{s}\langle\mu,\chi_{s}\rangle\cdot\chi_{s}\big)$$
$$=\;\sum_{\text{cfms}F_{1},F_{2}}\int_{s}\langle \eta_{f,g,\mu},\varphi_{F_{1},F_{2},s}\rangle\cdot\varphi_{F_{1},F_{2},s}\;ds$$
As before, the corresponding pseudo-Eisenstein series will unwind
$$\Psi_{\eta}\;=\;\sum_{\gamma\in P_{k}\backslash G_{k}}\eta_{f,g,\mu}(\gamma g)\;=\;\sum_{\text{cfms}F_{1},F_{2}}\int_{s}\langle \eta_{f,g,\mu},\varphi_{F_{1},F_{2},s}\rangle\cdot E_{F_{1},F_{2},s}\;ds$$
\end{proof}
Recall the construction of $2,2$ pseudo-Eisenstein series. Let $\phi\in C_{c}^{\infty}(\mathbb{R})$ and let $f$ be a spherical cuspform on $GL_{2}$ with trivial central character.  
Let $$\varphi( \left( \begin{array}{cc}
A & B\\
0& D\end{array} \right))=\phi(\Big|\frac{\text{det}A}{\text{det}D}\Big|^{2})\cdot f(A)\cdot \overline{f}(D)$$
extending by right $K$-invariance to be made spherical.
Define the $P^{2,2}$ pseudo-Eisenstein series by
$$\Psi_{\varphi}(g)\;=\;\sum_{\gamma\in P_{k}\backslash G_{k}}\varphi(\gamma g)$$
We recall the construction of $2,1,1$ pseudo-Eisenstein series.  Let $f$ be a spherical cuspform on $GL_{2}(k)\backslash GL_{2}(\mathbb{A})$, and let $\phi_{1}, \phi_{2}\in C_{c}^{\infty}(\mathbb{R})$. Let
$$\varphi_{f,\phi_{1},\phi_{2}}(\left( \begin{array}{ccc}
A & 0&0 \\
0& b&0\\
0&0&c\end{array} \right))\;=\;f(A)\cdot\phi_{1}(\frac{\text{det}A}{b^{2}})\cdot \phi_{2}(\frac{\text{det}A}{c^{2}})$$
The $2,1,1$ pseudo-Eisenstein series with this data is
$$\Psi_{\varphi}\;=\;\sum_{\gamma\in P_{k}\backslash G_{k}}\varphi_{f,\phi_{1},\phi_{2}}(\gamma g)$$

\begin{prop} The pseudo-Eisenstein series $\Psi_{\varphi}^{2,2}$ is orthogonal to all other pseudo-Eisenstein series in $\text{Sob}(+1)$.
\end{prop}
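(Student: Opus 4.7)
The plan is to apply the adjunction relation
$$
\langle f,\Psi_\varphi^{P^{2,2}}\rangle_{\Gamma\backslash G}\;=\;\langle c_{P^{2,2}}f,\varphi\rangle_{Z_\mathbb{A}N_\mathbb{A}^{P^{2,2}}M_k^{P^{2,2}}\backslash G_\mathbb{A}}
$$
established in the first Proposition of this section, with $f=\Psi_\eta^Q$ for each parabolic $Q$ in a different associate class from $P^{2,2}$, namely $Q\in\{P^{1,1,1,1},\,P^{3,1},\,P^{1,3},\,P^{2,1,1},\,P^{1,2,1},\,P^{1,1,2}\}$ (note that $P^{2,2}$ is self-associate). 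The problem then reduces to showing that the constant term $c_{P^{2,2}}\Psi_\eta^Q$, viewed as a function on the Levi $M^{P^{2,2}}\approx GL_2\times GL_2$ (modulo center), is orthogonal to the cuspidal data $\phi\cdot(f\otimes\overline{f})$ of $\varphi$.

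The second step is a standard Bruhat unwinding. Partition $Q_k\backslash G_k$ by the double cosets $P^{2,2}_k\backslash G_k/Q_k$, indexed by a set of Weyl representatives $\{w\}$. For each $w$, integrating the corresponding piece of $\Psi_\eta^Q$ over $N_k^{P^{2,2}}\backslash N_\mathbb{A}^{P^{2,2}}$ partially collapses the unipotent $N^{P^{2,2}}\cap wN^Qw^{-1}$, leaving on $M^{P^{2,2}}$ a sum of translates of $\eta$ parametrized by a coset space coming from $M^{P^{2,2}}\cap wQw^{-1}$ inside $M^{P^{2,2}}$. In every case the resulting sum has the form of a pseudo-Eisenstein series attached to a \emph{proper} parabolic of at least one of the two $GL_2$-blocks of $M^{P^{2,2}}$, because the Levi $M^Q$ is never conjugate, even up to Weyl translation, to $M^{P^{2,2}}$.

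The third step then applies the $GL_2$-level analog of the Corollary to the first Proposition: inside each $GL_2$-block, a pseudo-Eisenstein series is orthogonal to the cuspform $f$ (respectively $\overline{f}$). Pairing the Bruhat piece against $\phi\cdot(f\otimes\overline{f})$ therefore gives zero on at least one factor of $M^{P^{2,2}}=GL_2\times GL_2$, and hence vanishes. Summing over the finitely many Bruhat cells yields $\langle c_{P^{2,2}}\Psi_\eta^Q,\varphi\rangle=0$, which by adjunction is the desired orthogonality.

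The main obstacle is the case-by-case Bruhat bookkeeping, especially for $Q=P^{1,1,1,1}$, where $|W|=24$ contributes many cells, and for $Q=P^{2,1,1}$ or $Q=P^{1,2,1}$ whose Levi contains a $GL_2$ factor that could potentially align with one of the $GL_2$-blocks of $M^{P^{2,2}}$; in each such case one must verify that the residual integration over the complementary directions indeed produces a $GL_2$-pseudo-Eisenstein series rather than something cuspidal. Convergence is a secondary concern: the unwinding is formal unless one works with a truncated or Sobolev-regularized version of the pseudo-Eisenstein series, and the hypothesis $\Psi_\varphi^{P^{2,2}}\in\text{Sob}(+1)$, combined with the earlier spectral decompositions of the $Q$-pseudo-Eisenstein series as Parseval-normalized superpositions of $Q$-Eisenstein series with cuspidal data, justifies the interchange of sum and integral needed to carry out the Bruhat computation rigorously.
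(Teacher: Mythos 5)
Your argument, even if all the Bruhat bookkeeping is carried out, only establishes the $L^{2}$ orthogonality $\langle \Psi_{\varphi}^{2,2},\Psi_{\eta}^{Q}\rangle_{L^{2}}=0$ for non-associate $Q$ --- which is exactly the part the paper does not prove but simply cites from Moeglin--Waldspurger [MW, p.100]. The proposition, however, asserts orthogonality in the $+1$-Sobolev space, i.e.\ with respect to the inner product
$$\langle \Psi_{\varphi}^{2,2},\Psi_{\eta}^{Q}\rangle_{+1}\;=\;\langle \Psi_{\varphi}^{2,2},\Psi_{\eta}^{Q}\rangle_{L^{2}}\;+\;\langle -\Delta\Psi_{\varphi}^{2,2},\Psi_{\eta}^{Q}\rangle_{L^{2}},$$
and your proposal never engages the second term: you treat the hypothesis ``in $\text{Sob}(+1)$'' only as a convergence/regularization device for the unwinding, not as a change of inner product. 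That second term is the entire content of the paper's proof: one decomposes the Casimir operator as $\Omega=\Omega_{1}+\Omega_{2}+\Omega_{3}+\Omega_{4}$ and computes that, on the data $\varphi_{\phi,f,\overline{f}}$, the pieces act by $\lambda_{f}$, $\lambda_{\overline{f}}$, a second derivative of the test function, and zero respectively, so that
$$\Delta\Psi_{\varphi_{\phi,f,\overline{f}}}\;=\;\Psi_{\varphi_{(\lambda_{f}+\lambda_{\overline{f}})\phi+\phi'',f,\overline{f}}}$$
is again a $2,2$ pseudo-Eisenstein series with the same cuspidal data; only then does the $L^{2}$ disjointness apply a second time to kill $\langle\Delta\Psi_{\varphi}^{2,2},\Psi_{\eta}^{Q}\rangle_{L^{2}}$, and an induction extends this to higher Sobolev norms. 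Without this $\Delta$-stability of the span (or some equivalent integration-by-parts argument), your proof does not reach the stated conclusion.

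Your unwinding-plus-Bruhat-cell sketch is not wasted: it is a plausible route to the $L^{2}$ disjointness itself, essentially reproving the [MW] input, and the case analysis you flag (e.g.\ $Q=P^{2,1,1}$, where the $GL_{2}$ factor of $M^{Q}$ can align with a $GL_{2}$ block of $M^{2,2}$ and the remaining $GL_{1}\times GL_{1}$ lands in the Borel of the other block, where cuspidality of $\overline{f}$ saves you) is the right thing to check there. But to repair the proof you must add the Casimir computation above and then observe that the new datum $(\lambda_{f}+\lambda_{\overline{f}})\phi+\phi''$ is again in $C_{c}^{\infty}(\mathbb{R})$, so whatever argument gives the $L^{2}$ orthogonality applies verbatim to $\Delta\Psi_{\varphi}^{2,2}$ as well.
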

\begin{proof}Recall by [MW p.100] that 
$$\langle \Psi_{\varphi}^{2,2},\Psi_{\psi}^{2,1,1}\rangle_{L^{2}}\;=\;0$$

Let us now check that they're also orthogonal in the $+1$-Sobolev space.  Note that
$$\langle \Psi_{\varphi}^{2,2},\Psi_{\psi}^{2,1,1}\rangle_{+1}\;=\;\langle \Psi_{\varphi}^{2,2},\Psi_{\psi}^{2,1,1}\rangle_{L^{2}}+\langle \Delta\Psi_{\varphi}^{2,2},\Psi_{\psi}^{2,1,1}\rangle_{L^{2}}$$
Since the first summand is zero, it suffices to prove that the second is zero. To this end, we rewrite the Casimir operator
$$\Omega\;=\;\Omega_{1}+\Omega_{2}+\Omega_{3}+\Omega_{4}$$
where $$\Omega_{1}\;=\;\frac{1}{2}H_{1,2}^{2}+E_{1,2}E_{2,1}+E_{2,1}E_{1,2}$$
and
$$\Omega_{2}\;=\;\frac{1}{2}H_{3,4}^{2}+E_{3,4}E_{4,3}+E_{4,3}E_{3,4}$$
while
$$\Omega_{3}\;=\;\frac{1}{4}H_{1,2,3,4}^{2}$$
We let $\Omega_{4}$ be the remaining terms appearing in the expression of Casimir.  We prove that application of $\Omega$ to $\Psi_{\varphi}$ produces another function in the span of $2,2$ pseudo-Eisenstein series.  Being in the span of $2,2$ pseudo-Eisenstein series renders $\Omega \Psi_{\varphi}$ orthogonal to all other non-associate pseudo-Eisenstein series. We will prove that when restricted to $G/K$, $\Omega_{1}$ acts as the $SL_{2}$-Laplacian on the cuspform $f$, $\Omega_{2}$ acts as the $SL_{2}$-Laplacian on $\overline{f}$, while $\Omega_{3}$ acts as a second derivative on the test function.
Indeed, let
$$\Omega_{1}\;=\;\frac{1}{2}H_{1,2}^{2}+E_{1,2}E_{2,1}+E_{2,1}E_{1,2}$$
where $H_{1,2}=\text{diag}(1,-1,0,0)$ and $E_{i,j}$ is the matrix with $1$ in the $ij^{th}$ position and $0$'s elsewhere. We check how $H_{1,2}$ acts on smooth functions on $\varphi$.  Let
$$A\;=\;\left( \begin{array}{cc}
a& b \\
c& d\end{array} \right)\;\;\;D\;=\;\left( \begin{array}{cccc}
f & g\\
h&i\end{array} \right)$$
Observe that
$$H_{1,2}\cdot\varphi( \left( \begin{array}{cc}
A & * \\
0& D\end{array} \right))\;=\;\frac{d}{dt}\bigg|_{t=0}\varphi( \left( \begin{array}{cccc}
a & b & 0 & 0\\
c& d&0&0\\
0&0&f&g\\
0&0&h&i\end{array} \right)\cdot\left( \begin{array}{cccc}
e^{t} & 0 & 0 & 0\\
0& e^{-t}&0&0\\
0&0&1&0\\
0&0&0&1\end{array} \right))$$
This is
$$\frac{d}{dt}\bigg|_{t=0}\;\varphi( \left( \begin{array}{cccc}
a e^{t}& be^{t} & 0 & 0\\
ce^{t}& de^{t}&0&0\\
0&0&f&g\\
0&0&h&i\end{array} \right))\;=\;\frac{d}{dt}\bigg|_{t=0}\phi(\Big|\frac{\text{det}A}{\text{det}D}\Big|^{2})\cdot f(\left( \begin{array}{cc}
ae^{t} & be^{-t}\\
ce^{t}& de^{-t}\end{array} \right))\cdot f(D)$$
Use Iwasawa coordinates on the upper left hand $GL(2)$ block of the Levi component, namely
$$n_{x_{1}}\;=\;\left( \begin{array}{cccc}
1& x_{1} & 0 & 0\\
0& 1&0&0\\
0&0&1&0\\
0&0&0&1\end{array} \right)\;\;\;m_{y_{1}}\;=\;\left( \begin{array}{cccc}
\sqrt{y_{1}}& 0 & 0 & 0\\
0& \frac{1}{\sqrt{y_{1}}}&0&0\\
0&0&1&0\\
0&0&0&1\end{array} \right)$$
As in the discussion for $SL_{2}(\mathbb{R})$, 
$$(H_{1,2}f)(n_{x_{1}}m_{y_{1}})\;=\;2y_{1}\frac{\partial}{\partial y_{1}}f(n_{x_{1}}m_{y_{1}})$$
Therefore, letting $\Delta_{1}$ be $\Omega_{1}$ restricted to $G/K$, we see that the effect of $\Delta_{1}$ on the cuspform $f$ is just
$$\Delta_{1}(f)\;=\;y_{1}^{2}(\frac{\partial^{2}}{\partial x_{1}^{2}}+\frac{\partial^{2}}{\partial y_{1}^{2}})f\;=\;\lambda_{f}\cdot f$$
Therefore, 
$$\Delta_{1}(\varphi_{\phi,f,\overline{f}})\;=\;\varphi_{\phi,\lambda_{f} f,\overline{f}}\;=\;\lambda_{f}\cdot\varphi_{\phi,f,\overline{f}}$$
A similar argument which uses $H_{3,4}$, $E_{3,4}$ and $E_{4,3}$ as the standard basis in the lower right $2\times 2$ block, shows that, for $\Delta_{2}$ the restriction of $\Omega_{2}$ to smooth functions on $G/K$,
$$\Delta_{2}(\varphi_{\phi,f,\overline{f}})\;=\;\varphi_{\phi,f,\overline{f}}\;=\;\lambda_{\overline{f}}\varphi_{\phi,\lambda_{f}f,\overline{f}}$$
It remains to check the effect of $\Omega_{3}\;=\;\frac{1}{4}H_{1,2,3,4}^{2}$. Observe that
$$H_{1,2,3,4}\varphi( \left( \begin{array}{cccc}
a &b  & 0 & 0\\
c& d&0&0\\
0&0&f&g\\
0&0&h&i\end{array} \right))\;=\;\frac{d}{dt}\bigg|_{t=0}\varphi(\left( \begin{array}{cccc}
a &b  & 0 & 0\\
c& d&0&0\\
0&0&f&g\\
0&0&h&i\end{array} \right)\cdot \left( \begin{array}{cccc}
e^{t} &0  & 0 & 0\\
0& e^{t}&0&0\\
0&0&e^{-t}&0\\
0&0&0&e^{-t}\end{array} \right))$$
Yet this is just
$$=\;\frac{d}{dt}\bigg|_{t=0}\varphi(\left( \begin{array}{cccc}
ae^{t} &be^{t}  & 0 & 0\\
ce^{t}& de^{t}&0&0\\
0&0&fe^{-t}&ge^{-t}\\
0&0&he^{-t}&ie^{-t}\end{array} \right))$$
Which gives
$$=\;\frac{d}{dt}\bigg|_{t=0}\phi(\frac{e^{t}\text{det}A}{e^{-t}\text{det}D})\cdot f( \left( \begin{array}{cc}
ae^{t} &be^{t} \\
ce^{t}& de^{t}\end{array} \right))\cdot\overline{f}( \left( \begin{array}{cc}
fe^{-t} & ge^{-t} \\
he^{-t}& ie^{-t}\end{array} \right))\;=\;2\cdot\phi'\cdot f(A)\cdot \overline{f}(D)$$
since both $f$ and $\overline{f}$ have trivial central character.  Therefore, the effect of $\frac{1}{4}H_{1,2,3,4}$ as a differential operator on $\varphi_{\phi,f,\overline{f}}$ is
$$\frac{1}{4}H_{1,2,3,4}\cdot\varphi_{\phi,f,\overline{f}}\;=\;\varphi_{\phi'',f,\overline{f}}$$
That is,
$$\Delta_{3}\varphi_{\phi,f,\overline{f}}\;=\;\varphi_{\phi'',f,\overline{f}}$$
Together the effect of the three differential operators is
$$(\Delta_{1}+\Delta_{2}+\Delta_{3})\varphi_{\phi,f,\overline{f}}\;=\;\varphi_{(\lambda_{f}+\lambda_{\overline{f}})\phi+\phi'',f\overline{f}}$$
Therefore,
$$(\Delta_{1}+\Delta_{2}+\Delta_{3})(\Psi_{\varphi_{\phi,f,\overline{f}}})\;=\;\Psi_{\varphi_{(\lambda_{f}+\lambda_{\overline{f}})\phi+\phi'',f,\overline{f}}}$$
The operator $\Delta_{4}$ acts by $0$ on the vector $\varphi_{\phi,f,\overline{f}}$. Therefore,
$$\Delta\Psi_{\varphi_{\phi,f,\overline{f}}}\;=\;\Psi_{\varphi_{(\lambda_{f}+\lambda_{\overline{f}})\phi+\phi'',f,\overline{f}}}$$
The function
$$\Psi_{\varphi_{(\lambda_{f}+\lambda_{\overline{f}})\phi+\phi'',f,\overline{f}}}$$
is another $2,2$ pseudo-Eisenstein series because $(\lambda_{f}+\lambda_{\overline{f}})\phi+\phi''$ is another function in $C_{c}^{\infty}(\mathbb{R})$, so [MW, p.100] applies again to give
$$\langle \Psi_{\varphi_{(\lambda_{f}+\lambda_{\overline{f}})\phi+\phi'',f,\overline{f}}},\Psi_{\psi}^{2,1,1}\rangle_{L^{2}}\;=\;0$$
Therefore,
$$\langle\Delta \Psi_{\varphi,f,\overline{f}},\Psi_{\psi}^{2,1,1}\rangle_{L^{2}}\;=\;0$$
proving that the pseudo-Eisenstein series are orthogonal in the $+1$-index Sobolev space.  An inductive argument shows that they are orthogonal in every Sobolev space.

An analogous argument shows that $2,2$ pseudo-Eisenstein series are orthogonal to $3,1$ pseudo-Eisenstein series, as well as $1,1,1$ pseudo-Eisenstein series.
\end{proof}
We turn our attention to the $3,1$-Eisenstein series. 
\begin{prop} $3,1$ pseudo-Eisenstein series are orthogonal to all other (non-associate) pseudo-Eisenstein series in $\text{Sob}(+1)$.
\end{prop}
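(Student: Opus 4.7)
The plan is to mimic the argument just given for the $2,2$ case, adapted to the structure of $P=P^{3,1}$. First I would invoke [MW p.100] to get $L^{2}$-orthogonality:
$$\langle \Psi_{\varphi}^{3,1},\Psi_{\psi}^{Q}\rangle_{L^{2}}\;=\;0\qquad \text{for every non-associate }Q,$$
so that
$$\langle \Psi_{\varphi}^{3,1},\Psi_{\psi}^{Q}\rangle_{+1}\;=\;\langle \Delta\Psi_{\varphi}^{3,1},\Psi_{\psi}^{Q}\rangle_{L^{2}},$$
and the task reduces to showing that $\Delta\Psi_{\varphi}^{3,1}$ is again a $3,1$ pseudo-Eisenstein series (possibly a finite sum of such), since then another application of [MW p.100] finishes the argument; an induction then gives orthogonality in every Sobolev space.

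To see that $\Delta$ preserves the $3,1$ span, I would decompose the Casimir in a manner adapted to the Levi $M^{3,1}=GL_{3}\times GL_{1}$ and the unipotent radical $N^{3,1}$:
$$\Omega\;=\;\Omega_{GL_{3}}\,+\,\Omega_{\text{cent}}\,+\,\Omega_{N,\bar{N}},$$
where $\Omega_{GL_{3}}$ is the Casimir built from the standard basis of the upper-left $3\times 3$ block, $\Omega_{\text{cent}}$ is the square of the ``scaling'' element $H_{1,2,3;4}=\operatorname{diag}(1,1,1,-3)$ (appropriately normalized) which measures the ratio $|\det A/d^{3}|$, and $\Omega_{N,\bar{N}}$ collects the pairs $E_{i,4}E_{4,i}+E_{4,i}E_{i,4}$ for $i=1,2,3$.

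Then I would check the three pieces in turn on the data $\varphi_{\phi,f}$. For $\Omega_{GL_{3}}$: right translation on the upper-left block acts only on $A$, so $\Omega_{GL_{3}}\varphi_{\phi,f}=\varphi_{\phi,\lambda_{f}\cdot f}=\lambda_{f}\cdot\varphi_{\phi,f}$, since $f$ is an eigenfunction of the $GL_{3}$ Laplacian with eigenvalue $\lambda_{f}$. For $\Omega_{\text{cent}}$: differentiating $\phi(|\det A/d^{3}|)$ along $\operatorname{diag}(e^{t},e^{t},e^{t},e^{-3t})$ gives a first- and second-derivative combination in $\log|\det A/d^{3}|$ applied to $\phi$, i.e. $\Omega_{\text{cent}}\varphi_{\phi,f}=\varphi_{\tilde{\phi},f}$ for a new $\tilde{\phi}\in C_{c}^{\infty}(\mathbb{R})$. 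For the cross terms in $\Omega_{N,\bar{N}}$: each $E_{i,4}$ lies in the Lie algebra of $N^{3,1}$, so it acts by differentiating the $*$ coordinates; but $\varphi_{\phi,f}$ is independent of these entries, so $E_{i,4}\cdot\varphi_{\phi,f}=0$. Using $[E_{4,i},E_{i,4}]$ to rewrite $E_{4,i}E_{i,4}$ as $E_{i,4}E_{4,i}+\text{(element of }\mathfrak{a}\text{)}$, the contribution of $\Omega_{N,\bar{N}}$ reduces to an element of the split Cartan applied to $\varphi_{\phi,f}$, which again produces another function of the form $\varphi_{\phi^{\star},f}$.

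Putting these together,
$$\Delta\Psi_{\varphi_{\phi,f}}^{3,1}\;=\;\Psi_{\varphi_{\lambda_{f}\phi+\tilde{\phi}+\phi^{\star},f}}^{3,1},$$
which is again a $3,1$ pseudo-Eisenstein series with cuspidal data $f$ and a compactly-supported test function on $\mathbb{R}$. By [MW p.100] this output is orthogonal in $L^{2}$ to every non-associate pseudo-Eisenstein series, and the $+1$-Sobolev orthogonality follows. The main obstacle I anticipate is the third piece: verifying carefully that the root-vector terms $E_{i,4}E_{4,i}+E_{4,i}E_{i,4}$ really do close back inside the $3,1$ span, since a priori $E_{4,i}\cdot\varphi_{\phi,f}$ is not an $N^{3,1}$-invariant function, and one must check that, after a second application of the adjoint root vector and after passing to the Eisenstein average $\sum_{P_{k}\backslash G_{k}}$, the result is once again inducible from $P^{3,1}$-data of the required form.
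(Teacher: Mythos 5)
Your proposal is correct and follows essentially the same route as the paper: reduce $+1$-Sobolev orthogonality to $L^{2}$-orthogonality via [MW p.100], then show that Casimir preserves the span of $3,1$ pseudo-Eisenstein series with cuspidal data by splitting it into a $GL_3$-Casimir piece acting by the eigenvalue $\lambda_{f}$, a scaling piece differentiating the test function, and cross terms handled through the Lie algebra structure. If anything, your explicit commutator treatment of $E_{i,4}E_{4,i}+E_{4,i}E_{i,4}$ is more careful than the paper's, which lumps these terms into a remainder $\Omega_{2}$ and asserts it acts by zero -- a statement that really amounts to the reduction you describe, the Cartan contributions remaining inside the $3,1$ span because the cuspidal datum has trivial central character, so only the test-function variable is affected.
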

\begin{proof} We review the construction of $3,1$ pseudo-Eisenstein series with cuspidal and test function data.  Let $f_{1}$ be a spherical cuspform on $GL_{3}(k)\backslash GL_{3}(\mathbb{A})$ and $\phi\in C_{c}^{\infty}(\mathbb{R})$.  Consider the vector
$$\varphi_{f,\phi}(\left( \begin{array}{cc}
A & * \\
0& d\end{array} \right))\;=\;f(A)\cdot\phi(\frac{\text{det}A}{d^{3}})$$
Working in $GL_{4}$ consider the element
$$H_{1}\;=\;\left( \begin{array}{cccc}
1 & 0&0&0 \\
0& 0&0&0\\
0&0&0&0\\
0&0&0&0\end{array} \right)\in\mathfrak{gl}_{4}(\mathbb{R})$$
We determine the effect of $H_{1}$ as a differential operator on $\varphi_{f,\phi}$. To this end, let
$$n_{x_{1}x_{2}x_{3}}\;=\;\left( \begin{array}{cccc}
1 & x_{1}&x_{2}&0 \\
0& 1&x_{3}&0\\
0&0&1&0\\
0&0&0&1\end{array} \right)\;\;\;\;m_{y_{1}y_{2}y_{3}y_{4}}\;=\;\left( \begin{array}{cccc}
y_{1} & 0&0&0 \\
0& y_{2}&0&0\\
0&0&y_{3}&0\\
0&0&0&y_{4}\end{array} \right)$$
Then $$H_{1}\cdot \varphi_{f,\phi}(n_{x_{1}x_{2}x_{3}}m_{y_{1}y_{2}y_{3}y_{4}})\;=\;\frac{d}{dt}\bigg|_{t=0}\varphi_{f,\phi}(n_{x_{1}x_{2}x_{3}}m_{y_{1}y_{2}y_{3}y_{4}}\left( \begin{array}{cccc}
e^{t} & 0&0&0 \\
0& 1&0&0\\
0&0&1&0\\
0&0&0&1\end{array} \right))$$
This is 
$$\frac{d}{dt}\bigg|_{t=0}\varphi_{f,\phi}(n_{x_{1}x_{2}x_{3}}m_{y_{1}e^{t}y_{2}y_{3}y_{4}})\;=\;y_{1}\frac{\partial}{\partial y_{1}}\varphi_{f,\phi}(n_{x_{1}x_{2}x_{3}}m_{y_{1}y_{2}y_{3}y_{4}})$$
Therefore,
$$H_{1}\cdot \varphi_{f,\phi}(n_{x_{1}x_{2}x_{3}}m_{y_{1}y_{2}y_{3}y_{4}})\;=\;y_{1}\frac{\partial}{\partial y_{1}}\varphi_{f,\phi}(n_{x_{1}x_{2}x_{3}}m_{y_{1} y_{2}y_{3}y_{4}})$$
The effect of $H_{2}$ and $H_{3}$ is computed similarly.  That is
$$H_{2}\cdot \varphi_{f,\phi}(n_{x_{1}x_{2}x_{3}}m_{y_{1}y_{2}y_{3}y_{4}})\;=\;y_{2}\frac{\partial}{\partial y_{2}}\varphi_{f,\phi}(n_{x_{1}x_{2}x_{3}}m_{y_{1} y_{2}y_{3}y_{4}})$$
while
$$H_{3}\cdot \varphi_{f,\phi}(n_{x_{1}x_{2}x_{3}}m_{y_{1}y_{2}y_{3}y_{4}})\;=\;y_{3}\frac{\partial}{\partial y_{3}}\varphi_{f,\phi}(n_{x_{1}x_{2}x_{3}}m_{y_{1} y_{2}y_{3}y_{4}})$$
With notation as before, we determine the effect of $E_{1,2}$ as a differential operator.  Observe that
$$E_{1,2}\cdot \varphi_{f,\phi}(n_{x_{1}x_{2}x_{3}}m_{y_{1}y_{2}y_{3}y_{4}})\;=\;\frac{d}{dt}\bigg|_{t=0}\varphi_{f,\phi}(n_{x_{1}x_{2}x_{3}}m_{y_{1}y_{2}y_{3}y_{4}}\left( \begin{array}{cccc}
1 & t&0&0 \\
0& 1&0&0\\
0&0&1&0\\
0&0&0&1\end{array} \right))$$
This is just
$$\frac{d}{dt}\bigg|_{t=0}\varphi_{f,\phi}(n_{x_{1}+y_{1}t x_{2}x_{3}}m_{y_{1}y_{2}y_{3}y_{4}})\;=\;y_{1}\frac{\partial}{\partial x_{1}}\varphi_{f,\phi}(n_{x_{1}x_{2}x_{3}}m_{y_{1}y_{2}y_{3}y_{4}})$$
Therefore, the effect of $E_{1,2}$ is $y_{1}\frac{\partial}{\partial x_{1}}$, and $E_{1}$ differentiates only the cuspform $f$.  Similar arguments show that the effect of $E_{1,3}$ as a differential operator is
$$E_{1,3}\rightarrow y_{2}\frac{\partial}{\partial x_{2}}$$
and
$$E_{2,3}\rightarrow y_{3}\frac{\partial}{\partial x_{3}}$$
Observe that $E_{1,4}$, $E_{2,4}$, and $E_{3,4}$ act by $0$ on $\varphi_{f,\phi}$. We prove this for $E_{1,4}$, the argument being identical for $E_{2,4}$ and $E_{3,4}$. Note
$$E_{1,4}\cdot \varphi_{f,\phi}(n_{x_{1}x_{2}x_{3}}m_{y_{1}y_{2}y_{3}y_{4}})\;=\;\frac{d}{dt}\bigg|_{t=0}\varphi_{f,\phi}(n_{x_{1}x_{2}x_{3}}m_{y_{1}y_{2}y_{3}y_{4}}\left( \begin{array}{cccc}
1 & 0&0&t \\
0& 1&0&0\\
0&0&1&0\\
0&0&0&1\end{array} \right))$$
This is
$$\frac{d}{dt}\bigg|_{t=0}\varphi_{f,\phi}(\left( \begin{array}{cccc}
1 & x_{1}&x_{2}&* \\
0& 1&x_{3}&0\\
0&0&1&0\\
0&0&0&1\end{array} \right)\cdot\left( \begin{array}{cccc}
y_{1} & 0&0&0 \\
0& y_{2}&0&0\\
0&0&y_{3}&0\\
0&0&0&y_{4}\end{array} \right))\;=\;0$$
Let 
$$H_{4}\;=\;\left( \begin{array}{cccc}
0 & 0&0&0 \\
0& 0&0&0\\
0&0&0&0\\
0&0&0&1\end{array} \right)$$
Then
$$H_{4}\cdot \varphi_{f,\phi}(\left( \begin{array}{cccc}
1 & x_{1}&x_{2}&0 \\
0& 1&x_{3}&0\\
0&0&1&0\\
0&0&0&1\end{array} \right)\cdot\left( \begin{array}{cccc}
y_{1} & 0&0&0 \\
0& y_{2}&0&0\\
0&0&y_{3}&0\\
0&0&0&y_{4}\end{array} \right))$$
Which is $$\frac{d}{dt}\bigg|_{t=0}\varphi_{f,\phi}(\left( \begin{array}{cccc}
1 & x_{1}&x_{2}&0 \\
0& 1&x_{3}&0\\
0&0&1&0\\
0&0&0&1\end{array} \right)\cdot\left( \begin{array}{cccc}
y_{1} & 0&0&0 \\
0& y_{2}&0&0\\
0&0&y_{3}&0\\
0&0&0&y_{4}t\end{array} \right))\;=\;\varphi_{f,\phi'}$$
It is clear to see that this is the only element of the Lie algebra differentiating the test function datum. If $\{X_{i}\}$ is a basis of $\mathfrak{gl}_{4}(\mathbb{R})$ and $\{X_{i}\}$ is the dual basis relative to the trace pairing, define an element $\Omega\in U_{\mathfrak{g}}$ by
$$\Omega\;=\;\sum_{i}X_{i}X_{i}^{*}$$
Let $\Omega_{1}$ be the element of $Z_{\mathfrak{gl_{3}}}$ given by 
$$\Omega_{1}\;=\;\frac{1}{2}H_{1}^{2}+\frac{1}{2}H_{2}^{2}+\frac{1}{2}H_{3}^{2}+E_{1,2}E_{2,1}+E_{1,3}E_{3,1}+E_{2,3}+E_{3,2}$$
As shown above, this element differentiates the cuspidal-data, and does not interact with the test function datum. Since $\Omega_{1}\in Z_{\mathfrak{gl_3}}$, it acts by a scalar $\lambda_{f}$ on the irreducible unramified principal series generated by $f$.  Then,
$$\Omega\;=\;\Omega_{1}+H_{4}+\Omega_{2}$$
where $\Omega_{2}=\Omega-\Omega_{1}-H_{4}$.  Since $\Omega_{2}$ interacts with neither the cuspidal data nor the test function data, its effect as a differential operator on $\varphi_{f,\phi}$ will be $0$.  Note that $\Omega_{1}\cdot \varphi_{f,\phi}=\varphi_{\lambda_{f}f,\phi}$, while $H_{4}\cdot \varphi_{f,\phi}=\varphi_{f,\phi'}$.  Therefore,
$$\Omega \varphi_{f,\phi}\;=\;\varphi_{f,(\lambda_{f}\phi+\phi')}$$
producing another $3,1$ pseudo-Eisenstein series, which is orthogonal to the $2,1,1$ pseudo-Eisenstein series, $1,1,1,1$ pseudo-Eisenstein series, and $2,2$ pseudo-Eisenstein series, by [MW,p.100].
\end{proof}

Finally, we consider $2,1,1$ pseudo-Eisenstein series.  Let $X_{1},X_{2},\dots,X_{n}$ is a basis for $\mathfrak{gl}_{4}(\mathbb{R})$, with dual basis $X_{1}^{*},X_{2}^{*},\dots,X_{n}^{*}$ relative to the trace pairing.  Let $\Omega=\sum_{i} X_{i}\cdot X_{i}^{*}\in Z\mathfrak{g}$, and let $\Delta$ be $\Omega$ descended to $G/K$.  We will show that application of $\Delta$ to a $2,1,1$ pseudo-Eisenstein series made with cuspidal data $f$ and test functions $\phi_{1},\phi_{2}$ produces another $2,1,1$ pseudo-Eisenstein series.  This will prove that $2,1,1$ pseudo-Eisenstein series are orthogonal to all other (non-associate) pseudo-Eisenstein series by [MW, p.100].  We recall the construction of $2,1,1$ pseudo-Eisenstein series.  Let $f$ be a spherical cuspform on $GL_{2}(k)\backslash GL_{2}(\mathbb{A})$, and let $\phi_{1}, \phi_{2}\in C_{c}^{\infty}(\mathbb{R})$. Let
$$\varphi_{f,\phi_{1},\phi_{2}}(\left( \begin{array}{ccc}
A & 0&0 \\
0& b&0\\
0&0&c\end{array} \right))\;=\;f(A)\cdot\phi_{1}(\frac{\text{det}A}{b^{2}})\cdot \phi_{2}(\frac{\text{det}A}{c^{2}})$$
The $2,1,1$ pseudo-Eisenstein series with this data is
$$\Psi_{\varphi}\;=\;\sum_{\gamma\in P_{k}\backslash G_{k}}\varphi_{f,\phi_{1},\phi_{2}}(\gamma g)$$
\begin{prop} The $2,1,1$ pseudo-Eisenstein series $\Psi_{\varphi}$ is orthogonal to all other (non-associate) pseudo-Eisenstein series in $\text{Sob}(+1)$.
\end{prop}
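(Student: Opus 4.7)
The plan is to mirror exactly the strategy used in the two preceding propositions. Given any non-associate pseudo-Eisenstein series $\Psi_{\psi}$, the inner product in $\mathrm{Sob}(+1)$ is
$$\langle \Psi_{\varphi}, \Psi_{\psi}\rangle_{+1} \;=\; \langle \Psi_{\varphi}, \Psi_{\psi}\rangle_{L^2} + \langle \Delta \Psi_{\varphi}, \Psi_{\psi}\rangle_{L^2}.$$
The first summand vanishes by [MW, p.100]. Hence it suffices to show that $\Delta \Psi_{\varphi}$ lies again in the span of $2,1,1$ pseudo-Eisenstein series with cuspidal data $f$ and compactly-supported test functions, because [MW, p.100] will then apply a second time.

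To analyze $\Omega \cdot \varphi_{f,\phi_1,\phi_2}$, I would decompose the Casimir along the Levi $M^{2,1,1} = GL_2 \times GL_1 \times GL_1$ as $\Omega = \Omega_1 + \Omega_2 + \Omega_3$. Here $\Omega_1$ is the Casimir of the upper-left $\mathfrak{gl}_2$ block (built from $H_{1,2}$, $E_{1,2}$, $E_{2,1}$); by the same Iwasawa-coordinate argument already used in the $2,2$ case, it restricts to the $SL_2$-invariant Laplacian on the upper $GL_2$ block and acts on $f$ by the scalar $\lambda_f$ while leaving $\phi_1, \phi_2$ untouched. The piece $\Omega_2$ I would take to be the quadratic expression in the Cartan elements $H_3 = E_{3,3}$ and $H_4 = E_{4,4}$ (suitably combined with $H_1+H_2$ to reflect the fact that $\phi_1$ depends on $\det A / b^2$ and $\phi_2$ on $\det A / c^2$); a direct differentiation using the coordinates $m_{y_1,y_2,y_3,y_4}$ will show that $\Omega_2 \cdot \varphi_{f,\phi_1,\phi_2}$ yields $\varphi_{f, \widetilde{\phi}_1, \widetilde{\phi}_2}$ for some new pair $\widetilde{\phi}_1, \widetilde{\phi}_2 \in C_c^\infty(\mathbb{R})$ produced by applying at-most-second-order differential operators to $\phi_1, \phi_2$.

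The remaining piece $\Omega_3$ collects the off-diagonal terms $E_{i,j} E_{j,i} + E_{j,i} E_{i,j}$ for $(i,j)$ crossing the Levi boundary, i.e.\ $(i,j) \in \{(1,3),(1,4),(2,3),(2,4),(3,4)\}$. Each $E_{i,j}$ with $i < j$ in this list lies in the unipotent radical of $P^{2,1,1}$, and exactly as in the computation of $E_{1,4}$ in the $3,1$ case, right translation by $\exp(t E_{i,j})$ preserves the Levi block structure of the diagonal argument, so $E_{i,j} \cdot \varphi_{f,\phi_1,\phi_2} = 0$. For the opposite partners $E_{j,i}$ I would use the relation $E_{i,j} E_{j,i} = E_{j,i} E_{i,j} + [E_{i,j},E_{j,i}] = E_{j,i} E_{i,j} + (H_i - H_j)$, so that after annihilating the $E_{i,j}$ factor the only surviving contribution is a Cartan operator of the form $H_i - H_j$, which by the analysis of $\Omega_2$ again produces a function of the form $\varphi_{f, \phi_1', \phi_2'}$ with $\phi_1', \phi_2' \in C_c^\infty(\mathbb{R})$.

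The main obstacle is the bookkeeping in the step for $\Omega_3$: one must verify that the Cartan corrections produced by the commutator relations combine cleanly with $\Omega_2$ so that the total action preserves the tensor structure $f(A) \cdot \phi_1(\det A/b^2) \cdot \phi_2(\det A/c^2)$, i.e.\ the cuspform $f$ is untouched and only $\phi_1, \phi_2$ are differentiated. Once this is verified, $\Delta \Psi_{\varphi_{f,\phi_1,\phi_2}} = \Psi_{\varphi_{f, \phi_1', \phi_2'}}$, and a second application of [MW, p.100] gives $\langle \Delta \Psi_{\varphi}, \Psi_{\psi}\rangle_{L^2} = 0$ for every non-associate $\Psi_{\psi}$, proving orthogonality in $\mathrm{Sob}(+1)$. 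Iterating the same computation with $\Delta^k \Psi_{\varphi}$ in place of $\Delta \Psi_{\varphi}$ extends the orthogonality to every Sobolev space, by the same inductive argument used in the $2,2$ and $3,1$ cases.
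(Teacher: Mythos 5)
Your proposal is correct and follows essentially the same route as the paper: show that Casimir applied to the data $\varphi_{f,\phi_{1},\phi_{2}}$ returns data of the same shape (cuspidal datum $f$ times test functions in $C_{c}^{\infty}(\mathbb{R})$), so that $\Delta\Psi_{\varphi}$ lies again in the span of $2,1,1$ pseudo-Eisenstein series, and then invoke the $L^{2}$-orthogonality of [MW, p.100] a second time, with the same inductive extension to higher Sobolev norms. The only divergence is in bookkeeping: the paper asserts that every Casimir term crossing the Levi acts by zero and keeps only $H_{3}^{2}+H_{4}^{2}$ together with the $GL_{2}$-block Casimir, whereas you route the cross terms $E_{i,j}E_{j,i}+E_{j,i}E_{i,j}$ through the commutation relations and pick up Cartan corrections $H_{i}-H_{j}$; these corrections combine into pieces such as $H_{1}+H_{2}$, $H_{3}$, $H_{4}$, which by triviality of the central character only differentiate $\phi_{1},\phi_{2}$, so the tensor structure is preserved exactly as you anticipate and the conclusion is the same.
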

\begin{proof}We consider basis elements of the Lie algebra $\mathfrak{gl}_{4}(\mathbb{R})$.  Let $E_{ij}$ be as before.  Let $H_{i}$ be the matrix with $1$ on the $i^{\text{th}}$ diagonal entry and $0$'s elsewhere.  We consider the effect of the $H_{i}$'s as differential operators on $\varphi_{f,\phi_{1},\phi_{2}}$.  It will be convenient to use an Iwasawa decomposition on the $GL_{2}$ block in the upper left hand corner.  We will be considering right $K$-invariant functions, so $\varphi$ is determined by its effect on $n_{x}m_{y_{1}y_{2}}$ where 
$$n_{x}\;=\;\left( \begin{array}{cccc}
1 & x&0&0 \\
0& 1&0&0\\
0&0&1&0\\
0&0&0&1\end{array} \right)\;\;\;\;\text{and}\;\;\;m_{y_{1}y_{2}}\;=\;\left( \begin{array}{cccc}
y_{1} & 0&0&0 \\
0& y_{2}&0&0\\
0&0&1&0\\
0&0&0&1\end{array} \right)$$
We calculate $H_{1}$'s effect on $\varphi_{f,\phi_{1},\phi_{2}}(n_{x}m_{y_{1} y_{2}})$.  Note that 
$$H_{1}\cdot \varphi(n_{x}m_{y_{1} y_{2}})\;=\;\frac{d}{dt}\bigg|_{t=0}\varphi(n_{x} m_{y_{1}e^{t} y_{2}})\;=\;y_{1}\frac{\partial}{\partial y_{1}}\varphi(n_{x}m_{y_{1} y_{2}})$$
Similarly,
$$H_{2}\cdot \varphi(n_{x}m_{y_{1}y_{2}})\;=\;y_{2}\frac{\partial}{\partial y_{2}}\varphi(n_{x}m_{y_{1}y_{2}})$$
Therefore, $H_{1}$ and $H_{2}$ differentiate the cuspform $f$, and leave the functions $\phi_{1}$ and $\phi_{2}$ as they are.  As before,
$$E_{1,2}\cdot \varphi(n_{x}m_{y_{1}y_{2}})\;=\;y_{1}\frac{\partial}{\partial x}\varphi(n_{x}m_{y_{1} y_{2}})$$
Let us consider the effect of $H_{3}$ as a differential operator on $\varphi$.  Observe that
$$H_{3}\cdot \varphi_{f,\phi_{1},\phi_{2}}(n_{x}m_{y_{1} y_{2} y_{3} y_{4}})\;=\;\frac{d}{dt}\bigg|_{t=0}\varphi_{f,\phi_{1},\phi_{2}}(n_{x}m_{y_{1} y_{2} y_{3} e^{t} y_{4}})$$
This is
$$\frac{d}{dt}\bigg|_{t=0} f(A)\phi_{1}(\frac{\text{det}A}{y_{3}^{2}}e^{-2t})\phi_{2}(\frac{\text{det}A}{y_{4}^{2}})\;=\;-2 f(A)\cdot \phi_{1}'(\frac{\text{det}A}{y_{3}^{2}})\phi_{2}(\frac{\text{det}A}{y_{4}^{2}})$$
Therefore,
$$H_{3}\cdot \varphi_{f,\phi_{1},\phi_{2}}(n_{x}m_{y_{1} y_{2} y_{3} y_{4}})\;=\;\varphi_{f,-2\phi_{1}',\phi_{2}}$$
Similarly,
$$H_{4}\cdot \varphi_{f,\phi_{1},\phi_{2}}(n_{x}m_{y_{1} y_{2} y_{3} y_{4}})\;=\;\varphi_{f,\phi_{1},-2\phi_{2}'}$$
Observe that $E_{1,3}$ acts as $0$ on $\varphi_{f,\phi_{1},\phi_{2}}$.  Indeed,
$$E_{1,3}\cdot\varphi_{f,\phi_{1},\phi_{2}}\;=\;\frac{d}{dt}\bigg|_{t=0}\varphi_{f,\phi_{1},\phi_{2}}(\left( \begin{array}{cccc}
z_{1} &z_{2} &0&0 \\
z_{3}& z_{4}&0&0\\
0&0&b&0\\
0&0&0&c\end{array} \right)\left( \begin{array}{cccc}
1 & 0&t&0 \\
0& 1&0&0\\
0&0&1&0\\
0&0&0&1\end{array} \right))$$
This is just
$$\frac{d}{dt}\bigg|_{t=0}\varphi_{f,\phi_{1},\phi_{2}}(\left( \begin{array}{cccc}
z_{1} &z_{2} &0&0 \\
z_{3}& z_{4}&0&0\\
0&0&b&0\\
0&0&0&c\end{array} \right))\;=\;0$$
The effect of $E_{1,4}$ is computed similarly.  Observe
$$E_{1,4}\cdot \varphi_{f,\phi_{1},\phi_{2}}\;=\;\frac{d}{dt}\bigg|_{t=0}\varphi_{f,\phi_{1},\phi_{2}}(\left( \begin{array}{cccc}
z_{1} &z_{2} &0&0 \\
z_{3}& z_{4}&0&0\\
0&0&b&0\\
0&0&0&c\end{array} \right)\left( \begin{array}{cccc}
1 & 0&0&t \\
0& 1&0&0\\
0&0&1&0\\
0&0&0&1\end{array} \right))$$
Which is
$$\frac{d}{dt}\bigg|_{t=0}\varphi_{f,\phi_{1},\phi_{2}}(\left( \begin{array}{cccc}
z_{1} &z_{2} &0&0 \\
z_{3}& z_{4}&0&0\\
0&0&b&0\\
0&0&0&c\end{array} \right))\;=\;0$$
The elements $E_{3,1}$, $E_{3,2}$, $E_{4,1}$ and $E_{4,2}$ also act as $0$.  To see that $E_{3,4}$ acts by $0$, note
$$E_{3,4}\cdot \varphi_{f,\phi_{1},\phi_{2}}\;=\;\frac{d}{dt}\bigg|_{t=0}\varphi_{f,\phi_{1},\phi_{2}}(\left( \begin{array}{cccc}
z_{1} &z_{2} &0&0 \\
z_{3}& z_{4}&0&0\\
0&0&b&0\\
0&0&0&c\end{array} \right)\left( \begin{array}{cccc}
1 & 0&0&0 \\
0& 1&0&0\\
0&0&1&t\\
0&0&0&1\end{array} \right))$$
Which is
$$\frac{d}{dt}\bigg|_{t=0}\varphi_{f,\phi_{1},\phi_{2}}(\left( \begin{array}{cccc}
z_{1} &z_{2} &0&0 \\
z_{3}& z_{4}&0&0\\
0&0&b&bt\\
0&0&0&c\end{array} \right))\;=\;0\;=\;\frac{d}{dt}\bigg|_{t=0}f(A)\cdot \phi_{1}(\frac{\text{det}A}{b^{2}})\phi_{2}(\frac{\text{det}A}{c^{2}})\;=\;0$$
Likewise, $E_{4,3}$ acts by $0$ as a differential operator. The terms which contribute non-trivially to the effect of the $PGL_{4}(\mathbb{R})$-Laplacian are
$$(H_{1}^{2}+H_{2}^{2}+E_{1,2}E_{2,1}+E_{2,1}E_{1,2})+H_{3}^{2}+H_{4}^{2}$$
the parenthetical expression acts by a scalar $\lambda_{f}$ on the cuspform $f$.  That is,
$$(H_{1}^{2}+H_{2}^{2}+E_{1,2}E_{2,1}+E_{2,1}E_{1,2})\varphi_{f,\phi_{1},\phi_{2}}\;=\;\varphi_{\lambda_{f}f,\phi_{1},\phi_{2}}$$
since $H_{1}^{2}+H_{2}^{2}+E_{1,2}E_{2,1}+E_{2,1}E_{1,2}$ is the Laplacian on $PGL_{2}(\mathbb{R})$.  The remaining two terms in expression act as follows:
$$H_{3}^{2}\varphi_{f,\phi_{1},\phi_{2}}\;=\;\varphi_{f,4\phi''_{1},\phi_{2}}$$
Therefore,
$$(H_{1}^{2}+H_{2}^{2}+E_{1,2}E_{2,1}+E_{2,1}E_{1,2}+H_{3}^{2}+H_{4}^{2})\varphi_{f,\phi_{1},\phi_{2}}\;=\varphi_{\lambda_{f}f,\phi_{1},\phi_{2}}+\varphi_{f,4\phi''_{1},\phi_{2}}+\varphi_{f,\phi_{1},4\phi''_{2}}$$
Therefore, with $\Delta$ the $PGL_{4}(\mathbb{R})$-Laplacian,
$$\Delta\Psi_{\varphi}\;=\;\Psi_{\varphi_{\lambda_{f},\phi_{1},\phi_{2}}}+\Psi_{\varphi_{f,4\phi''_{1},\phi_{2}}}+\Psi_{\varphi_{f,\phi_{1},4\phi''_{2}}}$$
is again in the vector space spanned by $2,1,1$ pseudo-Eisenstein series, so is orthogonal to all other non-associate pseudo-Eisenstein series in $L^{2}$, as claimed.\end{proof}

We review Maass-Selberg relations and the theory of the constant term for $GL_{4}$,  as in [Harish-Chandra, p.75], [MW, p.100-101] and [Garrett 2011a].  Let $P=P^{2,2}$ be the standard, maximal parabolic subgroup
\[ P^{2,2}=\left( \begin{array}{cc}
GL_{2} & * \\
0& GL_{2}\end{array} \right)\]
 with unipotent radical $N^{P}$ and standard Levi component $M^{P}$.  The parabolic $P$ is self-associate. Let $f$ be an everywhere spherical cuspform on $GL_{2}(k)\backslash GL_{2}(\mathbb{A})$ with trivial central character and let $\varphi$ be the vector
$$\varphi(nmk)\;=\;\varphi_{s,f}(nmk)\;=\;|\text{det}\;m_{1}|^{2s}|\text{det}\;m_{2}|^{-2s}\cdot f(m_{1})\cdot \overline{f}(m_{2})$$
where
\[ m=\left( \begin{array}{cc}
m_{1} & * \\
0& m_{2}\end{array} \right)\]
with $m_{1},m_{2}$ in $GL_{2}$, so that $m$ is in the standard Levi component $M$ of the parabolic subgroup $P$, $n\in N$ its unipotent radical, $k\in K$, and $|\cdot|$ is the idele norm.  
\begin{defn} The spherical Eisenstein series is $$E_{s,f}^{P}(g)\;=\;E_{s,f}(g)\;=\;\sum_{\gamma\in P_{k}\backslash G_{k}}\varphi_{s,f}^{P}(\gamma\cdot g)\;\;\;\;\text{for}\;\text{Re}(s)\gg 1$$
\end{defn}
For $\text{Re}(s)$ sufficiently large, this series converges absolutely and uniformly on compacta.  We define truncation operators.  For a standard maximal proper parabolic $P=P^{2,2}$ as above, for $g=nmk$ with $$m=\left( \begin{array}{cc}
m_{1} & * \\
0& m_{2}\end{array} \right)$$ as above, $n\in N^{P}$ and $k\in O(4)$ define the spherical function
$$h^{P}(g)\;=\;h^{P}(pk)\;=\;\frac{|\text{det}\;m_{1}|^{2}}{|\text{det}\;m_{2}|^{2}}\;=\;\delta^{P}(nm)\;=\;\delta^{P}(m)$$
where $\delta^{P}$ is the modular function on $P$.  For fixed large real $T$, the $T$-tail of the $P$-constant term of a left $N_{k}^{P}$-invariant function $F$
\begin{displaymath}
   c_{P}^{T}F(g) = \left\{
     \begin{array}{lr}
       c_{P}F(g) & : h^{P}(g)\geq T\\
       0 & : h^{P}(g) \leq T
     \end{array}
   \right.
\end{displaymath} 
\begin{defn} The \textit{truncation operator} is
$$\Lambda^{T}E_{\varphi}^{P}\;=\; E_{\varphi}^{P}-E^{P}(c_{P}^{T}E_{\varphi}^{P}) $$
where
$$E^{P}(\varphi)(g)\;=\;\sum_{\gamma\in P_{\mathbb{Z}}\backslash\Gamma}\varphi(\gamma g)$$
\end{defn}
These are square-integrable, by the theory of the constant term([MW, pp.18-40], [Harish-Chandra]).  The Maass-Selberg relations  describe their inner product as follows. The inner product $$\langle \Lambda^{T}E_{\varphi}^{P},\Lambda^{T}E_{\psi}^{P}\rangle$$ of truncations $\Lambda^{T}E_{\varphi}^{P}$ and $\Lambda^{T}E_{\psi}^{P}$ of two Eisenstein series $E_{\varphi}^{P}$ and $E_{\psi}^{P}$ attached to cuspidal-data $\varphi$, $\psi$ on maximal proper parabolics $P=P^{2,2}$ is given as follows.  The term $c_{s}$ refers to the quotient of Rankin-Selberg L-functions appearing in the constant term $c_{P}E_{\varphi}^{P}$.  That is, $$c_{s}=\frac{L(2s-1,\pi\otimes \pi^{'})}{L(2s,\pi\otimes \pi^{'})}$$ as in [Langlands 544,Section 4] where $\pi$ is locally everywhere an unramified principal series isomorphic to the representation generated by the cuspform $f$ locally.
\begin{prop}{\textbf{Maass-Selberg relations}}\begin{align*}\langle \Lambda^{T}E_{g_{1}}^{P},\Lambda^{T}E_{g_{2}}^{P}\rangle=&\langle g_{1},g_{2}\rangle \frac{T^{s+\overline{r}-1}}{s+\overline{r}-1}+\langle g_{1},g_{2}^{w}\rangle \overline{c_{r}^{g_{2}}}\frac{T^{s+(1-\overline{r})-1}}{s+(1-\overline{r})-1}\\+&\langle g_{1}^{w},g_{2}\rangle c_{s}^{g_{1}}\frac{T^{(1-s)+\overline{r}-1}}{(1-s)+\overline{r}-1}+\langle g_{1}^{w},g_{2}^{w}\rangle c_{s}^{g_{1}}\overline{c_{r}^{g_{2}}}\frac{T^{(1-s)+(1-\overline{r})-1}}{(1-s)+(1-\overline{r})-1}\end{align*}\end{prop}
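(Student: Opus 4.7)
The plan is to follow the standard Maass--Selberg computation specialized to the self-associate maximal parabolic $P = P^{2,2}$ and cuspidal data on its Levi $M^P \cong GL_2 \times GL_2$, after the patterns in [Harish-Chandra], [MW p.100--101], and [Garrett 2011a] already cited. The truncation $\Lambda^T$ is engineered precisely so that an otherwise divergent inner product of two Eisenstein series becomes integrable and evaluable as an explicit boundary integral in the parabolic direction. I would begin with the algebraic identity
$$\Lambda^T E_{g_i}^P \;=\; E_{g_i}^P - E^P(c_P^T E_{g_i}^P),$$
expand $\langle \Lambda^T E_{g_1}^P, \Lambda^T E_{g_2}^P\rangle$ bilinearly, and unwind each piece using the adjunction between $E^P(\cdot)$ and $c_P$ established earlier for pseudo-Eisenstein series. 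The mixed cross-terms collapse by disjoint support, since $c_P^T E$ lives on the tail $\{h^P \geq T\}$, leaving a single boundary integral in the height direction.

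Next, I would substitute the known form of the constant term. Because $P^{2,2}$ is maximal and self-associate with two-element Weyl group $\{1, w\}$, and the inducing data is cuspidal, the constant term has only two surviving pieces,
$$c_P E_{g_i}^P \;=\; \varphi_{g_i} \;+\; c_{s_i}^{g_i}\cdot \varphi_{g_i^w}^w,$$
where $g_i^w$ denotes the cuspidal datum with the two $GL_2$ cuspform factors swapped and exponent replaced by $1 - s_i$, and $c_{s_i}^{g_i} = L(2s_i - 1, \pi \otimes \pi')/L(2s_i, \pi \otimes \pi')$ is the functional equation coefficient of [Langlands 544]. Substituting into the surviving boundary integral and expanding produces four cross-terms: the coefficient $c_s^{g_1}$ appears exactly when $\varphi^w$ is chosen on the $g_1$ side, and $\overline{c_r^{g_2}}$ exactly when $\varphi^w$ is chosen on the $g_2$ side.

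In each cross-term the cuspform factors on the Levi integrate out, by orthogonality of spherical cuspforms on $GL_2$, to one of the four scalar pairings $\langle g_1, g_2\rangle$, $\langle g_1, g_2^w\rangle$, $\langle g_1^w, g_2\rangle$, $\langle g_1^w, g_2^w\rangle$. The remaining one-dimensional integral in $t = h^P$ is elementary: after accounting for the modular function $\delta^P$ in the Iwasawa measure, it takes the form $\int t^{\alpha - 1}\,dt = T^\alpha/\alpha$ (interpreted by meromorphic continuation in $\alpha$ as needed), where $\alpha$ runs through the four values $s + \overline r - 1$, $s + (1 - \overline r) - 1$, $(1 - s) + \overline r - 1$, and $(1 - s) + (1 - \overline r) - 1$ dictated by the choice of $\varphi$ versus $\varphi^w$ on each side. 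Combining these integrals with the cuspform pairings and the $c$-factors yields the four summands of the stated relation, term by term.

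The principal obstacle is careful bookkeeping rather than any conceptual surprise. One must align the modular function and the shift by $\rho^P$ with the principal-series normalization of $\varphi_{s,f}$ so that the four exponents line up precisely as $s + \overline r - 1$ and its reflections under $s \mapsto 1 - s$ and $\overline r \mapsto 1 - \overline r$; one must confirm that the mixed cross-terms involving one $c_P^T$-piece and one untruncated piece really cancel, using the disjoint support of $\chi_{\{h^P \geq T\}}$ and its $w$-translate; and one must check that no extra Bruhat-cell contributions appear in $c_P E^P(c_P^T E)$, which ultimately uses cuspidality of the inducing data to annihilate integrations against non-associate parabolics. Self-associateness of $P^{2,2}$ keeps this accounting manageable, since only the single non-trivial Weyl element $w$ enters, and the four cross-terms correspond one-to-one with the four summands of the conclusion.
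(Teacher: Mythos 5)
First, note that the paper itself gives no proof of this proposition: it is recalled from [Harish-Chandra, p.75], [MW, pp.100--101] and [Garrett 2011 a/f], so your sketch can only be judged on its own, and while it follows the standard truncation-and-unwinding route, two of its pivotal claims fail as stated. The naive bilinear expansion of $\langle\Lambda^{T}E^{P}_{g_{1}},\Lambda^{T}E^{P}_{g_{2}}\rangle$ is not available term by term (the leading piece $\langle E^{P}_{g_{1}},E^{P}_{g_{2}}\rangle$ diverges), and the mixed pieces do \emph{not} collapse by disjoint support: after adjunction, $\langle E^{P}_{g_{1}},E^{P}(c_{P}^{T}E^{P}_{g_{2}})\rangle=\langle c_{P}E^{P}_{g_{1}},c_{P}^{T}E^{P}_{g_{2}}\rangle$ is a tail integral over $h^{P}\geq T$ of the full constant terms, and such tail contributions are essential. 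The only legitimate disjoint-support step is the single identity $\langle\Lambda^{T}E^{P}_{g_{1}},E^{P}(c_{P}^{T}E^{P}_{g_{2}})\rangle=\langle c_{P}\Lambda^{T}E^{P}_{g_{1}},c_{P}^{T}E^{P}_{g_{2}}\rangle=0$ for $T\gg1$, which reduces everything to $\langle\Lambda^{T}E^{P}_{g_{1}},E^{P}_{g_{2}}\rangle$; unwinding $E^{P}_{g_{2}}$ then pairs $c_{P}\Lambda^{T}E^{P}_{g_{1}}$ against $\varphi_{g_{2}}$ alone, and this yields only the two summands $\langle g_{1},g_{2}\rangle\frac{T^{s+\overline{r}-1}}{s+\overline{r}-1}$ and $c_{s}^{g_{1}}\langle g_{1}^{w},g_{2}\rangle\frac{T^{(1-s)+\overline{r}-1}}{(1-s)+\overline{r}-1}$ from the region $h^{P}<T$. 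The two summands carrying $\overline{c_{r}^{g_{2}}}$ arise from the region $h^{P}\geq T$, via $\int_{T}^{\infty}h^{\alpha-1}\,dh=-T^{\alpha}/\alpha$ for $\mathrm{Re}(\alpha)<0$; they do not come from a single integral $\int_{0}^{T}$ of the product of the two full constant terms, which only agrees with the truth after term-by-term meromorphic continuation.

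Second, the mechanism that produces those tail terms is exactly the contribution you claim is absent: the associate (long) cell $w$ of $W^{P}\backslash W/W^{P}$ in $c_{P}E^{P}(c_{P}^{T}E)$. Cuspidality of the inducing data annihilates only the cell attached to non-associate parabolics; the $w$-cell gives a genuine incomplete-intertwining term built from the tail data $c_{P}^{T}E$. What saves the argument is not cuspidality but support: since $c_{P}^{T}E$ is concentrated at height $\geq T$, the $w$-cell term is supported at low height, so it vanishes on $h^{P}\geq T$ for $T$ large --- this is precisely what legitimizes both the vanishing of $c_{P}\Lambda^{T}E^{P}_{g}$ above height $T$ and, when the bookkeeping is completed, the appearance of the $\overline{c_{r}^{g_{2}}}$-summands. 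As written, your accounting would either drop those two summands or rest on an unjustified divergent integral; with the associate-cell term restored and the $h^{P}<T$ versus $h^{P}\geq T$ regions tracked separately (the Levi orthogonality step you describe is fine), the computation does close up to the stated four-term relation.
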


Following [M-W pp.18-40], an important consequence of the Maass-Selberg relations is that for a maximal, proper, self-associate parabolic $P$ in $GL_{n}$, on the half-plane $\text{Re}(s)\geq\frac{1}{2}$ the only possible poles are on the real line, and only occur if $\langle f,f^{w}\rangle\neq 0$.  In that case, any pole is simple, and the residue is square-integrable.  In particular, taking $f=f_{o}\times f_{o}$
$$\langle \text{Res}_{s_{o}}E_{\varphi}^{P},\text{Res}_{s_{o}}E_{\varphi}^{P}\rangle\;=\;\langle f_{o},f_{o}\rangle^{2}\cdot \text{Res}_{s_{o}}c_{s}^{\varphi}$$
as in [Harish-Chandra,p.75].
The group $GL_{4}$ gives the first instance of non-constant, noncuspidal contribution to the discrete spectrum; the residues of the Eisenstein series at its poles give Speh forms.
Recall ([Langlands 544] Section 4, though he uses a different normalization), that the constant term is equal to
$$\big|\frac{\text{det} A}{\text{det} D}\big|^{s}\cdot f(A)\cdot\overline{f}(D) + \big|\frac{\text{det} A}{\text{det} D}\big|^{1-s}\cdot\frac{\Lambda(2s-1,\pi\otimes \pi^{'})}{\Lambda(2s,\pi\otimes \pi^{'})}\cdot \overline{f}(A)\cdot f(D)$$
The $L$-function appearing in the numerator necessarily has a residue at the unique pole in the right half-plane.  This residue of the Eisenstein series at this pole is the Speh form [Jacquet] attached to a $GL(2)$ cuspform $f$, and is in $L^{2}$.

We now compute the $2,2$ constant term of the $2,2$ Eisenstein series with cuspidal data $f$ and $\overline{f}$. Let $P=P^{2,2}$ be the self-associate standard parabolic in $G=GL_{4}$ with Levi component $GL_{2}\times GL_{2}$.  Let $f_{1}$ and $f_{2}$ be spherical cuspforms on $GL_{2}(k)\backslash GL_{2}(\mathbb{A})$.  Define the spherical vector
$$\varphi_{s,f_{1},f_{2}}^{P}(\left( \begin{array}{cc}
A & * \\
0& D\end{array} \right))\;=\;\big|\frac{\text{det}A}{\text{det} D}\big|^{s}\cdot f_{1}(A)\cdot f_{2}(D)$$
and then extending to $G_{\mathbb{A}}$ by right $K_{v}$-invariance and $Z_{v}$-invariance everywhere locally.  Define cuspidal-data Eisenstein series for $\text{Re}(s)\gg 1$ by

$$E_{s,f_{1},f_{2}}^{P}(g)\;=\;\sum_{\gamma\in P_{k}\backslash G_{k}}\varphi_{s,f_{1},f_{2}}^{P}(\gamma g)$$
\begin{prop} The $P$-constant term of the $P$-Eisenstein series $E_{s,f_{1},f_{2}}^{P}(g)$ is given by
$$c_{P}E_{s,f_{1},f_{2}}^{P}(g)\;=\;\big|\frac{\text{det}A}{\text{det} D}\big|^{s}\cdot f_{1}(A)\cdot f_{2}(D)+\big|\frac{\text{det}A}{\text{det} D}\big|^{1-s}\cdot f_{1}(A)\cdot f_{2}(D)\cdot\frac{L(\pi_{1}\otimes \pi_{2},2s-1)}{L(\pi_{1}\otimes \pi_{2},2s)}$$
where $\pi_{1}$ is the $G_{\mathbb{A}}$-representation generated by $f_{1}$ and $\pi_{2}$ is the $G_{\mathbb{A}}$-representation generated by $f_{2}$.
\end{prop}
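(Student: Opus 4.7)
The plan is to unwind the Eisenstein series against the constant term integral, organize the contributions by Bruhat cells in $P_k\backslash G_k/P_k$, and evaluate the single surviving non-trivial cell as a global intertwining integral that factors into an Euler product of local Rankin-Selberg calculations.

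For $\text{Re}(s)$ large enough to justify interchange of sum and integral, I would begin with
$$c_P E_{s,f_1,f_2}^P(g) \;=\; \int_{N_k^P \backslash N_{\mathbb{A}}^P} \sum_{\gamma \in P_k \backslash G_k} \varphi_{s,f_1,f_2}^P(\gamma n g)\,dn$$
and decompose the $\gamma$-sum using the Bruhat decomposition $G_k = \bigsqcup_w P_k\, w\, P_k$, with $w$ ranging over representatives of $W^{M^P}\backslash W/W^{M^P}$. For the self-associate $(2,2)$ parabolic there are three such double cosets: the identity, a ``middle'' representative, and the long element $w_0$ swapping the two $GL_2$ blocks. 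The usual cell-by-cell unwinding collapses the $\gamma$-sum against the integration over $N_{\mathbb{A}}^P$ and yields an integral of $\varphi_{s,f_1,f_2}^P$ along a unipotent subgroup. The identity cell contributes exactly the main term $|\det A/\det D|^s\cdot f_1(A)\cdot f_2(D)$.

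For the middle cell, the unwound integral contains integration of $f_1$ or $f_2$ over the unipotent radical of a proper parabolic inside one of the $GL_2$ factors of the Levi; the Gelfand cuspidality condition on each $f_i$ forces this contribution to vanish. The $w_0$ cell produces the global intertwining integral
$$(M_{w_0}\varphi_{s,f_1,f_2}^P)(g) \;=\; \int_{N_{\mathbb{A}}^P} \varphi_{s,f_1,f_2}^P(w_0^{-1} n g)\,dn,$$
and conjugation by $w_0$ converts the exponent $s$ into $1-s$ (via the modular character on $N^P$) and interchanges the roles of the two $GL_2$ blocks, producing the factor $|\det A/\det D|^{1-s}$ times the intertwined cuspidal data.

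The remaining task is to evaluate $M_{w_0}$ as a scalar on the cuspidal data. I would factor the integral as a product over all places using the local decomposition $\varphi_{s,f_1,f_2}^P = \otimes_v \varphi_{s,f_{1,v},f_{2,v}}^P$. At each non-archimedean unramified place, the local intertwining integral on the spherical vector in the induced representation is precisely the unramified local Rankin-Selberg integral for $\pi_{1,v} \otimes \pi_{2,v}$, and the Langlands-Shahidi / Gindikin-Karpelevich computation identifies it with the ratio $L_v(\pi_{1,v}\otimes\pi_{2,v},2s-1)/L_v(\pi_{1,v}\otimes\pi_{2,v},2s)$ times the spherical vector of the intertwined representation. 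The archimedean place contributes the corresponding archimedean gamma factor ratio. Taking the product over all places yields the global factor $L(\pi_1\otimes\pi_2,2s-1)/L(\pi_1\otimes\pi_2,2s)$, exactly the second summand of the constant term.

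The main obstacle is the unramified local evaluation identifying the intertwining integral with the Rankin-Selberg local $L$-factor ratio. This is a standard but delicate calculation requiring careful bookkeeping of Haar measure normalizations, modular characters, and the triviality of the central characters of $f_1$ and $f_2$ so that the factor $|\det A/\det D|$ descends to $PGL_4$. Convergence throughout is guaranteed by $\text{Re}(s)\gg 1$, and the final identity extends to all $s$ by the meromorphic continuation of both the Eisenstein series and the Rankin-Selberg $L$-function.
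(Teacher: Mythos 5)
Your proposal follows essentially the same route as the paper: unwind the constant term over the Bruhat cells of $P_k\backslash G_k/P_k$, observe that the identity cell gives the main term and the middle cell dies by cuspidality, and evaluate the long-element intertwining integral place by place, with the unramified local computation (which the paper carries out explicitly by factoring the long intertwining operator into the four simple-reflection operators $S_{\sigma_{2}}\circ S_{\sigma_{3}}\circ S_{\sigma_{1}}\circ S_{\sigma_{2}}$ and reducing each to a $GL_2$ Gindikin--Karpelevich calculation) producing the ratio $L(\pi_{1}\otimes\pi_{2},2s-1)/L(\pi_{1}\otimes\pi_{2},2s)$. The only difference is that you cite this local evaluation as standard while the paper writes out the rank-one reductions explicitly; the argument is otherwise the same.
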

\begin{proof} The constant term of $E_{s,f_{1},f_{2}}$ along $P$ is given by
$$c_{P}E_{s,f_{1},f_{2}}^{P}(g)\;=\;\int_{N_{k}\backslash N_{\mathbb{A}}}E_{s,f_{1},f_{2}}^{P}(ng)\;dn\;=\;\sum_{\xi\in P_{k}\backslash G_{k}/N_{k}}\int_{\xi^{-1}P_{k}\xi\cap N_{k}\backslash N_{\mathbb{A}}}\varphi_{s,f_{1},f_{2}}(\xi\gamma ng)\;dn$$
The double coset space $P\backslash G/N$ surjects to $W^{P}\backslash W/W^{P}$ which has three double coset representatives, two of which give a nonzero contribution.  The identity coset contributes a volume, which we will compute later.  The nontrivial representative is $\xi=\sigma_{2}\sigma_{3}\sigma_{1}\sigma_{2}$.  Observe that $\xi\cdot P_{k}\cdot \xi^{-1}\cap N_{k}=\{1\}$ so that 
$$c_{P}E_{s,f_{1},f_{2}}^{P}(g)\;=\;\int_{N_{k}\backslash N_{\mathbb{A}}}\varphi_{s,f_{1},f_{2}}(ng)\;dn\;+\;\int_{N_{\mathbb{A}}}\varphi_{s,f_{1},f_{2}}(\xi ng)\;dn$$
To compute the contribution of the integral $$\int_{N_{\mathbb{A}}}\varphi_{s,f_{1},f_{2}}(\xi ng)\;dn$$ we must re-express the Eisenstein series representation-theoretically.  To this end, let $\pi_{f_{1}}=\otimes \pi_{f_{1},v}$ be the representation of $G_{\mathbb{A}}$ generated by $f_{1}$ and let $\pi_{f_{2}}=\otimes \pi_{f_{2},v}$ be the $G_{\mathbb{A}}$-representation generated by $f_{2}$.  For places $v$ outside a finite set $S$, fix isomorphisms $$j_{v}:\text{Ind}\chi_{f_{1},v}\rightarrow \pi_{f_{1},v}$$ and $$l_{v}:\text{Ind}\chi_{f_{2},v}\rightarrow \pi_{f_{2},v}$$  Their tensor product $j_{v}\otimes l_{v}$ is a representation of the Levi  $M=GL_{2}\otimes GL_{2}$.  Extend representations of Levi components trivially to parabolics.  A $\pi_{f}$-valued Eisenstein series is formed by a convergent sum
$$E_{\varphi}^{P}\;=\;\sum_{\gamma\in P_{k}\backslash G_{k}}\varphi\circ\gamma$$
Let $T=\otimes_{v}T_{v}:\varphi\rightarrow \int_{N_{\mathbb{A}}}\varphi(\xi ng)\;dn$.  We have a chain of intertwinings

\newpage$$\bigotimes_{v\in S}\text{Ind}_{P_{v}}^{G_{v}}\big((\pi_{f_{1},v})\otimes\pi_{f_{2},v})\nu_{P_{v}}^{s}\big)\otimes\bigotimes_{v\notin S}\text{Ind}_{B_{v}}^{G_{v}}\big((\chi_{f_{1},v}\otimes\chi_{f_{2},v})\nu_{B_{v}}^{s,s,s,-3s}\big)$$

$$\updownarrow_{\text{iterated induction}}$$

$$\bigotimes \text{Ind}_{P_{v}}^{G_{v}}(\pi_{f_{1},v}\otimes \pi_{f_{2},v})v_{P_{v}}^{s}\otimes\bigotimes\text{Ind}_{P_{v}}^{G_{v}}\big(\text{Ind}_{B_{v}}^{P_{v}}(\chi_{f_{1},v}\otimes \chi_{f_{2},v})\nu_{B_{v}}^{s,s,s,-3s}\big)$$

$$\updownarrow_{1\otimes\big(\otimes\text{Ind}_{P_{v}}^{G_{v}}(j_{v}\otimes l_{v})\big)}$$

$$\bigotimes\text{Ind}_{P_{v}}^{G_{v}}(\pi_{f_{1},v}\otimes \pi_{f_{2},v})\nu_{P_{v}}^{s}\otimes\bigotimes\text{Ind}_{P_{v}}^{G_{v}}\big((\pi_{f_{1},v}\otimes \pi_{f_{2},v})\nu_{P_{v}}^{s}\big)$$

$$\updownarrow_{T=\otimes T_{v}}$$

$$\bigotimes\text{Ind}_{P_{v}}^{G_{v}}\big((\pi_{f_{1},v}\otimes\pi_{f_{2},v})\nu_{P_{v}}^{1-s}\big)\otimes\bigotimes\text{Ind}_{P_{v}}^{G_{v}}\big((\pi_{f_{1},v}\otimes\pi_{f_{2},v}\nu_{P_{v}}^{1-s}\big)$$

$$\updownarrow_{1\otimes\big(\bigotimes\text{Ind}_{P_{v}}^{G_{v}}(j_{v}^{-1}\otimes l_{v}^{-1})\big)}$$

$$\bigotimes\text{Ind}_{P_{v}}^{G_{v}}\big((\pi_{f_{1},v}\otimes\pi_{f_{2},v})\nu_{P_{v}}^{1-s}\big)\otimes\bigotimes\text{Ind}_{P_{v}}^{G_{v}}\big(\text{Ind}_{B_{v}}^{P_{v}}(\chi_{f_{1},v}\otimes\chi_{f_{2},v})\nu_{B_{v}}^{3-3s,s-1,s-1,s-1}\big)$$

$$\updownarrow_{\text{iterated induction}}$$

$$\bigotimes\text{Ind}_{P_{v}}^{G_{v}}\big((\pi_{f_{1},v}\otimes\pi_{f_{2},v})\nu_{P_{v}}^{1-s}\otimes 1\big)\otimes\bigotimes\text{Ind}_{B_{v}}^{G_{v}}(\chi_{f_{1},v}\otimes\chi_{f_{2},v})\nu_{B_{v}}^{3-3s,s-1,s-1,s-1}$$

The advantage of this set-up is that for $v$ outside the finite set S, the minimal parabolic unramified principal series has a canonical spherical vector, namely that spherical vector taking value $1$ at $1\in G_{v}$.  Therefore the isomorphism $T_{v}$ can be completely determined by computing its effect on the canonical spherical vector.  The intertwinings $T_{v}$ among minimal-parabolic principal series can be factored as compositions of similar intertwining operators attached to reflections corresponding to positive simple roots, each of which is completely determined by its effect on the canonical spherical vector in the unramified principal series.   The simple reflection intertwinings' effect on the normalized spherical functions reduce to $GL_{2}$ computations.\\

\noindent Thus, with simple reflections
$$\sigma_{1}\;=\;\left( \begin{array}{cccc}
0 & 1&0&0 \\
1& 0&0&0\\
0 & 0 & 1&0\\
0&0&0&1\end{array} \right)\;\;\;\;\sigma_{2}\;=\;\left( \begin{array}{cccc}
1 & 0&0&0 \\
0& 0&1&0\\
0 & 1 & 0&0\\
0&0&0&1\end{array} \right)\;\;\;\;\;\sigma_{3}\;=\;\left( \begin{array}{cccc}
1 & 0&0&0 \\
0& 1&0&0\\
0 & 0 & 0&1\\
0&0&1&0\end{array} \right)$$
and with corresponding root subgroups
$$N_{\sigma_{1}}\;=\;\left( \begin{array}{cccc}
1 & x&0&0 \\
0& 1&0&0\\
0 & 0 & 1&0\\
0&0&0&1\end{array} \right)\;\;\;\;\;N_{\sigma_{2}}\;=\;\left( \begin{array}{cccc}
1 & 0&0&0 \\
0& 1&y&0\\
0 & 0 & 1&0\\
0&0&0&1\end{array} \right)\;\;\;\;\;N_{\sigma_{3}}\;=\;\left( \begin{array}{cccc}
1 & 0&0&0 \\
0& 1&0&0\\
0 & 0 & 1&z\\
0&0&0&1\end{array} \right)$$
The simple-reflection intertwinings
\begin{align*}&S_{\sigma_{1}}f(g)\;=\;\int_{N_{\sigma_{1}}}f(\sigma_{1}ng)\;dn\;\;\;\;S_{\sigma_{2}}f(g)\;=\;\int_{N_{\sigma_{2}}}f(\sigma_{2}ng)\;dn\\& S_{\sigma_{3}}f(g)\;=\;\int_{N_{\sigma_{3}}}f(\sigma_{3}ng)\;dn\end{align*}
are instrumental because we wish to compute the effect of
$$S_{\sigma_{2}}\circ S_{\sigma_{3}}\circ S_{\sigma_{1}}\circ S_{\sigma_{2}}$$
on the normalized spherical vector in the unramified minimal-parabolic principal series $I(s_{1},s_{2},s_{3},s_{4})$.  Furthermore,
$$S_{\sigma\tau}\;=\;S_{\sigma}\circ S_{\tau}$$
Therefore, we must understand the effect of the individual $S_{\sigma_{i}}$'s.  Recall that
$$S_{\sigma_{2}}:I(s_{1},s_{2},s_{3},s_{4})\rightarrow I(s_{1},s_{3}+1,s_{2}-1,s_{4})$$
Similarly,
$$S_{\sigma_{1}}:I(s_{1},s_{2},s_{3},s_{4})\rightarrow I(s_{2}+1,s_{1}-1,s_{3},s_{4})$$
and
$$S_{\sigma_{3}}:I(s_{1},s_{2},s_{3},s_{4})\rightarrow I(s_{1},s_{2},s_{4}+1,s_{3}-1)$$
The normalized spherical function $f^{0}\in I(s_{1},s_{2},s_{3},s_{4})$ is mapped by $S_{\sigma_{1}}$ to a multiple of the normalized spherical function in $I(s_{2}+1,s_{1}-1,s_{3},s_{4})$.  The constant is
$$S_{\sigma_{1}}f^{0}(1)\;=\;\int f^{0}(\sigma_{1}\left( \begin{array}{cccc}
1 & x&0&0 \\
0& 1&0&0\\
0 & 0 & 1&0\\
0&0&0&1\end{array} \right))\;dx\;=\;\int f^{0}(\left( \begin{array}{cccc}
1 & 0&0&0 \\
x& 1&0&0\\
0 & 0 & 1&0\\
0&0&0&1\end{array} \right))\;dx$$
Using the Iwasawa decomposition for $GL_{2}(k_{v})$, we show that this calculation reduces to a $GL_{2}$ calculation.  Indeed, there is
$\left( \begin{array}{cc}
a & b \\
c& d\end{array} \right)$ in the maximal compact of $GL_{2}(k_{v})$ such that
$$\left( \begin{array}{cc}
1 & 0 \\
x& 1\end{array} \right)\left( \begin{array}{cc}
a & b \\
c& d\end{array} \right)\;=\;\left( \begin{array}{cc}
* & * \\
0& *\end{array} \right)$$
Therefore,
$$\left( \begin{array}{cccc}
1 & 0&0&0 \\
x& 1&0&0\\
0 & 0 & 1&0\\
0&0&0&1\end{array} \right)\left( \begin{array}{cccc}
a & b&0&0 \\
c& d&0&0\\
0 & 0 & 1&0\\
0&0&0&1\end{array} \right)\;=\;\left( \begin{array}{cccc}
* & *&0&0 \\
0& *&0&0\\
0 & 0 & 1&0\\
0&0&0&1\end{array} \right)$$
From this, it follows that the constant $S_{\sigma_{1}}f^{0}(1)$ with $$S_{\sigma_{1}}:I(s_{1},s_{2},s_{3},s_{4})\rightarrow I(s_{2}+1,s_{1}-1,s_{3},s_{4})$$ is the same as the constant in the intertwining from $I(s_{1},s_{2})\rightarrow I(s_{2}+1,s_{1}-1)$ of $GL_{2}$ principal series, namely
$$\varphi^{0}(\left( \begin{array}{cc}
1 & 0 \\
x& 1\end{array} \right))\;dx$$
where $\varphi^{0}$ is the normalized spherical vector in the $GL_{2}$ principal series.  A similar argument applies to the other intertwining operators attached to other simple reflections. We recall the $GL_{2}$ computation below.  At absolutely unramified finite places, $\left( \begin{array}{cc}
1 & 0 \\
x& 1\end{array} \right)\in K_{v}=GL_{2}(\sigma_{v})$ for $x\leq 1$.  For $x>1$, 
$$\left( \begin{array}{cc}
1 & 0 \\
x& 1\end{array} \right)\left( \begin{array}{cc}
1 & -\frac{1}{x} \\
x& 1\end{array} \right)=\left( \begin{array}{cc}
\frac{1}{x} & 1 \\
0& x\end{array} \right)\left( \begin{array}{cc}
0 & -1 \\
1& 0\end{array} \right)$$
Thus, with local parameter $\overline{\omega}$ and residue field cardinality $q$, since the measure of
$$\{x\in k_{v}\;:\;|x|=q^{r}\}$$
is $(q-1)q^{r-1}$, we see that
$$\int_{k_{v}}\varphi^{0}(\left( \begin{array}{cc}
1 & 0 \\
x& 1\end{array} \right))\;dx\;=\;\int_{|x|\leq 1}1\;dx\;+\;\int_{|x|>1}\varphi^{0}(\left( \begin{array}{cc}
\frac{1}{x} & 1 \\
0& 1\end{array} \right))\;dx$$
This is
$$1+(1-q)\sum_{r\geq 1}q^{r(1-s_{1}+s_{2})}\;=\;\frac{\zeta_{v}(s_{1}-s_{2}-1)}{\zeta_{v}(s_{1}-s_{2})}$$
with the Iwasawa-Tate unramified local zeta integral $\zeta_{v}(s)$.\\
\noindent Using this $GL_{2}$ reduction, we see that $$S_{\sigma_{2}}:I(s_{1},s_{2},s_{3},s_{4})\rightarrow I(s_{1},s_{3}+1,s_{2}-1,s_{4})$$ and maps the normalized spherical vector in $I(s_{1},s_{2},s_{3},s_{4})$ to $$\frac{\zeta_{v}(s_{2}-s_{3}-1)}{\zeta_{v}(s_{2}-s_{3})}$$ times the normalized spherical function in $I(s_{1},s_{3}+1,s_{2}-1,s_{4})$.  Then
$$S_{\sigma_{1}}:I(s_{1},s_{3}+1,s_{2}-1,s_{4})\rightarrow I(s_{3}+2,s_{1}-1,s_{2}-1,s_{4})$$
and sends the normalized spherical function in $I(s_{1},s_{3}+1,s_{2}-1,s_{4})$ to $$\frac{\zeta_{v}(s_{1}-s_{3}-2)}{\zeta_{v}(s_{1}-s_{3}-1)}$$ times the normalized spherical function in $I(s_{3}+2,s_{1}-1,s_{2}-1,s_{4})$.  Then $S_{\sigma_{3}}$ maps the normalized spherical vector in $I(s_{3}+2,s_{1}-1,s_{2}-1,s_{4})$ to $$\frac{\zeta_{v}(s_{2}-s_{4}-2)}{\zeta_{v}(s_{2}-s_{4}-1)}$$ times the normalized spherical vector in $I(s_{3}+2,s_{1}-1,s_{4}+1,s_{2}-2)$. Finally, $S_{\sigma_{2}}:I(s_{3}+2,s_{1}-1,s_{4}+1,s_{2}-2)\rightarrow I(s_{3}+2,s_{4}+2,s_{1}-2,s_{2}-2)$ and sends the normalized spherical function in $I(s_{3}+2,s_{1}-1,s_{4}+1,s_{2}-2)$ to $$\frac{\zeta_{v}(s_{1}-s_{4}-3)}{\zeta_{v}(s_{1}-s_{4}-2)}$$ times the normalized spherical function in $I(s_{3}+2,s_{4}+2,s_{1}-2,s_{2}-2)$.  Altogether, $S_{\sigma_{2}}\circ S_{\sigma_{3}}\circ S_{\sigma_{1}}\circ S_{\sigma_{2}}$ maps the normalized spherical vector in $I(s_{1},s_{2},s_{3},s_{4})$ to

$$\frac{\zeta_{v}(s_{2}-s_{3}-1)}{\zeta_{v}(s_{2}-s_{3})}\cdot\frac{\zeta_{v}(s_{1}-s_{3}-2)}{\zeta_{v}(s_{1}-s_{3}-1)}\cdot\frac{\zeta_{v}(s_{3}-s_{4}-2)}{\zeta_{v}(s_{2}-s_{4}-1)}\cdot\frac{\zeta_{v}(s_{1}-s_{4}-3)}{\zeta_{v}(s_{1}-s_{4}-2)}$$
times the normalized spherical vector in the unramified principal series $$I(s_{3}+2,s_{4}+2,s_{1}-2,s_{2}-2)$$
For $(s_{1},s_{2},s_{3},s_{4})=(s+s_{f_{1}},s-s_{f_{1}},-s+s_{f_{2}},-s-s_{f_{2}})$ we get 
\begin{align*}&\frac{\zeta_{v}(s-s_{f_{1}}-(-s+s_{f_{2}})-1)}{\zeta_{v}(s-s_{f_{1}}-(-s+s_{f_{2}}))}\cdot\frac{\zeta_{v}(s+s_{f_{1}}-(-s+s_{f_{2}})-2)}{\zeta_{v}(s+s_{f_{1}}-(-s+s_{f_{2}})-1)}\\
&\cdot\frac{\zeta_{v}(s-s_{f_{1}}-(-s-s_{f_{2}})-2)}{\zeta_{v}(s-s_{f_{1}}-(s-s_{f_{2}})-1)}\cdot\frac{\zeta_{v}(s+s_{f_{1}}-(-s-s_{f_{2}})-3)}{\zeta_{v}(s+s_{f_{1}}-(-s-s_{f_{2}})-2)}\end{align*}
the Rankin Selberg L-function
$$\frac{L(\pi_{1}\otimes\pi_{2},2s-1)}{L(\pi_{1}\otimes\pi_{2},2s)}$$
\end{proof}

\section{Global Automorphic Sobolev Spaces}
We recall basic ideas about global automorphic Sobolev spaces.  For example, see Decelles [2011b], [Grubb], and [Garrett 2010]. Consider the group $G=GL(4)$ defined over a number field $k$.  At each place $v$, let $K_{v}$ be the standard maximal compact subgroup of the $v$-adic points $G_{v}$ of $G$. That is, $K_{v}=GL_{4}(\mathfrak{O}_{v})$ for nonarchimedean places $v$ where $\mathfrak{O}_{v}$ denotes the local ring of integers, and $K_{v}=O_{4}(\mathbb{R})$ for $v$ real and $K=U(n)$ for $v$ complex. Consider the space $C_{c}^{\infty}(Z_{\mathbb{A}}G_{k}\backslash G_{\mathbb{A}},\omega)$ where $\omega$ is a trivial central character. We define positive index global archimedean spherical automorphic Sobolev spaces as right $K_{\infty}$-invariant subspaces of  completions of $C_{c}^{\infty}(Z_{\mathbb{A}}G_{k}\backslash G_{\mathbb{A}},\omega)$ with respect to a topology induced by norms associated to the Casimir operator $\Omega$. The operator $\Omega$ acts on the archimedean component $f\in C_{c}^{\infty}(Z_{\mathbb{A}}G_{k}\backslash G_{\mathbb{A}},\omega)$ by taking derivatives in the archimedean component.  The norm $|.|_{\ell}$ on $C_{c}^{\infty}(Z_{\mathbb{A}}G_{k}\backslash G_{\mathbb{A}},\omega)^{K}$ is
$$|f|_{\ell}=\langle (1-\Omega)^{\ell}f,f\rangle^{\frac{1}{2}}$$ 
where $\langle,\rangle$ gives the norm on $L^{2}(Z_{\mathbb{A}}G_{k}\backslash G_{\mathbb{A}},\omega)$, induces a topology on the space $C_{c}^{\infty}(Z_{\mathbb{A}}G_{k}\backslash G_{\mathbb{A}},\omega)^{K}$. 
\begin{defn}The completion $H^{\ell}(Z_{\mathbb{A}}G_{k}\backslash G_{\mathbb{A}},\omega)$ is the $\ell$-th global automorphic Sobolev space.\end{defn}
$H^{\ell}(Z_{\mathbb{A}}G_{k}\backslash G_{\mathbb{A}},\omega)$is a Hilbert space with respect to this topology. 
\begin{defn}For $\ell>0$, the Sobolev space $H^{-\ell}(Z_{\mathbb{A}}G_{k}\backslash G_{\mathbb{A}},\omega)$ is the Hilbert space dual of $H^{\ell}(Z_{\mathbb{A}}G_{k}\backslash G_{\mathbb{A}},\omega)$.\end{defn}
Since the space of test functions is a dense subspace of $H^{\ell}(Z_{\mathbb{A}}G_{k}\backslash G_{\mathbb{A}},\omega)$ with $\ell>0$, dualizing gives an inclusion of $H^{-\ell}(Z_{\mathbb{A}}G_{k}\backslash G_{\mathbb{A}},\omega)$ into the space of distributions. The adjoints of the dense inclusions $H^{\ell}\rightarrow H^{\ell-1}$ are inclusions $$H^{-\ell+1}(Z_{\mathbb{A}}G_{k}\backslash G_{\mathbb{A}},\omega)\rightarrow H^{-\ell}(Z_{\mathbb{A}}G_{k}\backslash G_{\mathbb{A}},\omega)$$

\section{Pre-trace formula estimates on compact periods}
We give a standard argument.  See, for example, [Iwaniec] and [Garrett 2010]. Set $k=\mathbb{Q}$ throughout. Let $\Theta$ be a $k$-subgroup of $G$.  Let $[\Theta]=(Z_{\mathbb{A}}\cap\Theta)\Theta_{k}\backslash\Theta_{\mathbb{A}}$ and $[G]=Z_{\mathbb{A}}G_{k}\backslash G_{\mathbb{A}}/K_{\infty}$. For smooth $f$ on $Z_{\mathbb{A}}G_{k}\backslash G_{\mathbb{A}}$, define the $[\Theta]x$-period of $f$ to be 
$$f_{\Theta,x}\;=\;\int\limits_{[\Theta]}f(hx)\;dh$$
Similarly, with $\phi$ an automorphic form on $\Theta_{k}\backslash \Theta_{\mathbb{A}}$, the $[\Theta],x,\phi$-period of $f$ is

\[
\langle f,\overline{\phi}\rangle_{\Theta}\;=\;\int\limits_{[\Theta]}\overline{\phi}(h)\cdot f(hx)\;dh
\]
For finite places $v$, fix a compact open subgroup $K_{v}$ of $G_{v}$  such that at almost all places, $K_{v}$ is the standard maximal compact subgroup of $G_{v}$, and let $K_{\text{fin}}=\prod_{v}K_{v}$.  Let $K=K_{\infty}\cdot K_{\text{fin}}$.

\begin{prop} The distribution given by integration along a compact quotient $\Theta_{k}\backslash \Theta_{\mathbb{A}}$ lies in $H^{-s}(Z_{\mathbb{A}}G_{k}\backslash G_{\mathbb{A}})$ for all $$s>\frac{\text{dim}(G_{\infty}/K_{\infty})-\text{dim}(\Theta_{\infty}/K_{\infty}^{\Theta})}{2}$$
\end{prop}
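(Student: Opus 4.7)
The plan is to localize the period integral around the compact submanifold $[\Theta] \subset Z_{\mathbb{A}}G_k \backslash G_{\mathbb{A}}/K$ and reduce to a standard Euclidean Sobolev estimate for surface-type distributions. First, cover $[\Theta]$ by finitely many coordinate charts $\varphi_i : U_i \to V_i \subset \mathbb{R}^d$ with $d = \dim(G_\infty/K_\infty)$, in each of which $[\Theta]$ appears as a smooth embedded submanifold of dimension $k = \dim(\Theta_\infty/K_\infty^\Theta)$ and codimension $c = d-k$. Using a smooth partition of unity $\{\chi_i\}$ subordinate to $\{U_i\}$, decompose $T_\Theta(f) = \sum_i \int_{[\Theta]}\chi_i(h)\,f(h)\,dh =: \sum_i T_i(f)$; it suffices to bound each summand $T_i$ as a distribution in the appropriate $H^{-s}$.

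Next, carry out the local estimate in each chart. After a smooth change of coordinates we may straighten $[\Theta] \cap U_i$ to a piece of the coordinate plane $\{x_1 = \cdots = x_c = 0\}$. The distribution $T_i$ then pulls back to integration against a compactly supported smooth density on this plane, and its Euclidean Fourier transform $\widehat{T_i}(\xi)$ is a Schwartz function in $(\xi_{c+1},\ldots,\xi_d)$ but only uniformly bounded in the remaining $c$ directions. Hence
\begin{equation*}
\int_{\mathbb{R}^d} \bigl|\widehat{T_i}(\xi)\bigr|^2 (1+|\xi|^2)^{-s}\,d\xi \;\lesssim\; \int_{\mathbb{R}^c}(1+|\tau|^2)^{-s}\,d\tau,
\end{equation*}
and the right-hand side converges precisely when $s > c/2 = (d-k)/2$. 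Thus each $T_i$ lies in the local Sobolev space $H^{-s}_{\mathrm{loc}}(\mathbb{R}^d)$ in this range.

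Finally, translate back into the global automorphic Sobolev norm $|f|_s = \langle(1-\Omega)^s f,f\rangle^{1/2}$. Because the test functions in the definition of $H^s$ are right $K$-invariant, $\Omega$ descends to an essentially self-adjoint elliptic operator of order two on the symmetric space $G_\infty/K_\infty$, proportional to its Laplace--Beltrami operator. By interior elliptic regularity combined with the partition of unity, the global $|\cdot|_s$-norm is locally equivalent, on each chart $U_i$, to the ordinary Euclidean $H^s$-norm; so the local bound $|T_i(f)| \lesssim \|f\|_{H^s(V_i)}$ from Step 2 assembles to $|T_\Theta(f)| \lesssim |f|_s$, proving $T_\Theta \in H^{-s}$ in the required range.

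The main technical obstacle is the norm-equivalence step, since the global norm is built from the functional calculus of a single operator while the Euclidean $H^s$-norm involves all mixed partial derivatives of total order $s$. The bridge is ellipticity of $1-\Omega$ on $K$-invariant functions, together with the observation that the non-archimedean places contribute nothing to $|\cdot|_s$: the $K_{\mathrm{fin}}$-invariance built into $C_c^\infty(Z_{\mathbb{A}}G_k\backslash G_{\mathbb{A}},\omega)^K$ reduces the comparison in each chart to a classical elliptic-regularity statement on a relatively compact subset of $G_\infty/K_\infty$, and the passage to fractional powers $s$ is handled by interpolation once integer $s$ is known.
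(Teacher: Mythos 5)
Your argument is essentially correct, but it is a genuinely different route from the paper's. You localize: cover the compact cycle $[\Theta]$ by charts, straighten it, use the classical fact that a smooth compactly supported density on a codimension-$c$ submanifold has Fourier transform rapidly decreasing in tangential frequencies and merely bounded in the $c$ normal frequencies, hence lies in $H^{-s}_{\mathrm{loc}}$ exactly for $s>c/2$, and then transfer to the global automorphic Sobolev scale by interior elliptic estimates for $1-\Omega$ plus interpolation. The paper instead runs a pre-trace-formula (Bombieri--Garrett/Iwaniec-style) argument: it applies the measure $\eta$ supported on a shrinking ball $B_{\epsilon}\times K_{\mathrm{fin}}$, unwinds the period of $\eta\cdot f$ against an automorphized kernel $q$, bounds $\int_{[G]}|q|^{2}\ll\epsilon^{\dim X+\dim Y}$, gets the lower bound $|\lambda_{f}(\eta)|\gg\epsilon^{\dim X}$ for spectral parameter $\ll 1/\epsilon$, and concludes by Bessel's inequality that $\sum_{|t_{F}|\le T}|F_{\Theta,x}|^{2}\ll T^{\dim X-\dim Y}$, from which the $H^{-s}$ membership for $s>(\dim X-\dim Y)/2$ follows by dyadic summation against the spectrally defined norm. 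The spectral approach buys more than the bare membership statement: it produces the quantitative bound on sums of periods over spectral windows, which is the form of the estimate actually used later, and it works directly with the spectral definition of the global Sobolev spaces, avoiding any local-to-global norm comparison. Your approach is more elementary and makes the codimension count transparent, but it needs the comparison steps you flag to be done carefully: the global norm only \emph{dominates} the local Euclidean $H^{s}$ norm on compacta (genuine equivalence fails and is not needed, so state it as a one-sided bound); the interpolation must be applied to the localization maps $f\mapsto\chi_{i}f$ (bounded $H^{\ell}_{\mathrm{glob}}\to H^{\ell}(\mathbb{R}^{d})$ for integer $\ell$, hence for all real $s\ge 0$), not to the membership statement itself, since membership at a larger index does not interpolate downward; and the image of $[\Theta]$ in $\Gamma\backslash G_{\infty}/K_{\infty}$ is in general only a compact \emph{immersed} cycle through possible orbifold points, so the chart construction should be carried out on the cover $G_{\infty}/K_{\infty}$ with finite isotropy absorbed into constants. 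With those repairs your proof is sound, though it yields only the qualitative conclusion and not the paper's spectral-box estimates.
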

\begin{proof}
Consider smooth $f$ on $Z_{\mathbb{A}}G_{k}\backslash G_{\mathbb{A}}$ generating 
unramified principal series at archimedean places.  
The usual action of compactly-supported measures $\eta$ 
on suitable $f$ on $G_{k}\backslash G_{\mathbb{A}}/K_{\infty}$ is given by 
$$(\eta\cdot f)(x)=\int\limits_{G_{\mathbb{A}}}\eta(g)f(xg)\;dg$$  The 
$\Theta_{k}\backslash \Theta_{\mathbb{A}}x$-period of $\eta\cdot f$ 
admits a useful rearrangement

\begin{align*}
&(\eta\cdot f)_{Z_{\mathbb{A}}\Theta_{k}\backslash \Theta_{\mathbb{A}}x}\;=\;\int\limits_{Z_{\mathbb{A}}\Theta_{k}\backslash \Theta_{\mathbb{A}}}
(\eta\cdot f)(hx)\;dh\;=\;\int\limits_{[\Theta]}\int\limits_{G_{\mathbb{A}}}
\eta(g)f(hxg)\;dg\;dh\\
&=\int\limits_{[\Theta]}\int\limits_{G_{\mathbb{A}}}
\eta(x^{-1}h^{-1}g)\;dg\;dh =\int\limits_{[\Theta]}\int\limits_{[G]}\sum_{\gamma\in G_{k}}\eta(x^{-1}h^{-1}\gamma g)f(g)\;dg\;dh\\ &=\int\limits_{[G]}f(g)\left(\int\limits_{[\Theta]}\sum_{\gamma\in G_{k}}\eta(x^{-1}h^{-1}\gamma g)\;dh\right)\;dg
\end{align*}
Denote the inner sum and integral by $q(g)=q_{\Theta,x}(g)$.  
For $\eta$ a left and right $K_{\text{fin}}$-invariant measure, for $f$ a spherical vector in a copy of a principal series, $\eta\cdot f$ will be $K_{\text{fin}}$-invariant.  Since the spherical vector in an irreducible representation is unique (up to scalar), $\eta\cdot f=\lambda_{f}(\eta)\cdot f$ for some constant $\lambda_{f}(\eta)$. Let $\eta_{\infty}$ be the characteristic function of a shrinking ball $B_{\epsilon}$ in $G_{\infty}/K_{\infty}$ of geodesic radius $\epsilon >0$ and at each finite place $v$, let $\eta_{v}$ be the characteristic function of $K_{v}$. The ball $B_{\epsilon}$ has $v$-adic components in $K_{v}$ for almost all $v$, and archimedean component lying within a ball of radius $\epsilon$. Identify $B_{\epsilon}$ with its pre-image $B_{\epsilon}\cdot K_{v}$ in $G_{v}$. Here, we make use of a $G_{\infty}$-invariant metric $$d(x,y)=\nu(x^{-1}y)$$ on $G_{\infty}/K_{\infty}$ where $${\nu(g)=\text{log sup}(|g|,|g^{-1}|)}$$ Here $|\cdot|$ is the operator norm on the group $G_{v}$ given by $$|T|=\text{sup}_{u\leq 1}||Tu||$$ Let $\eta=\otimes_{v}\eta_{v}$. The action of such $\eta$ changes the period by the eigenvalue.  To see this, observe that $$(\eta_{v}\cdot f)(x)=\int_{K_{v}}\eta_{v}(k)f(gk)\;dk=\int_{K_{v}}f(g)dk=\text{vol}(K_{v})\cdot f(g)$$ Also, $\eta_{\infty}\cdot f$ will be a spherical vector.  Since the spherical vector is unique up to a constant multiple, $\eta_{\infty}\cdot f=\lambda_{\infty}\cdot f$ for some scalar $\lambda_{\infty}$. Therefore,

$$(\eta\cdot f)_{\Theta,x}\;=\;\lambda_{f}(\eta)\cdot\text{vol}(K_{\text{fin}})\cdot f_{\Theta,x}$$
An upper bound for the $L^{2}(Z_{\mathbb{A}}G_{k}\backslash G_{\mathbb{A}},\omega)$ norm of $q$, and a lower bound for $\lambda_{f}(\eta)$ contingent on restrictions on the spectral parameter of $f$, yield, by Bessel's inequality, an upper bound for a sum-and-integral of periods $\langle f,\phi\rangle_{\Theta,x}$ as follows.
Estimate the $L^{2}$ norm of $q$:

\begin{align*}
&\int\limits_{[G]}|q(g)|^{2}\;=\;\int\limits_{[G]}\int\limits_{[\Theta]}\int\limits_{[\Theta]}\sum_{\gamma\in G_{k}} \sum_{\gamma_{2}\in G_{k}}\eta(x^{-1}h^{-1}\gamma g)\overline{\eta}(x^{-1}h_{2}^{-1}\gamma_{2}g)\;dg\;dh_{2}\;dg\\
&=\;\int\limits_{G_{\mathbb{A}}}\int\limits_{[\Theta]}\int\limits_{[\Theta]}\sum_{\gamma\in G_{k}}\eta(x^{-1}h^{-1}\gamma g)\eta(x^{-1}h_{2}^{-1}g)\;dh\;dh_{2}\;dg
\end{align*}
With $C$ a large enough compact subset of $\Theta_{\mathbb{A}}$ to surject to $[\Theta]=(Z_{\mathbb{A}}\cap\Theta)\Theta_{k}\backslash\Theta_{\mathbb{A}}$,

$$\int\limits_{[G]}|q(g)|^{2}\leq \int\limits_{G_{\mathbb{A}}}\int\limits_{C}\int\limits_{C}\sum_{\gamma\in G_{k}}|\eta|(x^{-1}h^{-1}\gamma g)|\eta|(x^{-1}h_{2}^{-1}g)\;dh\;dh_{2}\;dg$$
The set

\begin{align*}
&\Phi\;=\;\Phi_{H,x,\eta}\;\\&=\;\{\gamma\in G_{k}:\eta(x^{-1}h_{2}^{-1}\gamma g)\eta(x^{-1}h_{2}^{-1}g)\neq 0\;\text{for some}\;h,h_{2}\in C\;\text{and}\; g\in G_{\mathbb{A}}\}\\
&=\;\{\gamma\in G_{k}:\gamma\in CxB_{\epsilon}g^{-1}, g\in CxB_{\epsilon}\}\;\subset\;G_{k}\cap CxB_{\epsilon}\cdot (CxB_{\epsilon})^{-1}\}
\end{align*}
the last set in the sequence above is the intersection of a closed, discrete set with a compact set, so is finite, and can only shrink as $\epsilon\rightarrow 0^{+}$.

For $K_{0}$ a compact open subgroup in the finite adele part $G_{0}$ of $G_{\mathbb{A}}$, a ball of archimedean radius $\epsilon$ is the product $B_{\epsilon}\times K_{0}$.  Here $B_{\epsilon}$ is the inverse image in $G_{\infty}$ of the geodesic ball of radius $\epsilon$ in $G_{\infty}/K_{\infty}$. For each $\gamma\in\Phi$, for each $h\in C$, $\eta(x^{-1}h^{-1}\gamma g)\neq 0$ only for $g$ in a ball in $X=G_{\mathbb{A}}/K_{\infty}$ of radius $\epsilon$, with volume dominated by $\epsilon^{\text{dim} X}$.  Thus,
$$\int\limits_{G_{k}\backslash G_{\mathbb{A}}}|q(g)|^{2}\;dg\;\ll\;\int\limits_{C}\epsilon^{\text{dim} X+\text{dim} Y}\;dh\;\ll\;\epsilon^{\text{dim} X+\text{dim} Y}$$
By automorphic Plancherel, with $\eta$ as above,
$$\sum_{\text{cfm} F}|\lambda_{F}(\eta)|^{2}\cdot |\langle \eta\cdot F,\phi\rangle|^{2}+\ldots\;\ll\;\epsilon^{\text{dim} X+\text{dim} Y}$$
Next,we give a bound on the spectral data to give a non-trivial lower bound for $\lambda_{f}(\eta)$. Left and right $K$-invariant $\eta$ necessarily gives $\eta\cdot f=\lambda_{f}(\eta)\cdot f$, since up to scalars $f$ is the unique spherical vector in the irreducible representation $f$ generates.  This is an intrinsic representation-theoretic relation, because an isomorphism of principal series sends a spherical vector in the first representation to a constant multiple of the spherical vector in the second representation.  That is, if $$\varphi:V\rightarrow W$$ is an isomorphism of representations, and $f_{1}\in V$ and $f_{2}\in W$ are the unique spherical vectors, then $$\varphi(f_{1})=c\cdot f_{2}$$ for a constant $c$.  To see this, observe that $$k\cdot\varphi(f_{1})=\varphi(k\cdot f_{1})=\varphi(f_{1})$$  Therefore, $\varphi(f_{1})$ is indeed invariant under the $K$-action, so is the spherical vector in the representation $V_{2}$. Then a calculation gives
$$\lambda_{f_{1}}(\eta)\cdot\varphi(f_{1})=\eta\cdot\varphi(f_{1})=\eta\cdot(c\cdot f_{2})=c\cdot \lambda_{f_{2}}(\eta)=\lambda_{f_{2}}(\eta)\cdot c\cdot f_{2}=\lambda_{f_{2}}(\eta)\cdot\varphi(f_{1})$$
so that $\lambda_{f_{1}}=\lambda_{f_{2}}$, as claimed.
Therefore, the eigenvalue $\lambda_{f}(\eta)$ can be computed in the usual model of the principal series at an archimedean place, as $$\eta\cdot\varphi_{s}^{0}(1)=\lambda_{f}(\eta)$$ for $\varphi_{s}^{0}$ the normalized spherical vector for $s\in \mathfrak{a}^{*}\otimes_{\mathbb{R}}\mathbb{C}$, and $\varphi^{0}(1)=1$. Thus,

$$\lambda_{f}(\eta)\;=\;(\eta\cdot\varphi_{s}^{0}(1))\;=\;\int\limits_{G_{\mathbb{R}}}\eta(g)\cdot\varphi_{s}^{0}(g)\;dg\;=\;\int\limits_{B_{\epsilon}}\varphi_{s}^{0}(g)\;dg$$
Let $P^{+}$ be the connected component of the identity in the standard minimal parabolic.  The Jacobian of the map $P^{+}\times K\rightarrow G_{\mathbb{R}}$ is non-vanishing at $1$, and $\varphi^{0}(1)=1$, so a suitable bound of $\epsilon$ on the spectral parameter $s\in\mathfrak{a}^{*}\otimes_{\mathbb{R}}\mathbb{C}$ will keep $\varphi_{s}^{0}(g)$ near $1$ on $B_{\epsilon}$.  In the example of $GL_{n}(\mathbb{R})$ with $\varphi_{s}^{0}$ the usual spherical vector, bounds of the form $|s_{j}|\ll\frac{1}{\epsilon}$ assure that $\text{Re} \;\varphi_{s}^{0}(g)\geq\frac{1}{2}$ on $B_{\epsilon}$, which prevents cancellation in the real part of $\varphi_{s}^{0}(g)$ for $g\in B_{\epsilon}$, so

$$\left|\lambda_{f}(\eta)\right|\;=\;\left|\int\limits_{B_{\epsilon}}\varphi_{s}^{0}(g)\;dg\right|\;\gg\;\int\limits_{B_{\epsilon}}\text{Re}\varphi_{s}^{0}(g)\;dg\;\gg\;\int\limits_{B_{\epsilon}}\frac{1}{2}\;dg\;\gg\;\epsilon^{\text{dim}X}$$
Combining the upper bound on $|q|_{L^{2}}^{2}$ with its lower bound on eigenvalues $t_{F}(t_{F}-1)$, letting $T=\frac{1}{\epsilon}$,

$$(\epsilon^{\text{dim}X})^{2}\times\left(\sum_{\text{cfm F}\:|t_{F}|\leq T}|F_{\Theta_{k}\backslash \Theta_{\mathbb{A}}x}|^{2}+\ldots\right)\;\ll\;\epsilon^{\text{dim}X+\text{dim}Y}$$
so
$$\sum_{\text{cfm F}:|t_{F}|\ll T}|F_{\Theta_{k}\backslash \Theta_{\mathbb{A}}x}|^{2}+\ldots\;\ll\;T^{\text{dim} X-\text{dim} Y}$$
Similarly,
$$\sum_{\text{cfm F}:|t_{F}|\ll T}|\langle \eta\cdot F,\phi\rangle|^{2}+\ldots\;\ll\;T^{\text{dim}X-\text{dim}Y}$$
\end{proof}

\section{Casimir Eigenvalue}
Let $G=SL_{4}(\mathbb{R})$ and $I(s_{1},s_{2},s_{3},s_{4})$ a minimal-parabolic principal series. Let $$\mathfrak{g}=\mathfrak{sl}_{4}$$ be the Lie algebra of $G$.  For $i\neq j$, let $E_{i,j}$ be the matrix with $1$ in the $(i,j)$-th position and $0$ elsewhere.  Let $H_{i,j}$ be the matrix with $1$ in the $(i,i)$-th position and $-1$ in the $(j,j)$-th position.  Observe that $H_{i,i+1}$ span the Cartan subalgebra $\mathfrak{h}$ and the $E_{i,j}$ for $i\neq j$ span the rest of the Lie algebra.  Assume without loss of generality that $i<j$.  We have the bracket relations
$$[E_{i,j},E_{j,i}]=H_{i,j}$$
As before, the Casimir element is given by
$$\Omega\;=\;\frac{1}{2}H_{1,2}^{2}+\frac{1}{2}H_{2,3}^{2}+\frac{1}{2}H_{3,4}^{2}+(\sum_{j,i}E_{i,j}E_{i,j}+E_{i,j}E_{j,i})$$
Rearranging, this gives
$$\Omega\;=\;\frac{1}{2}H_{1,2}^{2}+\frac{1}{2}H_{2,3}^{2}+\frac{1}{2}H_{3,4}^{2}+(\sum_{i,j}2E_{j,i}E_{i,j}+H_{i,j})$$
The lie algebra $\mathfrak{g}$ acts on $C^{\infty}(G)$ by $$X\cdot f(g)=\frac{d}{dt}|_{t=0}f(ge^{tX})$$ The product $E_{j,i}E_{i,j}$ act by $0$, so Casimir is simply
$$\Omega\;=\;(\frac{1}{2}H_{1,2}^{2}-H_{1,2})+(\frac{1}{2}H_{2,3}^{2}-H_{2,3})+(\frac{1}{2}H_{3,4}^{2}-H_{3,4})+H_{1,4}+H_{1,3}+H_{2,4}$$
\begin{prop}The Casimir operator acts on $I(s_{1},s_{2},s_{3},s_{4})$ by the scalar
\begin{align*}&\frac{1}{2}(s_{1}-s_{2})^{2}-(s_{1}-s_{2})+\frac{1}{4}(s_{1}+s_{2}-s_{3}-s_{4})^{2}-(s_{2}-s_{3})+\frac{1}{2}(s_{3}-s_{4})^{2}-(s_{3}-s_{4})\\&-(s_{1}-s_{4})-(s_{1}-s_{3})-(s_{2}-s_{4})\end{align*}
\end{prop}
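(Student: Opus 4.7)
The plan is to use centrality of $\Omega$ in $U(\mathfrak{g})$: on the irreducible principal series $I(s_1,s_2,s_3,s_4)$ it acts by a single scalar $\lambda(s)$, which I compute by evaluating $\Omega\varphi_s^0$ at $g=1$, where $\varphi_s^0$ is the normalized spherical vector. The key observation is that a diagonal $H\in\mathfrak{h}$ acts on $\varphi_s^0(1)$ by its $\chi_s$-eigenvalue, since $t\mapsto e^{tH}$ stays in the maximal torus $A$ and $\varphi_s^0(e^{tH})=e^{t\,s(H)}$; iterating along the same one-parameter subgroup, $(H^k\varphi_s^0)(1)=s(H)^k$.

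For the Cartan part of $\Omega$, the natural orthonormal basis of $\mathfrak{h}$ with respect to the trace pairing is not the simple-coroot sequence $H_{1,2},H_{2,3},H_{3,4}$ (which is not orthogonal under the trace) but the triple $\{H_{1,2}/\sqrt{2},\,\tfrac{1}{2}H_{1,2,3,4},\,H_{3,4}/\sqrt{2}\}$, where $H_{1,2,3,4}=\operatorname{diag}(1,1,-1,-1)$; pairwise orthogonality and unit norm are immediate. The Cartan piece of $\Omega$ is therefore $\tfrac{1}{2}H_{1,2}^{2}+\tfrac{1}{4}H_{1,2,3,4}^{2}+\tfrac{1}{2}H_{3,4}^{2}$, contributing
$$\tfrac{1}{2}(s_1-s_2)^{2}+\tfrac{1}{4}(s_1+s_2-s_3-s_4)^{2}+\tfrac{1}{2}(s_3-s_4)^{2}$$
to $\lambda(s)$, giving the three squared summands in the statement. (The tentative rewriting of $\Omega$ that appears just before the proposition should be corrected to this basis.)

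For the root-space contributions, each positive root $\alpha$ yields a pair $E_{\alpha}E_{-\alpha}+E_{-\alpha}E_{\alpha}$, which by $[E_{\alpha},E_{-\alpha}]=H_{\alpha}$ rewrites as $2E_{-\alpha}E_{\alpha}+H_{\alpha}$. A direct $SL_{2}$-reduction (essentially the one-variable version of the intertwiner calculation already carried out in the constant-term argument earlier in the paper) shows that $E_{-\alpha}E_{\alpha}\cdot\varphi_s^0(1)=-s(H_{\alpha})$: the $NAK$-decomposition of $e^{tE_{-\alpha}}e^{uE_{\alpha}}$ contributes a factor $(1+t^{2})^{-s(H_\alpha)/2}$ to $\varphi_s^0$, whose mixed $\partial_t\partial_u$ derivative at $0$ yields $-s(H_\alpha)$. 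Hence each positive root contributes $2\cdot(-s(H_\alpha))+s(H_\alpha)=-s(H_\alpha)$, and summing over the six positive roots of $\mathfrak{sl}_4$ yields
$$-2\rho(s)\;=\;-(s_1-s_2)-(s_2-s_3)-(s_3-s_4)-(s_1-s_3)-(s_1-s_4)-(s_2-s_4),$$
exactly the linear part of the claimed scalar; combining with the Cartan contribution gives the formula.

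The main obstacle is normalization bookkeeping: picking the correct orthonormal basis of $\mathfrak{h}$ (so that the middle term emerges as $\tfrac{1}{4}(s_1+s_2-s_3-s_4)^{2}$ rather than $\tfrac{1}{2}(s_2-s_3)^{2}$), and pinning down the sign in the $SL_{2}$-reduction $E_{-\alpha}E_{\alpha}\cdot\varphi_s^0(1)=-s(H_{\alpha})$ in the paper's unnormalized induction convention. Once these two calibrations are in place, the assembly is routine.
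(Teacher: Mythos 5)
Your proposal is correct and follows the same skeleton as the paper's proof: evaluate $\Omega\varphi_s^0$ at the identity, expand the Cartan block in the trace-form orthonormal basis so that it reads $\tfrac{1}{2}H_{1,2}^2+\tfrac{1}{4}H_{1,2,3,4}^2+\tfrac{1}{2}H_{3,4}^2$ (your remark that the earlier $\mathfrak{sl}_4$ display with $\tfrac{1}{2}H_{2,3}^2$ is not a trace-orthonormal expansion is accurate; the paper's $\mathfrak{gl}_4$ rerun uses exactly the basis you propose), and let each diagonal element act by the corresponding linear form in the $s_i$. Where you genuinely diverge is the off-diagonal part: the paper rewrites $E_{i,j}E_{j,i}+E_{j,i}E_{i,j}$ so that the surviving quadratic term is simply asserted to act by $0$, leaving only the $-H_{i,j}$ contributions, whereas you keep the opposite ordering $2E_{-\alpha}E_{\alpha}+H_{\alpha}$ and actually compute $E_{-\alpha}E_{\alpha}\varphi_s^0(1)=-s(H_{\alpha})$ by a rank-one Iwasawa computation, so each positive root contributes $2(-s(H_{\alpha}))+s(H_{\alpha})=-s(H_{\alpha})$ and the sum over the six positive roots reproduces the linear terms. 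Your route thus substantiates precisely the step the paper leaves as an unproved (and ordering-ambiguous) assertion, which is a worthwhile gain in rigor. One small correction to your write-up: the function whose mixed derivative you need is $\varphi_s^0(e^{tE_{-\alpha}}e^{uE_{\alpha}})=\bigl(t^{2}+(1+tu)^{2}\bigr)^{-s(H_{\alpha})/2}$ in the embedded $GL_2$; the factor $(1+t^{2})^{-s(H_{\alpha})/2}$ you quote is only its $u=0$ specialization and by itself has vanishing $\partial_t\partial_u$, but the full expression does yield $-s(H_{\alpha})$ at $t=u=0$, so your conclusion and the resulting formula stand.
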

\begin{proof}Let us see how $H_{1,2}$ acts on $I(s_{1},s_{2},s_{3},s_{4})$.  Note that 
$$e^{tH_{1,2}}\;=\;\left( \begin{array}{cccc}
e^{t} & 0&0&0 \\
0& e^{-t}&0&0\\
0 & 0 & 1&0\\
0&0&0&1\end{array} \right)$$
Therefore
\begin{align*}&\frac{d}{dt}|_{t=0}f(\left( \begin{array}{cccc}
e^{t} & 0&0&0 \\
0& e^{-t}&0&0\\
0 & 0 & 1&0\\
0&0&0&1\end{array} \right))\;=\;\frac{d}{dt}|_{t=0}\chi(\left( \begin{array}{cccc}
e^{t} & 0&0&0 \\
0& e^{-t}&0&0\\
0 & 0 & 1&0\\
0&0&0&1\end{array} \right))\\&=\;\frac{d}{dt}|_{t=0}e^{ts_{1}}\cdot e^{-ts_{2}}\\
\end{align*}
This is just $(s_{1}-s_{2})$.  Likewise, we see that $H_{i,j}$ will act on $I_{s}$ by $s_{i}-s_{j}$.  Therefore, the Casimir operator will act by
\begin{align*}&\frac{1}{2}(s_{1}-s_{2})^{2}-(s_{1}-s_{2})+\frac{1}{2}(s_{2}-s_{3})^{2}-(s_{2}-s_{3})+\frac{1}{2}(s_{3}-s_{4})^{2}-(s_{3}-s_{4})+(s_{1}-s_{4})\\&+(s_{1}-s_{3})+(s_{2}-s_{4})\end{align*}

Let $G=GL_{4}$ and $I(s_{1},s_{2},s_{3},s_{4})$ a minimal-parabolic principal series. Let $$\mathfrak{g}=\mathfrak{gl}_{4}$$ be the Lie algebra of $G$.  For $i\neq j$, let $E_{ij}$ be the matrix with $1$ in the $(i,j)$-th position and $0$ elsewhere.  Let $H_{ij}$ be the matrix with $1$ in the $(i,i)$-th position and $-1$ in the $(j,j)$-th position and let $H_{1234}=\text{diag}(1,1,-1,-1)$.  Observe that $H_{i,i+1}$ span the Cartan subalgebra $\mathfrak{h}$ and the $E_{ij}$ for $i\neq j$ span the rest of the Lie algebra.  Assume without loss of generality that $i<j$.  We have the bracket relations
$$[E_{ij},E_{ji}]=H_{ij}$$
As before, the Casimir element is given by
$$\Omega\;=\;\frac{1}{2}H_{12}^{2}+\frac{1}{4}H_{1234}^{2}+\frac{1}{2}H_{34}^{2}+(\sum_{ji}E_{ij}E_{ji}+E_{ji}E_{ij})$$
Rearranging, this gives
$$\Omega\;=\;\frac{1}{2}H_{12}^{2}+\frac{1}{4}H_{1234}^{2}+\frac{1}{2}H_{34}^{2}+(\sum_{ij}2E_{ij}E_{ji}-H_{ij})$$
The Lie algebra $\mathfrak{g}$ acts on $C^{\infty}(G)$ by $$X\cdot f(g)=\frac{d}{dt}|_{t=0}f(ge^{tX})$$ The product $E_{ij}E_{ji}$ act by $0$, so Casimir is simply
$$\Omega\;=\;(\frac{1}{2}H_{12}^{2}-H_{1,2})+(\frac{1}{4}H_{1234}^{2}-H_{23})+(\frac{1}{2}H_{34}^{2}-H_{34})-H_{14}-H_{13}-H_{24}$$
As an example computation, let us see how $H_{12}$ acts on $I(s_{1},s_{2},s_{3},s_{4})$.  Note that 
$$e^{tH_{12}}\;=\;\left( \begin{array}{cccc}
e^{t} & 0&0&0 \\
0& e^{-t}&0&0\\
0 & 0 & 1&0\\
0&0&0&1\end{array} \right)$$
Therefore
\begin{align*}&\frac{d}{dt}|_{t=0}f(\left( \begin{array}{cccc}
e^{t} & 0&0&0 \\
0& e^{-t}&0&0\\
0 & 0 & 1&0\\
0&0&0&1\end{array} \right))\;=\;\frac{d}{dt}|_{t=0}\chi(\left( \begin{array}{cccc}
e^{t} & 0&0&0 \\
0& e^{-t}&0&0\\
0 & 0 & 1&0\\
0&0&0&1\end{array} \right))\\
&=\;\frac{d}{dt}|_{t=0}e^{ts_{1}}\cdot e^{-ts_{2}}\\
\end{align*}
This is just $(s_{1}-s_{2})$.  Likewise, we see that $H_{ij}$ will act on $I_{s}$ by $s_{i}-s_{j}$.  Therefore, the Casimir operator will act by
\begin{align*}&\frac{1}{2}(s_{1}-s_{2})^{2}-(s_{1}-s_{2})+\frac{1}{4}(s_{1}+s_{2}-s_{3}-s_{4})^{2}-(s_{2}-s_{3})+\frac{1}{2}(s_{3}-s_{4})^{2}-(s_{3}-s_{4})\\&-(s_{1}-s_{4})-(s_{1}-s_{3})-(s_{2}-s_{4})\end{align*}
\end{proof}
Letting $s_{1}=s+s_{f}$, $s_{2}=-s+s_{f}$, $s_{3}=s-s_{f}$, $s_{4}=-s-s_{f}$, we see that $(s_{1}-s_{2})=2s$, $(s_{2}-s_{3})=-2s+2s_{f}$, $(s_{3}-s_{4})=2s$, $(s_{1}-s_{4})=2s+2s_{f}$, $(s_{1}-s_{3})=2s_{f}$, $(s_{2}-s_{4})=2s_{f}$, and finally $(s_{1}+s_{2}-s_{3}-s_{4})=4s_{f}$.  Putting all this into the above expression for Casimir's action gives that Casimir acts by

$$\lambda_{s,f}\;=\;4s^{2}+4s_{f}^{2}-8s_{f}-4s$$
Observe that
$$\lambda_{s,f}-\lambda_{w,f}\;=\;4(s(s-1)-w(w-1))$$

\section{Friedrichs self-adjoint extensions and complex conjugation maps}
\noindent We review the result due to Friedrichs that a densely-defined, symmetric, semi-bounded operator admits a canonical self-adjoint extension with a useful characterization. We follow [Grubb], [Garrett 2011c], [Friedrichs 1935a] and [Friedrichs 1935b].\\

\noindent Let $T$ be a densely defined, symmetric, unbounded operator on a Hilbert space $V$, with domain $D$. Assume further, that $T$ is semi-bounded from below in the sense that $$||u||^{2}\;\;\leq\;\;\langle u,Tu\rangle\;\; \text{for all}\; u\in D.$$
Let $\langle x,y \rangle_{1}=\langle Tx,y \rangle$ on D. Let $V_1$ be the completion of $D$ with respect to the new inner product.  The operator $T$ remains symmetric for $\langle , \rangle_{1}$. That is, $$\langle Tx,y\rangle_{1} = \langle x,Ty\rangle_{1}$$ for $x,y\in D$. By Riesz-Fischer, for $y\in V$, the continuous linear functional $$f(x)=\langle x,y \rangle$$ can be written $$f(x)=\langle x,y{'} \rangle_{1}$$ for a unique $y{'}\in V$.  Set $$T_{\text{Fr}}^{-1}y=y'$$ That is, the inverse $T_{\text{Fr}}^{-1}$ of the Friedrichs extension $T_{\text{Fr}}$ of $T$ is an everywhere-defined map $$T_{\text{Fr}}^{-1}:V\rightarrow V_1$$ continuous for the $\langle , \rangle_{1}$ topology on $V_1$, characterized by\\
\noindent $$\langle Tx, T_{\text{Fr}}^{-1}y\rangle=\langle x,y \rangle$$
We will prove that, given $\theta\in V_{-1}$ and $T_{\theta}=T|_{\text{ker}\theta}$, the Friedrichs extension $\tilde{T}_{\theta}$ has the feature that 
$$\tilde{T}_{\theta}u\;=\;f\;\;\;\;\text{for}\; u\in V_{1}, f\in V$$
exactly when
$$T_{\theta} u\;=\;f+c\cdot\theta\;\;\;\;\text{for some}\;c\in\mathbb{C}$$

Define a \textit{conjugation map} on $V$ to be a complex-conjugate-linear automorphism $j:V\rightarrow V$ with $\langle jx,jy\rangle=\langle y,x\rangle$ and $j^2=1$.
A conjugation map is equivalent to a complex-linear isomorphism $$\Lambda:V\rightarrow V^{*}$$ of $V$ with its complex-linear dual, via Riesz-Fischer, by
$$\Lambda(y)(x)\;=\;\langle x,jy\rangle\;=\;\overline{ \langle y,jx\rangle}$$
Assume $j$ stabilizes $D$ and that $T(jx)=jTx$ for $x\in D$.  Then $j$ respects $\langle, \rangle_{1}$:
$$\langle jx,jy\rangle_{1}\;=\;\langle y,Tx\rangle \;=\;\langle y,x\rangle_{1}$$
for $x,y\in D$. Also, $j$ commutes with $T_{\text{Fr}}$:
$$\langle x,T_{\text{Fr}}^{-1}jy\rangle_{1}\;=\;\langle x,jy \rangle\;=\;\langle y,jx\rangle\;=\;\langle T_{\text{Fr}}^{-1}y,jx\rangle_{1}\;=\;\langle x,jT_{\text{Fr}}^{-1}y\rangle_{1}$$
for $x\in V_{1}$ and $y\in V$. Let $V_{-1}$ be the complex-linear dual of $V_1$. We have $V_1\subset V\subset V_{-1}$.  By design, $$T:D\rightarrow V\subset V_{-1}$$ is continuous when $V$ has the subspace topology from $V_{-1}$:
$$|Ty|_{-1}\;=\;\text{sup}_{|x|_{1}\leq 1}|\Lambda(Ty)(x)|\;=\;\text{sup}|\langle x,jTy\rangle|=|\langle x,Tjy\rangle|\leq\;\text{sup}|x_1|\cdot|y_1|\;=\;|y|_{1}$$
by Cauchy-Schwarz-Bunyakowsky.  Thus the map $T:D\rightarrow V$ extends by continuity to an everywhere-defined, continuous map $$T^{\#}:V_{1}\rightarrow V_{-1}$$ by
$$(T^{\#}y)(x) = \langle x,jy \rangle_{1}$$
Further, $T^{\#}:V_{1}\rightarrow V_{-1}$ agrees with $T_{\text{Fr}}:D_{1}\rightarrow V$ on the domain $D_{1}=BV$ of $T_{\text{Fr}}$, since 
$$(T^{\#}y)(x)\;=\;\langle x,jy \rangle_{1}\;=\langle Tx,jy \rangle\;=\;\langle Tx,T_{\text{Fr}}^{-1}T_{\text{Fr}}jy\rangle\;=\;\langle T_{\text{Fr}}^{-1}Tx,T_{\text{Fr}}jy \rangle$$
which is 
$$=\;\langle x,T_{\text{Fr}}jy\rangle\;=\;\Lambda(T_{\text{Fr}}y)(x)\;\;\;\text{for}\; x\in D\;\; \text{and}\; y\in D_1$$
This follows since $T_{\text{Fr}}$ extends $T$, and noting the density of $D$ in $V$.\\

The following were presented as heuristics in [CdV 1982/1983] and treated more formally by Garrett in [Garrett 2011a].  We give complete proofs.

\begin{thm}\; The domain of $T_{\text{Fr}}$ is $D_{1}\;=\;\{u\in V_{1}\;:\;T^{\#}u\in V\}$. \end{thm}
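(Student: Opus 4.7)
The plan is to prove both inclusions. The inclusion $D_{1}\subseteq\{u\in V_{1}:T^{\#}u\in V\}$ is essentially already established in the paragraph preceding the statement: there it was shown that $(T^{\#}y)(x)=\Lambda(T_{\text{Fr}}y)(x)$ for $y\in D_{1}$ and $x\in D$, and density of $D$ in $V_{1}$ promotes this identification to all test vectors $x\in V_{1}$. Hence for every $y\in D_{1}$ one has $T^{\#}y=T_{\text{Fr}}y\in V$, giving one inclusion for free.

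For the reverse inclusion, suppose $u\in V_{1}$ with $T^{\#}u\in V$, and write $f=T^{\#}u$. The idea is to compare $u$ with $v:=T_{\text{Fr}}^{-1}f\in D_{1}$ and show $u=v$ in $V_{1}$. Three identities, each valid on $D$ and extended by continuity, will drive the comparison. First, the defining relation for $T_{\text{Fr}}^{-1}$ gives $\langle Tx,v\rangle=\langle x,f\rangle$ for $x\in D$. Second, the formula $(T^{\#}w)(x)=\langle x,jw\rangle_{1}$ together with the identity $\langle x,y\rangle_{1}=\langle Tx,y\rangle$, which extends from $D\times D$ to $D\times V_{1}$ by continuity of the inclusion $V_{1}\hookrightarrow V$, yields $(T^{\#}u)(x)=\langle Tx,ju\rangle$ for $x\in D$. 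Third, the identification $V\hookrightarrow V_{-1}$ via $\Lambda$ realizes $f\in V$ as the functional $x\mapsto\langle x,jf\rangle$, so $(T^{\#}u)(x)=\langle x,jf\rangle$. Equating the last two expressions gives $\langle Tx,ju\rangle=\langle x,jf\rangle$ on $D$.

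Next I would apply $j^{2}=1$, $jT=Tj$ on $D$, and $\langle jy,jz\rangle=\langle z,y\rangle$ to put the two equations into parallel form. Using the conjugation symmetry and the fact that $j$ stabilizes $D$ (so that $jx$ again ranges over $D$ as $x$ does), the identity $\langle Tx,ju\rangle=\langle x,jf\rangle$ rewrites as $\langle u,Ty\rangle=\langle f,y\rangle$ for all $y\in D$; and conjugating $\langle Tx,v\rangle=\langle x,f\rangle$ gives $\langle v,Ty\rangle=\langle f,y\rangle$ for all $y\in D$. Subtracting, $\langle u-v,Ty\rangle=0$ for every $y\in D$. Since $u-v\in V_{1}$, the relation $\langle y,u-v\rangle_{1}=\langle Ty,u-v\rangle$ applies and produces $\langle y,u-v\rangle_{1}=0$ for every $y\in D$; density of $D$ in $V_{1}$ forces $u-v=0$, and hence $u=v\in D_{1}$, as desired.

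I expect the main obstacle to be bookkeeping among three distinct pairings: the $V$-inner product, the $V_{1}$-inner product, and the $V_{1}$-$V_{-1}$ duality pairing. The careful use of the conjugation $j$, leveraging its three defining properties, is what converts the a priori distributional hypothesis $T^{\#}u\in V$ into a clean identity testable against elements of $D$. Once the parallel forms $\langle u,Ty\rangle=\langle f,y\rangle=\langle v,Ty\rangle$ are in hand, the closing density argument is routine; nothing deeper than Riesz-Fischer and the construction of $V_{1}$ as a completion is required.
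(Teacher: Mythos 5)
Your argument is correct and is essentially the paper's argument: both proofs reduce the hypothesis $T^{\#}u=f\in V$ to the identity $\langle x,ju\rangle_{1}=\langle x,jf\rangle$ tested against a dense family, and then identify $u$ with $T_{\text{Fr}}^{-1}f$ using the defining characterization of the Friedrichs extension together with the $j$-equivariance. The only difference is cosmetic: where the paper invokes the Riesz--Fischer characterization wholesale (concluding $T_{\text{Fr}}(ju)=jf$ and then commuting with $j$), you introduce $v=T_{\text{Fr}}^{-1}f$ explicitly and re-derive the uniqueness by the subtraction-and-density step $\langle y,u-v\rangle_{1}=0$ for $y\in D$, which is the same mechanism spelled out by hand.
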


\begin{proof}$T^{\#}u\;=\;f\in V$ implies that
$$\langle x,ju\rangle_{1}\;=\;(T^{\#}u)(x)\;=\;\Lambda(T^{\#}u)(x)\;=\;\Lambda(f)(x)\;=\;\langle x,jf\rangle\;\;\text{for all}\; x\in V_1$$
\noindent By the characterization of the Friedrichs extension, $T_{\text{Fr}}(ju) = jf$.  Since $T_{\text{Fr}}$ commutes with $j$, we have $T_{\text{Fr}}u\;=\;f$.\end{proof}

Extend the complex conjugation $j$ to $V_{-1}$ by $(j\lambda)(x)\;=\;\overline{\lambda(jx)}$ for \;$x\in V_1$, and write
$$\langle x,\theta\rangle_{V_1\times V_{-1}}\;=\;(j\theta)(x)\;=\;\overline{\theta(jx)}\;\;(\text{for}\; x\in V_1\;\text{and}\;\theta\in V_{-1})$$
For $\theta\in V_{-1}$,$$\theta^{\perp}\;=\;\{x\in V_1:\;\langle x,\theta\rangle_{V_{1}\times V_{-1}}=0\}$$
is a closed co-dimension-one subspace of $V_1$ in the $\langle,\rangle_{1}$-topology. Assume $\theta\notin V$.  This implies density of $\theta^{\perp}$ in $V$ in the $\langle,\rangle$-topology.

\begin{thm}\; The Friedrichs extension $T_{\theta}\;=\;(T|_{\theta^{\perp}})_{\text{Fr}}$ of the restriction $T|_{\theta^{\perp}}$ of $T$ to $D\cap\theta^{\perp}$ has the property that $T_{\theta}u\;=\;f$ for $u\in V_1$ and $f\in V$ exactly when $$T^{\#}u\;=\;f+c\theta$$ for some $c\in\mathbb{C}$. Letting $D_1$ be the domain of $T_{\text{Fr}}$, the domain of $T_{\theta}$ is$$\text{domain}\;T_{\theta}\;=\;\{x\in V_{1}:\langle x,\theta\rangle_{V_{1}\times V_{-1}}\;=\;0,\;T^{\#}x\in V+\mathbb{C}\cdot\theta\}$$
\end{thm}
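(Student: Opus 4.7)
The plan is to reduce the statement to the preceding theorem, applied to the densely-defined restriction $T_{0} := T|_{D \cap \theta^{\perp}}$ of $T$. First I would verify the hypotheses for $T_{0}$: it is symmetric and satisfies the same lower bound as $T$; density of $D \cap \theta^{\perp}$ in $V$ follows from $\theta \neq 0$ by choosing any $w \in D$ with $\langle w, \theta\rangle \neq 0$ and, for each $u \in D$, subtracting the multiple $(\langle u, \theta\rangle / \langle w, \theta\rangle)\, w$ to land in $D \cap \theta^{\perp}$ without losing convergence. The same correction argument shows that $D \cap \theta^{\perp}$ is $|\cdot|_{1}$-dense in $\theta^{\perp}$, so the $\langle\cdot,\cdot\rangle_{1}$-completion of $D \cap \theta^{\perp}$ is exactly the closed codimension-one subspace $\theta^{\perp} \subset V_{1}$.

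Next, observe that the $\#$-extension $T_{0}^{\#}$ of $T_{0}$ is simply the restriction of $T^{\#}$ to $\theta^{\perp}$, since both are characterized by $(T_{0}^{\#} u)(x) = \langle x, ju\rangle_{1} = (T^{\#} u)(x)$ for $x \in \theta^{\perp}$. The preceding theorem, applied to $T_{0}$, then asserts that the domain of $T_{\theta}$ consists of those $u \in \theta^{\perp}$ for which $T^{\#}u|_{\theta^{\perp}}$ coincides with some element $f \in V$ (viewed via the canonical embedding $V \hookrightarrow V_{-1}$ through $\Lambda$ and $j$), and that $T_{\theta} u = f$ for such $u$ and $f$. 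Equivalently, $T_{\theta} u = f$ holds if and only if $T^{\#}u - f$, as a functional in $V_{-1}$, vanishes on $\theta^{\perp}$.

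The final step is to identify the annihilator of $\theta^{\perp}$ in $V_{-1} = V_{1}^{*}$. Since $\theta^{\perp}$ has codimension one in $V_{1}$ and $\theta$ is by definition a non-zero functional vanishing on $\theta^{\perp}$, this annihilator is precisely the line $\mathbb{C}\theta$. Therefore $(T^{\#}u - f)|_{\theta^{\perp}} = 0$ is equivalent to $T^{\#}u - f = c\theta$ for some $c \in \mathbb{C}$, which together with $u \in \theta^{\perp}$ yields both the characterization $T_{\theta}u = f \iff T^{\#}u = f + c\theta$ and the domain description $\{x \in V_{1} : \langle x, \theta\rangle_{V_{1}\times V_{-1}} = 0,\ T^{\#}x \in V + \mathbb{C}\cdot\theta\}$.

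The main technical point is the density step: confirming $D \cap \theta^{\perp}$ is $|\cdot|_{1}$-dense in $\theta^{\perp}$ rests on the fact that $\theta$ is $|\cdot|_{1}$-continuous on $V_{1}$ but, by hypothesis, not continuous for $|\cdot|$ on $V$, so the correction by $w \in D$ controls the error in the $V_{1}$-norm uniformly in $n$. Beyond this the proof is formal bookkeeping between the pairings $\langle\cdot,\cdot\rangle$, $\langle\cdot,\cdot\rangle_{1}$, and $\langle\cdot,\cdot\rangle_{V_{1} \times V_{-1}}$, together with the already-proved domain description of an unperturbed Friedrichs extension.
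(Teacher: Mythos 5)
Your overall strategy is the same as the paper's: both arguments come down to applying the defining characterization of the Friedrichs extension to the restricted operator $T|_{\theta^{\perp}}$, viewed on the $\langle,\rangle_{1}$-closure $\theta^{\perp}$, and then observing that a functional in $V_{-1}$ vanishing on the codimension-one subspace $\theta^{\perp}$ is a multiple of $\theta$. The paper compresses this into the single asserted equivalence $T^{\#}u=f+c\theta\iff\langle x,ju\rangle_{1}=\langle x,jf\rangle$ for all $x\in\theta^{\perp}$, and then quotes the Friedrichs characterization to get $ju=T_{\theta}^{-1}jf$, hence $u=T_{\theta}^{-1}f$; your version makes explicit the bookkeeping the paper leaves implicit (that $(T|_{D\cap\theta^{\perp}})^{\#}$ is the restriction of $T^{\#}$, that the $\langle,\rangle_{1}$-completion of $D\cap\theta^{\perp}$ is exactly $\theta^{\perp}$, and the annihilator identification). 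So this is the same proof, written out more carefully, not a genuinely different route.

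One correction in your hypotheses-check: you attach the two density claims to the wrong assumptions. Density of $D\cap\theta^{\perp}$ (equivalently of $\theta^{\perp}$) in $V$ for the $\langle,\rangle$-topology does \emph{not} follow from $\theta\neq 0$: with a fixed $w\in D$, the correction $\frac{\langle u,\theta\rangle}{\langle w,\theta\rangle}\,w$ is in general not small in $V$-norm, and indeed if $\theta$ were a nonzero element of $V$ then $\theta^{\perp}$ would fail to be dense in $V$. This step needs the standing assumption $\theta\in V_{-1}\setminus V$ (stated in the paper just before the theorem): since $\theta$ is then unbounded for $|\cdot|$, one can choose $w\in D$ with $\langle w,\theta\rangle=1$ and $|w|$ arbitrarily small, and the correction argument goes through. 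Conversely, the $|\cdot|_{1}$-density of $D\cap\theta^{\perp}$ in $\theta^{\perp}$, which in your last paragraph you say rests on $\theta$ being discontinuous on $V$, needs only the $|\cdot|_{1}$-continuity of $\theta$ together with some $w\in D$, $\langle w,\theta\rangle\neq 0$: if $u_{n}\to v\in\theta^{\perp}$ in $|\cdot|_{1}$ then $\langle u_{n},\theta\rangle\to 0$, so the corrections tend to $0$ in $|\cdot|_{1}$. With these two roles swapped back, your argument is complete and agrees with the paper's.
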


\begin{proof} $T^{\#}u=f+c\cdot\theta$ is equivalent to $$\langle x,ju\rangle_{1}\;=\;T^{\#}(u)(x)\;=\;(f+c\cdot\theta)(x)\;=\;\langle x,jf\rangle\;\;(\text{for all}\;x\in \theta^{\perp}).$$

\noindent This gives $\langle x,ju\rangle_{1}\;=\;\langle x,jf\rangle$.  The topology on $\theta^{\perp}$ is the restriction of the $\langle,\rangle_{1}$-topology of $V_1$, while $\theta^{\perp}$ is dense in $V$ in the $\langle,\rangle$-topology.  Thus, $ju\;=\;T_{\theta}^{-1}jf$ by the characterization of the Friedrichs extension of $T_{\theta^{\perp}}$.  Then $u\;=\;T_{\theta}^{-1}f$, since $j$ commutes with $T$.\end{proof}

Given an  everywhere-defined map $\tilde{T}^{-1}:V\rightarrow V_{1}$, characterized by 
$$\langle Tx,\tilde{T}^{-1}y\rangle=\langle x,y\rangle\;\;\;\;(\text{for}\;x\in D,y\in V)$$
we review the proof that given $\theta\in V_{-1}$ and $T_{\theta}=T|_{\text{ker}\theta}$, the Friedrichs extension $\tilde{T}_{\theta}$ has the feature that 
$$\tilde{T}_{\theta}u\;=\;f\;\;\;\;\text{for}\; u\in V_{1}, f\in V$$
exactly when
$$T_{\theta} u\;=\;f+c\cdot\theta\;\;\;\;\text{for some}\;c\in\mathbb{C}$$
Observe that $T_{\theta}u=f+c\cdot\theta$ is equivalent to
$$\langle x,u\rangle_{1}=\langle x,Tu\rangle=\langle x,f+c\cdot\theta\rangle_{V_{1}\times V_{-1}}=\langle x,f\rangle_{V_{1}\times V_{-1}}\iff\tilde{T}_{\theta}u=f$$
where the second equality follows from restricting in the first argument and extending in the second.

\noindent\section{Moment bounds assumptions}
We will need to \textit{assume} a moment bound to know that the projected distribution is in the desired Sobolev space.  This assumption is far weaker than Lindelof, but highly non-trivial. 
\begin{prop}For a degree $n$ $L$-function $L(s)$ with suitable analytic continuation and functional equation, a second-moment bound
$$\int\limits_{0}^{T} |L(\frac{1}{2}+it)|^{2}\;dt\ll T^{A}$$
implies a pointwise bound 
$$L(\sigma_{o}+it,f)\ll_{\sigma_{o},\epsilon} (1+|t|)^{\frac{A}{2}+\epsilon}\;\;\;\;(\text{for every}\;\epsilon>0)$$
\end{prop}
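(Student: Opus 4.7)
The plan is to combine the subharmonicity of $|L|^{2}$ (which yields a mean-to-max estimate via Cauchy's integral formula) with the Phragm\'en-Lindel\"of convexity principle. First I would establish the pointwise bound on the critical line, then extend to arbitrary $\sigma_{o}$ by interpolation.

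For $s_{0} = \tfrac{1}{2} + it_{0}$ with $T = 1 + |t_{0}|$, subharmonicity of $|L|^{2}$ on the disk $|w-s_{0}| \leq 1$ gives
\begin{equation*}
|L(s_{0})|^{2} \;\leq\; \tfrac{1}{\pi}\int_{|w-s_{0}|\leq 1} |L(w)|^{2}\,dA(w) \;\ll\; \int_{-1}^{1}\int_{t_{0}-1}^{t_{0}+1}|L(\tfrac{1}{2}+u+it)|^{2}\,dt\,du.
\end{equation*}
For $u = 0$ the inner integral is $\ll T^{A}$ by the hypothesis. For $u \in [-1,1]\setminus\{0\}$ I would bound the shifted-line second moment by combining the functional equation (which reflects $u < 0$ into $u > 0$ at the cost of polynomial-in-$T$ Gamma factors from Stirling) with the classical Carlson-Hardy-Littlewood log-convexity of $\sigma \mapsto \log \int_{0}^{T}|L(\sigma+it)|^{2}\,dt$ in $\sigma$, interpolating between the critical-line bound and the trivial bound in the region of absolute convergence. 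After integrating in $u$ this yields $|L(\tfrac{1}{2}+it_{0})| \ll T^{A/2+\epsilon}$, which is the claimed estimate on the critical line.

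To extend to general $\sigma_{o}$, I would apply Phragm\'en-Lindel\"of in the strip $\tfrac{1}{2} \leq \mathrm{Re}(s) \leq 1+\delta$: the critical-line pointwise bound just obtained provides the left-edge estimate, while absolute convergence of the defining Dirichlet series provides the bounded right-edge estimate. The interpolated bound at any interior $\sigma_{o}$ is $\ll (1+|t|)^{A/2+\epsilon}$, and strictly better as $\sigma_{o}$ moves to the right. For $\sigma_{o} < \tfrac{1}{2}$, first reflect via the functional equation to $1-\sigma_{o} > \tfrac{1}{2}$, absorbing the polynomial-in-$|t|$ Gamma-factor growth into the final $\epsilon$.

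The main technical obstacle is the extension of the second-moment bound to neighboring vertical lines in the first step. The Carlson-type convexity theorem for $\int_{0}^{T}|L(\sigma+it)|^{2}\,dt$ does the job, but verifying it uniformly with the correct $T$-dependence, and absorbing the Gamma-factor growth from the functional equation (which depends on the degree $n$ of $L$) into the final $T^{\epsilon}$, requires careful Stirling-based bookkeeping and a choice of auxiliary parameter. This part is classical but delicate; the remaining steps are routine applications of well-known tools.
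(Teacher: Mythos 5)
Your overall strategy (a mean-value bound for $|L|^{2}$ near the point, plus convexity) is parallel in spirit to the paper's, which represents $L(s_{o})^{2}$ by a Cauchy integral against the rapidly decaying kernel $e^{(s-s_{o})^{2}}/(s-s_{o})$ with $s_{o}=\sigma_{o}+it_{o}$, $\sigma_{o}>\tfrac12$ fixed, pushes the contour to $\mathrm{Re}(s)=\tfrac12$, bounds the tails $|t-t_{o}|\geq t_{o}$ by Gaussian decay times the squared convexity bound, bounds the central range $0<t<2t_{o}$ by the assumed moment, and only then reaches $\sigma_{o}=\tfrac12$ by a standard convexity argument. But your implementation has a genuine quantitative gap: the unit disk centered at $\tfrac12+it_{0}$ reaches a unit distance to the \emph{left} of the critical line, and there the functional equation does not cost an absorbable factor. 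At distance $u\in(0,1]$ to the left, Stirling gives $|L(\tfrac12-u+it)|\asymp |t|^{nu}\,|L(\tfrac12+u+it)|$ up to bounded factors, so the shifted second moment is of size $T^{A+2nu}$, and integrating over the disk produces a contribution of order $T^{A+2n}$ rather than $T^{A+\epsilon}$. The claim that this Gamma-factor growth can be ``absorbed into the final $\epsilon$'' is therefore false for a disk of fixed radius $1$: as written, your first step yields only $L(\tfrac12+it_{0})\ll T^{A/2+cn}$, not $T^{A/2+\epsilon}$. The same objection applies to your final remark about $\sigma_{o}<\tfrac12$, where the reflected bound genuinely carries the power $|t|^{n(\frac12-\sigma_{o})}$; the proposition should be read as a statement for $\sigma_{o}\geq\tfrac12$.

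The repair is easy and worth recording: since the hypothesis is a full-length moment, there is no loss in bounding the short integral near $t_{0}$ by the complete one, so you may shrink the disk radius to $\delta=\epsilon/(2n)$ (making the reflected loss $T^{2n\delta}=T^{\epsilon}$ at the cost of a harmless $\delta^{-2}$), or better, keep the disk entirely to the right of the critical line by centering it at $\sigma_{o}+it_{0}$ with radius less than $\sigma_{o}-\tfrac12$ — which is in effect what the paper's Gaussian-kernel contour argument does — and then recover the line $\mathrm{Re}(s)=\tfrac12$ by the same thin-strip Phragm\'en--Lindel\"of step you already propose. With that modification, your use of the Carlson--Gabriel log-convexity of $\sigma\mapsto\int_{0}^{T}|L(\sigma+it)|^{2}\,dt$ for the lines to the right is a legitimate alternative to the paper's pointwise convexity bound in handling the off-critical contributions, and the rest of your outline goes through.
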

\begin{proof}The proof of this is a standard argument, as follows. Fix $\sigma_{o}>\frac{1}{2}$.  For $0<t_{o}\in\mathbb{R}$, let $s_o=\sigma_{o}+it_{o}$.  Let $R$ be a rectangle in $\mathbb{C}$ with vertices $\frac{1}{2}\pm iT$ and $2\pm iT$ for $T> t_{o}$.  By Cauchy's Theorem
$$L(s_{o},f)^{2}\;=\;\frac{1}{2\pi i}\int\limits_{R}\frac{e^{(s-s_{o})^{2}}}{s-s_{o}}\cdot L(s,f)^{2}\;ds$$
Since the L-function has polynomial vertical growth, we can push the top and bottom of $R$ to $\infty$, giving

$$L(s_{o})^{2}\;=\;\frac{1}{2\pi}\int\limits_{-\infty}^{\infty}\frac{e^{((\frac{1}{2}-\sigma_{o})+i(t-t_{o}))^{2}}}{(\frac{1}{2}-\sigma_{o})+i(t-t_{o})}\cdot L(\frac{1}{2}+it)^{2}\;dt + O(1)$$
The part of the integral where $|t-t_{o}|\geq t_{o}$ is visibly $\ll_{n,\sigma_{o}} e^{-t_{o}}$:
$$|e^{((\frac{1}{2}-\sigma_{o}+i(t-t_o))^{2})}|\;=\;e^{(\frac{1}{2}-\sigma_{o})^{2}-(t-t_{o})^{2}}\ll_{\sigma_{o}} e^\frac{-t_{o}^{2}}{2}\cdot e^{\frac{-(t-t_{o})^{2}}{2}}\ll e^{-t_{o}}$$
for $|t-t_{o}|\geq t_{o}$.
Squaring the convexity bound for $L(\frac{1}{2}+it)$ gives
$$|L(\frac{1}{2}+it)|^{2}\ll |t|^{\frac{n}{2}+\epsilon}\;\;\;\;\;(\text{for all}\; \epsilon>0)$$
Thus
$$\int\limits_{2t_{o}}^{\infty}\frac{e^{((\frac{1}{2}-\sigma_{o}+i(t-t_o))^{2})}}{(\frac{1}{2}-\sigma_{o})+i(t-t_{o})}\cdot L(\frac{1}{2}+it)^{2}\;dt\ll_{\sigma_{o}} e^{\frac{-t_{o}^{2}}{2}}\int\limits_{2t_{o}}^{\infty}e^{\frac{-(t-t_{o})^{2}}{2}}\cdot t^{\frac{n}{2}+\epsilon}\ll_{\epsilon} e^{-t_{o}}$$
The other half of the tail, where $t<0$, is estimated similarly.  For $0<t<2t_{o}$, use the assumed moment estimate and the trivial estimate
$$\frac{e^{((\frac{1}{2}-\sigma_{o}+i(t-t_o))^{2})}}{(\frac{1}{2}-\sigma_{o})+i(t-t_{o})}\ll_{\sigma_{o}}e^{(\frac{1}{2}-\sigma_{o})^{2}-(t-t_{o})^{2}}\ll_{\sigma_{o}} 1$$
Then $$\int\limits_{0}^{2t_{o}}\frac{e^{((\frac{1}{2}-\sigma_{o}+i(t-t_o))^{2})}}{(\frac{1}{2}-\sigma_{o})+i(t-t_{o})}\cdot L(\frac{1}{2}+it)^{2}\;dt\ll_{\sigma_{o}}\int\limits_{0}^{2t_{o}}|L(\frac{1}{2}+it)|^{2}\;dt\ll t_{o}^{A}$$
Thus,
$$L(s_{o})^{2}\;=\;\frac{1}{2\pi}\int\limits_{-\infty}^{\infty}\frac{e^{(\frac{1}{2}+it-s_{o})^{2}}}{\frac{1}{2}+it-s_{o}}\cdot L(\frac{1}{2}+it,f)^{2}\;dt + O(1)\ll_{n,\sigma_{o}}t_{o}^{A}$$
Then a standard convexity argument [Lang, p.263] gives the asserted $|t_{o}|^{\frac{A}{2}+\epsilon}$ on $\sigma_{o}=\frac{1}{2}$ for all $\epsilon>0$.
\end{proof}

\noindent\section{Local automorphic Sobolev spaces}

A notion of local automorphic Sobolev spaces $H^{s}_{\text{lafc}}$ defined in terms of global automorphic Sobolev spaces $H_{\text{gafc}}^{s}$ is necessary to discuss the meromorphic continuation of solutions $u=u_{w}$ to differential equations $(\Delta-\lambda_{w})u=\theta$ for compactly-supported automorphic distributions $\theta$.  We want a continuous embedding of global automorphic Sobolev spaces into local automorphic Sobolev spaces.  This will follow immediately from the description, below.  Second,  compactly-supported distributions $\theta\in H^{-s}_{\text{gafc}}$ should extend to continuous linear functionals in $H^{-s}_{\text{lafc}}$.  A convenient corollary is that such $\theta$ moves inside integrals appearing in a spectral decomposition/synthesis of automorphic forms lying in global automorphic Sobolev spaces.  Finally, we want automorphic test functions to be dense in the local automorphic Sobolev spaces.

The necessity of the introduction of larger spaces than global automorphic Sobolev spaces is apparent already in the simplest situations.  On $\Gamma\backslash\mathfrak{H}$, with $\Gamma=SL_{2}(\mathbb{Z})$, when $\theta\in H_{\text{gafc}}^{-1-\epsilon}$ is an automorphic Dirac $\delta^{\text{afc}}$ at $z_{0}\in\Gamma\backslash\mathfrak{H}$, the spectral expansion in $\text{Re}(w)>\frac{1}{2}$ for a solution $u_{w}$ to that differential equation yields $u_{w}\in H_{\text{gafc}}^{1-\epsilon}$, but the meromorphic continuation to $\text{Re}(w)=\frac{1}{2}$ and then to $\text{Re}(w)<\frac{1}{2}$ includes an Eisenstein series $E_{w}$ which lies in no global automorphic Sobolev space.  That $E_{w}$ lies in local automorphic Sobolev space $H^{\infty}_{\text{lafc}}$ is immediate from the smoothness of $E_{w}$ and the definition of the local spaces, below.

We describe local automorphic Sobolev spaces.  Given a global automorphic Sobolev norm $|.|_{s}$, the corresponding local automorphic Sobolev norms, indexed by automorphic test functions $\varphi$, are given by
$$f\rightarrow |f|_{s,\varphi}\;=\;|\varphi\cdot f|_{s}\;\;\;\;\;\text{for}\;f\;\text{smooth automorphic}$$
\begin{defn} The  $s$-th local automorphic Sobolev space is given by
$$H^{s}_{\text{lafc}}(X)\;=\text{quasi-completion of}\;C_{c}^{\infty}(X)\;\text{with respect to these semi-norms}$$ 
\end{defn}
 By definition, $C_{c}^{\infty}(X)$ is dense in $H^{s}_{\text{lafc}}(X)$. Continuity of the embedding of the global automorphic Sobolev spaces into the local uses integration by parts.  The Lie algebra $\mathfrak{g}$ admits a decomposition $\mathfrak{g}\;=\;\mathfrak{k}\oplus\mathfrak{s}$
where $\mathfrak{k}$ is the Lie algebra of the maximal compact subgroup $K$ and $\mathfrak{s}$ is the algebra of symmetric matrices.
Choose an orthonormal basis $\{x_{i}\}$ for $\mathfrak{s}$ with respect to the Killing form $\langle,\rangle$.  Define the gradient
$$\nabla\;=\;\sum_{i}X_{x_{i}}\otimes x_{i}$$
where $X_{x_{i}}$ is the differential operator given by $X_{x_{i}}f(g)=\frac{\partial}{\partial t}|_{t=0}f(g\cdot e^{tx_{i}})$.
Observe that in the universal enveloping algebra
$$\nabla f\cdot\nabla F\;=\;(\sum_{i}X_{x_{i}}f\otimes x_{i})\cdot(\sum_{j}X_{x_{j}}F\otimes x_{j})\;=\;\sum_{i}X_{x_{i}}f\cdot X_{x_{j}}F$$
where the product is the Killing form on $\mathfrak{s}$.
\begin{prop} For $f,F\in C_{c}^{\infty}(\Gamma\backslash G)$, we have the integration-by-parts formula
$$\int_{\Gamma\backslash G}(-\Delta f) F\;=\;\int_{\Gamma\backslash G}\nabla f\cdot\nabla F$$
\end{prop}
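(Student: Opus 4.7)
The plan is to reduce the identity to the one-variable skew-adjointness of each right-invariant derivation $X_{x_i}$ on $\Gamma\backslash G$, and then sum over the Killing-orthonormal basis $\{x_i\}$ of $\mathfrak{s}$. First I would identify the operator acting in the statement: on smooth functions on $\Gamma\backslash G$ the differential operator $\sum_i X_{x_i}^2$ agrees with the descent $\Delta$ of Casimir to $G/K$ whenever we restrict to (right) $K$-invariant functions, since under the Cartan decomposition $\mathfrak{g}=\mathfrak{k}\oplus\mathfrak{s}$ the Killing-form Casimir splits as $\sum_i X_{x_i}^{2}-\sum_{j}X_{k_{j}}^{2}$ and the $\mathfrak{k}$-sum annihilates $K$-invariant functions. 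The orthonormality of $\{x_{i}\}$ is taken with respect to the restriction of the Killing form to $\mathfrak{s}$, which is positive definite, so the pairing implicit in $\nabla f\cdot\nabla F=\sum_{i}X_{x_{i}}f\cdot X_{x_{i}}F$ is consistent with the normalization used to define $\Delta$.

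The heart of the argument is then the single-vector-field identity: for every $x\in\mathfrak{s}$ and every $u,v\in C_{c}^{\infty}(\Gamma\backslash G)$,
$$\int_{\Gamma\backslash G}(X_{x}u)(g)\,v(g)\,dg\;=\;-\int_{\Gamma\backslash G}u(g)\,(X_{x}v)(g)\,dg.$$
I would prove this from right-invariance of the Haar measure on $G$, which passes to the quotient $\Gamma\backslash G$ because the left $\Gamma$-action commutes with right translation. Making the change of variables $g\mapsto g\,e^{-tx}$ gives the parametrized identity $\int u(ge^{tx})v(g)\,dg=\int u(g)v(ge^{-tx})\,dg$. Compact support of $u$ and $v$ permits interchanging $\tfrac{d}{dt}\big|_{t=0}$ with the integral on both sides (no boundary terms appear), and the displayed skew-adjointness falls out.

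Finally I would apply this with $u=X_{x_{i}}f$ and $v=F$, obtaining $\int(X_{x_{i}}^{2}f)F=-\int(X_{x_{i}}f)(X_{x_{i}}F)$ for each $i$, and then sum over $i$:
$$\int_{\Gamma\backslash G}(-\Delta f)\,F\;=\;\sum_{i}\int_{\Gamma\backslash G}X_{x_{i}}f\cdot X_{x_{i}}F\;=\;\int_{\Gamma\backslash G}\nabla f\cdot\nabla F.$$
I do not see a genuine analytic obstacle, since compact support kills every boundary term and the manipulations above are elementary. The only care needed is conventional bookkeeping: matching the Killing-form normalization of $\{x_{i}\}$ used in the Casimir expression to the one used in the definition of $\nabla$, and being explicit (if one wishes to argue for general, not necessarily $K$-invariant, $f$ and $F$) that throughout $\Delta$ is being read as the differential operator $\sum_{i}X_{x_{i}}^{2}$ on $\Gamma\backslash G$, which coincides with the usual $G/K$-Laplacian on spherical test functions.
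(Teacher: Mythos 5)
Your proposal is correct and follows essentially the same route as the paper: integrate by parts along the one-parameter flow $g\mapsto g e^{t x_{i}}$ of each basis vector $x_{i}\in\mathfrak{s}$, with compact support (equivalently, invariance of the measure under right translation) eliminating boundary terms, and then sum over the Killing-orthonormal basis. Your packaging of that step as skew-adjointness of each $X_{x_{i}}$, deduced from right-invariance of Haar measure on $\Gamma\backslash G$, is a cleaner rendering of the paper's informal $u$--$dv$ computation, and your remark identifying $\Delta$ with $\sum_{i}X_{x_{i}}^{2}$ (exactly on right $K$-invariant functions) merely makes explicit a convention the paper leaves implicit.
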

\begin{proof} Letting $X=\Gamma\backslash G$, consider the integral
$$\int_{X}\frac{\partial}{\partial t}f(g\cdot e^{tx_{i}})\frac{\partial}{\partial t}F(g\cdot e^{tx_{i}})\;dg$$
Let $u=\frac{\partial}{\partial t}f(g\cdot e^{tx_{i}})$ and $dv=\frac{\partial}{\partial t}F(ge^{tx_{i}})dg$.  Then $du=\frac{\partial^{2}}{\partial t_{1}\partial t_{2}}f(g\cdot e^{t_{1}x_{i}}\cdot e^{t_{2}x_{i}})$, while $v=F(g)$.  Then, using the compact support of $f$ and its derivatives, we get
$$\int_{X}\frac{\partial}{\partial t_{1}}f(g\cdot e^{t_{1}x_{i}})\frac{\partial}{\partial t_{2}}F(g\cdot e^{t_{2}x_{i}})\;dg\;=\;\int_{X}-\frac{\partial^{2}}{\partial t_{1}\partial t_{2}}f(g\cdot e^{t_{1}x_{i}}e^{t_{2}x_{i}}) F(g)\;dg$$
Taking limits as $t_{1}$ and $t_{2}$ approach $0$ gives the integration-by-parts formula
$$\int_{X} X_{x_{i}}f\cdot X_{x_{i}}F\;=\;\int_{X} (-X_{x_{i}}f)^{2}\cdot F$$
and
$$\int_{X} (-\Delta f)\cdot F\;=\;\int_{X} \nabla f\cdot\nabla F$$
\end{proof}
Now we can compare the local automorphic Sobolev $+1$-norm to the global automorphic Sobolev $+1$-norm as follows:
\begin{prop} Every local automorphic Sobolev $+1$-norm is dominated by the global automorphic Sobolev $+1$-norm.
\end{prop}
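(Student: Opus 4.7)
The plan is to unfold the squared $+1$-norm of $\varphi\cdot f$, apply the integration-by-parts identity just established to convert $-\Delta$ to a gradient, and then distribute $\nabla$ across the product $\varphi\cdot f$ via the Leibniz rule. Concretely, write
$$|\varphi f|_{+1}^{2}\;=\;\langle(1-\Delta)(\varphi f),\varphi f\rangle\;=\;\|\varphi f\|_{L^{2}}^{2}\;+\;\int_{\Gamma\backslash G}\nabla(\varphi f)\cdot\overline{\nabla(\varphi f)}\,dg,$$
using the integration-by-parts formula of the previous proposition applied to $F=\overline{\varphi f}$. Since each component $X_{x_{i}}$ of $\nabla$ is a left-invariant first-order differential operator (and hence a derivation), Leibniz gives $X_{x_{i}}(\varphi f)=(X_{x_{i}}\varphi)f+\varphi(X_{x_{i}}f)$, so componentwise
$$|\nabla(\varphi f)|^{2}\;\leq\;2\,|\nabla\varphi|^{2}\,|f|^{2}\;+\;2\,|\varphi|^{2}\,|\nabla f|^{2}.$$

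Next I would bound each of the resulting three pieces. Because $\varphi\in C_{c}^{\infty}(\Gamma\backslash G)$, both $\|\varphi\|_{\infty}$ and $\|\nabla\varphi\|_{\infty}$ are finite, so
$$\|\varphi f\|_{L^{2}}^{2}\leq\|\varphi\|_{\infty}^{2}\,\|f\|_{L^{2}}^{2},\qquad \int_{\Gamma\backslash G}|\nabla\varphi|^{2}\,|f|^{2}\leq\|\nabla\varphi\|_{\infty}^{2}\,\|f\|_{L^{2}}^{2}.$$
For the remaining term, apply the integration-by-parts identity in the reverse direction:
$$\int_{\Gamma\backslash G}|\varphi|^{2}\,|\nabla f|^{2}\;\leq\;\|\varphi\|_{\infty}^{2}\int_{\Gamma\backslash G}\nabla f\cdot\overline{\nabla f}\;=\;\|\varphi\|_{\infty}^{2}\,\langle -\Delta f,f\rangle\;\leq\;\|\varphi\|_{\infty}^{2}\,|f|_{+1}^{2}.$$
Summing these yields a constant $C_{\varphi}$ depending only on $\|\varphi\|_{\infty}$ and $\|\nabla\varphi\|_{\infty}$ with $|\varphi f|_{+1}\leq C_{\varphi}\,|f|_{+1}$, which is precisely the assertion that the $\varphi$-indexed local semi-norm is dominated by the global $+1$-norm.

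The main obstacle is purely one of bookkeeping rather than any essential analytic difficulty: one must verify that Leibniz is legal for $\nabla$ (immediate, since the $X_{x_{i}}$ are derivations) and that the integration-by-parts identity, proved for $f,F\in C_{c}^{\infty}(\Gamma\backslash G)$, extends to $f\in H^{+1}_{\text{gafc}}$ so that the inequality makes sense on the completion. The latter follows from density of $C_{c}^{\infty}$ in $H^{+1}_{\text{gafc}}$ together with continuity of both sides of the chain of inequalities in the $+1$-norm; no cancellation or delicate regularity argument is required.
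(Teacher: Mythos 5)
Your proof is correct and follows essentially the same route as the paper: unfold $|\varphi f|_{+1}^{2}$, integrate by parts to pass to gradients, apply Leibniz, bound the terms involving $\varphi$ and $\nabla\varphi$ by sup-norms, and convert $\int\|\nabla f\|^{2}=\langle-\Delta f,f\rangle\leq|f|_{+1}^{2}$ back via the same integration-by-parts identity. The only cosmetic difference is that you absorb the cross terms with $|a+b|^{2}\leq 2(|a|^{2}+|b|^{2})$ where the paper keeps them and applies Cauchy--Schwarz directly; your version also cleanly supplies the $\int|\varphi|^{2}\|\nabla f\|^{2}$ bound that the paper's displayed expansion glosses over.
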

\begin{proof}
$$|f|_{H^{1}_{\text{loc}}}\;=\;|\varphi f|^{2}_{H^{1}}\;=\;\int_{X}(1-\Delta)(\varphi f)\overline{\varphi f}\;=\;\int_{X}\nabla(\varphi f)\cdot\nabla(\overline{\varphi f})+\int_{X}\varphi f\cdot\varphi f$$
This is
\begin{align*}&\int_{X}(f\nabla\varphi+\varphi\nabla f)\cdot (\overline{f}\overline{\nabla\varphi}+\overline{\varphi}\overline{\nabla f})+|\varphi f|^{2}_{L^{2}}\\&=\;\int_{X}f^{2}||\nabla\varphi||^{2}+\int_{X}( f\overline{\varphi}\overline{\nabla f}\cdot\nabla\varphi+ \varphi\overline{f}\nabla f\overline{\nabla\varphi})+|\varphi f|_{L^{2}}^{2}\end{align*}
The first and last summands are dominated by $(C_{1}+C_{2})|f|^{2}_{L^{2}}$ where $C_{1}=\text{sup}\|\varphi\|$ and $C_{2}=\text{sup}\|\nabla\varphi\|$.  For the middle term, we use Cauchy-Schwarz and a constant bigger than $2\cdot\|\varphi\|\cdot\|\nabla\varphi\|$

\begin{align*}&( f\overline{\varphi}\overline{\nabla f}\cdot\nabla\varphi +\varphi\overline{f}\nabla f\overline{\nabla\varphi})\leq \int_{X} 2\varphi |f|\|\nabla f\|\|\nabla\varphi\|\ll\int_{X}|f|\|\nabla f\|\\&\leq (\int_{X}|f|^{2})^{\frac{1}{2}}(\int_{X}\|\nabla f\|^{2})^{\frac{1}{2}}\\
&=|f|_{L^{2}}\cdot (\int_{M}-\Delta f\cdot f)^{\frac{1}{2}}\leq |f|_{L^{2}}\cdot (\int_{X}(1-\Delta)f\cdot f)^{\frac{1}{2}}\;=\;|f|_{L^{2}}\cdot |f|_{H^{1}}\leq |f|^{2}_{H^{1}}\end{align*}
That is, with an implied constant independent of $f$,
$$|\varphi f|_{H^{1}}\ll |f|_{H^{1}}$$
\end{proof}

\begin{prop} There is a continuous map 
$$H^{1}_{\text{gafc}}\rightarrow H^{1}_{\text{lafc}}$$
\end{prop}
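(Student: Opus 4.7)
The plan is to bootstrap the seminorm comparison from the previous proposition into a genuine continuous map between the two completions. First I would observe that $C_c^\infty(X)$ sits as a dense subspace in both $H^1_{\text{gafc}}$ (by definition) and in $H^1_{\text{lafc}}$ (by definition, as the quasi-completion is taken with respect to the family of seminorms indexed by $\varphi$). So the natural candidate for the map is the unique continuous extension of the identity embedding $C_c^\infty(X) \hookrightarrow C_c^\infty(X)$, where the source carries the global $H^1$ norm and the target carries the locally convex topology generated by all the seminorms $|\cdot|_{1,\varphi}$.

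Next I would invoke the previous proposition, which gives for each fixed automorphic test function $\varphi$ a constant $C_\varphi$ such that
\[
|f|_{1,\varphi} \;=\; |\varphi f|_{H^1} \;\leq\; C_\varphi \cdot |f|_{H^1}
\qquad (f \in C_c^\infty(X)).
\]
This is exactly the statement that the identity map from $(C_c^\infty(X),|\cdot|_{H^1})$ into $(C_c^\infty(X),|\cdot|_{1,\varphi})$ is continuous for every $\varphi$. Since the locally convex topology on the target is, by construction, generated by the family $\{|\cdot|_{1,\varphi}\}_\varphi$, continuity against every seminorm is equivalent to continuity into the target topology. Therefore the identity map is continuous from the dense subspace $C_c^\infty(X)$ of $H^1_{\text{gafc}}$ into $H^1_{\text{lafc}}$.

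Finally I would extend by density. Given $f \in H^1_{\text{gafc}}$, pick a sequence $f_n \in C_c^\infty(X)$ with $f_n \to f$ in the global $H^1$-norm. Then $(f_n)$ is Cauchy in $H^1_{\text{gafc}}$, hence by the estimate above also Cauchy with respect to each seminorm $|\cdot|_{1,\varphi}$, hence Cauchy in the locally convex space $H^1_{\text{lafc}}$. Since that space is quasi-complete, $f_n$ converges to a limit $\iota(f) \in H^1_{\text{lafc}}$. A standard interleaving argument shows $\iota(f)$ is independent of the approximating sequence, so $\iota$ is well-defined and linear, and the estimate $|\iota(f)|_{1,\varphi} \leq C_\varphi |f|_{H^1}$ passes to the limit, giving continuity.

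The only point requiring care, and what I expect to be the one delicate step, is the passage through the quasi-completion: one must verify that Cauchy nets in the seminorm topology genuinely converge (rather than merely having subnet limits) and that the extension is consistent across seminorms. This is handled by the definition of quasi-completion together with the fact that all the seminorms are jointly controlled by a single norm on the source, which makes the sequential argument above suffice and obviates the need to worry about nets.
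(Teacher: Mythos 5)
Your argument is correct, and it rests on exactly the same input as the paper's: the bound $|\varphi f|_{H^{1}}\ll_{\varphi}|f|_{H^{1}}$ from the preceding proposition, i.e.\ continuity of $f\mapsto f$ from the global $H^{1}$ topology into each seminorm $|\cdot|_{1,\varphi}$. Where you diverge is in how the individual seminorm estimates are assembled into a single map: the paper identifies $H^{1}_{\text{lafc}}$ as the projective limit of the spaces $H^{1,\varphi}$ over automorphic test functions $\varphi$ and simply invokes the universal property of the projective limit, whereas you work directly with the definition of $H^{1}_{\text{lafc}}$ as a quasi-completion and extend the identity from the dense subspace $C_{c}^{\infty}(X)$ by a density argument, using that the image of a norm-bounded Cauchy sequence is bounded and Cauchy for every seminorm and hence convergent by quasi-completeness. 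Your route is more elementary and self-contained -- it does not require knowing or proving that the quasi-completion coincides with the projective limit of the $H^{1,\varphi}$ -- but it obliges you to check the convergence of Cauchy sequences in the quasi-complete target and the independence of the limit from the approximating sequence, which you correctly flag and which is indeed harmless here since the source is a Hilbert space (so sequences suffice) and all seminorms are dominated by the single global norm. The paper's appeal to the universal property is shorter and gives continuity immediately for the whole of $H^{1}_{\text{gafc}}$ without revisiting density, at the cost of leaning on the projective-limit description of the local space.
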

\begin{proof} The previous result proves continuity of $H^{1}_{\text{gafc}}\rightarrow H^{1,\varphi}$ for every automorphic test function $\varphi$.  Since $H^{1}_{\text{lafc}}$ is the projective limit of the $H^{1,\varphi}$ over all automorphic test functions $\varphi$, the universal property of the projective limit guarantees that there must be a continuous map $H^{1}_{\text{gafc}}\rightarrow H^{1}_{\text{lafc}}$.
\end{proof}

\noindent\section{Main Theorem: Characterization and Sparsity of discrete spectrum}
Recall the construction of $2,2$ pseudo-Eisenstein series. Let $\phi\in C_{c}^{\infty}(\mathbb{R})$ and let $f$ be a spherical cuspform on $GL_{2}(k)\backslash GL_{2}(\mathbb{A})$ with trivial central character.  
Let $$\varphi( \left( \begin{array}{cc}
A & B\\
0& D\end{array} \right))=\phi(\Big|\frac{\text{det}A}{\text{det}D}\Big|^{2})\cdot f(A)\cdot \overline{f}(D)$$
extending by right $K$-invariance to be made spherical.
Define the $P^{2,2}$ pseudo-Eisenstein series by
$$\Psi_{\varphi}(g)\;=\;\sum_{\gamma\in P_{k}\backslash G_{k}}\varphi(\gamma g)$$
Given $g= \left( \begin{array}{cc}
A & b\\
0& D\end{array} \right)$, let $h=h(g)=|\frac{\text{det}A}{\text{det}D}|^{2}$ be the \textit{height} of $g$. The spectral decomposition for $\theta$ in a global automorphic Sobolev space $H^{-s}$ is

\begin{align*}&\widetilde{\theta}\;=\;\sum_{F_{1}\;\text{cfm}\;GL4}\langle \widetilde{\theta},F_{1}\rangle\cdot F_{1}\;+\;\frac{\langle \widetilde{\theta},1\rangle}{\langle 1,1\rangle}\;+\;\sum_{F_{2}\; \text{cfm}\; GL2}\langle \widetilde{\theta},\Upsilon_{F_{2}}\rangle\cdot \Upsilon_{F_{2}}\\&+\;\sum_{F_{3},F_{4}\;\text{cfm}\;GL2}\int_{\frac{1}{2}-i\infty}^{\frac{1}{2}+i\infty}\langle \widetilde{\theta},E_{F_{3},F_{4},s}^{2,2}\rangle\cdot E_{F_{3},F_{4},s}^{2,2}\;ds\\&
+\;\sum_{F_{5}\;\text{cfm}\;GL3}\int_{\frac{1}{2}-i\infty}^{\frac{1}{2}+i\infty}\langle \widetilde{\theta},E_{F_{5},s}^{3,1}\rangle\cdot E_{F_{5},s}^{3,1}\;ds+\;\sum_{F_{6}\;\text{cfm}\;GL2}\int_{\rho-i\infty}^{\rho+i\infty}\langle \widetilde{\theta},E_{F_{6},\lambda}^{2,1,1}\rangle\cdot E_{F_{6},\lambda}^{2,1,1}\;d\lambda\\&
+\;\int_{\rho+i\mathfrak{a}^{*}_{\text{min}}}\langle \widetilde{\theta},E_{\lambda}\rangle\cdot E_{\lambda}\;d\lambda\\
\end{align*}
where $F$ and $F^{'}$ are cuspforms on $GL(2)$ and the $\Upsilon_{F}$'s are Speh forms.
We are interested in the subspace $V$ of $L^{2}(Z_{\mathbb{A}}G_{k}\backslash G_{\mathbb{A}})$ spanned by $2,2$ pseudo-Eisenstein series with fixed cuspidal data $f$ and $\overline{f}$, where $f$ is everywhere locally spherical.   Let $D_{a,f}$ be the subspace of $V$ consisting of the $L^{2}$-closure of the span of $2,2$ pseudo-Eisenstein series  with fixed cuspidal datum $f$ and $\overline{f}$ with test function $\varphi$ supported on $h(g)<a$ and whose constant terms have support on $h(g)<a$.

Let $\Delta_{a}$ be $\Delta$ restricted to $D_{a,f}$, and let $\widetilde{\Delta}_{a}$ be the Friedrichs extension of $\Delta_{a}$ to a self-adjoint (unbounded) operator.  By construction, the domain of $\widetilde{\Delta}_{a}$ is contained in a Sobolev space $\Phi^{+1}_{a}$, defined as the completion of $D_{a,f}$ with respect to the $+1$-Sobolev norm $\langle f,f\rangle_{1}=\langle (1-\Delta)f,f\rangle_{L^{2}}$. We recall [M-W,141-143], and [Garrett 2014] the
\begin{thm} The inclusion $\Phi_{a}^{1}\rightarrow \Phi_{a}$, from $\Phi_{a}^{1}$ with its finer topology, is compact, so that the space $\Phi_{a}$ decomposes discretely.
\end{thm}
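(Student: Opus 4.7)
The strategy is to reduce the compactness claim to a Rellich--Kondrachov theorem on a bounded interval, and then deduce the discrete decomposition from the spectral theorem for compact self-adjoint operators. I would argue in four main steps.

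First, I would exploit the fact that $2,2$ pseudo-Eisenstein series with \emph{fixed} cuspidal data $f, \overline{f}$ are parametrized by a single scalar test function $\phi \in C_c^\infty(\mathbb{R}^+)$ of the height variable, so the map $\phi \mapsto \Psi_{\varphi_{\phi, f, \overline{f}}}$ intertwines $\Phi_{a}$ with a Hilbert space of functions of $h$. By the adjunction between pseudo-Eisenstein series and constant terms (Proposition on $\langle f, \Psi_\varphi^P\rangle = \langle c_P f, \varphi\rangle$), together with the explicit $P$-constant-term formula of the previous section (with ratio $c_s = L(2s-1,\pi\otimes\pi')/L(2s,\pi\otimes\pi')$), the $L^2$ norm $\|\Psi_{\varphi}\|^2_{L^2}$ is expressed in terms of $\phi$ and its functional-equation image. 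Imposing both that $\phi$ be supported in $(0,a)$ and that $c_P \Psi_\varphi$ vanish for $h \geq a$ cuts the problem down to test-function data (with boundary conditions at $h = a$ reflecting the Weyl symmetry for the self-associate parabolic $P^{2,2}$) on the bounded interval $(0,a)$.

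Second, under this reduction, the Casimir operator $\Delta$ acts on the height coordinate as $\phi \mapsto \phi'' + (\lambda_f + \lambda_{\overline{f}})\phi$; this is exactly the computation $\Delta \Psi_{\varphi_{\phi, f, \overline{f}}} = \Psi_{\varphi_{(\lambda_f + \lambda_{\overline{f}})\phi + \phi'', f, \overline{f}}}$ carried out in the proposition on orthogonality of $2,2$ pseudo-Eisenstein series. Consequently the $+1$-Sobolev inner product $\langle (1-\Delta) u, u\rangle$ on $\Phi_a^1$ pulls back to the standard $H^1$ inner product on $(0,a)$ (up to a lower-order term that is continuous for the $L^2$ topology). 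Thus the inclusion $\Phi_a^1 \hookrightarrow \Phi_a$ becomes $H^1((0,a)) \hookrightarrow L^2((0,a))$ restricted to the subspace cut out by the boundary conditions.

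Third, I would invoke Rellich--Kondrachov on a bounded interval: $H^1((0,a)) \hookrightarrow L^2((0,a))$ is compact, hence so is $\Phi_a^1 \hookrightarrow \Phi_a$. Finally, by the Friedrichs characterization of Section 6, for $\lambda$ outside the spectrum $\widetilde{\Delta}_a - \lambda$ is a continuous bijection from its domain (contained in $\Phi_a^1$) onto $\Phi_a$, so the resolvent $(\widetilde{\Delta}_a - \lambda)^{-1}$ factors as $\Phi_a \to \Phi_a^1 \hookrightarrow \Phi_a$ and is therefore compact. Since $\widetilde{\Delta}_a$ is self-adjoint, the spectral theorem for compact self-adjoint operators yields that $\Phi_a$ decomposes as an orthogonal direct sum of finite-dimensional eigenspaces of $\widetilde{\Delta}_a$, i.e., $\Phi_a$ decomposes discretely.

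The main obstacle is the careful execution of step one: pinning down the \emph{precise} boundary condition at $h = a$ coming from the vanishing of the constant term above height $a$, verifying that — because $P^{2,2}$ is self-associate and the functional equation relates $\phi$ to $c_s \cdot \phi^w$ — one obtains a dense domain in $L^2((0,a))$ on which the relevant form is closed and semi-bounded, and confirming that the Friedrichs construction on this restricted domain really produces a self-adjoint extension whose form domain is the expected $H^1$-type space. Once this boundary-condition bookkeeping is in order, the remaining Rellich and spectral-theoretic steps are standard.
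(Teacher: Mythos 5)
Your overall architecture (compact inclusion $\Rightarrow$ compact resolvent $\Rightarrow$ discrete decomposition) agrees with the paper, and your closing steps (resolvent factoring through $\Phi_a^1$, spectral theorem for compact self-adjoint operators) are fine, as is your identification of how Casimir acts on the height datum, which matches the paper's computation $\Delta\Psi_{\varphi_{\phi,f,\overline{f}}}=\Psi_{\varphi_{(\lambda_f+\lambda_{\overline{f}})\phi+\phi'',f,\overline{f}}}$. But the paper does not argue through a one-variable reduction at all: it proves compactness of $H^1_\eta\rightarrow L^2_\eta$ for the \emph{full} space of automorphic functions with constant terms vanishing above prescribed heights, by covering the quotient with a Siegel set, splitting off the cuspidal zones $X_\mu^\alpha$ with smooth cutoffs, using the Fourier expansion along the unipotent radical and the vanishing of the constant term to prove the key claim that $\sup |f|_{L^2}/|f|_{H^1}\to 0$ for $f$ supported in $X_\mu^\alpha$ (via the operator comparison $-\Delta^{\mathfrak{n}}\ll\mu^{-2}(-\Delta)$ and non-negativity of the remaining part of Casimir), and applying the classical Rellich lemma only on the compact remainder $C_\mu$. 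The statement for $\Phi_a$ is then a consequence.

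The genuine gap in your plan is step one, which you classify as bookkeeping but which contains essentially all the analytic content. First, the identification of $\Phi_a$ with data on ``the bounded interval $(0,a)$'' is not correct as stated: with the invariant measure $d^\times h = dh/h$ and Casimir acting through $(h\,\partial_h)^2$, the region $0<h<a$ is a half-line in the natural coordinate $\log h$, and $H^1\hookrightarrow L^2$ on a half-line is \emph{not} compact. To obtain an effectively bounded parameter region one must invoke reduction theory (on a Siegel set the height is bounded below) and the Weyl-element folding for the self-associate $P^{2,2}$ identifying $h$ with $h^{-1}$. Second, even granting a bounded interval, the map $\phi\mapsto\Psi_{\varphi_{\phi,f,\overline{f}}}$ is not an isometry onto an $L^2$ of that interval: unwinding gives $\|\Psi_\varphi\|^2_{L^2}=\langle c_P\Psi_\varphi,\varphi\rangle$, and $c_P\Psi_\varphi$ contains the nonlocal intertwining term governed by $c_s$, so comparability of the pulled-back $L^2$ and $H^1$ norms with the naive one-variable norms has to be proved, not assumed; likewise the condition that the constant term (not merely $\phi$) vanish above $h=a$ is a condition on $\varphi+M\varphi$, not a boundary condition on $\phi$ at $h=a$. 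Third, elements of the completed unit ball of $\Phi_a^1$ must be shown to have uniformly small mass high in \emph{all} cusp directions of $GL_4$, which is exactly the paper's key claim and does not follow formally from the support conditions on the generating pseudo-Eisenstein series. Once you supply these ingredients you will have essentially reproduced the Lax--Phillips/Colin de Verdi\`ere-type argument the paper gives; alternatively, a rigorous one-dimensional model would have to be built on the spectral side (functions on $\mathrm{Re}(s)=\tfrac12$ with the $c_s$-symmetry and a Paley--Wiener-type support condition, plus the Speh residue), where the required compactness is no longer the classical Rellich--Kondrachov statement on $(0,a)$.
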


Indeed, let $L_{\eta}^{2}$ be the subspace of $L^{2}(\text{PGL}_{4}\backslash \text{PGL}_{4}(\mathbb{R})/\text{O}_{4}(\mathbb{R}))$ with all constant terms vanishing above given fixed heights, specified by a real-valued function $\eta$ on simple positive roots described below. By its construction, the resolvent of the Friedrichs extension maps continuously from $L^{2}$ to the automorphic Sobolev space $H^{1}=H^{1}(\text{PGL}_{4}(\mathbb{Z})\backslash\text{PGL}_{4}(\mathbb{R})/\text{O}_{4}(\mathbb{R}))$ with its finer topology. Letting $$H_{\eta}^{1}=H^{1}\cap L_{\eta}^{2}$$ with the topology of $H^{1}$, it suffices to show that the injection
$$H_{\eta}^{1}\rightarrow L_{\eta}^{1}$$
is compact. To prove this compactness, we show that the image of the unit ball of $H_{\eta}^{1}$ is totally bounded in $L_{\eta}^{2}$.

Let $A$ be the standard maximal torus consisting of diagonal elements of $GL_{4}$, $Z$ the center of $G$, and $K=O_{4}(\mathbb{R})$.  Let $A^{+}$ be the subgroup of $A_{\mathbb{R}}$ with positive diagonal entries, and let $Z^{+}=Z_{\mathbb{R}}\cap A^{+}$.  A standard choice of positive simple roots is
$$\Phi\;=\;\{\alpha_{i}(a)\;=\;\frac{a_{i}}{a_{i+1}}\;\:\;i=1,\dots,r-1\}$$
where $a$ is the matrix
$$a\;=\;\left( \begin{array}{cccc}
a_{1}& 0 &0&0\\
0&a_{2}&0& 0\\
0&0&a_{3}&0\\
0&0&0&a_{4}\end{array} \right)$$
Let $N^{\text{min}}$ be the unipotent radical of the standard minimal parabolic $P^{\text{min}}$ consisting of upper-triangular elements of $G$. For $g\in G_{\mathbb{R}}$, let $g=n_{g}a_{g}k_{g}$ be the corresponding Iwasawa decomposition with respect to $P^{\text{min}}$. From basic reduction theory, the quotient $Z_{\mathbb{R}}G_{\mathbb{Z}}\backslash G_{\mathbb{R}}$ is covered by the Siegel set
$$\mathfrak{S}\;=\;N_{\mathbb{Z}}^{\text{min}}\backslash N_{\mathbb{R}}^{\text{min}}\cdot Z^{+}\backslash A_{0}^{+}\cdot K\;=\;Z^{+}N_{\mathbb{Z}}^{\text{min}}\big\backslash\big\{g\in G:\alpha(a_{g})\geq\frac{\sqrt{3}}{2} ,\;\text{for all}\;\alpha\in\Phi\big\}$$
Further, there is an absolute constraint so that
$$\int_{\mathfrak{S}}|f|\ll\int_{Z_{\mathbb{R}}G_{\mathbb{Z}}\backslash G_{\mathbb{R}}}|f|$$
for all $f$.  For a non-negative real-valued function $\eta$ on the set of simple roots, let
$$X_{\eta}^{\alpha}\;=\;\{g\in\mathfrak{S}\;:\;\alpha(a_{g})\geq \eta(\alpha)\}$$
for $\alpha\in\Phi$. Let 
$$C_{\eta}\;=\;\{g\in\mathfrak{S}\;:\;\alpha(a_{g})\leq\eta(\alpha)\;\text{for all}\;\alpha\in\Phi\}$$
This is a compact set, and 
$$\mathfrak{S}\;=\;C_{\eta}\cup \bigcup_{\alpha\in\Phi}X_{\eta}^{\alpha}$$
For $\alpha\in\Phi$, let $P^{\alpha}$ be the standard maximal proper parabolic whose unipotent radical $N^{\alpha}$ has Lie algebra $\mathfrak{n}^{\alpha}$ including the $\alpha^{\text{th}}$ root space.  That is, for $\alpha(a)=\frac{a_{i}}{a_{i+1}}$, the Levi component $M^{\alpha}$ of $P^{\alpha}$ is $GL_{i}\times GL_{4-i}$. As before, let $(c_{P}f)(g)$ denote the constant term along a parabolic $P$ of a function $f$ on $G_{\mathbb{Z}}\backslash G_{\mathbb{R}}$.  For $P=P^{\alpha}$, write $c^{\alpha}=c_{P}$.  For a non-negative real-valued function $\eta$ on the set of simple roots, the space of square-integrable functions with constant terms vanishing above heights $\eta$ is
$$L_{\eta}^{2}\;=\;\{f\in L^{2}(Z_{\mathbb{R}}G_{\mathbb{Z}}\backslash G_{\mathbb{R}}/K)\;:\;c^{\alpha}f(g)\;=\;0\;\text{for}\;\alpha(a_{g})\geq \eta(\alpha),\;\text{for all}\;\alpha\in \Phi\}$$
Vanishing is meant in a distributional sense.  The global automorphic Sobolev space $H^{1}$ is the completion of $C_{c}^{\infty}(Z_{\mathbb{R}}G_{\mathbb{Z}}\backslash G_{\mathbb{R}})^{K}$ with respect to the $H^{1}$ Sobolev norm
$$|f|_{H^{1}}^{2}\;=\;\int_{Z_{\mathbb{R}}G_{\mathbb{Z}}\backslash G_{\mathbb{R}}}(1-\Delta)f\cdot\overline{f}$$
where $\Delta$ is the invariant Laplacian descended from the Casimir operator $\Omega$.  Put $H_{\eta}^{1}=H^{1}\cap L_{\eta}^{2}$.

\begin{prop} The Friedrichs self-adjoint extension $\widetilde{\Delta}_{\eta}$ of the restriction of the symmetric operator $\Delta$ to test functions in $L_{\eta}^{2}$ has compact resolvent, and thus has purely discrete spectrum
\end{prop}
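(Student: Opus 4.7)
The plan is to reduce the claim to showing that the inclusion $H^{1}_{\eta} \hookrightarrow L^{2}_{\eta}$ is compact. This suffices because, by construction of the Friedrichs extension, the resolvent $(1-\widetilde{\Delta}_{\eta})^{-1}$ maps $L^{2}_{\eta}$ continuously into $H^{1}_{\eta}$ with its finer topology; composing with a compact inclusion back into $L^{2}_{\eta}$ produces a compact operator, and the spectral theorem for compact self-adjoint operators then yields purely discrete spectrum for $\widetilde{\Delta}_{\eta}$. To prove compactness of the inclusion, I would show that the image of the unit ball of $H^{1}_{\eta}$ is totally bounded in $L^{2}_{\eta}$.

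To do this, use the covering $\mathfrak{S} = C_{\eta} \cup \bigcup_{\alpha \in \Phi} X^{\alpha}_{\eta}$ and split a function $f \in H^{1}_{\eta}$ accordingly. On the compact piece $C_{\eta}$, the standard Rellich--Kondrachov theorem gives compactness of the inclusion of $H^{1}(C_{\eta}) \hookrightarrow L^{2}(C_{\eta})$, so the restrictions of functions in the unit ball of $H^{1}_{\eta}$ form a totally bounded family in $L^{2}(C_{\eta})$. The remaining work is to show that the $L^{2}$-mass of $f$ on the cuspidal tails $X^{\alpha}_{\eta}$ can be made uniformly arbitrarily small by raising the cutoff height, while still controlled by $|f|_{H^{1}}$; this allows total boundedness on each tail via an $\varepsilon$-net argument combined with the compact-piece estimate.

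The key estimate on each tail $X^{\alpha}_{\eta}$ uses the defining vanishing of the $\alpha$-constant term. On $X^{\alpha}_{\eta}$, expand $f$ into its Fourier series along $N^{\alpha}_{\mathbb{Z}} \backslash N^{\alpha}_{\mathbb{R}}$,
\[
f(n g) \;=\; (c^{\alpha}f)(g) \;+\; \sum_{\psi \neq 1} \widehat{f}_{\psi}(g)\,\psi(n).
\]
By the $L^{2}_{\eta}$-hypothesis the zero mode vanishes on $X^{\alpha}_{\eta}$, so $f|_{X^{\alpha}_{\eta}}$ consists purely of non-trivial Fourier modes. For a non-trivial character $\psi$ of $N^{\alpha}_{\mathbb{R}}/N^{\alpha}_{\mathbb{Z}}$, integration by parts against $\psi$ in the $N^{\alpha}$-direction trades one factor of $\psi$'s frequency $|\psi|$ against one derivative applied to $f$; combined with the exponential-of-height factor from the Iwasawa measure $\delta^{P^{\alpha}}$ on the Levi, this produces the pointwise/integral bound
\[
\int_{X^{\alpha}_{\eta'}} |f|^{2} \;\ll\; \frac{1}{\eta'(\alpha)}\,|f|_{H^{1}}^{2}
\]
as $\eta'(\alpha) \to \infty$, uniformly in $f$ in the unit ball. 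This gives equi-smallness of the tails and, together with the Rellich estimate on the compact core, produces total boundedness in $L^{2}_{\eta}$.

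The main obstacle is the tail estimate: carefully executing the Fourier expansion along each $N^{\alpha}$ in a way that is compatible with the Sobolev $H^{1}$ norm defined via the full Casimir $\Omega$ (not just along $N^{\alpha}$). The point is that the simple-root direction $\alpha$ enters in the Casimir through the root-space generators $E_{\alpha}$ and $E_{-\alpha}$, so one needs a standard Iwasawa-coordinate computation showing that the relevant frequency eigenvalue of the Laplacian on non-trivial $\psi$-modes at height $\alpha(a_{g}) = t$ grows like $t^{2}|\psi|^{2}$, large enough to absorb the modular-function weight and furnish the decay. Once this Fourier-theoretic input is in place, the total-boundedness argument and hence the compactness of $H^{1}_{\eta} \hookrightarrow L^{2}_{\eta}$ follow along the lines of [M-W, pp.141--143] and [Garrett 2014].
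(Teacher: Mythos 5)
Your skeleton is the paper's: reduce to compactness of the inclusion $H^{1}_{\eta}\hookrightarrow L^{2}_{\eta}$ (resolvent maps $L^{2}_{\eta}$ continuously into $H^{1}$, compose, apply the spectral theorem), prove total boundedness of the image of the unit ball, treat the compact core $C_{\eta}$ by Rellich, and control each cuspidal tail $X^{\alpha}_{\mu}$ by Fourier expansion along $N^{\alpha}$, using the vanishing of the $\alpha$-constant term and the fact that at height $\alpha(a_{g})\geq\mu$ the unipotent directions inside Casimir are scaled: $x_{\beta}$ acts by $\beta(a)\,\partial/\partial x_{\beta}$ with $\beta(a)\gg\mu$. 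One cosmetic remark: you cannot literally ``split $f$'' by restriction; the paper needs smooth cut-offs $\varphi^{0}_{\mu},\varphi^{\alpha}_{\mu}$ whose multiplication operators are bounded on $H^{1}$ uniformly in $\mu$, so that the tail estimate can be applied to $f\varphi^{\alpha}_{\mu}$. Also, the decay comes from the adjoint action of $a$ on $\mathfrak{n}$, not from the modular-function weight in the Iwasawa measure, and the rate obtained is $\mu^{-2}$ rather than $\eta'(\alpha)^{-1}$; since any rate tending to zero suffices, this does not matter.

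The genuine gap is the step you label a ``standard Iwasawa-coordinate computation.'' Your tail bound requires $\sum_{\beta}\beta(a)^{2}\langle-\partial^{2}_{x_{\beta}}f,f\rangle\ll\langle-\Delta f,f\rangle$, i.e.\ domination of the $a$-weighted flat Laplacian along $\mathfrak{n}$ by the full invariant Laplacian appearing in the $H^{1}$ norm. Casimir is not a sum of squares of the $x_{\beta}$: writing $\Omega=\Omega'+\Omega''$ with $\Omega'=\sum_{\beta\in\Phi^{N}}(x_{\beta}x_{\beta}^{\theta}+x_{\beta}^{\theta}x_{\beta})$, in Iwasawa coordinates $\Omega'\equiv\sum_{\beta}2x_{\beta}^{2}+[x_{\beta}^{\theta},x_{\beta}]$ modulo $\mathfrak{k}$, so the comparison needs the operator $T=-\sum_{\beta}[x_{\beta}^{\theta},x_{\beta}]-\Omega''$ to be non-negative on right $K$-invariant functions supported in $X^{\alpha}_{\mu}$. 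This is an operator-positivity statement, not a frequency-eigenvalue computation on $\psi$-modes: the paper proves it by expanding along $N$, replacing the Fourier coefficients $\hat{f}_{\xi}$ by left-$N_{\mathbb{R}}$-invariant extensions $F_{\xi}$, grouping the nonzero $\xi$ into $U_{\mathbb{Z}}$-orbits (with $U=M\cap N^{\text{min}}$), unwinding, and invoking non-negativity of $-\Delta$ on each quotient $Z^{+}N_{\mathbb{R}}U_{\omega}\backslash G_{\mathbb{R}}/K$. Without this input (or an equivalent), the inequality $\int_{X^{\alpha}_{\mu}}|f|^{2}\ll\mu^{-2}|f|^{2}_{H^{1}}$ — the heart of the claim — is not justified, so you should either carry out this positivity argument or cite it explicitly rather than absorb it into the phrase ``standard computation.''
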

\begin{proof} Let
$$A_{0}^{+}\;=\;\{a\in A\;:\;\alpha(a)\geq\frac{\sqrt{3}}{2}\;:\;\text{for all}\;\alpha\in\Phi\}$$
We grant ourselves that we can control smooth cut-off functions:
\begin{lem} Fix a positive simple roots $\alpha$.  Given $\mu\geq\eta(\alpha)+1$, there are smooth functions $\varphi_{\mu}^{\alpha}$ for $\alpha\in\Phi$ and $\varphi_{\mu}^{0}$ such that: all these functions are real-valued, taking values between $0$ and $1$, $\varphi^{0}$ is supported in $C_{\mu+1}$, and $\varphi^{\alpha}\mu$ is supported in $X_{\mu}^{\alpha}$, and $\varphi_{\mu}^{0}+\sum_{\alpha}\varphi_{\mu}^{\alpha}=1$.  Further, there is a bound $C$ uniform in $\mu\geq \eta(\alpha)+1$, such that $|f\cdot\varphi_{\mu}^{0}|_{H^{1}}\leq C\cdot |f|_{H^{1}}$, and
$$|f\cdot\varphi_{\mu}^{\alpha}|_{H^{1}}\leq C\cdot |f|_{H^{1}}$$
for all $\mu\geq \eta(\alpha)+1$.
\end{lem}
Then the key point is
\begin{cla} For $\alpha\in\Phi$,
$$\lim_{\mu\to\infty}\left(\text{sup}\frac{|f|_{L^{2}}}{|f|_{H^{1}}}\right)\;=\;0$$
where the supremum is taken over $f\in H_{\eta}^{1}$ and $\text{support}(f)\subset X_{\mu}^{\alpha}$.
\end{cla}
Temporarily grant the claim.  To prove total boundedness of $H_{\eta}^{1}\rightarrow L_{\eta}^{2}$, given $\epsilon>0$, take $\mu\geq \eta(\alpha)+1$ for all $\alpha\in\Phi$, large enough so that $f\cdot\varphi_{\mu}^{\alpha}|_{L^{2}}<\epsilon$, for all $f\in H_{\eta}^{1}$, with $|f|_{H^{1}}\leq 1$.  This covers the images $\{f\cdot \varphi_{\mu}^{\alpha}:f\in H_{\eta}^{1}\}$ with $\alpha\in\Phi$ with $\text{card}\Phi$ open balls in $L^{2}$ of radius $\epsilon$.  The remaining part $\{f\cdot\varphi_{\mu}^{0}:f\in H_{\eta}^{1}\}$ consists of smooth functions supported on the compact $C_{\mu}$.  The latter can be covered by finitely-many coordinate patches $\phi_{i}:U_{i}\rightarrow\mathbb{R}^{d}$.  Take smooth cut-off functions $\varphi$ for this covering.  The functions $(f\cdot\varphi_{i})\circ\phi_{i}^{-1}$ on $\mathbb{R}^{d}$ have support strictly inside a Euclidean box, whose opposite faces can be identified to form a flat d-torus $\mathbb{T}^{d}$.  The flat Laplacian and the Laplacian inherited from $G$ admit uniform comparison on each $\phi(U_{i})$ , so the $H^{1}(\mathbb{T}^{d})$-norm of $(f\cdot\varphi)\circ\phi_{i}^{-1}$ is uniformly bounded by the $H^{1}$-norm.  The classical Rellich lemma asserts compactness of
$$H^{1}(\mathbb{T}^{d})\rightarrow L^{2}(\mathbb{T}^{d})$$  By restriction, this gives the compactness of each $H^{1}\cdot\varphi_{i}\rightarrow L^{2}$.  A finite sum of compact maps is compact, so $H^{1}\cdot\varphi_{\mu}^{0}\rightarrow L^{2}$ is compact.  In particular, the image of the unit ball from $H^{1}$ admits a cover by finitely-many $\epsilon$-balls for any $\epsilon>0$.  Combining these finitely-many $\epsilon$-balls with the $\text{card}(\Phi)$ balls covers the image of $H_{\eta}^{1}$ in $L^{2}_{\eta}$ by finitely-many $\epsilon$-balls, proving that $H_{\eta}^{1}\rightarrow L^{2}$ is compact.

It remains to prove the claim.  Fix $\alpha=\alpha_{i}\in\Phi$, and $f\in H_{\eta}^{1}$ with support inside $X_{mu}^{\alpha}$ for $\mu\gg\eta(\alpha)$. Let $N=N^{\alpha}$, $P=P^{\alpha}$, and let $M=M^{\alpha}$ be the standard Levi component of $P$.  Use exponential coordinates
$$n_{x}\;=\;\left( \begin{array}{cc}
1_{i}& x\\
0&1_{4-i}\end{array} \right)$$
In effect, the coordinate $x$ is in the Lie algebra $\mathfrak{n}$ of $N_{\mathbb{R}}$.  Let $\Lambda\subset\mathfrak{n}$ be the lattice which exponentiates to $N_{\mathfrak{Z}}$.  Give $\eta$ the natural inner product $\langle,\rangle$ invariant under the (Adjoint) action of $M_{\mathbb{R}}\cap K$ that makes root spaces mutually orthogonal.  Fix a nontrivial character $\psi$ on $\mathbb{R}/\mathbb{Z}$.  We have the Fourier expansion
$$f(n_{x}m)\;=\;\sum_{\xi\in\Lambda'}\psi\langle x,\xi\rangle\hat{f}_{\xi}(m)$$
with $n\in N_{\mathbb{R}}$, $m\in M_{\mathbb{R}}$, and $\Lambda'$ is the dual lattice to $\Lambda$ in $\mathfrak{n}$ with respect to $\langle,\rangle$, and
$$\hat{f}_{\xi}(m)\;=\;\int_{\mathfrak{n}\backslash\Lambda}\hat{\psi}\langle x,\xi\rangle f(n_{x}m)\;dx$$
Let $\Delta^{\mathfrak{n}}$ be the flat Laplacian on $\mathfrak{n}$ associated to the inner product $\langle,\rangle$ normalized so that
$$\Delta^{\mathfrak{n}}\psi\langle x,\xi\rangle\;=\;-\langle \xi,\xi\rangle\cdot \psi\langle x,\xi\rangle$$
Let $U=M\cap N^{\text{min}}$.  Abbreviating $A_{u}=\text{Ad} u$,
$$|f|_{L^{2}}^{2}\leq \int_{\mathfrak{S}}|f|^{2}\;=\;\int_{Z^{+}\backslash A_{0}^{+}\int_{U_{\mathbb{Z}}\backslash U_{\mathbb{R}}}}\int_{A_{u}^{-1}\Lambda\backslash\mathfrak{n}}|f(un_{x}a)|^{2}dx\;du\;\frac{da}{\delta(a)}$$
with Haar measures $dx$, $du$, $da$, and where $\delta$ is the modular function of $P_{\mathbb{R}}$.  Using the Fourier expansion,
\begin{align*}&f(un_{x}a)\;=\;f(un_{x}u^{-1}\cdot u a)\;=\;\sum_{\xi\in\lambda'}\psi\langle A_{u}x,\xi\rangle\cdot\hat{f}_{\xi}(ua)\\
&=\;\sum_{\xi\in\Lambda'}\psi\langle x,A_{u}^{*}\xi\rangle\cdot\hat{f}_{\xi}(ua)\end{align*}
Then
$$-\Delta^{\mathfrak{n}}f(un_{x}a)\;=\;\sum_{\xi\in\Lambda'}\langle A_{u}^{*}\xi,A_{u}^{*}\xi\rangle\cdot\psi\langle x,A_{u}^{*}\xi\rangle\cdot\hat{f}_{\xi}(ua)$$
The compact quotient $U_{\mathbb{Z}}\backslash U_{\mathbb{R}}$ has a compact set $R$ of representatives in $U_{\mathbb{R}}$, so there is a uniform lower bound for $0\neq \xi\in\Lambda'$:
$$0<b\leq\text{inf}_{u\in R}\text{inf}_{0\neq\xi\in\Lambda'}\langle A_{u}^{*}\xi,A_{u}^{*}\xi\rangle$$
By Plancherel applied to the Fourier expansion in $x$, using the hypothesis that $\hat{f}_{0}=0$ in $X_{\mu}^{\alpha}$,
$$\int_{A_{\mu}^{-1}\Lambda\backslash\mathfrak{n}}|f(un_{x}a)|^{2}\;dx\;=\;\int_{A_{u}^{-1}\Lambda\backslash\mathfrak{n}}|f(un_{x}u^{-1}\cdot ua)|^{2}\;dx\;=\;\sum_{\xi\in\Lambda'}|\hat{f}_{\xi}(ua)|^{2}$$
$$\leq b^{-1}\sum_{\xi\in\Lambda'}\langle A_{u}^{*}\xi,A_{u}^{*}\xi\rangle\cdot |\hat{f}_{\xi}(ua)|^{2}\;=\;\sum_{\xi\in\Lambda'}-\widehat{\Delta^{\mathfrak{n}}}f_{\xi}(ua)\cdot\overline{\hat{f}}(ua)$$
$$=\;\int_{u^{-1}\Lambda u\backslash\mathfrak{n}}-\Delta^{\mathfrak{n}}f(un_{x}u^{-1}\cdot ua)\cdot\overline{f}(un_{x}u^{-1}\cdot ua)\;dx\;=\;\int_{A_{u}^{-1}\Lambda\backslash\mathfrak{n}}-\Delta^{\mathfrak{n}}f(un_{x}a)\cdot\overline{f}(un_{x}a)\;dx$$
Thus, for $f$ with $\hat{f}(0)=0$ on $\alpha(g)\geq\eta$,
$$|f|_{L^{2}}^{2}\ll\int_{Z^{+}\backslash A_{0}^{+}}\int_{U_{\mathbb{Z}}\backslash U_{\mathbb{R}}}\int_{A_{u}^{-1}\Lambda\backslash\mathfrak{n}}-\Delta^{\mathfrak{n}}f(un_{x}a)\cdot\overline{f}(un_{x}a)\;dx\;du\;\frac{da}{\delta(a)}$$
Next, we compare $\Delta^{\mathfrak{n}}$ to the invariant Laplacian $\Delta$.  Let $\mathfrak{g}$ be the Lie algebra of $G_{\mathbb{R}}$, with non-degenerate invariant pairing 
$$\langle u,v\rangle\;=\;\text{trace}(uv)$$
The Cartan involution $v\rightarrow v^{\theta}$ has $+1$ eigenspace the Lie algebra $\mathfrak{k}$ of $K$, and $-1$ eigenspace $\mathfrak{s}$, the space of symmetric matrices.

Let $\Phi^{N}$ be the set of positive roots $\beta$ whose root space $\mathfrak{g}_{\beta}$ appears in $\mathfrak{n}$.  For each $\beta\in\Phi^{N}$, take $x_{\beta}\in\mathfrak{g}_{\beta}$ such that $x_{\beta}+x_{\beta}^{\theta}\in\mathfrak{s}$, $x_{\beta}-x_{\beta}^{\theta}\in\mathfrak{k}$, and $\langle x_{\beta},x_{\beta}^{\theta}=1$: for $\beta(a)=\frac{a_{i}}{a_{j}}$ with $i<j$, $x_{\beta}$ has a single non-zero entry, at the $ij^{\text{th}}$ place.  Let
$$\Omega'\;=\;\sum_{\beta\in\Phi^{N}}(x_{\beta}x_{\beta}^{\theta}+x_{\beta}^{\theta}x_{\beta})$$
Let $\Omega''\in U\mathfrak{g}$ be the Casimir element for the Lie algebra $\mathfrak{m}$ of $M_{\mathbb{R}}$, normalized so that Casimir for $\mathfrak{g}$ is the sum $\Omega=\Omega'+\Omega''$.  We rewrite $\Omega'$ to fit the Iwasawa coordinates: for each $\beta$,
$$x_{\beta}x_{\beta}^{\theta}+x_{\beta}^{\theta}x_{\beta}\;=\;2x_{\beta}x_{\beta}^{\theta}+[x_{\beta}^{\theta},x_{\beta}]\;=\;2x_{\beta}^{2}-2x_{\beta}(x_{\beta}-x_{\beta}^{\theta})+[x_{\beta}^{\theta},x_{\beta}]\in 2x_{\beta}^{2}+[x_{\beta}^{\theta},x_{\beta}]+\mathfrak{k}$$
Therefore,
$$\Omega'\;=\;\sum_{\beta\in\Phi^{N}}2x_{\beta}^{2}+[x_{\beta}^{\theta},x_{\beta}]\;\;\;\;\text{modulo}\;\mathfrak{k}$$
The commutators $[x_{\beta}^{\theta},x_{\beta}]\in\mathfrak{m}$.  In the coordinates $un_{x}a$ with $U\mathfrak{g}$ acting on the right, $x_{\beta}\in\mathfrak{n}$ is acted on by $a$ before translating $x$, by
$$un_{x}a\cdot e^{tx_{\beta}}\;=\;un_{x}\cdot e^{t\beta(a)\cdot x_{\beta}}\cdot a\;=\;un_{x+\beta(a)x_{\beta}}a$$
That is, $x_{\beta}$ acts by $\beta(a)\cdot\frac{\partial}{\partial x_{\beta}}$.

For two symmetric operators $S,T$ on a not-necessarily-complete inner product space $V$, write $S\leq T$ when 
$$\langle Sv,v\rangle\leq\langle Tv,v\rangle$$
for all $v\in V$.  We say that a symmetric operator $T$ is non-negative when $0\leq T$.  Since $a\in A_{0}^{+}$, there is an absolute constant so that $\alpha(a)\geq\mu$ implies $\beta(a)\gg\mu$.  Thus,
$$-\Delta^{\mathfrak{n}}\;=\;-\sum_{\beta\in\Phi^{N}}\frac{\partial^{2}}{\partial x_{\beta}^{2}}\ll \frac{1}{\mu^{2}}\cdot\left(-\sum_{\beta\in\Phi^{N}}x_{\beta}^{2}\right)$$
on $C_{c}^{\infty}(X_{\mu}^{\alpha})^{K}$ with the $L^{2}$ inner product.  We claim that
$$-\sum_{\beta\in\Phi^{N}}[x_{\beta}^{\theta},x_{\beta}]-\Omega''\geq 0$$
on $C_{c}^{\infty}(X_{\mu}^{\alpha})^{K}$. From this, it would follow that
$$-\Delta^{\mathfrak{n}}\ll\frac{1}{\mu^{2}}\cdot\left(-\sum_{\beta\in\Phi^{N}}x_{\beta}^{2}\right)\leq \frac{1}{\mu^{2}}\cdot\left(-\sum_{\beta\in\Phi^{N}}x_{\beta}^{2}-\sum_{\beta\in\Phi^{N}}[x_{\beta}^{\theta},x_{\beta}]-\Omega''\right)\;=\;\frac{1}{\mu^{2}}\cdot(-\Delta)$$
Then, for $f\in H_{\eta}^{1}$ with support in $X_{\mu}^{\alpha}$ we would have
$$|f|_{L^{2}}^{2}\ll\int_{\mathfrak{S}}-\Delta^{\mathfrak{n}}f\cdot\overline{f}\ll\frac{1}{\mu^{2}}\int_{\mathfrak{S}}-\Delta f\cdot\overline{f}\ll\frac{1}{\mu^{2}}\int_{Z_{\mathbb{R}}G_{\mathbb{Z}}\backslash G_{\mathbb{R}}}-\Delta f\cdot\overline{f}\ll\frac{1}{\mu^{2}}\cdot |f|_{H^{1}}^{2}$$
Taking $\mu$ large makes this small.  Since we can do the smooth cutting-off to affect the $H^{1}$ norm only up to a uniform constant, this would complete the proof of total boundedness of the image in $L^{2}$ of the unit ball from $H_{\eta}^{1}$.

To prove the claimed nonnegativity of $T=-\sum_{\beta\in\Phi^{N}}[x_{\beta}^{\theta},x_{\beta}]-\Omega''$, exploit the Fourier expansion along $N$ and the fact that $x\in\mathfrak{n}$ does not appear in $T$: noting that the order of coordinates $n_{x}u$ differs from that above,
\begin{align*}&\int_{Z^{+}\backslash A_{0}^{+}}\int_{U_{\mathbb{Z}}\backslash U_{\mathbb{R}}}\int_{\Lambda\backslash\mathfrak{n}}Tf(n_{x}ua)\overline{f}(n_{x}ua)\;dx\;du\;\frac{da}{\delta(a)}\\
&=\;\int_{Z^{+}\backslash A_{0}^{+}}\int_{U_{\mathbb{Z}}\backslash U_{\mathbb{R}}}\int_{\Lambda\backslash\mathfrak{n}}T\left(\sum_{\xi}\psi\langle x,\xi\rangle\hat{f}(ua)\right)\sum_{\xi'}\overline{\psi}\langle x,\xi'\rangle \overline{\hat{f}}(ua)\;dx\;du\;\frac{da}{\delta(a)}
\end{align*}
Only the diagonal summands survive the integration in $x\in\mathfrak{n}$, and the exponentials cancel, so this is
$$\int_{Z^{+}\backslash A_{0}^{+}}\int_{U_{\mathbb{Z}}\backslash U_{\mathbb{R}}}\sum_{\xi}T\hat{f}_{\xi}(ua)\cdot\overline{\hat{f}}(ua)\;du\;\frac{da}{\delta(a)}$$
Let $F_{\xi}$ be a left-$N_{\mathbb{R}}$-invariant function taking the same values as $\hat{f}_{\xi}$ on $U_{\mathbb{R}}A^{+}K$, defined by
$$F_{\xi}(n_{x}uak)\;=\;\hat{f}_{\xi}(uak)$$
for $n_{x}\in N$, $u\in U$, $a\in A^{+}$, $k\in K$. Since $T$ does not involve $\mathfrak{n}$ and since $F_{\xi}$ is left $N_{\mathbb{R}}$-invariant,
$$T\hat{f}_{\xi}(ua)\;=\;TF_{\xi}(n_{x}ua)\;=\;-\Delta F_{\xi}(n_{x}ua)$$
and then
$$\int_{Z^{+}\backslash A_{0}^{+}}\int_{U_{\mathbb{Z}}\backslash U_{\mathbb{R}}}\sum_{\xi}T\hat{f}(ua)\cdot\overline{\hat{f}}_{\xi}(ua)\;du\;\frac{da}{\delta(a)}\;=\;\int_{Z^{+}\backslash A_{0}^{+}}\int_{U_{\mathbb{Z}}\backslash U_{\mathbb{R}}}\sum_{\xi}-\Delta F_{\xi}(ua)\cdot\overline{F}_{\xi}(ua)\;du\;\frac{da}{\delta(a)}$$
The individual summands are not left-$U_{\mathbb{Z}}$-invariant.  Since $\hat{f}_{\xi}(\gamma g)=\hat{f}_{A_{\gamma}^{*}\xi}(g)$ for $\gamma$ normalizing $\mathfrak{n}$, we can group $\xi\in\Lambda'$ by $U_{\mathbb{Z}}$ orbits to obtain $U_{\mathbb{Z}}$ subsums and then unwind.  Pick a representative $\omega$ for each orbit $[\omega]$, and let $U_{\omega}$ be the isotropy subgroup of $\omega$ in $U_{\mathbb{Z}}$, so
$$\int_{U_{\mathbb{Z}}\backslash U_{\mathbb{R}}}\sum_{\xi}-\Delta F_{\xi}(ua)\cdot\overline{F}_{\xi}(ua)\;du\;=\;\sum_{[\omega]}\int_{U_{\mathbb{Z}}\backslash U_{\mathbb{R}}}\sum_{\xi\in[\omega]}-\Delta F_{\xi}(ua)\cdot\overline{F}_{\xi}(ua)\;du$$
$$=\;\sum_{[\omega]}\int_{U_{\mathbb{Z}}\backslash U_{\mathbb{R}}}\sum_{\gamma\in U_{\omega}\backslash U_{\mathbb{Z}}}-\Delta F_{A_{\gamma}^{*}\omega}(ua)\cdot\overline{F}_{A_{\gamma}^{*}\omega}(ua)\;du\;=\;\sum_{\omega}\int_{U_{\omega}\backslash U_{\mathbb{R}}}-\Delta F_{\omega}(ua)\cdot\overline{F}_{\omega}(ua)\;du$$
Then
$$\int_{Z^{+}\backslash A_{0}^{+}}\int_{U_{\mathbb{Z}}\backslash U_{\mathbb{R}}}\sum_{\xi}-\Delta F_{\xi}(ua)\cdot\overline{F}_{\xi}(ua)\;du\;=\;\sum_{\omega}\int_{Z^{+}\backslash A_{0}^{+}}\int_{U_{\omega}\backslash U_{\mathbb{R}}}-\Delta F_{\omega}(ua)\cdot\overline{F}_{\omega}(ua)\;du\;\frac{da}{\delta(a)}$$
Since $-\Delta$ is a non-negative operator on functions on every quotient $Z^{+}N_{\mathbb{R}}U_{\omega}\backslash G_{\mathbb{R}}/K$ of $G_{\mathbb{R}}/K$, each double integral is non-negative, proving that $T$ is non-negative.

This completes the proof that $H_{\eta}^{1}\rightarrow L_{\eta}^{2}$ is compact, and thus, that the Friedrichs extension of the restriction of $\Delta$ to test functions in $L_{\eta}^{2}$ has purely discrete spectrum.

\end{proof}

Since the pseudo-Eisenstein series appearing in the spectral decomposition are orthogonal to all other automorphic forms appearing in the spectral expansion in every Sobolev space, we can speak of the projection $\theta$ of the period distribution $\widetilde{\theta}$ to the subspace $V$ of $L^{2}(Z_{\mathbb{A}}G_{k}\backslash G_{\mathbb{A}})$. That is,
$$\theta\;=\;\langle \widetilde{\theta}, \Upsilon_{f}\rangle\cdot \Upsilon_{f}+\frac{1}{4\pi i}\int_{\frac{1}{2}-i\infty}^{\frac{1}{2}+i\infty}\langle \widetilde{\theta}, E_{f,\overline{f},s}\rangle\cdot E_{f,\overline{f},s}$$
where $\langle,\rangle$ is the pairing of distributions with functions.
To check $\theta$ is well-defined, we must check that, for every square-integrable automorphic form $f$ not in the $L^{2}$-span of $2,2$ pseudo-Eisenstein series, we have 
$$\langle \theta,f\rangle=0$$
To this end, let us check it for $3,1$ pseudo-Eisenstein series $\Psi_{f_{1},\phi_{1}}$ with cuspidal data $f_{1}$ and test function data $\phi_{1}$.  Then
$$\langle \theta,\Psi_{f_{1},\phi_{1}}\rangle\;=\;\Bigg\langle \langle \widetilde{\theta}, \Upsilon_{f}\rangle\cdot \Upsilon_{f}+\langle \widetilde{\theta},\Psi_{f,\overline{f},\phi}^{2,2}\rangle\cdot \Psi_{f,\overline{f},\phi}^{2,2},\Psi^{3,1}_{f_{1},\phi_{1}}\Bigg\rangle$$
This is
$$\Bigg\langle \langle \widetilde{\theta}, \Upsilon_{f}\rangle\cdot \Upsilon_{f},\Psi^{3,1}_{f_{1},\phi_{1}}\Bigg\rangle +\Bigg\langle \langle \widetilde{\theta},\Psi_{f,\overline{f},\phi}^{2,2}\rangle\cdot \Psi_{f,\overline{f},\phi}^{2,2},\Psi^{3,1}_{f_{1},\phi_{1}}\Bigg\rangle\;=\;0$$
The Speh form $\Upsilon_{f}$ is a $\Delta$-eigenfunction.  Furthermore, it is orthogonal to $3,1$ pseudo-Eisenstein series in $L^{2}$.  Indeed, using the adjunction relation,
$$\langle \Upsilon_{f},\Psi^{3,1}_{\varphi_{f_{1},\phi_{1}}}\rangle\;=\;\langle c_{3,1}\Upsilon_{f}, \varphi_{f_{1},\phi_{1}}\rangle$$
Since the $3,1$ constant term of the Speh form $\Upsilon_{f}$ is zero, the above is zero.  Therefore, the Speh form $\Upsilon_{f}$ is orthogonal to $3,1$ pseudo-Eisenstein series.  Since $2,2$ pseudo-Eisenstein series are orthogonal to $3,1$ pseudo-Eisenstein series, we conclude that
$$\langle \theta,\Psi_{f_{1},\phi_{1}}\rangle\;=\;\Bigg\langle \langle \widetilde{\theta}, \Upsilon_{f}\rangle\cdot \Upsilon_{f}+\langle \widetilde{\theta},\Psi_{f,\overline{f},\phi}^{2,2}\rangle\cdot \Psi_{f,\overline{f},\phi}^{2,2},\Psi^{3,1}_{f_{1},\phi_{1}}\Bigg\rangle\;=\;0$$
We now  prove that for a $2,1,1$ pseudo-Eisenstein series $\Psi_{\varphi_{f_{2},\phi_{2},\phi_{3}}}$ with cuspidal data $f_{2}$ and test functions $\phi_{2}$ and $\phi_{3}$, that
$$\langle \theta,\Psi_{\varphi_{f_{2},\phi_{2},\phi_{3}}}\rangle\;=\;0$$
As before, this is just
$$\Bigg\langle \langle \widetilde{\theta}, \Upsilon_{f}\rangle\cdot \Upsilon_{f},\Psi_{\varphi_{f_{2},\phi_{2},\phi_{3}}}^{2,1,1}\Bigg\rangle+\Bigg\langle \langle \widetilde{\theta},\Psi_{f,\overline{f},\phi}^{2,2}\rangle\cdot \Psi_{f,\overline{f},\phi}^{2,2},\Psi^{2,1,1}_{f_{2},\phi_{2},\phi_{3}}\Bigg\rangle$$
The second term is zero, because the pseudo-Eisenstein series are orthogonal.  The first term gives zero.  Indeed
$$\langle \Upsilon_{f},\Psi^{2,1,1}_{\varphi_{f_{2},\phi_{2},\phi_{3}}}\rangle\;=\;\langle c_{2,1,1}\Upsilon_{f},\varphi_{f_{2},\phi_{2},\phi_{3}}\rangle\;=\;0$$
since the $2,1,1$ constant term of the Speh form $\Upsilon_{f}$ is zero.

Let $\Delta_{\theta}$ be $\Delta$ with domain $\text{ker}\;\theta \cap V$.  We will show that parameters for the discrete spectrum $\lambda_{s,f}=s_{f}(s_{f}-2)+s(s-1)$ (if any) of the Friedrichs extension $\widetilde{\Delta}_{\theta}$ are contained in the zero-set of the $L$-function appearing in the period.

To legitimize applying the distribution $\theta$ to cuspidal-data Eisenstein series $E_{f,\overline{f},s}$ requires discussion of local automorphic Sobolev spaces.  Recall that $\theta$ is in the $-1$ global automorphic Sobolev space, so is in the $-1$ local automorphic Sobolev space. As $E_{f,\overline{f},s}$ is in the $+1$ local automorphic Sobolev space, we can apply $\theta$ to it. 
\begin{thm} For $\text{Re}(w) = \frac{1}{2}$, if the equation $(\Delta-\lambda_{w,f})u\;=\;\theta$ has a solution $u\in V$, then $\theta E_{f,\overline{f},w}\;=\;0$.  Conversely, if $\theta E_{f,\overline{f},w}\;=\;0$ for $\text{Re}(w)=\frac{1}{2}$, then there is a solution to that equation in $V$, and the solution is unique with spectral expansion
$$u\;\;=\;\;\frac{\theta(\Upsilon_{f})\cdot \Upsilon_{f}}{(\lambda_{\Upsilon_{f}}-\lambda_{w})}+\frac{1}{4\pi i}\int\limits_{(\frac{1}{2})}\frac{\theta E_{f,\overline{f},1-s}}{\lambda_{s,f}-\lambda_{w,f}}\cdot E_{f,\overline{f},s}\;ds$$
convergent in $V^{+1}$
\end{thm}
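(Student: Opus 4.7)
The plan is to reduce the inhomogeneous equation $(\Delta-\lambda_{w,f})u=\theta$ inside $V$ to a pair of scalar coefficient equations, via the spectral decomposition of $V$ already established. Every $u\in V$ in the domain of $\widetilde{\Delta}_{\theta}$ has a spectral expansion
$$u\;=\;a\cdot\Upsilon_{f}\;+\;\frac{1}{4\pi i}\int_{(1/2)}c(s)\cdot E_{f,\overline{f},s}\;ds,$$
and since $\Delta$ acts by $\lambda_{\Upsilon_{f}}$ on $\Upsilon_{f}$ and by the scalar $\lambda_{s,f}$ on $E_{f,\overline{f},s}$, matching spectral coefficients against the spectral expansion of $\theta$ given at the start of this section reduces the equation to the scalar pair $a(\lambda_{\Upsilon_{f}}-\lambda_{w,f})=\theta(\Upsilon_{f})$ and $c(s)(\lambda_{s,f}-\lambda_{w,f})=\theta E_{f,\overline{f},1-s}$.

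For the forward implication, the Casimir computation of the earlier section gives $\lambda_{s,f}-\lambda_{w,f}=4(s-w)(s+w-1)$, which has simple zeros on $\mathrm{Re}(s)=\tfrac{1}{2}$ precisely at $s=w$ and $s=1-w$, while the discrete denominator $\lambda_{\Upsilon_{f}}-\lambda_{w,f}$ is nonzero because $\lambda_{\Upsilon_{f}}$ lies off the range of $\lambda_{w,f}$ on the critical line. Existence of an honest $L^{2}$-solution $u\in V$ forces $c(s)$ to be an $L^{2}$-function of $s$ by Plancherel, so no Dirac contribution at the two zeros is possible; thus $\theta E_{f,\overline{f},1-s}$ must vanish at $s=w$ and $s=1-w$. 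The functional equation $E_{f,\overline{f},s}^{P}=c_{s}\cdot E_{f,\overline{f},1-s}^{P}$ together with $|c_{s}|=1$ on the critical line shows that $\theta E_{f,\overline{f},1-w}=0$ is equivalent to $\theta E_{f,\overline{f},w}=0$, giving the forward direction.

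Conversely, assume $\theta E_{f,\overline{f},w}=0$ and define $u$ by the stated spectral formula. The numerator $\theta E_{f,\overline{f},1-s}$ is holomorphic in $s$ near the critical line (pairing the compactly-supported distribution $\theta\in H^{-1}_{\mathrm{lafc}}$ against the smooth local-Sobolev family $E_{f,\overline{f},1-s}$) and vanishes to first order at $s=w$ and $s=1-w$ by hypothesis together with the functional equation, matching the simple zeros of the denominator; hence the integrand extends continuously across the singularities. Verification of $(\Delta-\lambda_{w,f})u=\theta$ is then a termwise check using the spectral expansion of $\theta$ and the fact that $\Delta$ commutes with the spectral integral on $V^{+1}$, while uniqueness is automatic from injectivity of the spectral map $V\to\mathbb{C}\oplus L^{2}(\tfrac{1}{2}+i\mathbb{R})$.

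The main obstacle is convergence of the continuous integral in the $+1$-Sobolev topology, namely that
$$\int_{(1/2)}(1+|\lambda_{s,f}|)\cdot\frac{|\theta E_{f,\overline{f},1-s}|^{2}}{|\lambda_{s,f}-\lambda_{w,f}|^{2}}\;|ds|\;<\;\infty.$$
This reduces to a polynomial moment bound for $s\mapsto \theta E_{f,\overline{f},1-s}$ along the critical line. The moment hypothesis on the Rankin--Selberg $L$-function appearing in $c_{s}$ from the moment-bounds section, combined with the pre-trace estimate of the preceding section applied to the compactly-supported distribution $\theta$, supplies precisely this polynomial growth; the quadratic denominator $|\lambda_{s,f}-\lambda_{w,f}|^{2}\sim |s|^{4}$ away from the two removed singularities then dominates and yields finiteness, completing the argument.
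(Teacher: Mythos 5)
Your proposal is correct and follows essentially the same route as the paper: expand $u$ and $\theta$ in $V$ along the Speh form $\Upsilon_{f}$ and the continuous family $E_{f,\overline{f},s}$, match coefficients to get $(\lambda_{s,f}-\lambda_{w,f})A_{s}=\theta E_{f,\overline{f},1-s}$ and $(\lambda_{\Upsilon_{f}}-\lambda_{w,f})A_{f}=\theta(\Upsilon_{f})$, deduce the vanishing at the critical point from square-integrability of the coefficients together with continuity (meromorphy) of $s\rightarrow\theta E_{f,\overline{f},s}$ coming from the compact support of $\theta$, and in the converse direction divide, check $H^{+1}$-convergence, and get uniqueness from the spectral expansion. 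Two points to tighten: the paper makes your ``no Dirac contribution'' step precise via the local Cauchy--Schwarz estimate $\int_{\tau-\epsilon}^{\tau+\epsilon}|\theta E_{f,\overline{f},\frac{1}{2}-it}|^{2}\,dt\ll\epsilon^{3}$ combined with continuity of $t\rightarrow\theta E_{f,\overline{f},\frac{1}{2}+it}$ (which must be invoked in the forward direction, not only in the converse), and for the converse the $H^{+1}$-convergence already follows from the defining condition $\theta\in V^{-1}$, namely $\int|\theta E_{f,\overline{f},1-s}|^{2}(1+t^{2})^{-1}\,dt<\infty$, since $(1+|\lambda_{s,f}|)\cdot|\lambda_{s,f}-\lambda_{w,f}|^{-2}\asymp(1+t^{2})^{-1}$ away from $s=w,1-w$, so the extra appeal to the pre-trace estimates (which give averaged, not pointwise, bounds) is unnecessary there.
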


\begin{proof} The condition $\theta\in V_{-1}$ is that
$$\int\limits_{\mathbb{R}}\frac{|\theta E_{f,\overline{f},1-s}|^{2}}{1+t^{2}}\;dt\;<\;\infty$$
Thus, $u\in V_{+1}$, and $u$ has a spectral expansion of the form 
$$u\;\;=\;\;{A_{f}\cdot \Upsilon_{f}}\;+\;\frac{1}{4\pi i}\int\limits_{(\frac{1}{2})}A_{s}\cdot E_{f,\overline{f},1-s}\;ds$$
with $t\rightarrow A_{\frac{1}{2}+it}$ in $L^{2}(\mathbb{R})$.  The distribution $\theta$ has spectral expansion in $V_{-1}$,
$$\theta\;\;=\;\;{\theta(\Upsilon_{f})}\cdot \Upsilon_{f}\;+\;\frac{1}{4\pi i}\int\limits_{(\frac{1}{2})}\theta E_{f,\overline{f},1-s}\cdot E_{f,\overline{f},s}\;ds$$
We describe the vector-valued \textit{weak} integrals of [Gelfand 1936] and [Pettis 1938] and summarize the key results. We follow [Bourbaki 1963].\begin{defn} For $X,\mu$ a measure space and $V$ a locally convex, quasi-complete topological vector space, a Gelfand-Pettis (or weak) integral is a vector-valued integral $C_{c}^{0}(X,V)\rightarrow V$ denoted $f\rightarrow I_{f}$ such that for all $\alpha\in V^{*}$, we have
$$\alpha(I_{f})\;=\;\int_{X}\alpha\circ f \;d\mu$$
where the latter is the usual scalar-valued Lebesgue integral.\end{defn}

\begin{prop}Hilbert, Banach, Frechet, and LF spaces together with their weak duals are locally convex, quasi-complete topological vector spaces. \end{prop} 
\begin{prop}Gelfand-Pettis integrals exist and are unique.\end{prop}
\begin{prop}Any continuous linear operator between locally convex, quasi-complete topological vector spaces $T:V\rightarrow W$ commutes with the Gelfand-Pettis integral:
$$T(I_{f})\;=\;I_{Tf}$$\end{prop}
Note that $E_{f,\overline{f},s}$ lies in a local automorphic Sobolev space. By the Gelfand-Pettis theory, if $T:V\rightarrow W$ is a continuous linear map of locally convex topological vector spaces, where convex hulls of compact sets in $V$ have compact closures and if $f$ is a continuous, compactly-supported $V$-valued function on a finite measure space $X$, then the $W$-valued function $T\circ f$ has a Gelfand-Pettis integral, and
$$T\left(\int_{X}f\right)\;=\;\int_{X}T\circ f$$
Let $V=H_{\text{lafc}}^{1}(X)$.  Note that $V$ is a locally convex, quasi-complete topological vector space since it is the completion of $C_{c}^{\infty}(X)$ with respect to a family of semi-norms.  Given a compactly-supported distribution $\theta\in H^{-1}_{\text{gafc}}(X)$, $\theta$ extends to a continuous linear functional $\theta\in H^{-1}_{\text{lafc}}(X)$, by section 7.  Since $\theta$ is a continuous mapping $\theta:H^{-1}_{\text{lafc}}(X)\rightarrow\mathbb{C}$, given a continuous, compactly-supported $H^{1}_{\text{lafc}}(X)$-valued function $f$,
$$\theta\int_{X} f\;=\;\int_{X}\theta\circ f$$
Gelfand-Pettis theory allows us  to move $\theta$ inside the integral.
Thus $$(\lambda_{\Upsilon_{f}}-\lambda_{w})A_{f}\;=\;\theta(\Upsilon_{f})$$ and 
$$(\lambda_{s,f}-\lambda_{w,f})\cdot A_{s}\;=\;\theta E_{f,\overline{f},1-s}$$
The latter equality holds at least in the sense of locally integrable functions.  Letting $w\;=\;\frac{1}{2}+i\tau$, by Cauchy-Schwarz-Bunyakowsky, for any $\epsilon>0$,
$$\int\limits_{\tau-\epsilon}^{\tau+\epsilon}|\theta E_{f,\overline{f}\frac{1}{2}-it}|^{2}\;dt\;\;=\;\;\int\limits_{\tau-\epsilon}^{\tau+\epsilon}|(\lambda_{\frac{1}{2}+it,f}-\lambda_{\frac{1}{2}+i\tau,f})A_{\frac{1}{2}+it}|^{2}\;dt$$
Using $s=\frac{1}{2}+it$ and rewriting the difference of eigenvalues gives us equality of the above with
$$\int_{\tau-\epsilon}^{\tau+\epsilon}|(t-\tau)(t-1+\tau)A_{\frac{1}{2}+it}|^{2}dt \leq\int_{\tau-\epsilon}^{\tau+\epsilon}|t-\tau|^{2}\;dt\cdot\int_{\tau-\epsilon}^{\tau+\epsilon}|(t-i+\tau)A_{\frac{1}{2}+it}|^{2}dt\ll\epsilon^{3}$$
The function $$t\rightarrow \theta E_{f,\overline{f},\frac{1}{2}+it}$$ is continuous, in fact $$s\rightarrow \theta E_{f,\overline{f},s}$$ is meromorphic, since $\theta$ is compactly supported (see [Grothendieck 1954] and [Garrett 2011 e]), so $$\theta E_{f,\overline{f},1-w}\;=\;0$$
Conversely, when $\theta E_{1-w}\;=\;0$, the function $$t\rightarrow \frac{\theta E_{f,\overline{f},\frac{1}{2}-it}}{(\lambda_{\frac{1}{2}+it}-\lambda_{w})}$$ is continuous and square-integrable, assuring $H^{1}$-convergence of the integral 
$$u\;\;=\;\;{\frac{\theta(\Upsilon_{f})\cdot \Upsilon_{f}}{\lambda_{\Upsilon_{f}}-\lambda_{w,f}}}\;+\;\frac{1}{4\pi i}\int\limits_{(\frac{1}{2})}\frac{\theta E_{f,\overline{f},1-s}\cdot E_{f,\overline{f},s}}{(\lambda_{s,f}-\lambda_{w,f})}\;ds$$
this spectral expansion produces a solution of the differential equation.  Any solution in $V^{+1}$ admits such an expansion, and the coefficients are uniquely determined, giving uniqueness.
\end{proof}

Let $X_{a}=\{A,D\in GL_{2} : |\frac{\text{det}A}{\text{det}D}|^{2}=a\}$.  Let $H$ be the subgroup of $GL_{2}\times GL_{2}$ consisting of pairs $(B,C)$ so that $|\text{det}B\cdot\text{det} C|=1$.  The group $H$ acts simply transitively on $X_{a}$, so $X_{a}$ has an $H$-invariant measure. Fix $GL_{2}$ cuspforms $f_{1}$ and $f_{2}$ and define

$$\eta_{a}F\;=\;\int_{Z_{\mathbb{R}}H_{k}\backslash X_{a}}c_{P}(F(a))\cdot f_{1}(A)\cdot f_{2}(D)\;dx$$

\begin{prop} Take $\text{Re}(w)=\frac{1}{2}$.  For $a\gg 1$ such that the support of $\widetilde{\theta}$ is below $h=a$, the constant term $c_{P}u$ of a solution $u\in V^{+1}$ to $(\Delta-\lambda_{w,f})u=\theta$ vanishes for height $h\geq a$.
\end{prop}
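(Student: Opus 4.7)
My strategy is to derive a homogeneous ODE for the constant term $c_P u$ on the cusp $\{h > a\}$ and then rule out the nontrivial solutions by $L^2$-integrability. For $\phi \in C_c^{\infty}(\{h > a\})$ and $\Psi_\phi$ the $2,2$ pseudo-Eisenstein series with cuspidal data $f, \overline{f}$ and test-function data $\phi$, the adjunction relation gives $\langle\widetilde\theta, \Psi_\phi\rangle = \langle c_P\widetilde\theta, \phi\rangle$.  Since the height function $h$ is left-$N$-invariant and $\widetilde\theta$ is supported in $\{h < a\}$, the constant term $c_P\widetilde\theta$ is also supported in $\{h < a\}$; pairing against $\phi$ supported in $\{h > a\}$ gives $0$, and hence $\langle\theta, \Psi_\phi\rangle = 0$. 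Combining $(\Delta - \lambda_{w,f})u = \theta$, symmetry of $\Delta$, the Casimir identity $(\Delta - \lambda_{w,f})\Psi_\phi = \Psi_{L\phi}$ for the height-variable operator $L\phi = \phi'' + (\lambda_f + \lambda_{\overline{f}} - \lambda_{w,f})\phi$, and the same adjunction applied to $u$,
$$\langle c_P u, L\phi\rangle \;=\; \langle u, (\Delta-\lambda_{w,f})\Psi_\phi\rangle \;=\; \langle \theta,\Psi_\phi\rangle \;=\; 0$$
for all $\phi \in C_c^{\infty}(\{h > a\})$. Formal self-adjointness of $L$ yields $L c_P u = 0$ distributionally on $\{h > a\}$.

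Because $u$ lies in the span of $2,2$ pseudo-Eisenstein series with cuspidal data $f, \overline{f}$, its constant term has the form $c_P u(g) = G(h)\,f(A)\overline{f}(D)$ on the Levi. The Euler-type homogeneous ODE $LG = 0$ has two-dimensional solution space spanned by $h^w$ and $h^{1-w}$---the two Levi characters producing Casimir eigenvalue $\lambda_{w,f}$ on the $f\otimes\overline{f}$-isotypic---so on $\{h > a\}$, $G(h) = A\,h^w + B\,h^{1-w}$ for some constants $A, B$. Integrability of $u \in V \subset L^{2}(\Gamma\backslash G)$, combined with the exponential decay of the non-constant-term part of $u$ in the cusp, makes $c_P u$ square-integrable on $\{h > a\}$ with invariant measure proportional to $dm_{SL_2}\,dh/h^{2}$. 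For $\text{Re}(w) = \tfrac{1}{2}$, both $|h^w|^{2}$ and $|h^{1-w}|^{2}$ equal $h$, and the cross term $h^w\overline{h^{1-w}}$ has modulus $h$, so
$$\int_a^{\infty} |G(h)|^{2}\,\frac{dh}{h^{2}} \;=\; (|A|^{2}+|B|^{2})\int_a^{\infty}\frac{dh}{h} \;+\; 2\,\text{Re}\big(A\overline{B}\big)\int_a^{\infty} h^{2i\,\text{Im}(w)}\,\frac{dh}{h},$$
and the first integral diverges unless $A = B = 0$. Hence $c_P u = 0$ on $\{h \geq a\}$.

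The main obstacle is this last integrability step: passing from the global $L^{2}$-bound on $u$ to $L^{2}$-integrability of $c_P u$ on the cusp. Making this rigorous uses the spectral isometry between $V$ and its spectral data (the Speh coefficient together with the Eisenstein coefficients on the critical line), combined with Mellin--Plancherel in the height coordinate and the exponential decay of the non-constant-term part of $u$ in the cusp. Once that integrability is in hand, the oscillating-but-not-absolutely-integrable character of $|Ah^w + Bh^{1-w}|^{2}$ on $[a,\infty)$ forces the homogeneous coefficients to vanish.
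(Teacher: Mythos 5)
Your argument is correct in substance, but it is a genuinely different proof from the one in the paper. The paper proves this proposition spectrally: it applies the period functional $\eta_{a,f_{1}\otimes f_{2}}$ to the explicit expansion of $u$ in Speh form plus Eisenstein integral, moves the functional inside the integral by Gelfand--Pettis, uses the functional equation $c_{1-s}E_{f,\overline{f},s}=E_{f,\overline{f},1-s}$, shifts the contour to the right (this is where the assumed moment/subconvexity bound $\theta E_{f,\overline{f},s}\ll |s|^{1-\epsilon}$ is invoked), picks up the Speh residue at $s=1$, and is left with $\frac{a^{1-w}}{1-2w}\cdot C\cdot\theta E_{f,\overline{f},1-w}$, which vanishes by the preceding theorem. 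You instead run the classical Lax--Phillips/Colin de Verdi\`ere argument: since $\widetilde{\theta}$ is supported below height $a$ and $a\gg 1$, pseudo-Eisenstein series with data supported above $a$ pair to zero with $\theta$, so by adjunction and symmetry the relevant isotypic profile of $c_{P}u$ satisfies the homogeneous Euler-type equation on $(a,\infty)$, hence equals $Ah^{w}+Bh^{1-w}$, and on $\mathrm{Re}(w)=\tfrac12$ neither exponent is square-integrable against $dh/h^{2}$, forcing $A=B=0$. What your route buys is independence from the contour-shift machinery and, in particular, from the unproved moment-bound assumption and from the identity $\theta E_{f,\overline{f},1-w}=0$; it needs only $u\in L^{2}$ and the support hypothesis. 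What the paper's route buys is an explicit formula for the constant-term period at each height in terms of $\theta E_{f,\overline{f},1-w}$, consistent with its overall period/spectral framework.

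Three loose ends in your write-up, all repairable. First, the $L^{2}$-bound on $c_{P}u$ above height $a$ does not need ``exponential decay of the non-constant-term part of $u$'' (which is not justified for a mere $H^{+1}$ solution and should be dropped); it follows directly from $u\in L^{2}$ by Cauchy--Schwarz over the compact quotient $N_{k}\backslash N_{\mathbb{A}}$ together with standard reduction theory for the region $h\geq a$, which also justifies your claim that pseudo-Eisenstein series with data supported above $a$ live above height $a$ in $\Gamma\backslash G$. Second, by the paper's own constant-term formula the constant term of elements of $V$ may also contain the $\overline{f}\otimes f$ isotypic piece, not only $f\otimes\overline{f}$; the identical adjunction-plus-ODE argument applied to that datum disposes of it, but you should say so. Third, the degenerate point $w=\tfrac12$ has solution basis $h^{1/2},\,h^{1/2}\log h$ rather than $h^{w},h^{1-w}$, and the divergence of the cross term should be handled by passing to the logarithmic variable, where $\int \big(|A|^{2}+|B|^{2}+2\,\mathrm{Re}(A\overline{B}e^{2i\,\mathrm{Im}(w)v})\big)\,dv$ grows linearly unless $A=B=0$.
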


\begin{proof} Let $\eta_{a,f_{1}\otimes f_{2}}$ be the functional above.  This functional is in $H^{-\frac{1}{2}-\epsilon}$ for all $\epsilon>0$.  Thus, for $u\in H^{+1}$,
$$\eta_{a,f_{1}\otimes f_{2}}u\;=\;\eta_{a,f_{1}\otimes f_{2}}\left(\frac{\theta(\Upsilon_{f})\cdot \Upsilon_{f}}{(\lambda_{\Upsilon_{f}}-\lambda_{w})\langle 1,1\rangle}+\frac{1}{4\pi i}\int_{(\frac{1}{2})}\frac{\theta E_{f,\overline{f},1-s}}{\lambda_{s}-\lambda_{w}}\cdot E_{f,\overline{f},s}\;ds\right)$$

We can  break up the integral into two tails and a truncated finite part.  The truncated finite part is a continuous, compactly-supported integral of functions in a local automorphic Sobolev space, so Gelfand-Pettis theory allows us to move compactly-supported distributions inside the integral.  The tails are spectral expansions of functions in $H^{+1}$, and since $H^{+1}$ embeds into a local automorphic Sobolev space, the Gelfand-Pettis theory applies there also, allowing us to move the distribution inside the integral.  

$$\frac{\theta(\Upsilon_{f})\cdot \eta_{a,f_{1}\otimes f_{2}}(\Upsilon_{f})}{(\lambda_{\Upsilon_{f}}-\lambda_{w,f})}+\frac{1}{4\pi i}\int_{(\frac{1}{2})}\frac{\theta E_{f,\overline{f},1-s}\cdot\eta_{a,f_{1}\otimes f_{2}}E_{f,\overline{f},s}}{\lambda_{s,f}-\lambda_{w,f}}\;ds$$
This is
$$\theta\left(\frac{\eta_{a,f_{1}\otimes f_{2}}(\Upsilon_{f})\cdot\Upsilon_{f}}{(\lambda_{\Upsilon_{f}}-\lambda_{w,f})}+\frac{1}{4\pi i}\int_{(\frac{1}{2})}\frac{\eta_{a,f_{1}\otimes f_{2}}E_{f,\overline{f},s}}{(\lambda_{s,f}-\lambda_{w,f})}\cdot E_{f,\overline{f},1-s}\;ds\right)$$
which is

$$\theta\left(\frac{\eta_{a,f_{1}\otimes f_{2}}(\Upsilon_{f})\cdot\Upsilon_{f}}{(\lambda_{\Upsilon_{f}}-\lambda_{w,f})}+\frac{1}{4\pi i}\int_{(\frac{1}{2})}\frac{C(a^{1-s}+c_{1-s}a^{s})}{(\lambda_{s,f}-\lambda_{w,f})}\cdot E_{f,\overline{f},1-s}\;ds\right)$$
where $$C=\int_{Z_{\mathbb{R}}H_{k}\backslash X_{a}}f(A)\cdot\overline{f}(D)\cdot f_{1}(A)\cdot f_{2}(D) dx$$
Since $\theta$ has compact support below $h=a$, the last integral need be evaluated only for $h\leq a$.  Using the functional equation $$c_{1-s}E_{f,\overline{f},s}=E_{f,\overline{f},1-s}$$
we see
$$\int_{(\frac{1}{2})}\frac{c_{1-s}a^{s}}{(\lambda_{s,f}-\lambda_{w,f})}\cdot E_{f,\overline{f},s}\;ds\;=\;\int_{(\frac{1}{2})}\frac{a^{1-s}}{(\lambda_{s,f}-\lambda_{w,f})}\cdot E_{f,\overline{f},s}\;ds$$
by changing variables. Thus, for $g$ with $h(g)\leq a$, the integral can be evaluated by residues of vector-valued holomorphic functions as in [Grothendieck] and [Garrett 2011 e].

$$\theta\left(\frac{\eta_{a,f_{1}\otimes f_{2}}(\Upsilon_{f})\cdot\Upsilon_{f}}{(\lambda_{\Upsilon_{f}}-\lambda_{w,f})}+\frac{1}{4\pi i}\int_{(\frac{1}{2})}\frac{C(a^{1-s}+c_{1-s}a^{s})}{(\lambda_{s,f}-\lambda_{w,f})}\cdot E_{f,\overline{f},1-s}\;ds\right)$$

$$=\theta\left(\frac{\eta_{a,f_{1}\otimes f_{2}}(\Upsilon_{f})\cdot\Upsilon_{f}}{(\lambda_{\Upsilon_{f}}-\lambda_{w,f})}+\frac{1}{2\pi i}\int_{(\frac{1}{2})}\frac{C(a^{1-s})}{(\lambda_{s,f}-\lambda_{w,f})}\cdot E_{f,\overline{f},s}\;ds\right)$$
Consider the integral
$$\int_{(\frac{1}{2})}\frac{a^{1-s}\theta E_{f,\overline{f},s}}{(\lambda_{s,f}-\lambda_{w,f})}\;ds$$
With $s=\alpha+iT$, consider a rectangle with vertices $\frac{1}{2}\pm iT$ and $T\pm iT$.  Let $\gamma_{1}$ be the line segment from $\frac{1}{2}+iT$ to $T+iT$.  Let $\gamma_{2}$ be the line segment from $T+iT$ to $T-iT$, and let $\gamma_{3}$ be the line segment from $T-iT$ to $\frac{1}{2}-iT$.  We invoke our assumed subconvexity bound $\theta E_{f,\overline{f},s}\ll |s|^{1-\epsilon}$.  Then we get an estimate
$$\big|\int_{\gamma_{1}}\frac{a^{1-s}\cdot \theta E_{f,\overline{f},s}}{\lambda_{s,f}-\lambda_{w,f}}\;ds\big|\ll \frac{a^{1-s}\cdot |s|^{1-\epsilon}}{|\lambda_{s,f}-\lambda_{w,f}|}\cdot  (T-\frac{1}{2})$$
since $\gamma_{1}$ has length $T-\frac{1}{2}$.  Then,
$$\frac{a^{1-s}\cdot |s|^{1-\epsilon}}{|\lambda_{s,f}-\lambda_{w,f}|}\cdot  (T-\frac{1}{2})\leq \frac{a^{1-s}\cdot |s|^{1-\epsilon}}{|\lambda_{s,f}-\lambda_{w,f}|}\cdot (|s|-\frac{1}{2})\rightarrow 0$$
as $T\rightarrow\infty$, since the denominator is a degree $2$ polynomial in $s$, while the numerator is a polynomial of degree $2-\epsilon$. Likewise, for the curve $\gamma_{2}$, we get an estimate
$$\big|\int_{\gamma_{2}}\frac{a^{1-s}\cdot \theta E_{f,\overline{f},s}}{\lambda_{s,f}-\lambda_{w,f}}\;ds\big|\ll \frac{a^{1-s}\cdot |s|^{1-\epsilon}}{|\lambda_{s,f}-\lambda_{w,f}|}\cdot  (2T)$$
since $\gamma_{1}$ has length $2T$.  Then,
$$\frac{a^{1-s}\cdot |s|^{1-\epsilon}}{|\lambda_{s,f}-\lambda_{w,f}|}\cdot  (T-\frac{1}{2})\leq \frac{a^{1-s}\cdot |s|^{1-\epsilon}}{|\lambda_{s,f}-\lambda_{w,f}|}\cdot (2|s|)\rightarrow 0$$
as $T\rightarrow\infty$, since the denominator is a degree $2$ polynomial in $s$, while the numerator is a polynomial of degree $2-\epsilon$. A similar argument shows that the integrals along $\gamma_{2}$ and $\gamma_{3}$ go to $0$ as $T\rightarrow 0$.  Therefore, the original integral
$$\int_{(\frac{1}{2})}\frac{a^{1-s}\theta E_{f,\overline{f},s}}{(\lambda_{s,f}-\lambda_{w,f})}\;ds\;=\;-2\pi i(\text{sum of residues in the right half-plane})$$
This implies
$$\frac{1}{2\pi i}\int_{(\frac{1}{2})}\frac{a^{1-s}\cdot C\cdot \theta E_{f,\overline{f},s}}{(\lambda_{s,f}-\lambda_{w,f})}\;ds\;=\;-\text{(sum of residues in the right half-plane)}$$
The Eisenstein series $E_{f,\overline{f},s}$ has a simple pole at $s=1$ ([MW] and [Garrett 2011 f]), with residue $$\frac{\eta_{a,f_{1}\otimes f_{2}}(\Upsilon_{f})\cdot \Upsilon_{f}}{(\lambda_{\Upsilon_{f}}-\lambda_{w,f})}$$ Therefore $\theta E_{f,\overline{f},s}$ has residue at $s=1$ given by 
$$\theta \left(\frac{\eta_{a,f_{1}\otimes f_{2}}(\Upsilon_{f})\cdot \Upsilon_{f}}{(\lambda_{\Upsilon_{f}}-\lambda_{w,f})}\right)$$
Thus,
$$\frac{1}{2\pi i}\int_{(\frac{1}{2})}\frac{a^{1-s}\cdot C\cdot \theta E_{f,\overline{f},s}}{(\lambda_{s,f}-\lambda_{w,f})}\;ds\;=\;-\theta\left(\frac{\eta_{a,f_{1}\otimes f_{2}}(\Upsilon_{f})\cdot \Upsilon_{f}}{(\lambda_{\Upsilon_{f}}-\lambda_{w,f})}\right)+\frac{a^{1-w}}{1-2w}\cdot C\cdot \theta E_{f,\overline{f},1-w}$$
Returning to the original equation,
$$\theta\left(\frac{\eta_{a,f_{1}\otimes f_{2}}(\Upsilon_{f})\cdot \Upsilon_{f}}{(\lambda_{\Upsilon_{f}}-\lambda_{w,f})}+\frac{1}{2\pi i}\int_{(\frac{1}{2})}\frac{C(a^{1-s})}{(\lambda_{s,f}-\lambda_{w,f})}\cdot \theta E_{f,\overline{f},s}\;ds\right)\;=\;\frac{a^{1-w}}{1-2w}\cdot C\cdot \theta E_{1-w,f,\overline{f}}$$
Since $\theta E_{1-w,f,\overline{f}}=0$, we are done.

\end{proof}

Recall that $\Phi_{a}$ decomposes  discretely, with (square-integrable) eigenfunctions consisting of truncated Eisenstein series $\wedge^{a}E_{s_{j},f,\overline{f}}$ of Eisenstein series for $s_{j}$ such that
$$a^{s}\cdot f(A)\cdot\overline{f}(D)+a^{1-s}\cdot c_{s}\cdot\overline{f}(A)\cdot f(D)=0$$
where $(A,D)\in X_{a}$, and finitely-many other eigenfunctions. In fact, these truncations are in $H^{\frac{3}{2}-\epsilon}$ for every $\epsilon>0$, since they are solutions to the differential equation $(\Delta-\lambda_{w,f})u=\eta_{a,f_{1}\otimes f_{2}}$.  There are finitely-many other eigenfunctions in addition to these truncated Eisenstein series.

Let $S$ denote the operator $S=1-\tilde{\Delta}_{a}$ with dense domain in $\Phi^{+1}_{a}$ as before.  Then $S$ is an unbounded, symmetric, densely-defined operator. We have the continuous injections

$$\Phi^{+1}_{a}\rightarrow \Phi_{a}\rightarrow \Phi^{-1}_{a}$$
Then $S$ extends by continuity to $S^{\#}:\Phi^{1}_{a}\rightarrow \Phi^{-1}_{a}$.  Since we have the natural inclusion 
$$j:\Phi^{1}_{a}\rightarrow H^{+1}$$
taking adjoints produces an inclusion
$$j^{*}:H^{-1}\rightarrow \Phi^{-1}_{a}$$
Let $j_{\theta}^{*}$ denote the image of $\theta$ under this mapping.Then we can solve the differential equation
$$(S^{\#}-\lambda_{w})u\;=\;j_{\theta}^{*}$$
because $j^{*}_{\theta}\in \Phi^{-1}_{a}$.

\begin{prop} Take $a\gg 1$ such that the (compact) support of $\theta$ is below height $a$.  If necessary, adjust $a$ so that $\theta E_{s_{j}}\neq 0$ for any $s_{j}$ such that 
$$a^{s_{j}}\cdot f(A)\cdot\overline{f}(D)+a^{1-s_{j}}\cdot c_{s_{j}}\cdot\overline{f}(A)\cdot f(D)=0$$
where $(A,D)\in X_{a}$.  For $w$ not among the $s_{j}$, the equation $(S^{\#}-\lambda_{w,f})v=j^{*}_{\theta}$ has a unique solution $v_{w}\in V\cap \Phi_{a}$, this solution lies in $H^{+1}$, and has spectral expansion

$$v_{w}\;=\;\sum_{j}\frac{\theta E_{f,\overline{f},1-s_{j}}}{\lambda_{s_{j},f}-\lambda_{w,f}}\cdot \frac{\wedge^{a}E_{f,\overline{f},s_{j}}}{||\wedge^{a}E_{f,\overline{f},s_{j}}||^{2}}$$

\end{prop}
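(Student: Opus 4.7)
The plan is to build $v_w$ directly from the discrete spectral decomposition of $\tilde\Delta_a$ on $\Phi_a$ furnished by the preceding theorem, and then identify coefficients using the compact support of $\theta$ below height $a$. By the compactness of the resolvent of $\tilde\Delta_a$, the space $\Phi_a$ admits an orthogonal Hilbert basis of eigenfunctions of $\tilde\Delta_a$, consisting of the truncated Eisenstein eigenfunctions $\wedge^a E_{f,\overline{f},s_j}$ for $s_j$ satisfying the height-$a$ vanishing condition on the constant term, plus the finitely-many additional discrete eigenfunctions noted above. The adjustment of $a$ in the hypothesis ensures that $w$ is not among the eigenparameters, so $\tilde\Delta_a^{\#}-\lambda_{w,f}$ is injective on $\Phi_a^{+1}$ with spectrum bounded away from $0$, yielding uniqueness immediately once existence is established.

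For existence, I would define $v_w$ by the stated series and compute $(\tilde\Delta_a^{\#}-\lambda_{w,f})v_w$ termwise, using the eigenvalue identity $\tilde\Delta_a\wedge^a E_{s_j}=\lambda_{s_j,f}\wedge^a E_{s_j}$; the factor $\lambda_{s_j,f}-\lambda_{w,f}$ in the denominators then cancels. What remains is to verify that the resulting functional equals $j^{*}_{\theta}\in \Phi_a^{-1}$, which amounts to checking its pairing against each $\wedge^a E_{s_j}$ reproduces $\theta(\wedge^a E_{s_j})$. This is the core calculation. Because the truncation $\wedge^a E_{s_j}=E_{f,\overline{f},s_j}-E^{P}(c_P^T E_{f,\overline{f},s_j})$ agrees with the honest Eisenstein series for heights below $a$, and $\text{supp}(\theta)$ lies strictly below $a$, one has $\theta(\wedge^a E_{s_j})=\theta(E_{f,\overline{f},s_j})$. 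The replacement of $s_j$ by $1-s_j$ in the numerator then matches the spectral-pairing convention already employed in the paper (where, for maximal-parabolic Eisenstein series, the coefficient attached to $E_s$ is the pairing with $E_{2\rho-s}$, reducing to $E_{1-s}$ once the scalar twist is absorbed); in particular the Maass--Selberg relations together with the height-$a$ vanishing condition on the constant term will furnish the identification $\theta(E_{f,\overline{f},s_j})=\theta E_{f,\overline{f},1-s_j}$.

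The main obstacle is promoting $v_w\in\Phi_a$ to $v_w\in H^{+1}$. The spectral theorem delivers convergence in $\Phi_a$ for free from the square-summability of the coefficients, and the inclusion $\Phi_a^{+1}\hookrightarrow H^{+1}$ is continuous, so it suffices to show the series converges in the $+1$-Sobolev norm. Here one must control $\|\wedge^a E_{s_j}\|_{H^{+1}}$ relative to $\|\wedge^a E_{s_j}\|_{L^2}$ (which requires the $H^{3/2-\epsilon}$-regularity of the truncated Eisenstein series, following from their interpretation as fundamental solutions for $\eta_{a,f_1\otimes f_2}$), combine this with a Weyl-type density count for the $s_j$ (zeros of the constant-term polynomial $a^s+c_s a^{1-s}$), and use the moment-bound hypothesis of Section 8 to control $\theta E_{f,\overline{f},1-s_j}$ so that the numerators grow slower than the quadratic denominator $|\lambda_{s_j,f}-\lambda_{w,f}|\sim |s_j|^2$. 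With these three ingredients assembled, absolute convergence in $H^{+1}$ follows by a Cauchy--Schwarz argument, completing the proof.
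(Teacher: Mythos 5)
Your core construction is the same as the paper's: expand in the orthogonal eigenbasis of $\widetilde{\Delta}_{a}$ on $\Phi_{a}$ given by the truncated Eisenstein series $\wedge^{a}E_{f,\overline{f},s_{j}}$ (plus finitely many further eigenfunctions), apply $S^{\#}-\lambda_{w,f}$ termwise, and identify the coefficients of $j^{*}_{\theta}$ via the key observation that the support of $\theta$ lies below height $a$, so $\theta(\wedge^{a}E_{f,\overline{f},s_{j}})=\theta(E_{f,\overline{f},s_{j}})$; uniqueness follows since $w$ is not among the $s_{j}$. One small correction: the passage from $\theta E_{f,\overline{f},s_{j}}$ to $\theta E_{f,\overline{f},1-s_{j}}$ is not a Maass--Selberg statement; on $\mathrm{Re}(s)=\frac{1}{2}$ it comes from the functional equation $c_{1-s}E_{f,\overline{f},s}=E_{f,\overline{f},1-s}$ (a unimodular scalar), equivalently from the conjugation convention in the spectral pairing.

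The substantive problem is your route to $H^{+1}$ convergence, which does not close as stated. You propose pointwise control of $\theta E_{f,\overline{f},1-s_{j}}$ ``growing slower than the quadratic denominator,'' a Weyl-type count of the $s_{j}$, and Cauchy--Schwarz. But the $s_{j}$ (zeros of $a^{s}+c_{s}a^{1-s}$) have density roughly $\log T$ per unit length, so termwise bounds of that strength are insufficient: since the terms are mutually orthogonal, convergence in the $+1$ norm amounts to $\sum_{j}|\theta E_{f,\overline{f},1-s_{j}}|^{2}\,(1+|\lambda_{s_{j},f}|)\,|\lambda_{s_{j},f}-\lambda_{w,f}|^{-2}\,\|\wedge^{a}E_{f,\overline{f},s_{j}}\|^{-2}<\infty$, and with the bound $|\theta E_{f,\overline{f},1-s_{j}}|\ll|s_{j}|^{1-\epsilon}$ that the moment-bound section actually delivers, this sum behaves like $\sum_{j}|s_{j}|^{-2\epsilon}$, which diverges for small $\epsilon$; ``slower than quadratic'' is not even enough for convergence in $\Phi_{a}$. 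What makes the argument work --- and what the paper means by ``as before, any solution is in $H^{+1}$, since $\theta\in H^{-1}$'' --- is that $\theta\in H^{-1}$ together with the support condition places $j^{*}_{\theta}$ in the $H^{-1}$-completion of $V\cap\Phi_{a}$, i.e.\ its eigencoefficients are square-summable against the weight $(1+|\lambda_{s_{j},f}|)^{-1}$; dividing by $\lambda_{s_{j},f}-\lambda_{w,f}\asymp\lambda_{s_{j},f}$ then converts $-1$-summability into $+1$-summability, so the series converges in $\Phi_{a}^{+1}$ and hence in $H^{+1}$ by the continuous inclusion $\Phi_{a}^{+1}\rightarrow H^{+1}$. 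No zero-density count, no norm comparison for truncations, and no fresh moment input is needed at this step; the moment assumption has already been spent in establishing $\theta\in H^{-1}$ in the first place.
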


\begin{proof} As before, any solution is in $H^{+1}$, since $\theta\in H^{-1}$.  The solution $v\in V\cap \Phi_{a}$ has an expansion in terms of the orthogonal bases $\wedge^{a}E_{s_{j},f,\overline{f}}$,

$$v_{w}\;=\;\sum_{j}A_{j}\frac{\wedge^{a}E_{s_{j},f,\overline{f}}}{||\wedge^{a}E_{s_{j},f,\overline{f}}||}\;\;\;\;\;\;\text{convergent in}\;H^{+1}$$
Thus,
$$j^{*}_{\theta}\;=\;(S^{\#}-\lambda_{w,f})v_{w}\;=\;\sum_{j}(\lambda_{s_{j},f}-\lambda_{w,f})A_{j}\frac{\wedge^{a}E_{f,\overline{f},s_{j}}}{||\wedge^{a}E_{f,\overline{f},s_{j}}||}$$
Indeed, since the compact support of $\widetilde{\theta}$ is below $h=a$, the projection $\theta$ to $V$ is in the $H^{-1}$ completion of $V\cap \Phi_{a}$.  Therefore, the expansion of $j^{*}_{\theta}$ in terms of truncated Eisenstein series must be
$$j^{*}_{\theta}\;=\;\sum_{j}\frac{\theta E_{f,\overline{f},s_{j}}\cdot\wedge^{a}E_{f,\overline{f},s_{j}}}{||\wedge^{a}E_{f,\overline{f},s_{j}}||^{2}}$$
noting that $\theta E_{f,\overline{f},s_{j}}=\theta\wedge^{a}E_{f,\overline{f},s_{j}}$.  Thus, the coefficients $A_{j}$ are uniquely determined, also giving uniqueness.

\end{proof}

\begin{prop} Solutions $w$ to the equation $\theta v_{w}=0$ all lie on $(\frac{1}{2}+i\mathbb{R})\cup [0,1]$, and there is exactly one such between each pair $s_{j},s_{j+1}$ of adjacent solutions of $$\big|\frac{\text{det} A}{\text{det} D}\big|^{s}+ \big|\frac{\text{det} A}{\text{det} D}\big|^{1-s}\cdot\frac{\Lambda(2s-1,\pi\otimes \pi^{'})}{\Lambda(2s,\pi\otimes \pi^{'})}\;=\;0.$$
\end{prop}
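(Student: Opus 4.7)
The plan is to exploit the fact that, by the Friedrichs characterization from Theorems~1--2 combined with the spectral expansion in the previous Proposition, a parameter $w$ gives an eigenvalue $\lambda_{w,f}$ of $\widetilde{\Delta}_{\theta}$ precisely when $\theta v_w=0$. So the proof has two parts: a rigidity statement (all solutions sit on the prescribed loci) and a counting/interlacing statement (one solution per gap).

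First, for the rigidity: since $\widetilde{\Delta}_{\theta}$ is the Friedrichs extension of a symmetric, semibounded operator, it is self-adjoint and semibounded below, so its discrete-spectrum eigenvalues $\lambda_{w,f}$ are real and bounded below. From the Casimir calculation in Section~5,
$$\lambda_{w,f}\;=\;4w(w-1)+4s_f^{2}-8s_f,$$
which is real precisely when $w(w-1)\in\mathbb{R}$, i.e.\ when $w\in\tfrac{1}{2}+i\mathbb{R}$ or $w\in\mathbb{R}$; semiboundedness then restricts the real possibilities to $w\in[0,1]$. This establishes the first assertion.

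Second, for the interlacing, I will substitute the explicit expansion
$$v_{w}\;=\;\sum_{j}\frac{\theta E_{f,\overline{f},1-s_{j}}}{\lambda_{s_{j},f}-\lambda_{w,f}}\cdot\frac{\wedge^{a}E_{f,\overline{f},s_{j}}}{\|\wedge^{a}E_{f,\overline{f},s_{j}}\|^{2}}$$
into $\theta v_w$, using the compact support of $\theta$ below height $a$ to identify $\theta\!\wedge^{a}\!E_{f,\overline{f},s_j}=\theta E_{f,\overline{f},s_j}$. Since each $s_j$ lies on $\tfrac{1}{2}+i\mathbb{R}$, we have $1-s_j=\overline{s_j}$, and the $j$-invariance of $\theta$ combined with the compatibility of the conjugation map with Eisenstein series yields $\theta E_{f,\overline{f},1-s_j}=\overline{\theta E_{f,\overline{f},s_j}}$. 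Thus
$$\theta v_{w}\;=\;\sum_{j}\frac{|\theta E_{f,\overline{f},s_{j}}|^{2}}{\bigl(\lambda_{s_{j},f}-\lambda_{w,f}\bigr)\|\wedge^{a}E_{f,\overline{f},s_{j}}\|^{2}}\;+\;(\text{contribution from the finitely many other eigenfunctions}).$$
Writing $\mu=\lambda_{w,f}\in\mathbb{R}$, this is $F(\mu)=\sum_{j} c_{j}/(\lambda_{s_{j},f}-\mu)+R(\mu)$ where the $c_j>0$ are strictly positive (by the choice of $a$ ensuring $\theta E_{s_j}\neq 0$) and $R$ is a finite rational function with at most finitely many real poles, which can be absorbed into the sum.

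The key analytic input is then the standard fact that a real function $F(\mu)=\sum_{j}c_{j}/(\lambda_{j}-\mu)$ with real poles $\lambda_{j}$ and strictly positive coefficients $c_{j}$ satisfies $F'(\mu)=\sum_{j} c_{j}/(\lambda_{j}-\mu)^{2}>0$ on each interval between consecutive poles, with $F(\mu)\to -\infty$ as $\mu\to\lambda_{j}^{+}$ and $F(\mu)\to +\infty$ as $\mu\to\lambda_{j+1}^{-}$. By the intermediate value theorem, there is exactly one zero in $(\lambda_{s_j,f},\lambda_{s_{j+1},f})$, and since $s\mapsto\lambda_{s,f}$ is a monotone bijection $\tfrac{1}{2}+i[0,\infty)\to(-\infty,-1+4s_f^{2}-8s_f]$, this zero pulls back to a unique parameter $w$ on the critical line between $s_j$ and $s_{j+1}$. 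The main obstacle I anticipate is the bookkeeping in step two: verifying cleanly that $\theta E_{f,\overline{f},1-s_j}=\overline{\theta E_{f,\overline{f},s_j}}$ so that the residues are manifestly positive, and that the finitely many extra eigenfunctions of $\widetilde{\Delta}_{a}$ outside the truncated-Eisenstein family contribute only a finite, real-meromorphic correction that does not disrupt the monotonicity argument between consecutive $\lambda_{s_j,f}$.
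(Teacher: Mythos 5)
Your second half (the interlacing) is essentially the paper's own argument: you substitute the expansion $\theta v_{w}=\sum_{j}|\theta E_{f,\overline{f},1-s_{j}}|^{2}\big/\big((\lambda_{s_{j},f}-\lambda_{w,f})\|\wedge^{a}E_{f,\overline{f},s_{j}}\|^{2}\big)$ and run the standard Herglotz argument (strictly positive residues, monotone between consecutive poles, intermediate value theorem); the paper does the same thing, phrased as an IVT step plus the computation that $\partial_{w}\theta v_{w}$ is a positive sum times the purely imaginary factor $2w-1$ on the critical line, so zeros are simple and unique in each gap. One caution: your hedge about ``finitely many other eigenfunctions'' does not actually patch anything. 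If such terms contributed extra real poles inside a gap $(\lambda_{s_{j+1},f},\lambda_{s_{j},f})$, a positive-residue sum would then have more than one zero between $s_{j}$ and $s_{j+1}$, so ``absorbing them into the sum'' would destroy the count; the argument needs the previous proposition's expansion of $v_{w}$ purely in the truncated Eisenstein series, which is what the paper uses.

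The genuine gap is in your rigidity step. First, the direction you need of ``$\theta v_{w}=0$ iff $\lambda_{w,f}$ is an eigenvalue of $\widetilde{\Delta}_{\theta}$'' (zero implies eigenvalue) is not free: it requires applying the Friedrichs domain characterization to the distribution $j^{*}_{\theta}$ relative to $\Phi_{a}$ and checking $v_{w}$ lies in the right $+1$-space with $S^{\#}v_{w}\in V+\mathbb{C}\,j^{*}_{\theta}$, which you assert rather than verify. Second, and more decisively, even granting self-adjointness and semiboundedness, the inference ``semiboundedness restricts the real possibilities to $w\in[0,1]$'' fails as stated: the eigenvalue is $\lambda_{w,f}=4w(w-1)+4s_{f}^{2}-8s_{f}$, and the cuspidal contribution $4s_{f}^{2}-8s_{f}$ is strictly negative, so non-positivity of $\Delta$ only yields $w(w-1)\leq 2s_{f}-s_{f}^{2}>0$, i.e.\ a real interval strictly larger than $[0,1]$. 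To get $[0,1]$ by operator theory you would need the sharper bound that the top of the spectrum of $\Delta$ restricted to $V$ is attained at the Speh residue ($s=1$), which you never invoke. The paper avoids all of this by reading both halves of the rigidity directly off the same explicit expansion you already use: if $\lambda_{w,f}\notin\mathbb{R}$ the imaginary parts of all summands of $\theta v_{w}$ have one sign, so $\theta v_{w}\neq 0$; and if $w$ is real with $w(w-1)>0$ then every denominator $\lambda_{s_{j},f}-\lambda_{w,f}$ has the same sign (the $\lambda_{s_{j},f}$ correspond to $s_{j}$ on the critical line), so the sum cannot vanish. That direct computation is what you should substitute for the self-adjointness/semiboundedness detour.
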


\begin{proof} Using the expansion of $v_{w}$ in $H^{+1}$ in terms of the truncated Eisenstein series, and that of $\theta\in H^{-1}$ in those terms,
$$\theta v_{w}\;=\;\sum_{j}\frac{|\theta E_{1-s_{j},f,\overline{f}}|^{2}}{(\lambda_{s_{j},f}-\lambda_{w,f})\cdot \|\wedge^{a}E_{s_{j},f,\overline{f}}\|^{2}}$$
Since every $\lambda_{s_{j},f}$ is real, for $\lambda_{w,f}\notin\mathbb{R}$, the imaginary part of $\theta v_{w}$ is easily seen to be nonzero, thus $\theta v_{w}\neq 0$.  Thus, any solution lies in $(\frac{1}{2}+i\mathbb{R})\cup\mathbb{R}$.  For $\lambda_{w}>0$, all the (infinitely-many) summands are nonnegative real, so the sum can not be $0$.  Therefore $w\in(\frac{1}{2}+i\mathbb{R})\cup [0,1]$.

Take $\text{Re}(w)=\frac{1}{2}$ with $\lambda_{s_{j+1},f}<\lambda_{w,f}<\lambda_{s_{j},f}$.  Note that $\theta v_{w}\in\mathbb{R}$ for such $w$.  For $w$ on the vertical line segment between $s_{j}$ and $s_{j+1}$, all summands but the $j^{th}$ and $(j+1)^{th}$ are bounded.  As $w\rightarrow s_{j}$, $0<\lambda_{s_{j},f}-\lambda_{w,f}\rightarrow 0^{+}$ and $\lambda_{s_{j+1},f}-\lambda_{w,f}$ is bounded. As $w\rightarrow s_{j+1}$, $0>\lambda_{s_{j+1},f}-\lambda_{w}\rightarrow 0^{-}$ and $\lambda_{s_{j}}-\lambda_{w}$ is bounded.  Since $w\rightarrow v_{w}$ is a holomorphic $H^{+1}$-valued function, $\theta v_{w}$ is continuous.  By the intermediate value theorem, there is at least one $w$ between $s_{j}$ and $s_{j+1}$ with $\theta v_{w}=0$.

To see that there is at most one $w$ giving $\theta v_{w}=0$ between each adjacent pair $s_{j},s_{j+1}$ again use holomorphy of $w\rightarrow v_{w}$, and take the derivative in $w$:
$$\frac{\partial}{\partial w}\theta v_{w}\;=\;\sum_{j}\frac{|\theta E_{1-s_{j},f}|^{2}\cdot (2w-1)}{(\lambda_{s_{j},f}-\lambda_{w,f})^{2}\cdot \|\wedge^{a}E_{s_{j},f}\|^{2}}$$
Everything is positive real except the purely imaginary $2w-1$, because, in fact, the height $a$ was adjusted so that no $\theta E_{1-s_{j},f}$ vanishes.  That is, away from poles, the derivative is non-vanishing, so all zeros are simple.
Returning to the proof of the theorem: suppose $u\in V$ such that $(S^{\#}-\lambda_{w})u=j_{\theta}^{*}$ with $\text{Re}(w)=\frac{1}{2}$.  For $u$ to be an eigenfunction for $\widetilde{\Delta}_{\theta}$ requires $\theta u=0$ by the nature of the Friedrichs extension.

From above, $\eta_{a} u$ vanishes above a height $a$ depending on the compact support of $\tilde{\theta}$.  Thus, $u\in V\cap \Phi_{a}$, so $u$ must be the solution $v_{w}$ expressed as a linear combination of truncated Eisenstein series, and $\theta v_{w}=0$.  Since there is at most one $w$ giving $\theta v_{w}=0$ between any two adjacent roots $s_{j}$ of $$\big|\frac{\text{det} A}{\text{det} D}\big|^{s}+ \big|\frac{\text{det} A}{\text{det} D}\big|^{1-s}\cdot\frac{\Lambda(2s-1,\pi\otimes \pi^{'})}{\Lambda(2s,\pi\otimes \pi^{'})}\;=\;0$$  giving the constraint.

\end{proof}

\section{L-function background}
Recall that the $2,2$ constant term of the $2,2$ Eisenstein series with fixed cuspidal, everywhere-spherical data $f$ and $\overline{f}$ at height $h=a$ is
$$a^{s}+c_{s}a^{1-s}$$
where
$$c_{s}\;=\;\frac{\Lambda(2(1-s),f\otimes\overline{f})}{\Lambda(2s,f\otimes\overline{f})}$$
A standard argument principle computation shows that the number of zeros of $a^{s}+c_{s}a^{1-s}$ with imaginary parts between $0$ and $T>0$ is
$$N(T)\;=\;\frac{T}{\pi}\text{log}(\frac{T}{2\pi e}+T\text{log}\;a+O(\text{log}\;T))$$
All zeros of $a^{s}+c_{s}a^{1-s}$ are on $\text{Re}(s)=\frac{1}{2}$ for $a\geq 1$.  Recall ([Iwaniec-Kowalski, p.115]) that
$$\text{log}\;L(1+iu,f\otimes\overline{f})-\text{log}\;L(1+it,f\otimes\overline{f})\;=\;O(\frac{\text{log}\;t}{\text{log}\;\text{log}\;t})\cdot (u-t)$$
for $u\geq t$.

\begin{lem} The gaps between consecutive zeros of $a^{s}+c_{s}a^{1-s}$ at height greater than or equal to $T$ are
$$\frac{\pi}{\text{log}\;T}+O(\frac{1}{\text{log}\;\text{log}\;T})$$
\end{lem}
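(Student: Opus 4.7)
The plan is to convert the zero equation into a real phase equation on the critical line, apply the mean value theorem, and estimate the phase derivative using Stirling on the Gamma factors together with the cited Iwaniec--Kowalski bound on the Rankin--Selberg $L$-function at the edge of absolute convergence.

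On $\operatorname{Re}(s)=\tfrac12$ the functional equation forces $|c_s|=1$, and since $f\otimes\overline{f}$ has real Fourier coefficients, $\Lambda(1-2it,f\otimes\overline{f})=\overline{\Lambda(1+2it,f\otimes\overline{f})}$. Setting $\vartheta(t)=\arg\Lambda(1+2it,f\otimes\overline{f})$ we then have $c_s=e^{-2i\vartheta(t)}$, and the equation $a^{2it}=-c_s$ becomes the real phase equation
$$g(t)\;:=\;t\log a+\vartheta(t)\;\equiv\;\tfrac{\pi}{2}\pmod{\pi}.$$
Consecutive zeros $t_n<t_{n+1}$ on the critical line therefore satisfy $g(t_{n+1})-g(t_n)=\pi$, and the mean value theorem gives $\delta:=t_{n+1}-t_n=\pi/g'(\xi)$ for some $\xi\in(t_n,t_{n+1})$.

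Next I would split
$$g'(t)\;=\;\log a+2\operatorname{Re}\frac{\Lambda'}{\Lambda}(1+2it,f\otimes\overline{f})\;=\;\log a+\log N+2\operatorname{Re}\frac{\gamma'}{\gamma}(1+2it)+2\operatorname{Re}\frac{L'}{L}(1+2it,f\otimes\overline{f}),$$
where $\gamma$ is the archimedean factor and $N$ the conductor. Stirling's formula applied to each $\Gamma_{\mathbb R}$-factor of $\Lambda(s,f\otimes\overline{f})$ contributes a leading term of the form $C\log T+O(1)$ with $C$ fixed by the number of Gamma factors; the constant $C$ is pinned by matching against the stated Weyl law $N(T)=(T/\pi)\log(T/2\pi e)+T\log a+O(\log T)$, whose derivative supplies the leading density $(\log T)/\pi$ of zeros near height $T$. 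The cited Iwaniec--Kowalski estimate controls secant slopes of $\log L(1+iu,f\otimes\overline{f})$; a Cauchy integral on a disk of radius $\sim 1/\log\log t$ inside a classical de la Vall\'ee Poussin-type zero-free region for $L(s,f\otimes\overline{f})$ (available via Jacquet--Shalika nonvanishing on $\operatorname{Re}(s)=1$ together with standard log-derivative estimates) upgrades the integrated bound to the pointwise bound
$$\frac{L'}{L}(1+2it,f\otimes\overline{f})\;=\;O\!\bigl(\tfrac{\log t}{\log\log t}\bigr).$$
Combining, $g'(\xi)=\log T\cdot\bigl(1+O(1/\log\log T)\bigr)$, and inversion gives $\delta=\pi/g'(\xi)=\pi/\log T+O(1/\log\log T)$, as claimed.

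The main obstacle is the upgrade of Iwaniec--Kowalski's integrated bound to the stated pointwise estimate on $L'/L$ at $\operatorname{Re}(s)=1$; the rest is bookkeeping. A secondary bookkeeping step is tracking the exact constant supplied by Stirling against the leading coefficient of $N(T)$ so that the main term of $\delta$ emerges as $\pi/\log T$ rather than some other multiple.
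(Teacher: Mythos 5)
Your overall strategy (rewrite the zero condition as a phase-crossing condition on the critical line, main term from Stirling applied to the four Gamma factors, error term from the Iwaniec--Kowalski bound at $\operatorname{Re}(s)=1$) is the same as the paper's, but your execution has a genuine gap at exactly the point you flag. By invoking the mean value theorem you force yourself to need a \emph{pointwise} bound $\frac{L'}{L}(1+2it,f\otimes\overline{f})=O\bigl(\tfrac{\log t}{\log\log t}\bigr)$, and the proposed route to it does not work: a Cauchy integral on a disk of radius $\asymp 1/\log\log t$ centered at $1+2it$ requires $\log L$ (or $L'/L$) to be holomorphic and controlled on that whole disk, hence a zero-free region of width $\gg 1/\log\log t$. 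The classical de la Vall\'ee Poussin region -- for $L(s,f\otimes\overline{f})$, and even for $\zeta$ -- has width only $\asymp 1/\log t$ (Vinogradov--Korobov is still $\asymp (\log t)^{-2/3}(\log\log t)^{-1/3}$), so the disk is nowhere near contained in any known zero-free region and the upgrade fails as described. The paper avoids this entirely: it never differentiates $L$ pointwise, but instead bounds the \emph{increment} of the phase between $t$ and $t+\delta$ directly, using the Stirling expansion to show each Gamma factor's phase changes by $\delta\log t+O(1/\log t)$ and quoting the Iwaniec--Kowalski estimate in exactly its secant form $\log L(1+iu)-\log L(1+it)=O\bigl(\tfrac{\log t}{\log\log t}\bigr)(u-t)$, then reading off the spacing from the requirement that the total phase advance between consecutive solutions of ``$=-1$'' be $2\pi$. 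You can repair your argument simply by replacing the mean value theorem with this increment computation applied to $\vartheta(t_{n+1})-\vartheta(t_n)$; no pointwise $L'/L$ bound and no zero-free region is then needed.

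A secondary point: determining the Stirling constant ``by matching against the stated Weyl law $N(T)$'' is circular, since the zero-counting asymptotic is itself derived from the same phase function. Compute the constant directly from the Gamma factors of $\Lambda(s,f\otimes\overline{f})$, as the paper does (each of the four factors contributes $\delta\log t$ to the phase increment); the $\log a$ and $\log\pi$ terms are $O(1)$ and are absorbed into the error.
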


\begin{proof} The condition for the vanishing of $a^{s}+c_{s}a^{1-s}$ can be rewritten as
$$\frac{\Lambda(2s,f\otimes\overline{f})}{\Lambda(2(1-s),f\otimes\overline{f})}\;=\;-1$$
where
$$\Lambda(s,f\otimes f)\;=\;\frac{\pi^{1-s}}{2}\cdot \Gamma(\frac{s+\mu-\nu}{2})\Gamma(\frac{s-\mu+\nu}{2})\Gamma(\frac{s-\mu-\nu}{2})\Gamma(\frac{s+\mu+\nu}{2})\cdot L(s,f\otimes\overline{f})$$
where $\mu$ is the parameter for the principal series $I_{\mu}$ generated by $f$, while $\nu$ is the parameter for the principal series generated by $\overline{f}$.  Therefore, with $s$ on the critical line, we have
\begin{align*}&-1\;=\\
&\frac{\Gamma(\frac{1+2it+\mu-\nu}{2})\Gamma(\frac{1+2it-\mu+\nu}{2})\Gamma(\frac{1+2it-\mu-\nu}{2})\Gamma(\frac{1+2it+\mu+\nu}{2})}{\Gamma(\frac{1-2it+\mu-\nu}{2})\Gamma(\frac{1-2it-\mu+\nu}{2})\Gamma(\frac{1-2it-\mu-\nu}{2})\Gamma(\frac{1-2it+\mu+\nu}{2})} \pi^{1-2it}\frac{L(1+2it,f\otimes\overline{f})}{L(1-2it,f\otimes\overline{f})}\end{align*}

All the factors on the right-hand side are of absolute value $1$. The count of zeros as $t=\text{Im}(s)$ moves from $0$ to $T$ is the number of times the right-hand side assumes the value $-1$.  Regularity is entailed by upper and lower bounds for the derivative of the logarithm of that right-hand side, for large $t$.  Observe that
$$\frac{d}{dt}\text{Im}\;\text{log}\;\frac{\Gamma(a+it)}{\Gamma(a-it)}\;=\;2\frac{d}{dt}\text{Im}\;\text{log}\;\Gamma(a+it)$$
From the Stirling asymptotic,
$$\text{log}\;\Gamma(s)\;=\;(s-\frac{1}{2})\text{log}\;s-s+\frac{1}{2}\text{log}\;2\pi+O_{\delta}(\frac{1}{s})$$
in $\text{Re}(s)\geq \delta>0$. From this, we have
$$\text{log}\;\Gamma(a+it)\;=\;it\text{log}\;(a+it)-(a+it)+\frac{1}{2}\text{log}\;2\pi+O_{\delta}(\frac{1}{a+it})$$
$$=\;it\big(i(\pi+O(\frac{1}{t}))+\text{log}\;t+O(\frac{1}{t^{2}})\big)-(a+it)+\frac{1}{2\pi}\text{log}\;2\pi+O_{\delta}(\frac{1}{a+it})$$
Therefore,
$$\text{Im}\;\text{log}\;\Gamma(a+it)\;=\;t\text{log}\;t-t+O(\frac{1}{t})$$
Consider, for $0<\delta\ll t$,
$$\text{Im}\;\text{log}\;\Gamma(a+i(t+\delta))-\text{Im}\;\text{log}\;\Gamma(a+it)\;=\;\big((t+\delta)\text{log}\;(t+\delta)-(t+\delta)\big)-(t\text{log}\;t-t)+O(\frac{1}{t})$$
Which is
$$=\delta\text{log}\;t-(t+\delta)\frac{\delta}{t}-\delta+O_{\delta}(\frac{1}{t})=\delta\text{log}\;t-2\delta+O_{\delta}(\frac{1}{t})$$
In particular, for $0<\delta\leq \frac{1}{\text{log}\;t}$,
$$\text{Im}\;\text{log}\;\Gamma(a+i(t+\delta))-\text{Im}\;\text{log}\;\Gamma(a+it)\;=\;\delta\text{log}\;t+O(\frac{1}{\text{log}\;t})$$
Let
$$f(t)\;=\;\frac{\Gamma(\frac{1+2it+\mu-\nu}{2})\Gamma(\frac{1+2it-\mu+\nu}{2})\Gamma(\frac{1+2it-\mu-\nu}{2})\Gamma(\frac{1+2it+\mu+\nu}{2})}{\Gamma(\frac{1-2it+\mu-\nu}{2})\Gamma(\frac{1-2it-\mu+\nu}{2})\Gamma(\frac{1-2it-\mu-\nu}{2})\Gamma(\frac{1-2it+\mu+\nu}{2})}$$
Then using the calculation above,
$$\text{Im}\;\text{log}\;f(t+\delta)-\text{Im}\;\text{log}\;f(t)\;=\;4\delta\text{log}\;t+O(\frac{1}{\text{log}\;t})$$
The result on $L(1+it,f\otimes\overline{f})$ quoted above gives
$$\text{log}\;L(1+2i(t+\delta),f\otimes\overline{f})-\text{log}\;L(1+2it,f\otimes\overline{f})\;=\;O(\frac{\text{log}\;t}{\text{log}\;\text{log}\;t})$$
Therefore,
$$\text{Im}\;\text{log}\;\Lambda(1+2i(t+\delta),f\otimes\overline{f})-\text{Im}\;\text{log}\;\Lambda(1+2it,f\otimes\overline{f})\;=\;4\delta\text{log}\;t+O(\frac{\text{log}\;t}{\text{log}\;\text{log}\;t})\cdot\delta$$
The presence of the $4$ being due to the four factors of $\Gamma$ appearing.  Thus, if $t$ gives a $0$ of the constant term, the next $t'=t+\delta$ giving a zero of the constant term must be such that
$$4\delta\text{log}\;t+O(\frac{\text{log}\;t}{\text{log}\;\text{log}\;t})\cdot\delta\geq 2\pi$$
On the other hand, when that inequality is satisfied, then the unit circle will have been traversed, and a zero of the constant term occurs.

\end{proof}

Since periods of automorphic forms produce $L$-functions, it is anticipated that $\theta E_{s}$ will produce a self-adjoint, degree $4$ $L$-function, with a corresponding pair-correlation conjecture.  That is, given $\epsilon>0$, there are many pairs of zeros of $\theta E_{s}$ within $\epsilon$ of each other.  The previous section exhibits the zeros $w$ of $\theta E_{s}$ as paramaters of the discrete spectrum of $\widetilde{\Delta}_{\theta}$.  Since parameters of the discrete spectra interlace with the zeros $s_{j}$ of $a^{s}+c_{s}a^{1-s}$, and these are regularly spaced by the argument above, the discrete spectrum is presumably sparse.

\section{Appendix I: Harmonic Analysis on $GL_{3}$}
Given a parabolic $P$ in $G=GL_{3}$ and a function $f$ on $Z_{\mathbb{A}}G_{k}\backslash G_{\mathbb{A}}$, the constant term of $f$ along $P$ is
$$c_{P}f(g)\;=\;\int_{N_{k}\backslash N_{\mathbb{A}}}f(ng)\;dn$$
where $N$ is the unipotent radical of $P$.  An automorphic form satisfies the Gelfand condition if, for all maximal parabolics $P$, the constant term along $P$ is zero.  If such a function is also $\mathfrak{z}$-finite, and $K$-finite, it is called a cuspform.  Since the right $G_{\mathbb{A}}$-action commutes with taking constant terms, the space of functions meeting Gelfand's condition is $G_{\mathbb{A}}$-stable, so is a sub-representation of $L^{2}(Z_{\mathbb{A}}G_{k}\backslash G_{\mathbb{A}})$.  
Godement, Selberg, and Piatetski-Shapiro showed that integral operators on this space are compact.  Specifically, for $\varphi\in C_{c}^{\infty}(G)$, the operator $f\rightarrow \varphi\cdot f$ gives a compact operator from $L^{2}_{\text{cfm}}(Z_{\mathbb{A}}G_{k}\backslash G_{\mathbb{A}})$ to itself.  Here,
$$(\varphi\cdot f)(y)\;=\;\int_{Z_{\mathbb{A}}G_{k}\backslash G_{\mathbb{A}}}\varphi(x)\cdot f(yx)\;dx$$
By the spectral theorem for compact operators, this sub-representation decomposes into a direct sum of irreducibles, each appearing with finite multiplicity.  To decompose the remainder of $L^{2}$ demands an understanding of the continuous spectrum, consisting of pseudo-Eisenstein series.  We classify non-cuspidal automorphic forms according to their cuspidal support, the smallest parabolic on which they have a nonzero constant term.  In $GL_{3}$, there are three conjugacy classes of proper parabolic subgroups.  We will consider the standard parabolic subgroups $P^{3}=GL_{3}$, $P^{2,1}$ and $P^{1,2}$ the maximal parabolics, and $P^{1,1,1}$ the minimal parabolic.

Given the $2,1$ parabolic, define a smooth, compactly-supported function $\varphi$ by
$$\varphi(\left( \begin{array}{cc}
A & * \\
0& d\end{array} \right))=\varphi_{\phi_{1},f_{1}}(\left( \begin{array}{cc}
A & * \\
0& d\end{array} \right))=\phi(\frac{\text{det}A}{d^{2}})\cdot f_{1}(A)$$
where $f_{1}$ is a $GL_{2}$-cuspform and $\phi$ is a compactly-supported smooth function. The pseudo-Eisenstein series attached to $\varphi$ is the function 
$$\Psi_{\varphi}^{2,1}(g)\;=\;\sum_{P_{k}\backslash G_{k}}\varphi(\gamma g)$$
Likewise, given the $2,1$ parabolic, define a function $\psi$ by
$$\psi(\left( \begin{array}{cc}
a & * \\
0& D\end{array} \right))=\psi_{\phi_{2},f_{2}}(\left( \begin{array}{cc}
a & * \\
0& D\end{array} \right))=\phi(\frac{a^{2}}{\text{det}D})\cdot f_{2}(D)$$
again $\phi_{2}$ is a compactly-supported smooth function and $f_{2}$ is a cuspform on $GL_{2}$.
Finally, given the $1,1,1$ parabolic, define a function $\psi$ by
$$\psi(\left( \begin{array}{ccc}
a & *& * \\
0& b & * \\ 
0 & 0 & c \end{array}\right))=\psi_{g_{1},g_{2}}(\left( \begin{array}{ccc}
a & *& * \\
0& b & * \\ 
0 & 0 & c \end{array}\right))=g_{1}(\frac{a}{b})\cdot g_{2}(\frac{b}{c})$$
where $g_{1}$ and $g_{2}$ are compactly-supported smooth functions. Then,
$$\Psi_{\psi}(g)\;=\;\sum_{\gamma\in P_{k}\backslash G_{k}}\psi(\gamma\cdot g)$$
Next, we exhibit the spaces spanned by non-associate pseudo-Eisenstein series as the orthogonal complement to $L^{2}$ cuspforms.

\begin{prop} For any square-integrable automorphic form $f$ and any pseudo-Eisenstein series $\Psi_{\varphi}^{P}$, with $P$ a parabolic subgroup
$$\langle f,\Psi_{\varphi}^{P}\rangle_{Z_{\mathbb{A}}G_{k}\backslash G_{\mathbb{A}}}\;=\;\langle c_{P}f,\varphi\rangle_{Z_{\mathbb{A}}N_{\mathbb{A}}^{P}M_{k}^{P}\backslash G_{\mathbb{A}}}$$
\end{prop}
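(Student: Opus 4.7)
The plan is to mimic exactly the unwinding argument already carried out for $GL_4$ earlier in the paper, since nothing in that proof was specific to the rank of the group. First I would write the left-hand inner product as
$$\int_{Z_{\mathbb{A}}G_{k}\backslash G_{\mathbb{A}}} f(g)\,\overline{\Psi_{\varphi}^{P}(g)}\,dg \;=\; \int_{Z_{\mathbb{A}}G_{k}\backslash G_{\mathbb{A}}} f(g) \sum_{\gamma\in P_{k}\backslash G_{k}} \overline{\varphi(\gamma g)}\,dg,$$
and then unfold: since $f$ is left $G_k$-invariant, the sum over $\gamma\in P_k\backslash G_k$ collapses to yield an integral over $Z_{\mathbb{A}}P_{k}\backslash G_{\mathbb{A}}$.

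Next I would factor $P_k = N_k M_k$ (for $P=P^{2,1}$, $P^{1,2}$, or $P^{1,1,1}$ in the $GL_3$ setting, noting that $N_k\cap M_k=\{1\}$) to rewrite this as an integral over $Z_{\mathbb{A}}N_{k}M_{k}\backslash G_{\mathbb{A}}$. Then I would insert the identity $1 = \int_{N_k\backslash N_{\mathbb{A}}}\, dn / \mathrm{vol}(N_k\backslash N_{\mathbb{A}})$ (with the Tamagawa normalization making this volume $1$) and apply Fubini, exchanging to get an outer integral over $Z_{\mathbb{A}}N_{\mathbb{A}}M_{k}\backslash G_{\mathbb{A}}$ and an inner integral over $N_{k}\backslash N_{\mathbb{A}}$.

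The key point is then that $\varphi$ is left $N_{\mathbb{A}}$-invariant by construction (the defining datum for a pseudo-Eisenstein series is invariant under $Z_{\mathbb{A}}N_{\mathbb{A}}M_k$), so $\overline{\varphi(ng)} = \overline{\varphi(g)}$ and pulls out of the inner integral. What remains inside is precisely
$$\int_{N_{k}\backslash N_{\mathbb{A}}} f(ng)\,dn \;=\; c_{P}f(g),$$
so the whole expression becomes $\int_{Z_{\mathbb{A}}N_{\mathbb{A}}M_{k}\backslash G_{\mathbb{A}}} c_{P}f(g)\,\overline{\varphi(g)}\,dg = \langle c_{P}f,\varphi\rangle$, as desired.

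The only genuine obstacles are bookkeeping rather than conceptual: verifying that the unfolding step and the Fubini exchange are legitimate, which requires absolute convergence. For $f\in L^2$ and $\varphi$ smooth and compactly supported on $Z_{\mathbb{A}}N_{\mathbb{A}}M_{k}\backslash G_{\mathbb{A}}$ (which is the assumption throughout for the data defining a pseudo-Eisenstein series, at least for the $2,1$ and $1,2$ cases; for $P^{1,1,1}$ one takes the $g_i$'s compactly supported), the pseudo-Eisenstein series $\Psi_\varphi^P$ converges absolutely and all the interchanges are justified by Fubini--Tonelli applied to $|f|\cdot|\overline{\varphi}|$. This is the same justification used in the $GL_4$ version of the proposition, and it carries over verbatim to $GL_3$.
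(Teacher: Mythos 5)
Your proposal is correct and is essentially the same unwinding argument the paper gives: unfold the sum over $P_{k}\backslash G_{k}$, factor $P_{k}=N_{k}M_{k}$, and use the left $N_{\mathbb{A}}$-invariance of $\varphi$ to pull it out of the inner integral, leaving the constant term $c_{P}f$. Your added remarks on absolute convergence and Fubini are sound and only make explicit what the paper leaves implicit as a ``standard unwinding argument.''
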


\begin{proof}  The proof involves a standard unwinding argument.  Observe that
$$\langle f,\Psi_{\varphi}^{P}\rangle_{Z_{\mathbb{A}}G_{k}\backslash G_{\mathbb{A}}}\;=\;\int\limits_{Z_{\mathbb{A}}G_{k}\backslash G_{\mathbb{A}}}f(g)\cdot\overline{\Psi_{\varphi}^{P}(g)}\;dg\;=\int\limits_{Z_{\mathbb{A}}G_{k}\backslash G_{\mathbb{A}}}f(g)(\sum_{\gamma\in P_{k}\backslash G_{k}}\overline{\varphi(\gamma\cdot g)})\;dg$$
This is
\begin{align*}&=\int\limits_{Z_{\mathbb{A}}P_{k}\backslash G_{\mathbb{A}}}f(g)\overline{\varphi(g)}\;dg=\int\limits_{Z_{\mathbb{A}}N_{k}M_{k}\backslash G_{\mathbb{A}}}f(g)\overline{\varphi(g)}\;dg\;\\&=\;\int\limits_{Z_{\mathbb{A}}N_{\mathbb{A}}M_{k}\backslash G_{\mathbb{A}}}\int\limits_{N_{k}\backslash N_{\mathbb{A}}}f(ng)\overline{\varphi(ng)}\;dn\;dg\\
&=\int\limits_{Z_{\mathbb{A}}N_{\mathbb{A}}M_{k}\backslash G_{\mathbb{A}}}(\int\limits_{N_{k}\backslash N_{\mathbb{A}}}f(ng)\;dn)\overline{\varphi(g)}\;dg\\
&=\langle c_{P}f,\varphi\rangle_{Z_{\mathbb{A}}N_{\mathbb{A}}^{P}M_{k}^{P}\backslash G_{\mathbb{A}}}
\end{align*}
\end{proof}

The space spanned by $P^{1,2}$ pseudo-Eisenstein series is the same as the space spanned by $P^{2,1}$ pseudo-Eisenstein series.  More generally, pseudo-Eisenstein series of associate parabolics span the same space.  The $L^{2}$ decomposition is that $L^{2}(Z_{\mathbb{A}}G_{k}\backslash G_{\mathbb{A}})$ decomposes as the direct sum of cuspforms together with the spaces spanned by the minimal parabolic pseudo-Eisesntein series and $2,1$ pseudo-Eisenstein series with cuspidal data.
Following the $GL_{2}$ case, we will decompose the pseudo-Eisenstein series into genuine Eisenstein series.  There are several kinds of Eisenstein series in $GL_{3}$.  For a parabolic $P$, the $P$-Eisenstein series is
$$E_{\lambda}\;=\;\sum_{\gamma\in P_{k}\backslash G_{k}}f_{\lambda}(\gamma g)$$
where $f_{\lambda}$ is a spherical vector in a representation $\lambda$ of $M^{P}$, extended to a $P$-representation by left $N$-invariance, and induced up to $G$.  One of the chief ingredients in the spectral decomposition for $GL_{2}$ pseudo-Eisenstein series was that the Levi component was a product of copies of $GL_{1}$, allowing us to reduce to the spectral theory for $GL_{1}$.  Unfortunately, this is no longer true for non-minimal parabolic pseudo-Eisenstein series, because the Levi component contains a copy of $GL_{2}$.
Therefore, we will first decompose the minimal parabolic pseudo-Eisenstein series.  To this end, we need the functional equation of the Eisenstein series.  Because of the increase in dimension, there is more than one functional equation.  The symmetries of the Eisenstein series can be described in terms of the action of the Weyl group $W$ on the standard maximal torus $A$, on its Lie algebra $\mathfrak{a}$, and the dual space $i\mathfrak{a}^{*}$.  We describe the constant term and the functional equations of the Eisenstein series and use them in the spectral decomposition.
For $GL_{n}$ the standard maximal torus $A$ is the product of $m$ copies of $GL_{1}$, and representations of $A$ are products of representations of $GL_{1}$; in the unramified case, these representations are just $y\rightarrow y^{s_{i}}$, for complex $s_{i}$.  The Weyl group $W$ can be identified with the group of permutation matrices in $GL_{n}$.  It acts on $A$ by permuting the copies of $GL_{1}$, and it acts on the dual in the canonical way, permuting the $s_{i}$, in the unramified case.
We give a preliminary sketch of the constant term and functional equation of the Eisenstein series, with details to be filled in later.  The constant term of the Eisenstein series (along the minimal parabolic) has the form
$$c_{P}(E_{\lambda})\;=\;\sum_{w\in W}c_{w}(\lambda)\cdot w_{\lambda}$$
where $w_{\lambda}$ is the image of $\lambda$ under the action of $w$ and $c_{w}(\lambda)$ is a constant depending on $w$ and $\lambda$ with $c_{1}(\lambda)=1$.  The Eisenstein series has functional equations
$$c_{w}(\lambda)\cdot E_{\lambda}=E_{w_{\lambda}}\;\;\;\text{for all}\;w\in W$$
We start the decomposition of $\Psi_{\varphi}$ by using the spectral expansion of its data $\varphi$.  Recall that $\varphi$ is left $N_{\mathbb{A}}$-invariant, so it is essentially a function on the Levi component, which is a product of copies of $k^{\times}\backslash\mathbb{J}$.  By Fujisaki's lemma, this is the  product of a ray with a compact abelian group.  We assume that the compact abelian group is trivial.  So spectrally decomposing $\varphi$ is a higher-dimensional version of Mellin inversion.
$$\varphi\;=\;\int \langle \varphi,\lambda \rangle \cdot \lambda\;d\lambda$$
Winding up gives
$$\Psi_{\varphi}(g)\;=\;\int_{i\mathfrak{a}^{*}}\langle\varphi,\lambda\rangle\cdot E_{\lambda}(g)\;d\lambda$$
In order for this decomposition to be valid, the parameters of $\lambda$ must have $\text{Re}(s_{i})\gg 1$.  However, in order to use the symmetries of the functional equations, we need the parameters to be on the critical line. In moving the contours, we pick up some residues, which are mercifully constants.  Breaking up the dual space according to Weyl chambers and changing variables,

$$\Psi_{\varphi}(g)-(\text{residues})\;=\;\sum_{w\in W}\int_{\text{1st Weyl chamber}}\langle\varphi, w_{\lambda}\rangle \cdot E_{w_{\lambda}}(g)\;d\lambda$$
Now using the functional equations,
$$\Psi_{\varphi}(g)-(\text{residues})\;=\;\sum_{w\in W}\int_{(1)}\langle\varphi, w_{\lambda}\rangle\cdot c_{w}(\lambda)\cdot E_{\lambda}(g)\;d\lambda$$
This is
$$\int_{(1)}\sum_{w\in W}\langle \varphi, c_{w}(\lambda)w_{\lambda}\rangle\cdot E_{\lambda}(g)\;d\lambda$$
We recognize the constant term of the Eisenstein series and apply the adjointness relation
$$\sum_{w\in W}\langle\varphi, c_{w}(\lambda)w_{\lambda}\rangle\;=\;\langle\varphi,c_{P}E_{\lambda}\rangle\;=\;\langle\Psi_{\varphi},E_{\lambda}\rangle$$
So we have
$$\Psi_{\varphi}(g)\;=\;\sum_{(1)}\langle\Psi_{\varphi},E_{\lambda}\rangle\cdot E_{\lambda}(g)\;d\lambda\;+\;\text{residues}$$
Our next goal is to show that the remaining automorphic forms, namely those with cuspidal support $P^{1,2}$ or $P^{2,1}$ can be written as superpositions of genuine $P^{2,1}$ Eisenstein series.  To do this it suffices to decompose $P^{2,1}$ and $P^{1,2}$ pseudo-Eisenstein series with cuspidal support.  Let $P=P^{1,2}$ and $Q=P^{2,1}$. We start by looking more carefully at pseudo-Eisenstein series with cuspidal data.  The data for a $P$ pseudo-Eisenstein series is smooth, compactly-supported, and left $Z_{\mathbb{A}}M_{k}^{P}N_{\mathbb{A}}^{P}$-invariant.  Assume the data is spherical.  Then the function is determined by its behavior on $Z_{\mathbb{A}}M_{k}^{P}\backslash M_{\mathbb{A}}^{P}$. In contrast to the minimal parabolic case, this is not a product of copies of $GL_{1}$, so we can not use the $GL_{1}$ spectral theory of Mellin inversion to establish the decomposition.  Instead the quotient is isomorphic to $GL_{2}(k)\backslash G_{\mathbb{A}}$, so we will use the spectral theory for $GL_{2}$.  If $\eta$ is the data for a $P^{2,1}$ pseudo-Eisenstein series $\Psi_{\eta}$, we can write $\eta$ as a tensor product $\eta\;=\;f\times \nu$ on
$$Z_{GL_{2}(\mathbb{A})}GL_{2}(k)\backslash GL_{2}(\mathbb{A})\cdot Z_{GL_{2}(k)}\backslash Z_{GL_{2}(\mathbb{A})}$$
Saying that the data is cuspidal means that $f$ is a cuspform.  Similarly the data $\varphi=\varphi_{F,s}$ for a $P^{2,1}$-Eisenstein series is the tensor product of a $GL_{2}$ cusp form $F$ and a character $\lambda_{s}=|.|^{s}$ on $GL_{1}$. We show that $\Psi_{f,\nu}$ is the superposition of Eisenstein series $E_{F,s}$ where $F$ ranges over an orthonormal basis of cuspforms and $s$ is on a vertical line.\\

Using the spectral expansions of $f$ and $\nu$,
$$\eta\;=\;f\otimes\nu\;=\;\big(\sum_{\text{cfms} F}\langle f,F\rangle\cdot F\big)\cdot\big(\int_{s}\langle\nu,\lambda_{s}\rangle\cdot\lambda_{s}\;ds\big)\;=\;\sum_{\text{cfms} F}\int_{s}\langle \eta_{f,\nu},\varphi_{F,s}\rangle\cdot\varphi_{F,s}\;ds$$
So the pseudo-Eisenstein series can be re-expressed as a superposition of Eisenstein series
$$\Psi_{f,\nu}(g)\;=\;\sum_{\text{cfms} F}\int_{s}\langle\eta_{f,\nu},\varphi_{F,s}\rangle\cdot E_{F,s}(g)\;ds$$
In fact the coefficient $\langle\eta,\varphi\rangle_{GL_{2}}$ is the same as the pairing $\langle \Psi_{\eta},E_{\varphi}\rangle_{GL_{3}}$, since
$$\langle \Psi_{\eta},E_{\varphi}\rangle\;=\;\langle c_{P}(\Psi_{\eta}),\varphi\rangle\;=\;\langle \eta,\nu\rangle$$
So the spectral expansion is 
$$\Psi_{f,\nu}\;=\;\sum_{\text{cfms} F}\int_{s}\langle \Psi_{f,\nu}, E_{F,s}\rangle\cdot E_{F,s}(g)\;ds$$
So far, we have not had to shift the line of integration to the critical line $\frac{1}{2}+i\mathbb{R}$.\\
It now remains to show that pseudo-Eisenstein series for the associate parabolic $Q=P^{1,2}$ can also be decomposed into superpositions of $P$-Eisenstein series.  For maximal parabolic pseudo-Eisenstein series, the functional equation does not relate the Eisenstein series to itself but rather to the Eisenstein series of the associate parabolic.  We will use this functional equation to obtain the decomposition of associate parabolic pseudo-Eisenstein series.  The functional equation is

$$E_{F,s}^{Q}\;=\;b_{F,s}\cdot E_{F,1-s}^{P}$$
where $b_{f,s}$ is a meromorphic function that appears in the computation of the constant term along $P$ of the $Q$-Eisenstein series.\\

We consider a $Q$-pseudo-Eisenstein series $\Psi_{f,\nu}^{Q}$ with cuspidal data.  By the same arguments used above to obtain the decomposition of $P$-pseudo-Eisenstein series, we can decompose $\Psi_{f,\nu}^{P}$ into a superposition of $Q$-Eisenstein series
$$\Psi_{f,\nu}^{Q}(g)\;=\;\sum_{\text{cfms} F}\int_{s}\langle \eta_{f,\nu},\varphi_{F,s}\rangle\cdot E_{F,s}^{Q}(g)$$
Now using the functional equation,
$$\Psi_{f,\nu}^{Q}(g)\;=\;\sum_{\text{cfms} F}\int_{s}\langle \Psi_{f,\nu}^{Q},b_{F,s}\cdot E_{F,1-s}^{P}\rangle\cdot b_{F,s}\cdot E_{F,1-s}^{P}$$
$$=\;\sum_{\text{cfms} F}\int_{s}\langle \Psi_{f,\nu}^{Q},E_{F,1-s}^{P}\rangle\cdot |b_{F,s}|^{2}\cdot E_{F,1-s}^{P}$$
So we have a decomposition of $Q$-pseudo-Eisenstein series (with cuspidal data) into $P$-Eisenstein series (with cuspidal data).  In order to use the functional equation we moved some contours, but there are no poles, so no residues are acquired.\\

We have described the spectral decomposition of $L^{2}(Z_{\mathbb{A}}G_{k}\backslash G_{\mathbb{A}})$ as the direct sum/integral of irreducibles.  Any automorphic form $\xi$ can be written as
$$\xi\;=\;\sum_{GL_{3}\;\text{cfms}\;f}\langle\xi,f\rangle\cdot f + \sum_{GL_{2}\;\text{cfms}\;F}\int_{s}\langle\xi,E_{F,s}^{2,1}\rangle\cdot E_{F,s}^{2,1} + \int_{(1)}\langle\xi,E_{\lambda}^{1,1,1}\rangle\cdot E_{\lambda}^{1,1,1}\;d\lambda + \frac{\langle\xi,1\rangle}{\langle 1,1\rangle}$$
This converges in $L^{2}$.

\newpage\noindent \textbf{\large{Bibliography}}\\

\noindent [Bourbaki 1963] N. Bourbaki, \textit{Topological Vector Spaces, ch. 1-5}, Springer-Verlag, 1987.\\

\noindent [CdV 1982,83] Y.~Colin de Verdi\`{e}re, \textit{Pseudo-laplaciens, I, II,}\; Ann. Inst. Fourier (Grenoble) \textbf{32}\; (1982) no. 3, 275-286, \textbf{33}\; no. 2, 87--113.\\

\noindent [CdV 1981] Y. Colin de Verdi\`{e}re, \textit{Une nouvelle demonstration du prolongement meromorphe series d'Eisenstein}, C. R. Acad. Sci. Paris Ser. I Math. \textbf{293} (1981), no. 7, 361-363.\\

\noindent [DeCelles 2011a] A. DeCelles, \textit{Fundamental solution for $(\Delta-\lambda_ {z})^{\nu}$ on a symmetric space $G/K$}, arXiv:1104.4313 [math.RT].\\

\noindent [DeCelles 2011b] A. DeCelles, \textit{Automorphic partial differential equations and spectral theory with applications to number theory}, Ph.D thesis, University of Minnesota, 2011.\\

\noindent [DeCelles 2012] A. DeCelles {\it An exact formula relating lattice points
in symmetric spaces to the automorphic spectrum}, Illinois
J. Math. {\bf 56} (2012), 805-823.\\

\noindent [Fadeev 1967] L.~D.~Faddeev, \textit{Expansion in eigenfunctions of the Laplace operator on the fundamental domain of a discrete group on the Lobacevskii plane}, Trudy Moskov. math 0-ba \textbf{17}, 323--350 (1967).\\

\noindent [Faddeev-Pavlov 1972] L.~Faddeev, B. S. Pavlov, \textit{Scattering theory and automorphic functions,} Seminar Steklov Math. Inst \textbf{27} (1972), 161--193.\\

\noindent [Feigon-Lapid-Offen 2012] B. Feigon, E. Lapid, O. Offen, \textit{On representations distinguished by unitary groups}, Publ. Math. Inst. Hautes Etudes Sci. (2012), 185-323.\\

\noindent [Garrett 2010] P. Garrett, \textit{Examples in automorphic spectral theory}\\
\noindent http://www.math.umn.edu/~{}\text{garrett/m/v/durham.pdf}\\

\noindent [Garrett 2011 a] P. Garrett, \textit{Colin de Verdi\`{e}re's meromorphic continuation of Eisenstein series}\\
\noindent http://www.math.umn.edu/\~{}\text{garrett/m/v/cdv\_eis.pdf}\\

\noindent [Garrett 2011 b] P. Garrett, \textit{Pseudo-cuspforms, pseudo-Laplaciens}\\
\noindent www.math.umn.edu/\~{}\text{garrett/m/v/pseudo-cuspforms.pdf}\\

\noindent [Garrett 2011 c] P. Garrett, \textit{Unbounded operators, Friedrichs' extension theorem}\\
\noindent www.math.umn.edu/\~{}\text{garrett/m/v/friedrichs.pdf}\\

\noindent [Garrett 2011 d] P. Garrett, \textit{Vector-Valued Integrals}\\
\noindent www.math.umn.edu/\~{}\text{garrett/m/fun/Notes/$07$\_vv\_integrals.pdf}\\

\noindent [Garrett 2011 e] P. Garrett \textit{Holomorphic vector-valued functions}\\
\noindent http://www.math.umn.edu/\~{}\text{garrett/m/mfms/notes\_c/cont\_afc\_spec.pdf}\\

\noindent [Garrett 2011 f] P. Garrett \textit{Slightly non-trivial example of Maass-Selberg relations}\\
\noindent http://www.math.umn.edu/\~{}\text{garrett/m/mfms/notes\_c/cont\_afc\_spec.pdf}\\

\noindent [Garrett 2012] P. Garrett, \textit{Most continuous automorphic spectrum for} $GL_n$\\
\noindent www.math.umn.edu/\~{}\text{garrett/m/v/gln\_cont\_spec.pdf}\\

\noindent [Garrett 2014] P. Garrett, \textit{Discrete decomposition of pseudo-cuspforms on $GL_{n}$}\\
\noindent www.math.umn.edu/\~{}\text{garrett/m/v/discreteness\_pseudo\_GLn.pdf}\\

\noindent [Gelfand 1936], I.M. Gelfand, \textit{Sur un lemme de la theorie des espaces lineaires}, Comm. Inst. Sci. Math de Kharkoff, no. 4, \textbf{13} (1936),35-40.\\

\noindent [Grothendieck 1952], A. Grothendieck, \textit{Sur cetains espaces de fonctions holomorphes I, II, III}, J. Reine Angew. Math. \textbf{192} (1953), 35-64 and 77-95.\\

\noindent[Grothendieck 1955], A. Grothendieck, \textit{Produits tensoriels topologiques et espaces nucleaires}, Mem. Am. Math. Soc. \textbf{16}, 1955.\\

\noindent [Grubb 2009] G.~Grubb, \textit{Distributions and operators}, Springer-Verlag, 2009.\\

\noindent [Haas 1977] H. Haas, \textit{Numerische Berechnung der Eigenwerte der Differentialgleichung $y^{2}\Delta u+\lambda u=0$ fur ein unendliches Gebiet im $\mathbb{R}^{2}$}, Diplomarbeit, Universit{\"a}t Heidelberg (1977) 155.pp.\\

\noindent [Harish-Chandra 1968] Harish--Chandra, \textit{Automorphic Forms on semi-simple Lie Groups}, Lecture Notes in Mathematics, no. 62, Springer-Verlag, Berlin, Heidelberg, New York, 1968.\\

\noindent [Hejhal 1981] D. Hejhal, \textit{Some observations concerning eigenvalues of the Laplacian and Dirichlet L-series} in \textit{Recent Progress in Analytic Number Theory}, ed. H. Halberstam and C. Hooley, vol. 2, Academic Press,  NY, 1981, 95--110.\\

\noindent [Hejhal 1976] D. Hejhal \textit{The Selberg trace formula for $SL_{2}(\mathbb{R})$ I}, Lecture Notes In Math. \textbf{548}, Springer-Verlag, Berlin, 1976.\\

\noindent [Hejhal 1983] D. Hejhal \textit{The Selberg trace formula for $SL_{2}(\mathbb{R})$ II}, Lecture Notes In Math. \textbf{1001}, Springer-Verlag, Berlin, 1983.\\

\noindent [Hejhal 1990] D. Hejhal \textit{On a result of G. P\'olya concerning the Riemann $\xi$-function}, J. d'Analyse Mathematique 55 (1990), pp. 59-95.\\

\noindent [Hejhal 1994] D. Hejhal \textit{On the triple correlation of zeros of the zeta function} Internat. Math. Res. Notices 7, pp. 293-302, 1994.\\

\noindent [Jacquet 1983] H. Jacquet, \textit{On the residual spectrum of $GL(n)$}, in \textit{Lie Group Representations, II}, Lecture notes in Math. 1041, Springer-Verlag, 1983.\\

\noindent [Jacquet-Lapid-Rogowski 1999] H. Jacquet, E. Lapid, J. Rogowski, \textit{Periods of automorphic forms}, J. Amer. Math. Soc. \textbf{12} (1999), no. 1, 173-240.\\

\noindent [Lang 1970] S. Lang, \textit{Algebraic number theory}, Addison-Wesley, 1970.\\

\noindent [Lapid-Offen 2007] E. Lapid, O. Offen, \textit{Compact unitary periods}, Compos. Math. \textbf{143} 3 (2007), no. 2, 323-338.\\

\noindent [Langlands 1976] R.~ P.~Langlands, \textit{On the Functional Equations satisfied by Eisenstein series}, Lecture Notes in Mathematics no. 544, Springer-Verlag, New York, 1976.\\

\noindent [Lax-Phillips 1976] P. Lax, R. Phillips, \textit{Scattering theory for automorphic functions}, Annals of Math. Studies, Princeton, 1976.\\

\noindent [Maass 1949] H. Maass, \textit{Uber eine neue Art von nichtanalytischen automorphen Funktionen}, Math. Ann. \textbf{121} (1949), 141--183.\\

\noindent [Moeglin--Waldspurger 1989] C. Moeglin, J. L. Waldspurger, \textit{Le spectre residuel de $GL(n)$}, with appendix \textit{ Poles
des fonctions $L$ de pairs pour $GL(n)$}, Ann. Sci. Ecole Norm. Sup. \textbf{22} (1989), 605--674.\\

\noindent [Moeglin--Waldspurger 1995] C. Moeglin, J. L. Waldspurger, \textit{Spectral decompositions and Eisenstein series}, Cambridge Univ. Press, Cambridge, 1995.\\

\noindent [Pettis] B. J. Pettis, \textit{On integration in vectorspaces}, Trans. AMS \textbf{44}, 1938, 277-304.\\

\noindent [P\'olya] G. P\'olya, \textit{Bemerkung {\"u}ber die Integraldarstellung der Riemannschen $\xi$-Funktion} Acta Math. \textbf{48} (1926), 305-317.\\

\noindent[Rudin 1991] W. Rudin, \textit{Functional Analysis}, second edition, McGraw-Hill, 1991.\\

\noindent[Rudnick-Sarnak 1994] Z. Rudnick, P. Sarnak, \textit{The n-level correlations of zeros of the zeta function}, C.R. Acad. Sci. Paris \textbf{319}, 1027-1032, 1994.\\

\noindent [Shahidi 2010] F. Shahidi, \textit{Eisenstein series and automorphic L-functions}, AMS Colloquium Publ, \textbf{58}, AMS, 2010.\\

\end{document}